\documentclass[12pt]{amsart}
\usepackage{mathrsfs}
\usepackage{color}
\usepackage[latin1]{inputenc}
\usepackage{fullpage}
\usepackage{amsmath}
\usepackage{amscd}
\usepackage{amstext}
\usepackage{amsbsy}
\usepackage{amsopn}
\usepackage{amsthm}
\usepackage{amssymb}
\usepackage{amsxtra}
\usepackage{verbatim}
\def\o{\otimes}
\title{Free transport for convex potentials}
\author[Yoann Dabrowski, Alice Guionnet, Dima Shlyakhtenko]{Yoann Dabrowski, Alice Guionnet, Dima Shlyakhtenko}
\thanks{YD: dabrowski@math.univ-lyon1.fr, Universit\'e de Lyon, Universit\'e Lyon 1, Institut Camille Jordan UMR 5208, 43 blvd. du 11 novembre 1918, F-69622 Villeurbanne cedex, France\\
AG: alice.guionnet@ens-lyon.fr,
 Universit\'e de Lyon, Ecole Normale Superieure, 46 all\'ee d'Italie, 69007, Lyon, France. Research supported by NSF Grant DMS-1307704 and Simons foundation.\\
DS: shlyakht@math.ucla.edu, Department of Mathematics, UCLA, Los Angeles, CA 90095, USA. Research supported by NSF Grant DMS-1500035\\
}

\newcounter{Step}
\setcounter{Step}{0}
\newenvironment{step}[0]{\bigskip\addtocounter{Step}{1}\noindent\textbf{Step \theStep~:} }{\begin{flushright}\tiny \end{flushright}}

\def\N{\mbox{I\hspace{-.15em}N} }
\def\R{\mbox{I\hspace{-.15em}R} }

\def\C{\mathbb{C}}

\def\o{\otimes}

\def\H{\mathscr{H}}

\def\Tr{{\rm Tr}}

\def\H{\mathcal{H}}

\theoremstyle{plain}
      \newtheorem{Theorem}{Theorem}
      \newtheorem{Lemma}[Theorem]{Lemma}
      \newtheorem{Corollary}[Theorem]{Corollary}
      \newtheorem{Proposition}[Theorem]{Proposition}
     
      \theoremstyle{definition}
      \newtheorem{definition}[Theorem]{Definition}
      \newtheorem{Assumption}[Theorem]{Assumption}

     \theoremstyle{remark}

\def\oeh#1{\underset{#1}{\overset{\operatorname{eh}}{\otimes}}}
\def\oehc#1{\underset{#1, c}{\overset{\operatorname{eh}}{\otimes}}}
\def\oh#1{\underset{#1}{\overset{\operatorname{h}}{\otimes}}}

\begin{document}
\begin{abstract}
We construct non-commutative analogs of transport maps among free Gibbs state satisfying a certain convexity condition.  Unlike previous constructions, our approach is non-perturbative in nature and thus can be used to construct transport maps between free Gibbs states associated to potentials which are far from quadratic, i.e., states which are far from the semicircle law.  An essential technical ingredient in our approach is the extension of free stochastic analysis to non-commutative spaces of functions based on the Haagerup tensor product.
\end{abstract}
\maketitle
\tableofcontents
\sloppy

\section{Introduction} 
A transport map between two probability measures  is  a function  pushing the first measure onto the second. Finding transport maps which minimize a certain cost function  is the central question in transportation theory.
It was formalized by Monge in the eighteenth century, studied by Kantorovich during World War II and has known major advances in the last twenty years, starting with a work of Brenier \cite{brennier:polarFact}, see also the very inspiring book by Villani \cite{cedric}.
In fact, the mere  existence of a transport map is itself not completely trivial and was shown by von Neumann in 1930s, under very weak assumptions, as part of the program to classify measure spaces.  

A central question is to find appropriate generalizations of this result to the non-commutative setting, where measures are replaced by non-commutative distributions, that is, tracial states. In this case, there is no notion of density but in certain instances arising in Voiculescu's free probability theory, integration by parts makes sense.  It gives the adjoint in $L^2$  of Voiculescu's free difference quotient \cite{dvv:entropy5}, and is often a (cyclic) derivative of a non-commutative function that we call potential. 

Non commutative laws which are characterized by such an integration by parts formula are called free Gibbs laws.
In \cite{alice-shlyakhtenko-transport}, two of the authors of this article constructed transport maps between a class of free Gibbs laws. They used   ideas going back to Monge and Amp\`ere, based on the remark that transport maps must satisfy an equation given by the change of variables formula.  Solving this equation yields a transport map. Unfortunately, this equation was only solved in \cite{alice-shlyakhtenko-transport} in the case of potentials which are small perturbations of quadratic potentials, i.e., certain small perturbations of Voiculescu's free semicircular law.
However, already this result yielded isomorphisms between the associated $C^*$ and von Neumann algebras in such perturbative situations, solving a number of open questions \cite{voiculescu:conjectureAboutPotentials}. In particular, this approach was used to show that the  $C^*$ and von Neumann algebras of $q$-Gaussian laws \cite{Speicher:q-comm} are isomorphic for sufficiently small values of $q$.

The goal of the present article is to consider non-perturbative situations. We will see that we can tackle situations where the potential is ``strictly convex'' (in a sense we will make precise later in the paper). The idea is once again to use a non-commutative version of the Monge-Amp\`ere equation, but to solve it by interpolating the potential between the two given laws. This requires to solve a Poisson type equation. The latter, in strictly convex situations, can be solved by using the associated (free) semi-group. However, this program meets several difficulties in the non-commutative setting. First, smoothness properties of the semi-group were so far not studied. Furthermore, the appropriate 
notion of convexity has not yet been formulated. We detail our  framework in Section \ref{notation}, leaving to the appendix the elaboration of 
most of its properties. In Section \ref{sg}, we study the semi-group defined in this framework and derive its properties. Based on this, we finally construct the transport map in Section \ref{cons}. 

In the rest of this section, we detail the classical construction of transport maps from which we took our inspiration, and explain how it generalizes to the case of a single non-commutative variable. We then consider the general non-commutative multi-variable case and state our main theorem.

\subsection{Classical construction of transport maps}
For any suitable real-valued function $U$  from $\mathbb R^d$ to $\mathbb R$ we define  the probability measure
 $$\mu_U(dx)=\frac{1}{Z_{U}}e^{-U(x)}dx, \qquad Z_U=\int e^{-U(x)} dx\,.$$
We let $V$ and $V+W$ be two functions going fast enough to infinity so that $Z_V$ and $Z_{V+W}$ are finite. We would like to construct $F:\mathbb R^d\mapsto \mathbb R^d$ so that 
$\mu_{V+W}=F\#\mu_V$, i.e.,  so that for all test functions $h$
$$\int h(F(y))d\mu_V(y)=\int h(x)d\mu_W(x)=\int h(F(y)){\rm Jac}(F)(y) e^{-(V+W)(F(y))}dy/Z_W$$
where  ${\rm Jac}(F)$ denotes the Jacobian of $F$.  We have simply performed the change of variables $x=F(y)$ in the last line, assuming that  $F$ is $C^1$.
We therefore deduce that $F$ should satisfy the transport equation:
\begin{equation}\label{trans}
V(y)=(V+W)(F(y))-\ln {\rm Jac}(F)(y)+C
\end{equation}
for almost all $y$ where we set $C=\ln Z_{V+W}-\ln Z_V$. 

If $V-W$ is small we can seek a solution $F$ which is close to identity, so that its Jacobian stays 
away from the zero and therefore does not get close to the singularity of the logarithm. The resulting equation can in turn be 
 solved by the implicit function theorem. Such arguments were extended to the non-commutative setting in \cite{alice-shlyakhtenko-transport}.

To solve the transport equation in a non-perturbative situation, we shall in this article proceed by interpolating the potential. Namely, let us consider  potentials $V_\alpha=\alpha W+V$ and seek to  construct a  transport map $F_\alpha$ 
of $\mu_V$ onto $\mu_{V_\alpha}$. The advantage of smooth interpolation is that transporting $\mu_{V_\alpha}$ onto $\mu_{V_{\alpha+\varepsilon}}$ can a priori be solved for $\varepsilon$ small enough by the previous pertubative arguments, and the full transport $F_1=F$ of $\mu_V$ onto $\mu_W$ can then be recovered by integration along the interpolation. 

In fact, we shall solve the transport equation \eqref{trans} under the additional restriction that $F$ evolves according to a gradient 
flow: $\partial_\alpha F_\alpha=\nabla g_\alpha(F_\alpha)$. It turns out that $g$ must then be a  solution of the Poisson equation
\begin{equation}\label{poisson}L_{V_\alpha} g_\alpha=W+\partial_\alpha \ln Z_{V_\alpha}\,,\end{equation}
with $L_{V_\alpha}=\Delta-\nabla V_\alpha.\nabla$ the infinitesimal generator of the diffusion having $\mu_{V_\alpha}$ as its stationary measure. 
Solving the Poisson equation \eqref{poisson} amounts to inverting $L_{V_\alpha}$, that is, finding the Green function of the differential operator $L_{V_\alpha}$. This is a well known problem which can be solved under various boundary conditions or growth of $V$ at infinity. To simplify we shall assume that  $V_\alpha$ (that is $V$ and $V+W$)
 are uniformly convex. This insures that  the semi-group $P_s^\alpha=e^{sL_{V_\alpha}}$ converges uniformly towards the Gibbs measure $\mu_{V_\alpha}$ as $s$ goes to infinity. More precisely, there exists some $c>0$ such that for all Lipschitz functions $f$ with bounded 
 Lipschitz norm $\|f\|_L$ we have
 $$\|P_s^\alpha f-\mu_{V_\alpha}(f)\|_\infty\le 2 e^{-cs}\|f\|_L\,.$$
 As a consequence we can solve the Poisson equation \eqref{poisson} by setting
 \begin{equation}\label{solve}
 g_\alpha(x)=\int_0^\infty P^\alpha_s(W+\partial_\alpha \ln Z_{V_\alpha}) (x) ds\end{equation}
 where we noticed  that $\mu_{V_\alpha}\left(W+\partial_\alpha \ln Z_{V_\alpha}\right)=0$.
 Hence we see that in the classical setup \eqref{poisson} can be solved thanks to the associated semi-group. Moreover, by smoothness of $x\mapsto P^\alpha_s (W)(x)$, we see that $g_\alpha$ is smooth if $W$ is.
  To conclude, all that remains is to solve the transport equation 
 $\partial_\alpha F_\alpha=\nabla g_\alpha (F_\alpha)$.  In the rest of this article we generalize this strategy to the free probability framework.

  Let us first investigate the free set-up in the one variable case. Typically, one should think about the non-commutative law of one variable as
  the asymptotic spectral measure of a random matrix, confined by a potential $V$: the joint law of these eigenvalues is  given by
 $$dP_N^V(\lambda_1,\ldots,\lambda_N)=\frac{1}{Z_N^V}\prod_{1\le i\neq j\le N}|\lambda_i-\lambda_j| \exp\{-N\sum_{i=1}^N V(\lambda_i)\} \prod_{1\le i\le N} d\lambda_i\,.$$
 It is then well known (see e.g. \cite{guionnet-anderson-zeitouni}) that the spectral measure $L_N=\frac{1}{N}\sum_{i=1}^ N \delta_{\lambda_i}$ converges almost surely to the equilibrium measure $\mu_V$, which is  characterized by the fact that the function
 \begin{equation}\label{char}V(x)-2\int \ln |x-y|d\mu_V(y) \end{equation}
 is equal to a constant $c_V$ on the support of $\mu_V$ and is greater than this constant outside of the support. This equation implies the Schwinger-Dyson equation
 \begin{equation}
 \label{SDint}
 2\ P.V.\int \frac{1}{x-y} d\mu_V(y)= V'(x),\quad \mu_V\mbox{ a.s.}\end{equation}
 where $P.V.$ denotes the principal value.
We will call a free Gibbs law with potential $V$  a solution to \eqref{SDint}.  It may not be unique; in fact, there is a continuum of solutions as soon as solutions have disconnected support: a solution corresponds  to any choice  of masses of the connected pieces of the support. 
 This is not the case when  $V$ is uniformly convex. In this case, there is a unique solution and it has connected support. The interest in Schwinger-Dyson equation 
 is that it can be interpreted as an integration by parts identity for the non-commutative derivative $\partial f(x,y):=\frac{f(x)-f(y)}{x-y}$ since it implies that
 $$\int \int \frac{f(x)-f(y)}{x-y} d\mu_V(x)d\mu_V(y)=\int f(x)V'(x) d\mu_V(x)\,.$$
 As there is no notion of density in free probability, integration by parts can be seen as an important way to classify measures. 
 Moreover, as we shall soon describe, there is a natural generalization of free Gibbs laws to the multi-variable setting.

 Let now $V,W$  be two potentials. 
 We would like to construct a transport map from the Gibbs law $\mu_V$ with potential $V$ to the Gibbs law $\mu_{V+W}$  with potential $V+W$.
 We can follow the previous scheme and seek  $g_\alpha$ satisfying : $\partial_\alpha F_\alpha= g_\alpha' (F_\alpha)$ and  $F_\alpha\#\mu_V=\mu_{V_\alpha}$. By \eqref{char}, we find that $\mu_{V_\alpha}$ almost surely we must have
 \begin{equation}\label{tot}\Delta_{V_\alpha} g_\alpha(x):=2\int \frac{g'_\alpha(x)-g'_\alpha(y)}{x-y} d\mu_{V_\alpha}(y)-V_\alpha'(x) g'_\alpha(x)=W-\partial_\alpha c_{V_\alpha}\,.\end{equation}
 We recognize on the left  hand side the infinitesimal generator  $\Delta_{V_\alpha}$
 of the free diffusion driven by a free Brownian motion, \cite{biane-speicher0}.  More precisely, the infinitesimal generator of the free diffusion is given by
 $$\Delta_{V_\alpha}f(x)=2\mathbb E\left[ \frac{f'(x)-f(X)}{x-X} \right]-V'(x) f'(x)$$
 if $X$ has the same law as $x$. 
 
 The fact that this generator depends on the law of the variable complicates the resulting theory quite a lot. In particular,
 the operators $e^{s \Delta_{V_\alpha}}$ acting on the obvious space of functions do not form a semigroup. 
To restore the semi-group property, we have to enlarge the set of test functions to be functions of not just the real variable $x$, but also of expectations of this random variable. Our idea here is similar to the one introduced in \cite{Ceb13}.
 This in turn changes the generator of the diffusion to also involve differentiation under the expectation: we denote $\delta_V$ the derivative $\delta_V\mathbb E[f]=\mathbb E [\Delta_V f]$. 
 We can now check that $(e^{s (\Delta_{V_\alpha}+\delta_{V_\alpha})})_{s\ge 0}$ is a semi-group so that we can apply the previous analysis.

  Note here that when $x$ follows the invariant measure $\mu_{V_\alpha}$,
 $\delta_{V_\alpha}\mu_{V_\alpha}(f)=0$ and therefore the two generators coincide.  Thus invariant measures for the semi-group $(e^{s (\Delta_{V_\alpha}+\delta_{V_\alpha})})_{s\ge 0}$ will satisfy \eqref{tot}.

 As before, we shall solve \eqref{tot} under a gradient form. Again, the natural gradient that we shall use also differentiates under expectation. 
 Namely we let $\mathcal D$ to be given for any smooth functions $f,f_i, i\ge 0$ by 
 $$\mathcal D(f(x)\prod \mathbb E[f_j(x)])=f'(x)\prod \mathbb E[f_j(x)]+\mathbb E[f]\sum_i f_i'(x)\prod_{j\neq i}\mathbb E[f_j] \,.$$
  Then, we shall find a function $\mathcal D g_\alpha$ (of the variable $x$ and the expectation, see Lemma \ref{ODE}),  which satisfies a gradient form of \eqref{tot} (after adding $\delta_V$ to the generator and commuting $\mathcal D$ with $\Delta_{V_\alpha}+\delta_{V_\alpha}$) :
 \begin{equation}\label{cf}\mathcal D(W)=(\Delta_{V_\alpha} +\delta_{V_\alpha})(\mathcal D g_\alpha)+ V''_\alpha  \mathcal D g_\alpha\,.\end{equation}
Having obtained the solution $g_\alpha$, we finally solve
 \begin{equation}\label{transportflow}\partial_\alpha F_\alpha=\mathcal D g_\alpha(F_\alpha)\,.\end{equation}
 
 To make things clearer, let us  transport the measure $P_N^V$ onto $P_N^W$ and only afterwards take the large $N$-limit. Again, we consider the transport of $P_N^V$ onto $P_N^{V_\alpha}$. 
We may  expect, by symmetry, the  flow $F^\alpha=(F^\alpha_1,\ldots,F^\alpha_N)$  for the transport  map to be the gradient of a function of the empirical measure $L_N=\frac{1}{N}\sum \delta_{\lambda_i^N}$: 
$$F_i^\alpha(\lambda)=N\partial_{\lambda_i} G_\alpha(L_N)=\mathcal D G_\alpha(\lambda_i,L_N)\,.$$ The infinitesimal generator $L_V=\Delta-\nabla V.\nabla$ acting on functions 
of the form $F(L_N)=N\prod \frac{1}{N}\sum_{i=1}^N f_j(\lambda_i)$ reads
$$L_V F=\sum_{k}\prod_{j\neq k}  \frac{1}{N}\sum_{i=1}^N f_j(\lambda_i)  \frac{1}{N}\sum_{i=1}^N L_V f_k(\lambda_i) +O(\frac{1}{N})$$
where the last term comes from differentiation of two different functions and is at most of order $1/N$. Hence, when $N$ goes to infinity we see that functions of the distribution of the $\lambda_i$ should not be taken as constant but also differentiated under the expectation. 
Taking the gradient in the Poisson equation \eqref{poisson} shows that we seek  $G_\alpha$ such that for each $i$
$$ ( L_{V_\alpha}+\delta_{V_\alpha})\mathcal D  G_\alpha(\lambda_i) =\mathcal D W(\lambda_i)+V_\alpha''(\lambda_i) \mathcal D G_\alpha(\lambda_i)+O(\frac{1}{N})\,.$$
Hence, taking the large $N$ limit, we expect $G_\alpha$ to be given at first order by the solution $g_\alpha$ of \eqref{cf}.

The final step to finish our construction of the transport map is to introduce a notion of uniform convexity of $V$ such that the associated semi-group converges uniformly and sufficiently rapidly towards the invariant measure
as time goes to infinity (to make sense of the integral over time from $0$ to $\infty$), and such that  
if $f$ is smooth then also $x\mapsto e^{s (\Delta_{V_t}+\delta_{V_t})} f(x) $ is smooth, uniformly in $s$ (to be able to solve the transport equation). 
Our choice of the notion of uniform convexity of $V$ is designed to guarantee such properties. 

\subsection{Construction of transport maps in free probability}
We now want to explain our approach to the main goal of this article, which is to construct transport maps between  non-commutative distributions of several non-commutative variables. In free probability theory, laws of non-commutative variables are defined as linear forms $\tau$ on the space $\mathbb C\langle X_1,\ldots,X_n\rangle$ of polynomials in the self-adjoint  non-commutative letters $X_1,\ldots,X_n$ with coefficients in $\mathbb C$ which have mass one (so that $\tau(1)=1$), and which satisfy the  traciality property ($\tau(PQ)=\tau(QP)$) and the state property ($\tau(PP^*)\ge 0$). Here $*$ denotes the usual involution $(zX_{i_1}\cdots X_{i_k})^*= \bar z X_{i_k}\cdots X_{i_1}$.

An example 
one should  keep in mind is the asymptotic law  of several interacting random matrices with joint law given by
$$d\mathbb P_N^{V}(X_1^N,\ldots,X_n^N)=\frac{1}{Z_N^{V}} \exp\{-N\Tr(V(X_1^N,\ldots,X_n^N))\} d X_1^N\cdots dX_n^N$$
where $dX^N$ is the Lebesgue measure on the space of $N\times N$ Hermitian matrices and $V$ is a self-adjoint polynomial in $\mathbb C\langle X_1,\ldots,X_n\rangle$ so that $Z_N^{V}$ is finite.  In this case $$\tau_{X^N}(P)=\frac{1}{N}\Tr (P(X_1^N,\ldots,X_n^N))$$
is a non commutative law for any self-adjoint matrices $X_1^N,\ldots,X_n^N$.  So is  its expectation under $\mathbb P_N^{V}$ and the limit of these expected value as $N\to \infty$ (if the limit exists). 

Existence of such an (almost sure and $L^1(\mathbb P^V_N)$) limit was proven when $V$ is a small perturbation of a quadratic potential  \cite{guionnet-edouard:combRM} and when $V$ satisfies some property of convexity \cite{alice-shlyakhtenko-freeDiffusions}.

 In this paper we will introduce a more suitable notion of convexity yielding as well existence and uniqueness of a limit $\tau_V$. We shall see that it includes the case of quartic potentials.   By integration by parts, we see that the limit $\tau_V$ must satisfy 
that for any polynomial $P$
\begin{equation}\label{SD}\tau_V\otimes\tau_V(\partial_i P)=\tau_V(P \mathcal D_i V)\end{equation}
where $\partial_i$ is the free difference quotient with respect to the $i$th derivative from $\mathbb C\langle X_1,\ldots,X_n\rangle$ to $\mathbb C\langle X_1,\ldots,X_n\rangle\otimes \mathbb C\langle X_1,\ldots,X_n\rangle$ given by
$$\partial_i (PQ)=\partial_i(P)\times 1\otimes Q+P\otimes 1\times\partial_i Q,\qquad \partial_i X_j=1_{i=j}1\otimes 1\,,$$
and $\mathcal D_i=m\circ \partial_i $ the cyclic derivative, $m(a\times b)=ba$.  When $V=\sum_{i=1}^n X_i^2$, $\sigma_n:=\tau_{\sum_{i=1}^n X_i^2}$ is uniquely given recursively by \eqref{SD} and is the law of $n$ free semicircle variables.
In general, we say that a non-commutative law $\tau_V$ satisfying \eqref{SD} is a free Gibbs law with potential $V$. Alternatively we say that the conjugate variables $(\partial_i^*(1\otimes 1))_{1\le i\le n}$ are equal to the cyclic gradient $(\mathcal D_i V)_{1\le i\le n}$. 

The goal of this paper is to construct non-commutative transport maps between $\tau_V$ and $\sigma_n$, following the ideas developed in the previous section. In fact, constructing the transport map as the solution of the transport equation \eqref{transportflow}
where $g_\alpha$ is solution of a Poisson equation \eqref{cf} is a natural analogue thanks to existence of free diffusion and free semi-groups.
However, this program meets several issues that have to be addressed.

 \begin{itemize}
 \item One of the key  point to construct the solution to Poisson equation was the fast convergence of the semi-group towards the free Gibbs law. In the free context, it is well known that semi-groups with deep double well potentials do not always converge.  It is therefore natural to search for the appropriate notion of convexity in the non-commutative setting, which would imply convergence of the semi-group as time goes to infinity, uniformly on the initial condition. 
   In \cite{alice-shlyakhtenko-freeDiffusions}, the notion of convexity that was used turns out to be too strong to include many examples. It assumed that for all $n$-tuples of self-adjoint variables $(X,Y)$  bounded by some $R$, 
  $$\sum_{i=1}^n \left((\mathcal D_i V(X)-\mathcal D_iV(Y))(X_i-Y_i)+(X_i-Y_i)(\mathcal D_i V(X)-\mathcal D_iV(Y))\right)$$
  is non-negative. This is  not satisfied by $V(X)=X^4$ as can be checked by taking $(X,Y)$ to be two $2\times 2$ matrices given by $X_{11}=1,X_{12}=X_{21}=0,X_{22}=-6$, $Y_{11}=1,Y_{12}=Y_{21}=\sqrt{11}/4, Y_{22}=-5$.
  It would be more natural to assume that the Hessian of $\Tr V(X_1^N,\ldots,X_n^N)$ is bounded below for any $n$-tuple of Hermitian matrices $X_1^N,\ldots,X_n^N$. However, this Hessian lives in a tensor product space and saying that it is non-negative depends on the topology with which we equip the tensor product. We shall see that a good  topology is given by  the extended Haagerup tensor product and prove that our definition includes the case of quartic potentials.
  \item As in the one variable case, we have to consider functions not only of the variables but also of the expectation and the semi-group must also differentiate under expectation. Hence, we have to develop  free stochastic calculus applied to such functions.
  \item The solution of the Poisson equation is given in terms of the semi-group,
  and we need to show existence and smoothness of the transport maps which are the solution of the transport equation driven by this solution. This requires us to show that the semi-group acts  smoothly on appropriate spaces of non-commutative functions,
  and also understand its image under the cyclic gradient. 
   \end{itemize}
   
   We next state our result.  In Section \ref{diffopsection} we define several differential operators acting on functions of several non-commutative variables, some of them being well known in free probability, such as the difference quotient and the cyclic gradient. We extend their definition to functions which also depend on expectations, in order to define a proper semi-group on the appropriate function spaces. We then define the notion of $(c,R)$ $h$-convexity  of a function in
  Definition \ref{defhconvex}. It states that the Hessian of this function is bounded below by $cI$ in the extended Haagerup tensor product, uniformly when evaluated on non-commutative variables bounded by $R$. An important point is that this notion is stable under addition.
  We then show in Proposition \ref{ConvSDE} that the free SDE with strictly $h$-convex potential converges as time goes to infinity towards a free Gibbs law. To construct the transport map between $\tau_V$ and $\sigma_n$, we shall need an additional technical assumption. First, as we proceed by  interpolation of   the potential, we need to assume that a nice bounded free Gibbs law exists for all potentials $V_\alpha=\alpha V+(1-\alpha)\sum_{i=1}^n X_i^2,\alpha\in [0,1]$. This is the content of Assumption \ref{thehyphyphyp}. We are now in position to state one of our main theorems,
  see Corollary \ref{maincor} (with $B=D=\mathbb C$ and $W=c\sum_{i=1}^n X_i^2-V$).
  
  \begin{Theorem} Let $c,R>0$.  Assume that $V$ is a six times continuously differentiable  $(c,R)$ h-convex on the space of variables bounded by $2R$. Assume that
   $(V, c\sum_{i=1}^n X_i^2-V)$ satisfies the technical Assumption \ref{thehyphyphyp}.  Let $V_\alpha=V+\alpha(c\sum_{i=1}^n X_i^2-V)$.
   
   \begin{itemize}
   \item There exists $\alpha_0>0$ and functions $F_\alpha, \alpha\in [0,\alpha_0]$ and $G_\alpha,\alpha \in [0,\alpha_0]$, so that for all $\alpha\in [0,\alpha_0]$, $\tau_{V}$ (resp. $\tau_{V_\alpha}$) is the pushforward of $\tau_{V_\alpha}$ (resp. $\tau_V$) by $F_\alpha$ (resp. $G_\alpha$).  
   \item For any $\alpha\in [0,1]$, the von Neumann algebras associated to  the free Gibbs law with potential $V_\alpha$ are isomorphic; in particular, they are isomorphic to the von Neumann algebra generated by $n$ free semicircular variables.
     
     \end{itemize}
  \end{Theorem}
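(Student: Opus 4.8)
The plan is to carry out, rigorously and inside the non-commutative function spaces of Section~\ref{notation}, the interpolation argument sketched above. Put $W=c\sum_{i=1}^nX_i^2-V$, so that $V_\alpha=V+\alpha W=(1-\alpha)V+\alpha c\sum_{i=1}^nX_i^2$ interpolates between $V$ at $\alpha=0$ and the purely quadratic potential $c\sum_iX_i^2$ at $\alpha=1$, whose free Gibbs law is a rescaling of the free semicircular law $\sigma_n$ and generates the same von Neumann algebra as $\sigma_n$. The first observation is that $h$-convexity in the sense of Definition~\ref{defhconvex} is a lower bound on the Hessian measured in the extended Haagerup tensor norm, hence is stable under convex combinations; since $c\sum_iX_i^2$ has Hessian $2cI$, every $V_\alpha$ is $(c',R)$ $h$-convex with $c'\ge c$ on the variables bounded by $2R$, \emph{uniformly in} $\alpha\in[0,1]$, and likewise $W$ and each $V_\alpha$ remain six times continuously differentiable with uniformly controlled derivative norms. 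This uniformity is what will make every constant below independent of $\alpha$.

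First I would invoke Proposition~\ref{ConvSDE}: strict $h$-convexity of $V_\alpha$ yields, for each $\alpha$, a unique bounded free Gibbs law $\tau_{V_\alpha}$ (the solution of the Schwinger--Dyson equation~\eqref{SD} with support of radius at most $R$) and, crucially, an exponential rate of convergence of the semigroup $P^\alpha_s=e^{s(\Delta_{V_\alpha}+\delta_{V_\alpha})}$ to $\tau_{V_\alpha}$, uniformly on bounded sets of initial data and uniformly in $\alpha$. Recall that enlarging the generator from $\Delta_{V_\alpha}$ to $\Delta_{V_\alpha}+\delta_{V_\alpha}$, i.e. allowing differentiation under the trace, is precisely what makes $(P^\alpha_s)_{s\ge0}$ a genuine semigroup on the spaces of non-commutative functions of the variables \emph{and} of their moments. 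Assumption~\ref{thehyphyphyp} guarantees that $\tau_{V_\alpha}$ remains bounded and regular along the whole segment, so that all of this holds simultaneously for every $\alpha$.

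Next I would solve the Poisson-type equation~\eqref{cf}. Commuting the cyclic gradient $\mathcal{D}$ past the generator produces a zeroth-order Hessian term, so what has to be constructed is $\zeta_\alpha=\mathcal{D}g_\alpha$, a tuple of non-commutative functions of the variables and of the moments; strict $h$-convexity makes the resulting conjugated operator invertible with a spectral gap, its resolvent being a time integral of the associated semigroup that converges exponentially fast by the previous step. The substantial point here --- and in my view the \textbf{main obstacle} --- is to show that this time integral actually lands in a space of sufficiently regular non-commutative functions, i.e. that the semigroup acts \emph{smoothly} on the function spaces built on the extended Haagerup tensor product, with bounds uniform in $s$ and $\alpha$; this amounts to propagating Haagerup-norm estimates on the (up to sixth) derivatives of $W$ through the free stochastic differential equation while losing only finitely many derivatives, and is the analytic core carried out in Section~\ref{sg} and the appendix. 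Granting it, $g_\alpha$ is recovered from $\zeta_\alpha$ (Lemma~\ref{ODE}) after fixing the scalar constant so that~\eqref{tot} holds $\tau_{V_\alpha}$-almost everywhere, and $\mathcal{D}g_\alpha$ is then Lipschitz with Lipschitz gradient, with constants uniform in $\alpha$.

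Finally I would solve the transport equation~\eqref{transportflow}, $\partial_\alpha\Phi_\alpha=\mathcal{D}g_\alpha(\Phi_\alpha)$ with $\Phi_0=\mathrm{id}$, as an ordinary differential equation in the relevant Banach space of non-commutative functions: the uniform Lipschitz bounds give, by Picard iteration, a unique solution on some interval $[0,\alpha_0]$ with $\alpha_0>0$ depending only on $c$, $R$ and the (uniform) regularity of $V$, and the same bounds keep the solution among variables bounded by $2R$ --- which is why the hypothesis is imposed on that enlarged range. Differentiating in $\alpha$ the non-commutative change-of-variables identity --- the analogue of~\eqref{trans} --- and using that $g_\alpha$ solves~\eqref{cf} shows that $\Phi_\alpha$ pushes $\tau_V$ forward onto $\tau_{V_\alpha}$; this is the map $G_\alpha$, and since $\Phi_\alpha$ is close to the identity it is invertible (a Neumann series), the inverse $F_\alpha$ pushing $\tau_{V_\alpha}$ onto $\tau_V$, as in~\cite{alice-shlyakhtenko-transport}. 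Because $F_\alpha$ and $G_\alpha$ are close to the identity they send the canonical generators of one algebra to a set that still generates the other, and being trace-preserving they implement mutually inverse isomorphisms $W^*(\tau_{V_\alpha})\cong W^*(\tau_V)$ for $\alpha\le\alpha_0$. The very same construction applies verbatim between $\tau_{V_\beta}$ and $\tau_{V_{\beta'}}$ whenever $|\beta-\beta'|\le\alpha_0$ (one interpolates along a sub-segment of the original one, so the convexity bound and Assumption~\ref{thehyphyphyp} are inherited); composing the resulting isomorphisms along a partition of $[0,1]$ of mesh less than $\alpha_0$ then yields $W^*(\tau_{V_\alpha})\cong W^*(\tau_V)$ for every $\alpha\in[0,1]$, and taking $\alpha=1$ identifies this common algebra with the von Neumann algebra generated by $n$ free semicircular variables.
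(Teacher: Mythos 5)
Your overall architecture is the one the paper itself follows (interpolation $V_\alpha=V+\alpha W$ with $W=c\sum_iX_i^2-V$, Poisson equation solved by the time-integral of the semigroup as in Lemma~\ref{ODE}, then the flow $\frac{d}{d\alpha}F_\alpha=\mathscr{D}g_\alpha(F_\alpha)$), but the step where you actually identify the law of the flow's endpoint is a genuine gap as written. You propose to ``differentiate in $\alpha$ the non-commutative change-of-variables identity, the analogue of \eqref{trans}''; in the multi-variable non-commutative setting no such identity is available --- there is no density and no Jacobian/log-determinant formula at this level of generality, and the whole point of the framework is to replace \eqref{trans} by the Schwinger--Dyson characterization \eqref{SD}. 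The paper's actual mechanism is Lemma~\ref{Lemma34}: one sets $\Upsilon_\alpha=\mathscr{J}^*_{F_\alpha}(1\otimes 1)-\mathscr{D}V_\alpha(F_\alpha(X))$, computes $\frac{d}{d\alpha}\mathscr{J}^*_{F_\alpha}(1\otimes 1)$ via Lemma~\ref{adjoint} (which itself requires the traciality Assumption~\ref{thehyp3}, verified in Corollary~\ref{maincor} by first transporting from the quadratic potential), combines this with the Poisson equation \eqref{Laplaceg} and the cyclic-gradient commutation identities to show that $\Upsilon_\alpha$ solves a linear ODE, concludes $\Upsilon_\alpha\equiv 0$ by Gronwall, and only then identifies the law of $F_\alpha(X)$ as $\tau_{V_\alpha}$ by invoking uniqueness of the free Gibbs law for $(c,2R)$ h-convex potentials (Proposition~\ref{ConvSDE}(b)). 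Nothing in your sketch substitutes for this conjugate-variable computation, and the step you invoke instead does not exist in this setting.

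Two further points. Your inversion argument (``$\Phi_\alpha$ is close to the identity, hence invertible by a Neumann series, and generators go to generators'') is not sufficient for the von Neumann algebra statement: to get $W^*(F_\alpha(X),B)=W^*(X,B)$ you must exhibit the inverse as a (limit of) non-commutative functions of $F_\alpha(X)$, with control of domains and compositions in the $C^{k,l}_{tr}$ spaces, and no inverse function theorem is established there. The paper avoids this by solving the reversed flow with potential $\hat V_\alpha=V_{\alpha_1}-\alpha W$ and showing $\hat F_\alpha(F_{\alpha_1}(X))=F_{\alpha_1-\alpha}(X)$ by uniqueness of the ODE, which yields the two inclusions directly. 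On the other hand, your composition along a partition of $[0,1]$ of mesh at most $\alpha_0$ is a legitimate, in fact slightly cleaner, alternative to the paper's geometric iteration followed by the $(V,(1+\epsilon)W)$ trick, since the $\alpha_0$ of Lemma~\ref{ODE} is uniform along the segment and Assumption~\ref{thehyphyphyp} supplies admissible starting data at every intermediate $\beta$; but each restart still requires checking that the new initial variables lie in $A^n_{R/4,conj}$ and satisfy Assumption~\ref{thehyp3}, which is precisely the bookkeeping carried out in the proof of Corollary~\ref{maincor}.
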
   
  In the appendix, see Corollary \ref{corfullquartic}, we show that the following perturbation of quartic  potentials $\mathcal V$ satisfy all our hypotheses:
  
  $$\mathcal{V}(X)=V(X)+\varepsilon P(\frac{\sqrt{-1}+X_1}{\sqrt{-1}-X_1}, \cdots,\frac{\sqrt{-1}+X_n}{\sqrt{-1}-X_n})\,,$$
with
$$V(X)=\sum_{j=1}^k\mu_j\upsilon_j\left(\sum_{i=1}^n\lambda_{i,j}X_i\right)
+\sum_{i,j=1}^nA_{i,j}X_iX_j\,.$$ 
  Here
   $A=(A_{i,j})\in M_n(\R) $ is a positive matrix with  $A\geq cI_n$,  $(\lambda_{i,j})\in M_{n,k}(\R) , \mu\in [0,\infty[^k$, $\upsilon_j(x)=\nu_{j,2}\frac{X_1^2}{2}+\nu_{j,3} \frac{X_1^3}{3}+\nu_{j,4}\frac{X_1^4}{4}\in \C\langle X_1,...,X_n\rangle$ for $\nu_{j,4}>0,\nu_{j,3}^2\leq 8\nu_{j,2}\nu_{j,4}/3.$ Furthermore, $P$ is  a self-adjoint polynomial and $\varepsilon$ is small enough.

This is the first potential which is not a perturbation of a quadratic case for which isomorphism between the von Neumann algebras associated with its free Gibbs law and that of free semi-circle variables  is proven.

In the rest of the article we will consider a more general framework where the set of polynomials in $X_1,\ldots, X_n$ is replaced by the set of polynomials in $X_1,\ldots,X_n$ and elements in $B$, a von Neumann algebra. For $D$ a von Neumann  subalgebra of $B$, we shall consider variables $X_1,\ldots,X_n$ which commute with $D$. 
Our set of test functions will be converging series in such monomials, or closures of this space arising from certain non-commutative versions of $C^p$-norms. We shall consider the extended Haagerup tensor product of such spaces, and its cyclic variant which allows the action of cyclic permutations on these functions, space on which the cyclic gradient acts. Indeed, this gradient appears in the  right hand side of the Dyson-Schwinger equation \eqref{SD} and   the non-commutative version of the transport equation \eqref{transportflow}, and is therefore key to our analysis. Our main result in this general situation  is stated in Corollary \ref{maincor}.

    Our motivation for this generalization is two-fold. The first is to consider the crossed product $F_n\ltimes D$ of an action of the free group on $D$, as well as its $q$-deformation \cite{JLU14}. At this point we did not verify that these deformations correspond to potentials that satisfy our assumptions (for $q$ small enough). The motivation to also consider the algebra $B$ comes from the analysis of the free product $(\Gamma\ltimes D)*_{D} (W^*(S_s,s\le t)\otimes D)$: then $B=\Gamma\ltimes D$. Being able to construct transport maps in this setting would allow to construct solutions of free SDE's with initial conditions in $B$ as the image by transport maps of some process $S_{t_1},\ldots,S_{t_n}$.  For instance, one would want to obtain solutions of free SDE's similar to those considered in \cite{shlyakht:lowerEstimates} in the context of crossed product and for non-algebraic cocycles. Building such solutions in free products with amalgamation could enable the use of techniques similar to those in \cite{DabI,Ioana} and would lead to the study of algebras $B$ by  a free transport approach, for instance to answer questions such as uniqueness of Cartan decomposition up to unitary conjuguacy for non-trivial actions when $\Gamma$ is a group with positive first $\ell^2$ Betti number. Such interesting applications would thus require  to consider non smooth potentials $V$, something which is still far from our reach.   However, we feel that these potential applications outweigh the small additional difficulties involved in considering the more complex setting with non-trivial algebras $B$ and $D$.  Thus  our article lays the groundwork for future developments in this direction and our main example of relative algebra $B$ is exactly the kind of crossed-product that could be interesting for the above-mentioned potential applications.
    
\subsection*{Acknowledgements} The authors would like to acknowledge the hospitality of the Focus Program on Noncommutative Distributions in Free Probability Theory held at the Fields institute in July 2013 where an early part of this work has been completed.  We are also grateful to the  Oberwolfach Workshop on Free Probability Theory held in June 2015 during which we were able to make further progress.
  
 \section{Definitions and framework}
\label{notation}

\subsection{Spaces of analytic functions}
We denote by $M( X_1,\ldots,X_n)$  the set of monomials in $X_1,\ldots,X_n$.
Throughout this paper, $B$ will denote a finite von Neumann algebra, and $D$ a von Neumann subalgebra.

The extended Haagerup tensor product relative to $D$ is denoted by $\oeh{D}$.  We denote by $B^{\oehc{D} n}$ a version of the $n$-th extended Haagerup tensor power of $B$ that carries the action of the cyclic group of order $n$.

For $R>0$, we define formally 
 $$B\langle X_1,...,X_n:D,R\rangle:= B\oplus^{1}_D\ell^1_D\left(R^{|m|}B^{\oeh{D}}(|m|+1);m\in M(X_1,...,X_n), |m|\geq 1\right).$$
Here $R^{|m|}E$ means the space $E$ with standard  norm multiplied by $R^{|m|}$. This space can be regarded as the space of  power series in $X_1,\dots,X_n$ with coefficients in $B$ and radius of convergence at least $R$ by identifying a monomial $b_0 X_{i_1} b_1 \cdots  X_{i_p} b_p$ with the copy of the tensor $b_0\otimes \cdots \otimes b_p$ indexed by the monomial $m=X_{i_1}\cdots X_{i_p}$. The definition of the  Haagerup tensor product $\oeh{D}$ are discussed in section 1.2 and Lemma 5 of \cite{dabsetup} {(see also  \cite[chaper 5]{PisierBook}, \cite{M97,M05} for the general module case).}
 The above definition requires a direct sum of $D$-modules in order that $B\langle X_1,...,X_n:D,R\rangle\oeh{D} B\langle X_1,...,X_n:D,R\rangle$ is well defined. Modulo this (important) property, we could have more simply considered the (ordinary operator space) $\ell^1=\ell^1_{\C}$ direct sum (cf.  \cite[section 2.6]{PisierBook}): we denote $B\langle X_1,...,X_n:D,R,\mathbb C\rangle$ the corresponding smaller space. We will only use this sum in the cyclic case.

 Its cyclic variant  $B_c\langle X_1,...,X_n:D,R,\C\rangle$ is given
 by: 
 $$(D'\cap B)\oplus^1 \ell^1\left(R^{|m|}B^{\oehc {D}(|m|+1)};m\in M(X_1,...,X_n), |m|\geq 1\right)\,,$$
where $D'$ is the commutant of $D$ and $\oehc {D}$ stands for the cyclic version of Haagerup tensor product defined in subsection \ref{CyclicH}.
 This space can be regarded as the space of power series in $X_1,\dots,X_n$ with coefficients in $B$ and radius of convergence at least $R$, and such that variables $X_j$ commute with $D$.  As before, a monomial $b_0 X_{i_1} b_1 \cdots  X_{i_p} b_p$ is identified with the copy of the tensor $b_0\otimes \cdots \otimes b_p$ indexed by the monomial $m=X_{i_1}\cdots X_{i_p}$.  The use of the Haagerup tensor product $\oehc{D}$ ensures the possibility of cyclic permutation of various terms in the power series. 
 $\mathfrak{C}_{p+1}$ denotes the group of cyclic permutations acting on the cyclic tensor product, with generator $\rho(b_0\otimes\cdots\otimes b_p)=b_p\otimes b_0\cdots\otimes b_{p-1}$.
 We will define in subsection \ref{CyclicA} the cyclic gradient: it is roughly speaking a linear map on this space. We also define the analogue $B_c\langle X_1,...,X_n:D,R,\mathbb C\rangle$ of  $B\langle X_1,...,X_n:D,R,\mathbb C\rangle$.

 $B_c\langle X_1,...,X_n:D,R,\C\rangle$ and  $B\langle X_1,...,X_n:D,R\rangle$ are Banach algebras, see \cite[Theorem 39]{dabsetup} and subsection \ref{CyclicA}.

We let  for $n,m>0$, $i\in\{1,\ldots, n-1\}$, $\#_i: A^{\oeh D n}\times (D'\cap A^{\oeh D m}) \to A^{\oeh D n+m-2}$ the canonical extension of the map given on elementary tensors by 
$$(a_1\otimes\cdots a_n)\#_i( b_1\otimes\cdots b_m)=a_1\otimes\cdots \otimes a_{i-1}\otimes a_ib_1\otimes b_2\cdots \otimes b_ma_{i+1}\otimes\cdots\otimes a_n\,.$$
Having those operadic compositions, which will be crucial for non-commutative calculus, is another reason for using variants of Haagerup tensor products. The reader should note that by definition $A^{\oehc D m}\subset (D'\cap A^{\oeh D m})$. We will also use the restriction to cyclic variants as defined in subsection \ref{CyclicH}: 
$\#_i: A^{\oehc D n}\times A^{\oehc D m} \to A^{\oehc D n+m-2}$
We will denote in short $\#$ for $\#_1$.
{We may also write for instance $.\#(.,.): A^{\oehc D 3}\times A^{\oehc D n}\times A^{\oehc D m}\to A^{\oehc D m+n-1}$ for $U\#(V,W)=(U\#_1V)\#W=(U\#_2W)\#V$ and similarly  $U\#(V_1,\cdots,V_k)$.}

We endow $A^{\oehc D 2}$ with the adjunction $*$ so that $(a\otimes b)^*=a^*\otimes b^*$. Note that $(a\#b)^*=b^*\#a^*$, so that $(A^{\oehc D 2},*)$ is  a $*$-algebra.

\subsection{Spaces of analytic functions with expectations}
We will need a generalization of analytic functions  enabling functions of the conditional expectation $E_D$ on $D$.  For example, we would like to consider functions of the type
$$b_0X_{i_1}b_1\cdots  X_{i_p} b_p  E_D[ b_{p+1}X_{i_{p+2}}\cdots b_{p+k}E_D[b_{p+k+1}X_{i_{p+k+2}}\cdots b_{p+k+m}]]\qquad\quad$$
$$\qquad \times  
E_D[b_{p+k+m}X_{i_{p+k+m+1}}\cdots b_{p+k+m+\ell}] b_{p+k+m+\ell+1} X_{i_{p+k+m+\ell+2}}\cdots b_{p+k+m+\ell+r}$$
As the order in which conditional expectations are applied matters, we will label such a monomial by inserting an additional
letter $Y$ for each closing and opening parenthesis of the map. The matching  between the
closing and opening parenthesis then defines a non-crossing pair partitions of the set of positions of the letter $Y$. 
Conversely, given a non-commuting monomial in  letters $X_1,\ldots,X_n$ and $Y$ having even degree $2k$ in $Y$, and a non-crossing pair partition of the positions of the letter $Y$, we can define a unique expression of the type above. Thus, formally we set
$$B_k\{ X_1,...,X_n:E_D,R\}:=\ell^1_{D}\big(R^{|m|_X}B^{\oeh{D}(|m|+1)};\qquad$$
$$\qquad m\in M_{2k}(X_1,...,X_n,Y),\pi\in NC_2(2k), |m|\geq 1\big)\,,\quad k\ge 1$$
where $M_{2k}(X_1,...,X_n,Y)$ is the set of non-commuting monomial in  letters $X_1,\ldots,X_n$ and $Y$ having even degree $2k$ in $Y$, $|m|_X$ denotes the degree in the letter $X_1,\ldots,X_n$ of $m$ and $|m|=|m|_X+2k$. { We call $B_k\{ X_1,...,X_n:E_D,R,\C\}$ the corresponding space with (non-module) operator space $\ell^1$ sums (in the sense of   \cite[section 2.6]{PisierBook}).}
Similarly, we define 
$$B_{c,k}\{ X_1,...,X_n:E_D,R{,\C}\}:=\ell^1\big(R^{|m|_X}B^{\oehc{D}(|m|+1)};\qquad$$
$$\qquad m\in M_{2k}(X_1,...,X_n,Y),\pi\in NC_2(2k), |m|\geq 1\big)\,.$$
We set     $B_{c,0}\{ X_1,...,X_n:E_D,R{,\C}\}=B_c\langle X_1,...,X_n: D,R{,\C}\rangle$
   and $B_0\{ X_1,...,X_n:E_D,R\}=B\langle X_1,...,X_n: D,R\rangle$.
   
Finally we define:
\begin{gather*}
B_{c}\{ X_1,...,X_n:E_D,R{,\C}\}:=\ell^1\left(B_{c,k}\{ X_1,...,X_n:E_D,R{,\C}\}, k\in \N\right),\\
 B\{ X_1,...,X_n:E_D,R,\C\}:=\ell^1\left(B_{k}\{ X_1,...,X_n:E_D,R,\C\}, k\in \N\right),\\
   B\{ X_1,...,X_n:E_D,R\}:=\ell^1_D\left(B_{k}\{ X_1,...,X_n:E_D,R\}, k\in \N\right).\end{gather*}
   Above, $E_D$ should be considered as a variable taken in the space of $D$-bilinear completely bounded maps.

For $P\in B_{c}\{ X_1,...,X_n:E_D,R{,\C}\}$ and $E: B\langle X_1,...,X_n: D,R\rangle\to D$ unital $D$-bilinear completely-bounded map, we can define the map $P\mapsto P(E)$ taking $B_{c}\{ X_1,...,X_n:E_D,R{,\C}\}$ to $B\langle X_1,...,X_n: D,R\rangle$ by recursively replacing each sub-monomial $E_D(Q)$, $Q\in B\langle X_1,...,X_n: D,R\rangle$ inside $P$ by $E(Q)$. {A formal definition is explained in subsection \ref{CyclicE} where all the technical Lemmas we will need about those analytic functions are proved.}

\subsection{Spaces of differentiable  functions}
Let $A$ be a finite von Neumann algebra, $B\subset A$ {a von Neumann subalgebra}.
Set $$A^n_R:=\{  (X_1,\ldots,X_n)\in A^n: X_i=X_i^*\in A;\,  \|X_i\|< R, \quad [X_i,D]=0,\, 1\le i\le n \}.$$
Let $U\subset A^n_R$ be a closed subset of $A^n_R$. For convenience, we will first embed the algebra
$B_c\langle X_1,...,X_n:D,R{,\C}\rangle$ into a much larger algebra  $\cap_{S>R}C^0_b(U,B_c\langle X_1,...,X_n: D,S{,\C}\rangle)$, where
 $C^0_b(U,B)$ stands for the space of bounded continuous functions on $U$ with values in a Banach space $B$.  On this space we define the norm
 $$\Vert{}P\Vert{}_{A,U}=\sup\{\Vert{}P(X_1,...,X_n)\Vert{}_A\, ;\, (X_1,...,X_n)\in U\, \},$$
where by $P(X_1,\ldots,X_n)$ we mean the value of $P$ evaluated at $(X_1,\ldots,X_n)\in U$, itself evaluated as a power series 
in $(X_1,\ldots,X_n)$ {(see Proposition \ref{analytic} for some details on those evaluations)}. We call the corresponding completion $C^*_u(A,U:B,D)$ and $C^*_u(A,R:B,D)$ when $U=A_R^n$.

For $P\in \cap_{S>R}C^1_b(A_R^n,B_c\langle X_1,...,X_n: D,S{,\C}\rangle)\subset \cap_{S>R}C^0_b(A_R^n,B_c\langle X_1,...,X_n: D,S{,\C}\rangle)$ the set of continuously differentiable functions on $A_R^n$ with bounded first derivative, one can consider the differential $$dP\in \cap_{S>R}C^0_b(A_R^n,L(D'\cap A_{sa}^n,B_c\langle X_1,...,X_n: D,S{,\C}\rangle))\,,$$ 
where $L(G,G')$ is the set of bounded linear maps from $G$ into $G'$. Here, $(D'\cap(A_{sa}^n))$ should be thought of as a tangent space of $A_R^n$.
As usual, one writes for $X\in A^n_R$ and $H\in D'\cap A_{sa}^n$,  $$D_HP(X)=dP(X).H$$ and we see that $(D_HP:X\mapsto dP(X).H)\in \cap_{S>R}C^0_b(A_R^n,B_c\langle X_1,...,X_n: D,S{,\C}\rangle))$. Likewise for $$P\in \cap_{S>R}C^k_b(A_R^n,B_c\langle X_1,...,X_n: D,S,\C\rangle)\subset \cap_{S>R}C^0_b(A_R^n,B_c\langle X_1,...,X_n: D,S{,\C}\rangle)$$ an element of the set of $k$ times coefficientwise continuously differentiable functions on $A_R^n$ with bounded first $k$-th order differentials, one can consider the $k$-th order differential $$d^kP\in C^0_b(A_R^n,B((D'\cap(A_{sa}^n))^{\hat{\o} k},B_c\langle X_1,...,X_n: D,S,\C\rangle)).$$  Here $\hat{\o}$ denotes the projective tensor product.  

In this case $D_KD_H^{k-1}P(X)=d^kP(X).(K,H,...,H)$ and $$D_KD_H^{k-1}P:X\mapsto D_KD_H^{k-1}P(X)\in \cap_{S>R}C^0_b(A_R^n, B_c\langle X_1,...,X_n: D,S,\C\rangle).$$

We show in Proposition \ref{analytic2} that  on $B\langle X_1,...,X_n: D,R\rangle$, the $i$-th free difference quotient $$\partial_i:B\langle X_1,...,X_n: D,R\rangle\to B\langle X_1,...,X_n: D,R\rangle\oeh{ D} B\langle X_1,...,X_n: D,R\rangle $$ 
 is defined and is a canonical derivation satisfying  $\partial_i(X_j)=\delta_{i=j}1\o 1, \partial_i(b)=0.$ This can be extended to
 $ B\{ X_1,...,X_n: E_D,R\}$ by putting $\partial_i\circ E_D=0$.

We denote in short 
$$\partial^k_{(i_1,...,i_k)}:B\langle X_1,...,X_n: D,R\rangle\to B\langle X_1,...,X_n: D,R\rangle^{\oeh{D} (k+1)}$$ the map $$\partial^k_{(i_1,...,i_k)}=(\partial_{i_1}\o1^{\o k})\circ (\partial_{i_2}\o1^{\o (k-1)})\circ...\circ\partial_{i_k}.$$

Recall that $D_H$ stands for the directional derivative of a function in $C^1_u(A,U:B,E_D)$, viewed as a function from $U$ to the space of power series $B_c\langle X_1,...,X_n:D,R{,\C}\rangle$. However, this won't be the most convenient differential, since the non-commutative power series part will always be evaluated at the same $X\in U$ and we will rather need the full differential which uses also the free difference quotient on the powers series part. 

On the space of continuous differentiable functions $C^1(U,A)$  from $U$ to $A$, denote by $D_H^X$ the  derivative in the direction $H\in A^n$.  
Consider the map $\eta: C_u^{1,1}(A,U:B,E_D) \to C^1(U,A)$ given for $P\in \cap_{S>R}C^1_b(A_R^n,B_c\langle X_1,...,X_n: D,S{,\C}\rangle)$ by $\eta(P) = (P (X) )(X)$.  Then one has
\begin{equation}\label{eqdif}
D_H^X ( \eta ( P ) ) = \eta ( D_H (P) ) + \eta ( \sum_{j=1}^k (\partial_j (P))\#H_j )).
\end{equation}
We let $d_X$ be the differential associated with $D^X_H$.
We will also write:

\begin{eqnarray*}
d_X^pP(X).H&=&(D^X_H)^p{\eta(P)}=d^p[X\mapsto P(X)(X)](X).(H,...,H)\\
&=&\sum_{j=0}^p\sum_{i\in[1,n]^j}(d^{p-j}[\partial_i^j P(X)].(H,...,H))\#(H_{i_1},...,H_{i_j})
.\end{eqnarray*}

For $P\in \cap_{S>R}C^0_b(A_R^n,A\langle X_1,...,X_n: D,S,\C\rangle),X\in A_R^n$ , we set
$$
 \Vert{}P\Vert{}_{k,X}=\left(\Vert{}P(X)\Vert{}_{A}+\sum_{p=1}^k \sum_{i\in[1,n]^p}
\Vert{}\partial^p_{i}(P)(X)\Vert{}_{A^{\oeh{D}(p+1)}} \right).$$

We will consider the (separation) completion of $$ \bigcap_{S>R}C^{l+1}_b(A_R^n,B\langle X_1,...,X_n: D,S,\C\rangle)$$ with respect to the seminorms for $(k,l)\in \N^2$ given by
\begin{align*}&\Vert{}P\Vert{}_{k,l,U}=\sup_{X\in U} \Vert{}P\Vert{}_{k,X}+\sum_{p=1}^l\left\{\sup_{\textrm{\tiny$\begin{array}{c}X\in U\\H\in A_1^n\end{array}$}}\big(\Vert{}(D_H^X)^{p}\eta(P)(X)\Vert{}_{A}\qquad\right.\\
&\left.\qquad\qquad\qquad+
\sum_{\textrm{\tiny$\begin{array}{c}
i\in[1,n]^m\\ m\leq k\end{array}$}}\Vert{}(D_H^X)^{p}\partial^m_{i}(\eta(P))(X)\Vert{}_{A^{\oeh{ D}(m+1)}}
\big)\right\}\end{align*}

This seminorm controls $k$ free difference quotients and $l$ full differentials.


We will denote these (separation) completions by $C^{k,l}_{u}(A,U:B,D),$ and $C^{k,l}_{u}(A,R:B,D)$ when $U=A_R^n$. {Note that the above map $D_H^p$, for $p\leq l$ extends continuously to a map $C^{k,l}_{u}(A,R:B,D)\to C_u^{k-p,l-p}(A,R:B,D).$}
 
When in the definition of $\|.\|_{k,X}$ we replace $\|.\|_{A^{\oeh{D}(l+1)}}$ by $\|.\|_{A^{\oehc{D}(l+1)}}$, we distinguish  the corresponding seminorms by 
a subscript $c$, yielding the norm $\|.\|_{k,l,U,c}$ and the spaces $C^{k,l}_c(A,U:B,D)$, $C^k_c(A,U:B,D)$.

{Note that this require a supplementary assumption that $U\subset A_{R,UltraApp}^n$ where $A_{R,UltraApp}^n$ is defined before Proposition \ref{TensorCyclicEvaluation}: this assumption is necessary to define evaluation into cyclic tensor products. This is crucial to see that the image of cyclic analytic functions by the free difference quotient belongs to the cyclic Haagerup tensor product, see also Proposition \ref{analytic2}.
More precisely we define  $A_{R,UltraApp}^n$ the set of $X_1,...,X_n\in A, X_i=X_i^*, [X_i,D]=0, ||X_i||\leq R$ and such that $B,X_1,...,X_n$ is the limit in $E_D$-law (for the $*$-strong convergence of $D$) of variables in $B_c\langle X_1,...X_m:D,2,\C\rangle(S_1,...,S_m)$ with $S_i$  free semicircular variables over $D.$   We will thus always assume $U\subset A_{R,UltraApp}^n$ when we deal with spaces with index $c$. Note that consistently, we will write $C^k_c(A,R:B,D)$ when $U=A_{R,UltraApp}^n$.}

 For convenience later in writing estimates valid when there is at least one derivative, we also introduce a seminorm  
\begin{align*}&\Vert{}P\Vert{}_{k,l,U,\geq 1}=\sup_{X\in U}\left(\sum_{p=1}^k \sum_{i\in[1,n]^p}\Vert{}\partial^p_{i}(P)(X)\Vert{}_{A^{\oeh{D}(p+1)}} \right)+\sum_{p=1}^l\sup_{\textrm{\tiny$\begin{array}{c}X\in U\\H\in A_1^n\end{array}$}}\big(\Vert{}(D_H^X)^{p}\eta(P)(X)\Vert{}_{A}\qquad\\
&\qquad\qquad\qquad+
\sum_{\textrm{\tiny$\begin{array}{c}
i\in[1,n]^m\\ m\leq k\end{array}$}}\Vert{}(D_H^X)^{p}\partial^m_{i}(\eta(P))(X)\Vert{}_{A^{\oeh{ D}(m+1)}}
\big).\end{align*}

We next define differentiable functions 
depending on conditional expectations.

Using the conditional expectation $E_D: A\to D$, we can define a completely bounded map $E_{D,X} :  B\langle X_1,...,X_n: D,S\rangle
\to D$ by sending $P$ to $E_D ( P(X_1,\dots,X_n))$, for any $S>R$.

Consider the map $\omega$ taking $P\in B_c\{ X_1,...,X_n:E_D,R^+,\C\}:=\cap_{S>R}B_c\{ X_1,...,X_n:E_D,S,\C\}$ to the function $$\omega(P) :  X \mapsto P(E_{D,X})  \in 
B_c\langle X_1,...,X_n:E_D,R^+,\C\rangle:=\cap_{S>R}B_c\langle X_1,...,X_n:E_D,S,\C\rangle.$$  We denote by $C^0_{b,tr}(U, B\langle X_1,...,X_n: D,R\rangle)$ the image of this map.

The spaces $C_{tr}^{k,l}(A,U:B,E_D)$ (resp.  $C^*_{tr}(A,U:B,E_D)$, $C^*_{tr,{c}}(A,U:B,E_D)$ and $C_{tr,c}^{k,l}(A,U:B,E_D)$) are defined as the closures of the space $C^0_{b,tr}(U, B\langle X_1,...,X_n: D,S\rangle)$ inside $C^{k,l}(A,U:B,D)$ (respectively, $C^*_u(A,U:B,D)$, $C^*_c(A,U:B,D)$, $C^{k,l}_c(A,U:B,D)$
). When $U=A^n_R$, we replace in the notations $U$ by $R$.

We denote by  
$C^{k,l}(A,U:B,D)$ the closed subspace of $C^{k,l}_{tr}(A,U:B,D)$ generated by the  image under $\omega$ of 
$B\langle X_1,...,X_n:D,S\rangle, S>R$. We denote in short  $C^{k}(A,R:B,D)$ for $C^{k,k}(A,U:B,D)$.

Let $H\in A^n$.  Recall that $D_H$ stands for the directional derivative of a function in $C^1_u(A,U:B,E_D)$, viewed as a function from $U$ to the space of power series $B_c\langle X_1,...,X_n:D,R{,\C}\rangle$.  Given $P\in B_c\{ X_1,...,X_n: E_D,R{,\C}\}$ a monomial involving $E_D$, we note that  $D_H (\omega(P)$ amounts to replacing each sub-monomial of the form $E_D(Q)$ with $Q\in B_c\langle X_1,...,X_n:D,R{,\C}\rangle$ by $E_D(\sum_j \partial_j Q \# H_j)$.  For example if $H=(H_1,H_2)$, then 
\begin{eqnarray*}&&D_H(\omega ( X_1 X_2 E_D (X_1^2 (E_D (X_1) ) E_D(X_2))) (Y_1,Y_2) \qquad \qquad\qquad\\ 
\quad&&=X_1 X_2 E_D (H_1 Y_1 (E_D (Y_1) ) E_D(Y_2))) 
  +X_1 X_2 E_D (Y_1H_1 (E_D (Y_1) ) E_D(Y_2)))\\
 &&  + X_1 X_2 E_D (Y_1^2 (E_D (H_1) ) E_D(Y_2))) 
  + X_1 X_2 E_D (Y_1^2 (E_D (Y_1) ) E_D(H_2))).
  \end{eqnarray*}  In other words, $D_H$ corresponds to ``differentiation under $E_D$''.
\subsection{Differential operators}\label{diffopsection}
For $p,P\in B_c\{ X_1,...,X_n: E_D,S,\C\}$,
we define recursively the cyclic gradient   $(\mathscr{D}_{i,p}(P), 1\le i\le n)$ by
$\mathscr{D}_{i,p}(X_j)=1_{j=i} p$,
\begin{equation}\label{CyclicDerivationNotation}\mathscr{D}_{i,p}(PQ)=\mathscr{D}_{i,Qp}(P)+\mathscr{D}_{i,pP}(Q),\qquad \mathscr{D}_{i,p}(E_D(P))=\mathscr{D}_{i,E_D(p)}(P).\end{equation}
For instance, one computes $\mathscr{D}_{1,p}(X_2 E_D(X_1bX_2) X_1)=pX_2 E_D(X_1bX_2)+bX_2 E_D(X_1pX_2)$. Moreover, observe that  for polynomials $P$  in $\{X_1,\ldots,X_n\}$,
\begin{equation}\label{flip}\rho(\partial_i P)\#Q=\mathscr{D}_{i,Q}(P)\,.\end{equation}
We denote in short $\mathscr{D}_i=\mathscr{D}_{i,1}$. Its restriction to polynomials in $\{X_1,\ldots,X_n\}$ corresponds to the usual cyclic derivative.
We consider a flat Laplacian
defined for $P\in  B\{ X_1,...,X_n: E_D,R\}$ by

$$\Delta(P)=2\sum_{i}m\circ (1\o E_D\o 1)\partial_i\o 1\partial_i(P)\,.$$
We define $\delta_\Delta$ a derivation on $B\{ X_1,...,X_n: E_D,R\}$ by requiring that it vanishes on $B\langle X_1,...,X_n: D,R\rangle$
and satisfies
$$\delta_\Delta(P)=0\,,\quad \delta_\Delta(E_D(Q))=E_D((\Delta +\delta_\Delta)(Q))\,.$$

Likewise, for any $V\in B\langle X_1,...,X_n:D,R\rangle,$ the map \begin{equation}\label{deltaV}
\Delta_V=\Delta-\sum_{i}\partial_i(.)\#\mathscr{D}_iV\end{equation} produces a map $\delta_V$ such that $\delta_V(P)=0, $ for $P\in B\langle X_1,...,X_n:D,R\rangle$. Moreover,  $\delta_V$ is a derivation and for $Q$ monomial in $B\{ X_1,...,X_n:E_D,R\}$, 
$$\delta_V(E_D(Q))=E_D((\Delta_V +\delta_V)(Q)).$$
$\delta_V$ extends to $B\{ X_1,...,X_n:E_D,R\}$ {(see Proposition \ref{DeltaAnalytic}).} 
Moreover, we have for any $g\in B_c\{ X_1,...,X_n:E_D,R,\C\}$, 
\begin{equation}\label{derivcyclic}
\mathscr{D}_{i}(\Delta_V+\delta_V)(g)=(\Delta_V+\delta_V)\mathscr{D}_{i}(g)-\sum_{j=1}^n\mathscr{D}_{i,\mathscr{D}_{j}g}\mathscr{D}_{j}V.\end{equation}
We extend $\Delta_V$ and $\delta_V$ to $V\in C^3_c(A,2R:B,E_D)$  by adding the variables $Z_i$ to be evaluated at $
\mathscr{D}_iV(X)$, letting $V_0(Z)=\frac{1}{2}\sum Z_i^2$  and setting for $P\in B\{ X_1,...,X_n:E_D,R\}$
\begin{equation}
\label{defDeltaV}\Delta_V(P)(E_{D,X})(X):=\left(\Delta_{V_0(Z)}(P)\right)\left(E_{D,X,\mathscr{D} V(X)}\right)(X,\mathscr{D} V(X))\,.\end{equation}
$\Delta_V(P)$ belongs to $C^*_{tr}(A,U)$.  The extension of $\delta_V$ is similar. 
We   define, $C^{k,l}_{tr,V}(A,U:B,E_D), k\in \{*\}\cup\N^*,k\geq l$ as the separation-completion of $B_{ c}\{ X_1,...,X_n;E_D,R^+\}:= \cap_{S>R}B_{ c}\{ X_1,...,X_n;E_D,R\}$ for the semi-norm (with $\omega(P)=(X\mapsto P(E_{X,D}))$):

\begin{align*}||P||_{C^{k,l}_{tr,V}(A,U:B,E_D)}&=||\omega(P)||_{k,l,U}+
1_{k\geq 2}||(\Delta_V+\delta_V)(P)||_{C^*_{tr}(A,U)}\\&+\sum_{p=0}^{l-1}\sum_{i=1}^n\sup_{\textrm{\tiny$\begin{array}{c}Q\in (C^{k,p}_{tr}(A,U^{m-1}:B,E_D))_1\\ m\geq 2\end{array}$}} ||\mathscr{D}_{i,Q(X')}(P)||_{k,p,U^m},\end{align*}
{where $(X)_1$ denotes the unit ball around $0$ of the normed space $X$.}
We  also define a first order part seminorm $||P||_{C^{k,l}_{tr,V}(A,U:B,E_D),\geq 1}$ by replacing the first term in the sum with $\Vert{}\omega(P)\Vert{}_{k,l,U\geq 1}$. {We also define the space $C^{k,l}_{tr,V,c}(A,U:B,E_D)$ in the same way as before but considering everywhere {\em cyclic} extended-Haagerup tensor products.}

To sum up we have introduced the following spaces
$$\begin{array}{ccccccc}
C^{k+l}&&C^{k,l}_{tr,V}&\to&C^{k,l}_{tr}&\subset&C^{k,l}_u\cr
\cup&&\cup&&\cup&&\cup\cr
C^{k+l}_c&\subset^{
}&C^{k,l}_{tr,V,c}&\to&C^{k,l}_{tr,c}&\subset&C^{k,l}_{u,c}\cr
\uparrow&&\uparrow&&&&\cr
B_c\langle \cdots\rangle&\subset&B_c\{\cdots\}&&&&\cr
\end{array}$$
where $\subset$ means the existence of a canonical injective mapping, whereas $\to$ means the existence of a canonical map (with conditions written in index). We shall not discuss these mappings as we will not use them and leave the reader check them.
\subsection{Free brownian motion}\label{freebrownian}
$(S_t^i, t\ge 0, 1\le i\le n)$ will denote $n$ free Brownian motions. Let $U\subset A_{R}^n$. We denote by $*_D$ the free product  with amalgamation over $D$: see \cite{dnv} for a definition as well as for a definition of freeness with amalgamation over $D$. Let $\mathscr{A}=A*_D(D\otimes W^*(S_t^{(i)},i=1,...,n,t\geq 0))$ and assume that $A$ is big enough so that $\mathscr{A}$ is isomorphic to $A$. Set  $U_A=\{X\in \mathscr{A}_{R}^n , X\in U \}\subset A_{R}^n$ and $\mathscr{B}=B*_D(D\otimes W^*(S_t^{(i)},i=1,...,n,t\geq 0))$.

Define 
$$C^{k{},l}_{tr,V}(A,U:\mathscr{B},E_D:\{S_t^{(i)},i=1,...,n,t\geq 0\})\subset C^{k,l}_{tr,V}(\mathscr{A},U_A:\mathscr{B},E_D)$$ as the closure of $$\bigcup_{0\leq t_1\leq ...\leq t_m}\eta_S\big(B_c\{X_1,...,X_n,S_{t_1},...,S_{t_m}-S_{t_{m-1}}:\qquad\qquad$$
$$\qquad\qquad
\qquad E_D,\max[R,\max_{i=2,n} 2(t_i-t_{i-1})]\C\}\big)$$ 
where $\eta_S$ is the partial evaluation of the analytic functions in $X$'s and $S$'s at $S_{t_1},$ ${S_{t_2}-S_{t_1}},$ $ \ldots, S_{t_m}-S_{t_{m-1}}$, hence obtaining functions in $\mathscr{B}_c\{X_1,\ldots,X_n: E_D, R\}$. In other words, 
this is the  union of partial evaluation maps at the free brownian motions  of analytic functions with expectations. Write in short $\mathscr{S}=\{S_t^{(i)},i=1,...,n,t\geq 0))\},$ { and similarly for $u>0,\mathscr{S}_u=\{S_t^{(i)},i=1,...,n,u\geq t\geq 0))\},\mathscr{S}_{\geq u}=\{S_t^{(i)}-S_u^{(i)},i=1,...,n, t\geq u))\}$}

We call accordingly, {for $U\subset A_{R,UltraApp}^n$,} $C^{k}_{c}(A,U:\mathscr{B},D:\mathscr{S})\subset C^{k,k}_{tr,V}(A,U:\mathscr{B},E_D:\mathscr{S})\cap C^{k}_{c}(\mathscr{A},U_A:\mathscr{B},D)$ the space generated by analytic functions (without expectations) with norm $\Vert{}.\Vert{}_{k,l,U}$.{We also have analogously $C^{k}_{c}(A,U:\mathscr{B},D:\mathscr{S}_u)\subset C^{k,k}_{tr,V}(A,U:\mathscr{B},E_D:\mathscr{S}_u)$ (imposing above $t_m\leq u$).
Fix a trace preserving $*$-homomorphism $\theta_u:\mathscr{A}\to \mathscr{A}$ by $\theta_u(a)=a, a\in A$, $\theta_u(S_s)=S_{s+u}-S_u$
with  obvious induced maps $$\theta_u':C^{k,l}_{tr,V}(A,U:\mathscr{B},D:\mathscr{S})
\to C^{k,l}_{tr,V}(A,U:\mathscr{B},D:\mathscr{S}_{\geq u}),$$
and similarly $ \theta_u':C^{k,l}_{tr}(A,U:\mathscr{B},D:\mathscr{S})
\to C^{k,l}_{tr}(A,U:\mathscr{B},D:\mathscr{S}_{\geq u}).$}

For $u\ge 0$, we denote by $\mathscr{A}_u=A*_D(D\otimes W^*(S_t^{(i)},i=1,...,n,t\in [0,u]))$ and $E_u$ the associated conditional expectation.
We observe that when restricted to polynomial function, the conditional expectations take their values in polynomials. Under certain conditions on $U$,
see Proposition  \ref{ExpectationCkl},  we can extend $E_u$ as an application $
C^{k,l}_{tr,V}(A,U:\mathscr{B},D:\mathscr{S})\to C^{k,l}_{tr,V}(A,U:\mathscr{B},D:\mathscr{S}_{ u})\,.$

\section{Semi-groups and SDE's associated with a convex potential}\label{sg}
\subsection{Convex potentials}
With obvious notations, $ M_n(A^{\oehc {D}2})$ denotes the space of $n\times n$ matrices with entries in $A^{\oehc {D}2}$.
For $M\in  M_n(A^{\oehc {D}2})$, $(M^*)_{ij}:= (M_{ji})^*$ with for $b\in A^{\oehc {D}2}$, $b^*$ defined in {Theorem \ref{finite2} (1e)}.
 We don't equip this space with the norm induced by its natural  operator space structure as Haagerup tensor product. 
We rather see $M_n(A^{\oehc {D}2})$ as follows 
 $$M_n(A^{\oehc {D}2})\subset \bigcap_{m=1}^\infty B\left(\ell^2\Big([\![1,n]\!],(A^{\oehc {D}m})\Big),\ell^2\Big([\![1,n]\!],(A^{\oehc {D}m})\Big)\right)\,.$$
We equip it  with the matrix like $\#$ multiplication map defined for $M=[M_{ij}]\in M_n(A^{\oehc {D}2}),$ $ X\in \ell^2([\![1,n]\!],A^{\oehc {D}2})=(M^{\oehc {D}m})^n$ by 
 $$(A\#X)_i=\sum_{j=1}^n A_{ij}\#X_j\,, $$
and with the norm
$$||M||_{M_n(A^{\oehc {D}2})}:=\sup_{m\ge 0} \sup\{
||(M\#X)||_{(A^{\oehc {D}m})^n}, ||(M^*\#X)||_{(A^{\oehc {D}m})^n}: {||X||_{(A^{\oehc {D}m})^n}\le 1} \}\,.$$
By definition $||M||_{M_n(A^{\oehc {D}2})}=||M^*||_{M_n(A^{\oehc {D}2})}$, and 
$$||M \#N||_{M_n(A^{\oehc {D}2})}\le ||M||_{M_n(A^{\oehc {D}2})}||N||_{M_n(A^{\oehc {D}2})}\,.$$

We first recall a consequence of Hille-Yosida Theorem.
\begin{Proposition} The following are equivalent.

\begin{enumerate}
\item $Q=Q^*\in M_n(A^{\oehc{D}2})$ has a semigroup of contraction $e^{-t Q},$

\item $Q=Q^*\in  M_n(A^{\oehc {D}2})$ has a resolvent family for all $\alpha>0$, $\alpha+Q$ is invertible in $M_n(A^{\oehc {D}2})$ and $||\frac{\alpha}{\alpha+Q}||_{M_n(A^{\oehc{D}2})} \leq 1$.

\end{enumerate}
In this case we say $Q\geq 0$.
\end{Proposition}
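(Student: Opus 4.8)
The plan is to recognize this as a direct application of the Hille--Yosida theorem, adapted to the (somewhat unusual) operator-space setting in which $Q$ acts on the scale of spaces $(A^{\oehc{D}m})^n$, $m\geq 1$, simultaneously. The statement is an equivalence between the existence of a contraction semigroup $e^{-tQ}$ and the resolvent estimate $\|\alpha(\alpha+Q)^{-1}\|\leq 1$ for all $\alpha>0$, together with invertibility of $\alpha+Q$ in $M_n(A^{\oehc{D}2})$; the self-adjointness of $Q$ is being used to make ``generates a contraction semigroup'' symmetric under $t\mapsto -t$ is \emph{not} claimed here (the semigroup is one-sided, $t\geq 0$), so what is really needed is just the classical generation theorem for contraction semigroups on a Banach space, applied uniformly across the family of Banach spaces $\ell^2([\![1,n]\!],A^{\oehc{D}m})$.

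For the implication (1)$\Rightarrow$(2): if $e^{-tQ}$ is a strongly continuous semigroup of contractions, then for each $\alpha>0$ the Laplace transform $R_\alpha:=\int_0^\infty e^{-\alpha t}e^{-tQ}\,dt$ converges in norm (since $\|e^{-\alpha t}e^{-tQ}\|\leq e^{-\alpha t}$) and defines a bounded operator with $\|R_\alpha\|\leq \int_0^\infty e^{-\alpha t}\,dt=\alpha^{-1}$, hence $\|\alpha R_\alpha\|\leq 1$. The standard semigroup identities give $R_\alpha(\alpha+Q)=(\alpha+Q)R_\alpha=\mathrm{id}$ on the appropriate domain, so $\alpha+Q$ is invertible with inverse $R_\alpha$; one must check that $R_\alpha$ is realized as an element of $M_n(A^{\oehc{D}2})$ and not merely as an abstract bounded operator on each $\ell^2([\![1,n]\!],A^{\oehc{D}m})$ --- this follows because $M_n(A^{\oehc{D}2})$ is defined precisely as those matrices of elements whose $\#$-action is bounded on every such space with a uniform bound, and the integral defining $R_\alpha$ is a norm-limit of Riemann sums of elements $e^{-t Q}\in M_n(A^{\oehc{D}2})$ in that uniform norm. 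Taking adjoints, $Q=Q^*$ gives $e^{-tQ^*}=(e^{-tQ})^*$ also a contraction semigroup, so the resolvent bound for $\alpha+Q^*$ holds as well, which is exactly the symmetrized condition in the definition of $\|\cdot\|_{M_n(A^{\oehc{D}2})}$.

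For the converse (2)$\Rightarrow$(1): this is the substance of Hille--Yosida. Given invertibility and $\|\alpha(\alpha+Q)^{-1}\|\leq 1$ for all $\alpha>0$ (and the analogous bound for $Q^*$, which is automatic since $(\alpha+Q)^{-1*}=(\alpha+Q^*)^{-1}$ and the norm on $M_n(A^{\oehc{D}2})$ is $*$-symmetric by construction), one forms the Yosida approximants $Q_\alpha:=\alpha Q(\alpha+Q)^{-1}=\alpha^2(\alpha+Q)^{-1}-\alpha\,\mathrm{id}$, which are bounded elements of $M_n(A^{\oehc{D}2})$, and checks that $e^{-tQ_\alpha}=e^{\alpha t}e^{-\alpha^2 t(\alpha+Q)^{-1}}$ has norm $\leq e^{\alpha t}e^{-\alpha^2 t\cdot(1/\alpha)}\cdot\ldots$; more precisely $\|e^{-tQ_\alpha}\|\leq e^{-\alpha t}\sum_k \frac{(\alpha^2 t)^k}{k!}\|(\alpha+Q)^{-1}\|^k\leq e^{-\alpha t}e^{\alpha^2 t/\alpha}=1$, so each $e^{-tQ_\alpha}$ is a contraction semigroup. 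Then one shows $e^{-tQ_\alpha}$ converges as $\alpha\to\infty$, uniformly on compact $t$-intervals, using the resolvent identity and the telescoping estimate $\|e^{-tQ_\alpha}x-e^{-tQ_\beta}x\|\leq t\|Q_\alpha x - Q_\beta x\|$ on a dense set (e.g. $x$ in the range of $(\alpha_0+Q)^{-1}$ for fixed $\alpha_0$), and finally that the limit $e^{-tQ}$ is again a contraction semigroup with generator $-Q$. The one genuine subtlety --- and the step I expect to be the main obstacle --- is making sure all of this is performed \emph{within the normed space} $M_n(A^{\oehc{D}2})$ with its special norm defined via the whole scale $\{A^{\oehc{D}m}\}_{m\geq1}$: the Yosida approximants, their exponentials, and the limiting semigroup must all be shown to lie in $M_n(A^{\oehc{D}2})$ (closedness of this space under the relevant limits, which is where one uses that it is defined as a $\sup$ over $m$ of operator norms, hence complete, and that $\#$-multiplication is submultiplicative as recorded just before the Proposition). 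Modulo this bookkeeping, the argument is the textbook Hille--Yosida proof verbatim.
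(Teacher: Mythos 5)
Your proof is correct and is essentially the paper's argument: the paper simply invokes the Hille--Yosida theorem (in the form of Theorem 1.12 of Ma--R\"ockner) on each Banach space $\ell^2([\![1,n]\!],A^{\oehc{D}m})$ appearing in the definition of the norm of $M_n(A^{\oehc{D}2})$, which is exactly what you do, except that you spell out the classical Laplace-transform/Yosida-approximant proof and the (straightforward, since $Q$ is bounded and $M_n(A^{\oehc{D}2})$ is a complete Banach algebra under $\#$) bookkeeping that the resolvents and the semigroup stay in $M_n(A^{\oehc{D}2})$.
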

\begin{proof}
We apply Hille-Yosida Theorem e.g. in the form of Theorem 1.12 in \cite{MaRockner}, to each  Banach space $\ell^2([\![1,n]\!],(A^{\oehc {D}m}))$ in the definition of the norm  of $M_n(A^{\oehc{D}2}).$
\end{proof}

Note that  the set of non-negative $Q=Q^*\in M_n(A^{\oehc{D}2})$ is a cone. Indeed, if $\alpha\ge 0$ and $Q\geq 0$, clearly $\alpha Q\ge 0$. Moreover,
 $Q\ge 0$
and $\tilde Q\ge 0$  implies that $Q+\tilde Q\ge 0$. Indeed, as $Q$ and $\tilde Q$ are
bounded, they are defined everywhere  as well as $Q+\tilde Q$, and one can use \cite{trotter} to see that 
$$e^{-t(Q+\tilde Q)}=\lim_{k\rightarrow\infty}(e^{-\frac{t}{k} Q}.e^{-\frac{t}{k} \tilde Q})^k$$
is a contraction as the right hand side is. Moreover, this set is closed as follows easily from  the characterization (2) (notice here that
the set $Q=Q^*\in  M_n(A^{\oehc {D}2})$ is closed).

Observe that if $V=V^*\in  C^{2}_c(A,R:B,D)$, $X\in A_R^n$,  $(\partial_i\mathscr{D}_jV(X))_{1\le i,j\le n}\in M_n(A^{\oehc {D}2})$
is self-adjoint.
\begin{definition}\label{defhconvex} Let $c,R> 0$.
$V=V^*\in  C^{2}_c(A,R:B,D)$ is said  $(c,R)$ h-convex if $(\partial_i\mathscr{D}_jV(X))_{1\le i,j\le n}-c\operatorname{Id}\geq 0$
for any {$X\in A_{R,UltraApp}^n$}.
\end{definition}


We show below that  $(c,R)$ h-convex potentials have well behaved solutions of linear ODE.
\begin{Lemma}\label{lemhyp} Assume $V$ is  $(c,R)$ h-convex. Consider  a continuous self-adjoint  process $(X_t)_{t\ge 0}$, $\|X_t\|\le R$, $X_t\in D'$.

(a)  Let $Y\in (A^{\oehc {D} m})^n$ be such that $Y_j^*=Y_j$ (with $(a_1\o ...\o a_m)^*=a_m^*\o...\o a_1^*$). Then, there exists a
  unique solution $\phi_{s,t}(Y,X)\in (A^{\oehc {D} m})^n$ of the following linear ODE for $t\geq s$:
\begin{equation}\label{eqphi}
\phi_{s,t}(Y,X)_j=Y_j-\frac{1}{2}\int_s^t du\sum_k (\partial_k \mathscr{D}_j V)( X_u) \# \phi_{s,u}(Y,X)_k\,.\end{equation} 
It satisfies $\phi_{s,t}(Y,X)_j^*=\phi_{s,t}(Y,X)_j$. Moreover,
 for any $\sigma \in \mathfrak{C}_{n}$,  the solution $\sigma.(\phi_{s,t}(Y,X)_j)$ of the equation transformed by $\sigma$ (that is 
the equation obtained by applying a cyclic permutation of the tensor 
indices)
satisfies  :
\begin{equation}\label{eq:estimateOnPhist}
||\sigma.(\phi_{s,t}(Y,X))||_{(A^{\oeh{ D} m})^n}\leq e^{-(t-s)c/2}\|Y\|_{(A^{\oehc{ D} m})^n}.
\end{equation}

(b)  Let $Y_s$  be a $C^1$ process 
with values in $ (A^{\oehc{D} m})^n$ such that $Y_s(t)_j^*=Y_s(t)_j$ (with $(a_1\o ...\o a_n)^*=a_n^*\o...\o a_1^*$). The (unique) solution $\Phi_{s,t}(Y,X)$ of the following linear ODE for $t\geq s$:
\begin{equation}\label{equationphi}\Phi_{s,t}(Y,X)_j=Y_s(t)_j-\frac{1}{2}\int_s^t du\sum_k (\partial_k \mathscr{D}_j V)( X_u) \# \Phi_{s,u}(Y,X)_k,\end{equation}
satisfies $\Phi_{s,t}(Y,X)_j^*=\Phi_{s,t}(Y,X)_j$  and
$$||\Phi_{s,t}(Y,X)||_{(A^{\oehc{D} m})^n}\leq e^{-(t-s)c/2}\|\|Y\|\|_{s,t}$$
with
$$\|\|Y\|\|_{s,t}=(\sum_j||(Y_s(s))_{j}||_{A^{\oehc{D} m}}^2)^{1/2}+  \int_s^t  e^{-c(s-u)/2}
(\sum_j||\partial_u Y_s(u)_{j}||_{A^{\oehc{D} m}}^2)^{1/2}du\,.$$

\end{Lemma}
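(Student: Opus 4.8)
The plan is to prove both (a) and (b) by the standard existence-uniqueness theory for linear ODEs in Banach spaces, combined with a Gronwall-type estimate that uses $h$-convexity to produce the exponential decay. The key observation is that the right-hand side of \eqref{eqphi} (and of \eqref{equationphi}) is, for fixed $X_u$, a bounded linear operator on the Banach space $(A^{\oehc{D}m})^n$: by definition of the norm on $M_n(A^{\oehc{D}2})$, the map $Z\mapsto (\sum_k(\partial_k\mathscr{D}_jV)(X_u)\#Z_k)_j$ is bounded on each $(A^{\oehc{D}m})^n$ with norm at most $\|(\partial_i\mathscr{D}_jV(X_u))_{ij}\|_{M_n(A^{\oehc{D}2})}$, and this is uniformly bounded in $u$ since $\|X_u\|\le R$ and $V\in C^2_c(A,R:B,D)$. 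Continuity of $u\mapsto X_u$ and coefficientwise continuity of $\partial_k\mathscr{D}_jV$ give continuity of the coefficient operator, so Picard iteration (equivalently, the time-ordered exponential) yields a unique continuous solution $\phi_{s,t}(Y,X)$ on all of $[s,\infty)$.

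\medskip
\noindent\textbf{Self-adjointness.} To see $\phi_{s,t}(Y,X)_j^*=\phi_{s,t}(Y,X)_j$, I would note that applying $*$ to \eqref{eqphi} and using that $(\partial_k\mathscr{D}_jV)(X_u)$ is self-adjoint in $A^{\oehc{D}2}$ (observed just before Definition \ref{defhconvex}, using $X_u\in D'$ and $X_u=X_u^*$) together with the compatibility $(a\#b)^*=b^*\#a^*$ shows that $t\mapsto \phi_{s,t}(Y,X)_j^*$ solves the same ODE with the same initial condition $Y_j^*=Y_j$; uniqueness then forces equality. One must check that $*$ maps $(\partial_k\mathscr{D}_jV)(X_u)\#\phi_{s,u}(Y,X)_k$ correctly: $\big((\partial_k\mathscr{D}_jV)(X_u)\#\phi_k\big)^*=\phi_k^*\#(\partial_k\mathscr{D}_jV)(X_u)^*$, and since the relevant element of $A^{\oehc{D}2}$ is symmetric (the Hessian is self-adjoint as a matrix, $(\partial_k\mathscr{D}_jV)^*=\partial_j\mathscr{D}_kV$) one reorganizes the sum over $k$ to land back on the same equation --- this is the one slightly fiddly bookkeeping point.

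\medskip
\noindent\textbf{The decay estimate (main obstacle).} This is where $h$-convexity enters and is the crux of the lemma. The idea: differentiate $\tfrac{d}{dt}\|\phi_{s,t}(Y,X)\|^2$ --- but the norm on $A^{\oehc{D}m}$ is not Hilbertian, so one cannot literally do this. Instead I would run the argument at the level of the resolvent/semigroup: $(c,R)$ $h$-convexity says $Q_u:=(\partial_i\mathscr{D}_jV(X_u))_{ij}-c\,\mathrm{Id}\ge 0$ in the sense of the Proposition, so $e^{-t Q_u}$ is a contraction on every $\ell^2([\![1,n]\!],A^{\oehc{D}m})$, hence $e^{-t(\partial_i\mathscr{D}_jV(X_u))_{ij}/2}$ has norm $\le e^{-tc/2}$ there. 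For the time-dependent equation \eqref{eqphi} one approximates the solution by a product of short-time exponentials $\prod e^{-\frac{\Delta u}{2}(\partial_i\mathscr{D}_jV(X_{u_\ell}))_{ij}}$ (Trotter-type, using that the coefficients are bounded and continuous in $u$, as already invoked for the cone being closed), each factor of norm $\le e^{-c\Delta u/2}$, and passes to the limit to get $\|\phi_{s,t}(Y,X)\|_{(A^{\oehc{D}m})^n}\le e^{-(t-s)c/2}\|Y\|$. The appearance of $\sigma.(\phi_{s,t}(Y,X))$ on the left of \eqref{eq:estimateOnPhist}: cyclic permutations $\sigma\in\mathfrak{C}_n$ act isometrically on $A^{\oehc{D}m}$ (this is the whole point of the cyclic Haagerup tensor product) and commute with the $\#$-operations appropriately, so $\sigma.\phi_{s,t}(Y,X)$ solves the $\sigma$-transformed ODE with initial data $\sigma.Y$, and $\|\sigma.Y\|_{(A^{\oehc{D}m})^n}=\|Y\|_{(A^{\oehc{D}m})^n}$; the estimate then follows from the same contraction bound applied to the transformed equation. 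The measurement with $\|\cdot\|_{(A^{\oeh{D}m})^n}$ on the left (non-cyclic norm) versus $\|Y\|_{(A^{\oehc{D}m})^n}$ on the right is harmless since the cyclic norm dominates the non-cyclic one.

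\medskip
\noindent\textbf{Part (b).} For the inhomogeneous-initial-data version, write $\Psi_{s,t}:=\Phi_{s,t}(Y,X)-\phi_{s,t}(Y_s(s),X)$; using $\tfrac{d}{dt}Y_s(t)=\partial_t Y_s(t)$ and the variation-of-constants formula, $\Psi$ satisfies \eqref{eqphi} with source term $\partial_u Y_s(u)$, so $\Phi_{s,t}(Y,X)_j=\phi_{s,t}(Y_s(s),X)_j+\int_s^t \phi_{u,t}(\partial_u Y_s(u),X)_j\,du$. Applying the decay estimate from (a) to each piece and the triangle inequality gives $\|\Phi_{s,t}(Y,X)\|\le e^{-(t-s)c/2}\big(\sum_j\|Y_s(s)_j\|^2\big)^{1/2}+\int_s^t e^{-(t-u)c/2}\big(\sum_j\|\partial_u Y_s(u)_j\|^2\big)^{1/2}du$, which after factoring out $e^{-(t-s)c/2}$ --- note $e^{-(t-u)c/2}=e^{-(t-s)c/2}e^{(s-u)c/2}$ --- is exactly the claimed bound with the stated $\|\|Y\|\|_{s,t}$. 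Self-adjointness of $\Phi_{s,t}(Y,X)$ follows as in (a) from uniqueness. The one technical point to be careful about throughout is that all these manipulations (Picard iteration, variation of constants, Trotter products) must be justified simultaneously on the whole scale of spaces $(A^{\oehc{D}m})^n$, $m\ge1$, since the norm on $M_n(A^{\oehc{D}2})$ is a sup over $m$; but since all bounds obtained are uniform in $m$, this causes no difficulty.
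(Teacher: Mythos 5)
Your proposal is correct and follows essentially the same route as the paper: the paper also obtains the decay bound by freezing the coefficient on small time intervals, composing the contraction semigroups that $(c,R)$ h-convexity provides (so that $e^{-t(Q-\frac{c}{2}\operatorname{Id})}$ is contractive on every $\ell^2([\![1,n]\!],A^{\oehc{D}m})$), passing to the limit of these piecewise-frozen approximations, proving uniqueness by Gronwall and self-adjointness by starring the equation and invoking uniqueness, and it establishes (b) from exactly your formula $\Phi_{s,t}(Y,X)=\phi_{s,t}(Y_s(s),X)+\int_s^t\phi_{u,t}(\partial_u Y_s(u),X)\,du$. Two cosmetic remarks: your factorization should read $e^{-(t-u)c/2}=e^{-(t-s)c/2}e^{-c(s-u)/2}$ (not $e^{+c(s-u)/2}$), which is precisely what produces the factor $e^{-c(s-u)/2}$ in $\|\|Y\|\|_{s,t}$, and for self-adjointness the paper sidesteps your index-swapping step by observing that each entry $(\partial_k\mathscr{D}_jV)(X_u)$ is itself invariant under the slot-reversal involution (since $V=V^*$ and $X_u=X_u^*$), so that the starred family solves the identical equation with the identical initial data.
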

\begin{proof}{\bf Proof of (a). } Let $X$ be a continuous self-adjoint process. The semigroup $\Theta^{X}$  associated to $Q=\frac{1}{2}(\partial_k\mathscr{D}_j V(X))_{kj}$
gives a solution  $(\Theta^X_{s,t}(Y))_{t\ge 0}$ to
$$Y_j(t)=Y_j-\frac{1}{2}\int_s^t \sum_{k=1}^n \partial_k\mathscr{D}_j V(X)\# Y_k(s) ds\,.$$
Therefore we can 
define the solution to 
$$\phi_{s,t}^p(Y,X)_j=Y_j-\frac{1}{2}\int_s^tdu\sum_k (\partial_k \mathscr{D}_j V)( X_{\frac{\lfloor up\rfloor}{p}}) \# \phi_{s,u}^p(Y,X)_k$$
in $(A^{\oehc {D} m})^n$ by putting 
\begin{equation}\label{defphip}\phi_{s,t+s}^p(Y,X)=\Theta^{X_{\frac{\lfloor tp \rfloor }{p} +s}}_{\frac{\lfloor pt\rfloor }{p} +s , t+s}\circ
\Theta^{X_{\frac{\lfloor tp-1 \rfloor }{p} +s}}_{\frac{\lfloor (pt-1)\rfloor }{p} +s , \frac{\lfloor pt\rfloor }{p} +s}\circ\cdots \circ \Theta^{X_{ s}}_{s , \frac{1}{p} +s}(Y)\,. \end{equation}
By assumption of $(c,R)$ h-convexity,  the semigroup $e^{-t(Q-\frac{c}{2}\operatorname{Id})}=e^{\frac{c}{2}t }e^{-tQ}$ is contractive,
which gives the bound
$$||\phi_{s,t}^p(Y,X)||_{(A^{\oehc{ D} m})^n}\leq e^{-(t-s)c/2}||Y||_{(A^{\oehc{ D} m})^n}\,.$$
In particular, this sequence is bounded uniformly.  By continuity of $X$, we can prove similarly that this sequence is Cauchy, and hence 
converges towards the solution of \eqref{eqphi}; the limit then clearly satisfies the bound \eqref{eq:estimateOnPhist}.  Uniqueness can be proved by Gronwall Lemma, as $(\partial_k \mathscr{D}_j V)( X_.)$ is uniformly bounded.

Selfadjointness  of $\phi_{s,t}(Y,X)_j$
 follows from the uniqueness of the solution to the linear ODE since $((a\o c)\#(b_1\o ...\o b_n))^\star=(c^\star\o a^\star)\# (b_1\o ...\o b_n)^\star$ and $((\partial_k \mathscr{D}_j V(X_s))^\star)_{kj}=(\partial_k (\mathscr{D}_j V^*)(X_s^*))_{kj}=(\partial_k (\mathscr{D}_j V)(X_s))_{kj}$ because $V=V^*$ and $X_s^*=X_s$.
 
 \noindent
{\bf Proof of (b). } Using the notation of (a), define : 
$$\Phi_{s,t}(Y,X)=\phi_{s,t}(Y_s(s),X)+\int_s^tdu\phi_{u,t}(\partial_uY_s(u),X).$$
Differentiating in $t$ shows that $\Phi_{s,t}$ is a solution of \eqref{equationphi}. The bounds follows readily from (a).
Again, uniqueness follows from Gronwall's Lemma. 

\end{proof}
\subsection{Free stochastic differential equation}
\begin{Proposition}\label{ConvSDE}
Assume $V\in C^{2}_c(A,R:B,D)$ is  $(c,R)$ h-convex. \\
(a) There exists $T>0$ so that for any $X_0\in  {A_{R,UltraApp}^n}$, there exists 
a unique solution to 
$$X_t(X_0)=X_0+ S_t -\frac{1}{2} \int_0^t \mathscr{D} V (X_u(X_0)) du$$ which is defined for all times $t<T$.
Moreover, for all $X_0,\tilde X_0\in {A_{R,UltraApp}^n}$ and $t\ge 0$
\begin{equation}\label{boundX}\|X_t(X_0)-X_t(\tilde X_0)\|\le e^{-ct/2}\| X_0-\tilde X_0\|.\end{equation} \\
(b) Assume that there exists $X^V = (X_1^V,\dots,X_n^V)\in  {A_{R/3,UltraApp}^n}$ for which the conjugate variables are equal to $\mathscr{D}_j V$.  Then part (a) holds with $T=\infty$ for any solution starting at $X_0\in A^n_{R/3,UltraApp}$.   As a consequence, there is at most one free Gibbs law with potential $V$ uniformly in ${A_{R/3,UltraApp}^n}$.
\end{Proposition}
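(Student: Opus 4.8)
\emph{Overall strategy.} For part (a) I would read the free SDE as a deterministic Banach-space ODE: the substitution $Y_t:=X_t-S_t$ turns it into $Y_t=X_0-\frac{1}{2}\int_0^t\mathscr{D}V(Y_u+S_u)\,du$, whose right-hand side is norm-continuous in $t$ (the paths $u\mapsto S_u$ are norm-continuous) and Lipschitz in $Y$ on any closed ball inside the domain of $\mathscr{D}V$, with Lipschitz constant controlled by $\sup_X\sum_{i,j}\|(\partial_i\mathscr{D}_jV)(X)\|_{A^{\oehc D 2}}$, finite since $V\in C^2_c(A,R:B,D)$ (one uses $\|a\#H\|_A\le\|a\|_{A^{\oehc D 2}}\|H\|_A$, and that for a non-commutative power series $V$ the full differential of $\mathscr{D}V$ reduces to its free-difference-quotient part). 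Picard iteration then gives, for some $T>0$ uniform over starting points in a fixed sub-ball, a unique solution on $[0,T)$; self-adjointness, commutation with $D$, adaptedness to the filtration of $(S_u)$, and the membership in $A^n_{R,UltraApp}$ all pass to the norm limit because every Picard iterate is built from $X_0$ and $(S_u)_{u\le t}$ by analytic operations. The decay estimate \eqref{boundX} and global existence in (b) then come from $(c,R)$ $h$-convexity through Lemma \ref{lemhyp}.

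\emph{Proof of \eqref{boundX}.} Put $X_t=X_t(X_0)$, $\tilde X_t=X_t(\tilde X_0)$, $\Delta_t=X_t-\tilde X_t$. Subtracting the two integral equations and applying the fundamental theorem of calculus along $s\mapsto (1-s)\tilde X_u+sX_u$ gives
\[
\Delta_{t,j}=\Delta_{0,j}-\frac{1}{2}\int_0^t\sum_k M^{(u)}_{jk}\#\Delta_{u,k}\,du,\qquad
M^{(u)}_{jk}=\int_0^1(\partial_k\mathscr{D}_jV)\big((1-s)\tilde X_u+sX_u\big)\,ds.
\]
Since $(\partial_k\mathscr{D}_jV(X))_{kj}-c\operatorname{Id}\ge0$ for every $X$ occurring and the cone of non-negative self-adjoint elements of $M_n(A^{\oehc D 2})$ is closed (hence stable under the Riemann sums approximating the $s$-integral), we get $M^{(u)}\ge c\operatorname{Id}$; moreover $M^{(u)}=(M^{(u)})^*$ because $V=V^*$ and $X_u,\tilde X_u$ are self-adjoint. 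So $\Delta_t$ solves a non-autonomous linear ODE with generator $-\tfrac12 M^{(u)}$, and the Trotter-product argument from the proof of Lemma \ref{lemhyp}(a) applies verbatim: each frozen semigroup $e^{-\tau M^{(u)}/2}$ has norm $\le e^{-c\tau/2}$ (as $\tfrac12 M^{(u)}-\tfrac c2\operatorname{Id}\ge0$), whence $\|\Delta_t\|_{A^n}\le e^{-ct/2}\|\Delta_0\|_{A^n}$, valid on the common interval of existence. This completes part (a).

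\emph{Part (b): global existence and uniqueness.} Suppose $X^V\in A^n_{R/3,UltraApp}$ has conjugate variables $\mathscr{D}_jV$, so its law $\mu$ is a free Gibbs law with potential $V$; the Schwinger--Dyson equation for $\mu$ says precisely that $\mu$ is stationary for the generator $\Delta_V$ of the free diffusion. By uniqueness of the SDE together with free stochastic calculus, the law of $X_t(X^V)$ is $\mu$ throughout its interval of existence, hence $\|X_t(X^V)_i\|=\|X_i^V\|<R/3$, a purely distributional quantity. Feeding this a priori bound into part (a) (restart from $X_{T-\varepsilon}(X^V)\in A^n_{R/3,UltraApp}$ and iterate) extends $X_t(X^V)$ to all $t\ge0$. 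For arbitrary $X_0\in A^n_{R/3,UltraApp}$, apply \eqref{boundX} against the reference solution $X_t(X^V)$ on the maximal interval of existence: $\|X_t(X_0)\|\le\|X_t(X^V)\|+e^{-ct/2}\|X_0-X^V\|<R/3+2R/3=R$, so $X_t(X_0)$ never reaches $\partial A_R^n$ and therefore extends to $[0,\infty)$; thus $T=\infty$. Finally, if $X^V$ and $\tilde X^V$ are two such Gibbs points, realize them mutually free over $D$ in a common algebra carrying a free Brownian motion $(S_u)$ free from both, and run both flows with that same $(S_u)$; each flow preserves its own law, so for every polynomial $P$
\[
|\tau(P(X^V))-\tau(P(\tilde X^V))|=|\tau(P(X_t(X^V)))-\tau(P(X_t(\tilde X^V)))|\le L_P\,e^{-ct/2}\,\|X^V-\tilde X^V\|,
\]
where $L_P$ bounds the Lipschitz constant of $P$ on $A^n_{R/3}$. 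Letting $t\to\infty$ forces $\tau(P(X^V))=\tau(P(\tilde X^V))$ for all $P$, i.e.\ the free Gibbs law uniformly in $A^n_{R/3,UltraApp}$ is unique.

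\emph{Main obstacle.} The delicate points are all about domains. First, $\mathscr{D}V$ must be defined and Lipschitz slightly beyond $A_R^n$ (automatic when, as in the main theorem, $V$ is taken $h$-convex on variables bounded by $2R$), and $\|X_0\|$ being possibly close to $R$ forces $T$ in (a) to be small. More seriously, the Taylor-remainder step requires the whole segment $(1-s)\tilde X_u+sX_u$ to stay inside $A^n_{R,UltraApp}$ — the set on which $h$-convexity is postulated — so one must check that this set, or a harmless enlargement, is stable under the convex combinations and flows appearing above; this is where the $UltraApp$ approximation hypothesis does real work, and I expect it to be the main technical nuisance. The stationarity of the free Gibbs law along the free diffusion, although morally standard, must likewise be justified carefully within the present function-space framework.
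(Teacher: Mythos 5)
Your proposal follows essentially the same route as the paper: local existence by Picard iteration (the paper delegates this to Biane--Speicher), the contraction estimate \eqref{boundX} by writing the difference of solutions as a linear equation driven by $\int_0^1\partial\mathscr{D}V(\theta X^1_u+(1-\theta)X^0_u)\,d\theta\ge c\,\mathrm{Id}$ (closedness of the positive cone) and invoking the Lemma \ref{lemhyp} semigroup bound, and part (b) via stationarity of $X_t(X^V)$ plus \eqref{boundX} to keep solutions started in $A^n_{R/3}$ inside $A^n_R$ and to force equality of any two stationary Gibbs laws. Your write-up is merely more explicit on points the paper treats by citation or in one line (the deterministic reformulation, the Trotter step, the coupling argument for uniqueness), and the domain issues you flag are handled no more carefully in the paper itself.
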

\begin{proof}
Existence of $X_t(X_0)$ for all times $t<T$ for which $\sup_{s<T} \Vert X_s(X_0)\Vert <R$ 
 follows form the Picard iteration argument in \cite{biane-speicher}. The existence of $T>0$ (depending only on the Lipschitz constant of $\mathscr{D}V$) is also shown there.

 Applying the same argument as in the proof of  Lemma \ref{lemhyp} by writing $X^1_t=X_t(\tilde X_0),X^0_t=X_t(X_0)$,
$$X_t^1-X_t^0=\tilde X_0-X_0-\frac{1}{2} \int_0^t \int_0^1 \partial\mathscr{D} V(\theta X_u^0+(1-\theta) X_u^1) \# (X_u^1-X_u^0) d\theta du$$
and arguing that $\int_0^1 \partial\mathscr{D} V(\theta X^1_u+(1-\theta) X^0_u)d\theta -c\operatorname{Id} \ge 0$ as the set of non-negative elements of $M_n(A^{\oehc D 2})$ is a closed cone, the estimate  \eqref{boundX}  follows from \eqref{eq:estimateOnPhist}.

Assuming the Assumption of part (b), we see that the solution $X_t(X_V)$ is stationary; in particular, its norm is constant.  Part (a) and the estimate \eqref{boundX} then imply that any other solution starting at an element of $A^n_{R/3}$ stays in $A^n_{R}$, which means that $T$ can be chosen to be infinite.  Also, if there were two free Gibbs law with potential $V$, they would be stationary laws for the dynamics and \eqref{boundX} would imply that they are equal.
 \end{proof}
 Throughout this paper we assume that

\begin{Assumption}\label{thehyp} Let 
 $V,W\in C_c^3(A,2R:B, E_D)$ be two non-commutative functions such that
$V$ and $V+W$ are $(c,2R)$ h-convex for some $c>0$.  We assume   that for any $\alpha\in [0,1]$, there exists  a solution $(X_1^{V+\alpha W},\ldots,X_n^{V+\alpha W})\in  {A_{R/3,UltraApp}^n}$  with conjugate variables $(\mathscr{D}_i(V+\alpha W))_{1\le i\le n}$. 
\end{Assumption}

{In subsection  \ref{ConvexPotentialSection} we describe a class of quartic potentials satisfying this assumption. The existence of a solution to Schwinger-Dyson equations will be obtained from a random matrix model in the easiest case $B=\C$ and the convexity will be obtained by operator spaces techniques.}

This Assumption insures that 
 $$V_\alpha  = V +  \alpha W$$ 
 is $(c,2R)$ convex for all $\alpha\in [0,1]$.

 We consider the SDE
\begin{equation} \label{eqn:Xt}
X_t ^\alpha= X_0+ S_t - \frac{1}{2} \int_0^t \mathscr{D} V_\alpha (X_s^\alpha) ds
\end{equation}
where $S$ is the free Brownian motion relative to $D$ (with covariance map $id_D$).  
By Proposition \ref{ConvSDE}, we deduce that there exists  a unique solution $X_t$ satisfying $\Vert X_t \Vert < R$ for any $X_0\in
 A^n_{R/3}$.  We denote it by   $X_t^\alpha(X_0, \{S_s, s\in [0,t]\}), t\ge 0$, and $X_t^\alpha$ in short.
We  set  for $U\subset A_R^n$, $U_\alpha $ be the subset of its elements stable under the flow:
$$U_\alpha=\{X_0\in U : \forall t\,,\quad  X_t^\alpha\in U\}$$ 
\begin{Lemma}\label{EstimXt} Let {$U\subset A_{R,UltraApp}^n$}. Under  Assumption \ref{thehyp}, the map
 $${X_0\in U_{\alpha 
}\mapsto }X_t^\alpha(X_0, \{S_s, s\in [0,t]\})$$
 comes from an element in $C^{1,0}_{tr,V,c}(A,U_\alpha:\mathscr{B},E_D:\mathscr{S})$, {and we have for any $\tau<t$ the relation \begin{equation}\label{poll}
 X_t^\alpha(., \{S_s, s\in [0,t]\})=\theta_\tau'[X^\alpha_{t-\tau}(., \{S_{s}, s\in [0,t-\tau]\})]\circ_\tau X_\tau^\alpha(., \{S_s, s\in [0,\tau]\})\,\end{equation}}
where $\theta_u':C^{k,l}_{tr,V,c}(A,U_\alpha:\mathscr{B},E_D:\mathscr{S})
\to C^{k,l}_{tr,V,c}(A,U_\alpha:\mathscr{B},E_D:\mathscr{S}_{\geq u}),$ the map induced by the shift $\theta_u(S_s)=S_{s+u}-S_u$.
Moreover, if we also assume $V,W\in C^{k+l+2}_{c}(A,2R:\mathscr{B},D)$, then $X_0\mapsto X_t^\alpha(X_0, \{S_s, s\in [0,t]\})\in C^{k+l}_{c}(A,U_\alpha:\mathscr{B},D:\mathscr{S})\to C^{k,l}_{tr,V,c}(A,U_\alpha:\mathscr{B},E_D:\mathscr{S})$. Moreover, in each case $t\mapsto X_t^\alpha$ is continuous.

Finally there exists a finite constant $C_{k+l}$ such that, for $k+l\geq 1$ :
\begin{equation}\label{bornenorm}
||X_t^\alpha||_{k+l,0,U_\alpha
{,c,\geq 1}}\leq C_{k+l}e^{-ct/2}.\end{equation}

\end{Lemma}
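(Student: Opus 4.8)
### Plan of proof for Lemma \ref{EstimXt}

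The plan is to set up the SDE \eqref{eqn:Xt} as a fixed point problem in the relevant function spaces, exactly in the spirit of the Picard iteration of \cite{biane-speicher} invoked in Proposition \ref{ConvSDE}, but now tracking regularity in $X_0$ rather than merely in the algebra norm. First I would observe that the map $X_0\mapsto X_0$ trivially lies in $C^{1,0}_{tr,V,c}(A,U_\alpha:\mathscr{B},E_D:\mathscr{S})$, and that $X_0\mapsto S_t$ is a constant map in $C^\infty$, so by the Banach-algebra and composition properties recorded in the earlier sections the Picard map
\[
\Psi(Y)_t = X_0 + S_t - \frac{1}{2}\int_0^t \mathscr{D}V_\alpha(Y_s)\,ds
\]
sends $C^{1,0}_{tr,V,c}$-valued paths to $C^{1,0}_{tr,V,c}$-valued paths, using $V_\alpha\in C^3_c$ so that $\mathscr{D}V_\alpha$ and its first differential are controlled. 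On a short time interval $[0,T_0]$ this is a contraction, giving a local solution in the space; then the semi-group/cocycle identity \eqref{poll}, which follows from uniqueness of the solution together with the defining property of $\theta_\tau$ (the solution restarted at time $\tau$ with the shifted Brownian motion), lets me propagate the solution globally in time. The restriction to $U_\alpha$ and to $A^n_{R/3,UltraApp}$ via Proposition \ref{ConvSDE}(b) guarantees the solution stays in the domain, so evaluations into the cyclic Haagerup tensor products make sense.

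The heart of the matter — and the main obstacle — is the exponential decay estimate \eqref{bornenorm}. Here I would differentiate the SDE in the directions $H\in A_1^n$ (for the $l$ ``full differential'' directions via $D_H^X$) and apply the free difference quotients $\partial^p_i$ (for the $k$ directions), obtaining for the first-order objects a linear ODE of precisely the form \eqref{eqphi}/\eqref{equationphi} in Lemma \ref{lemhyp}, with driving operator $\frac12(\partial_k\mathscr{D}_jV_\alpha(X_u^\alpha))_{kj}$. Since $V_\alpha$ is $(c,2R)$ h-convex for all $\alpha\in[0,1]$ by Assumption \ref{thehyp}, Lemma \ref{lemhyp}(a) gives the bound $e^{-(t-s)c/2}$ on these linear flows, uniformly over the (continuous, self-adjoint, $D$-commuting) process $X_u^\alpha$; this is exactly what produces the $e^{-ct/2}$ factor. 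For higher-order derivatives $k+l\geq 2$ one gets the same linear ODE but now with an inhomogeneous term built from lower-order derivatives of $X^\alpha$ (products of already-estimated quantities against higher differentials of $\mathscr{D}V_\alpha$, which is where $V,W\in C^{k+l+2}_c$ is needed); one then uses the variation-of-constants formula and Lemma \ref{lemhyp}(b), whose norm $\|\|\cdot\|\|_{s,t}$ is tailored to integrate the lower-order exponentially decaying bounds against the $e^{-c(s-u)/2}$ kernel and still come out with an $e^{-ct/2}$ bound (the $\int_0^t e^{-c(t-u)/2}e^{-cu/2}du \lesssim e^{-ct/2}$-type computation, absorbing polynomial-in-$t$ factors into a slightly worse constant, or rather into the same form by a standard Gronwall-type bootstrapping). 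The induction on $k+l$ then closes, delivering the constant $C_{k+l}$.

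For the continuity of $t\mapsto X_t^\alpha$ in the various norms, I would argue from the integral equation: the difference $X_t^\alpha - X_{t'}^\alpha$ is controlled, after differentiating, by the same linear ODEs with a short-time inhomogeneity $\int_{t'}^t(\cdots)$, whose norm is $O(|t-t'|)$ by the already-established uniform bounds; hence continuity, indeed local Lipschitz continuity in $t$. Finally, the membership statements — that the solution comes from $C^{k+l}_c(A,U_\alpha:\mathscr{B},D:\mathscr{S})$ mapping into $C^{k,l}_{tr,V,c}$ — follow because each Picard iterate is a partial evaluation at the free Brownian increments of an analytic function with expectations (the iterates are built from $X_0$, the increments $S_{t_i}-S_{t_{i-1}}$, $\mathscr{D}V_\alpha$ and integration), so the limit lies in the closure defining $C^{k+l}_c(A,U_\alpha:\mathscr{B},D:\mathscr{S})$, and the canonical maps to $C^{k,l}_{tr,V,c}$ recorded in the diagram at the end of the ``Differential operators'' subsection do the rest. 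I expect the bookkeeping in the higher-order Gronwall estimate — correctly identifying the inhomogeneous terms and checking they are already bounded with the right exponential rate — to be the only genuinely delicate point; everything else is an adaptation of \cite{biane-speicher} combined with Lemma \ref{lemhyp}.
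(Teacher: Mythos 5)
Your route is the paper's route: Picard iteration in the $\|\cdot\|_{k+l,0,U_\alpha,c}$ norms using the composition results (Corollary \ref{compositionCkl}) with $\mathscr{D}V_\alpha\in C^{k+l+1}_c$, the observation that the iterates are evaluations of analytic functions with expectations at the Brownian increments so the limit lies in $C^{k+l}_c(A,U_\alpha:\mathscr{B},D:\mathscr{S})$, uniqueness of the solution plus the shift $\theta_\tau$ to obtain \eqref{poll} and propagate globally, continuity in $t$ from the integral equation, and finally differentiation of the SDE so that the free difference quotients and full differentials of $X^\alpha_t$ satisfy the linear ODE \eqref{HigherProcess} to which Lemma \ref{lemhyp} is applied, with an induction on $k+l$.

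However, the point you flag as the only delicate one is precisely where your sketch fails as written. The convolution you rely on, $\int_0^t e^{-c(t-u)/2}e^{-cu/2}\,du$, equals $t\,e^{-ct/2}$ and is \emph{not} $O(e^{-ct/2})$; absorbing the polynomial factor ``into a slightly worse constant'' only yields a degraded rate $e^{-c't/2}$ with $c'<c$, which is not the bound \eqref{bornenorm} as stated, and a ``Gronwall-type bootstrapping'' cannot repair this since the linear part of the equation has already been exploited in full through Lemma \ref{lemhyp}. The paper closes the induction by a structural observation you pass near but never state: in \eqref{HigherProcess} every lower-order term is \emph{non-linear} (at least quadratic) in derivatives of $X_u^\alpha$ of order $\geq 1$, being a higher differential of $\mathscr{D}V_\alpha$ contracted against at least two such derivative factors; hence, by the induction hypothesis, each inhomogeneity decays at least like $e^{-cu}$, and then Lemma \ref{lemhyp}(b) with its norm $\|\|\cdot\|\|_{s,t}$ gives $\int_0^t e^{-c(t-u)/2}e^{-cu}\,du\leq \tfrac{2}{c}\,e^{-ct/2}$, i.e.\ the exact rate $e^{-ct/2}$ of \eqref{bornenorm}. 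With that observation inserted in place of your convolution estimate, your argument coincides with the paper's proof.
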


{Note that $X_t^\alpha(X_0, \{S_s, s\in [0,t]\})$ above is a non-commutative function without expectation but can be thought of as an element of this larger space of functions, hence the reference to $l$. Note that most of the results only depends on $k+l$.}
\begin{proof} 
Let $k\geq 1,l\geq 0$ so that  $V,W\in C^{k+l+2}_{c}(A,2R:\mathscr{B},D)$.
We now prove that $X^\alpha$ can be seen as a smooth function of $X_0, S$, in the sense that it is an element of $C^{k+l}_{c}(A,U_\alpha:\mathscr{B},D:\mathscr{S})$. Fix $T$ small enough, 
 such that in particular  $2\sqrt{T}+T\sup_{X\in A_{2R}^n,i}||\mathscr{D}_i V_\alpha (X)||_A\leq R$ . We construct by Picard iteration the process on $[0,T]$. We let  $X^{[0,m]}$ be 
defined  recursively by $X^{[0,0]}_.=X_0$ and for $m\ge 1$, 
\begin{equation*} 
X_t^{[0,m]} = S_t-\frac{1}{2} \int_0^t \mathscr{D} V_\alpha (X_u^{[0,m-1]}) du+X_0, t\in [0,T]\,.
\end{equation*}
Because $\|X_0\|\le R$, one checks by induction on $m$ that $||X_t^{[0,m]}||\le 2R$, and the processes are indeed well defined for all $m$ {as a $C^{k,l}_{tr,V,c}$ function.}
 Since $X_t^{[0,m]}$ is obtained from $X_t^{[0,m-1]}$ by operations of integration over a subset of $[0,T]$ and composition with $\mathscr{D}V$, we may use Corollary \ref{compositionCkl} and $\mathscr{D} V\in C^{k+l+1}_c(A,2R:\mathscr{B},D)^n$ to prove that the Picard iteration procedure is first bounded (for $T$ small)  and then converges in the norm $\|.\|_{k+l,0,U_\alpha,c}$ (for $T$ even smaller so that the equation is locally lipschitz on the a priori bound obtained before in $\|.\|_{k+l,0,U_\alpha,c}$). We let $X_s, s\le T$ be the limit : it belongs to  $C^{k+l}_{c}(A,U:\mathscr{B},D:\mathscr{S})$ and is the unique solution of \eqref{eqn:Xt}. By the definition of $U_\alpha$, for $X_0\in U_\alpha$, $X_s\in U_\alpha$, {in particular $||X_s||\leq R$} . Hence, we can iterate the process by considering 
 for $s\in [0,T]$ the sequence
defined  recursively by $X^{[s,0]}_t=X_s, t\le T$ and for $m\ge 1$
\begin{equation*} 
X_t^{[s,m]} = S_t-S_s-\frac{1}{2} \int_s^t \mathscr{D} V_\alpha (X_u^{[s,m-1]}) du+X_s, t\in [s,s+T]\,.
\end{equation*}
 Again this sequence converges in the norm $\|.\|_{k+l,0,U_\alpha,c}$ to a limit $X^{[s,\infty]}$. As $V$ is $C^{k+l+2}_c(A,2R:\mathscr{B},D)$, such construction has a unique solution so that
 $X^{[s,\infty]}_t=X^{[s',\infty]}_t$ for all $s,s'\le t$. We denote this solution $X^\alpha$. It satisfies  \eqref{poll}. 
We continue by induction to construct $X^\alpha\in C^{k+l}_{c}(A,U_\alpha:\mathscr{B},D:\mathscr{S})$ for all times.  
 The continuity of $t\to X_t$ is clear, as a uniform  limit of continuous functions.
 
 We finally show \eqref{bornenorm}.
 Using the first formula in the proof of Lemma \ref{compositionCklFixed} on the equation on Picard iterates and then taking the limit $m\to\infty$, one gets for $k\geq 1$:

\begin{align} \label{HigherProcess}\begin{split}
&\partial^{k}_{(j_1,...,j_k)}X_t^{(i)} 
= -\frac{1}{2}\int_s^tdu \sum_j\partial_j\mathscr{D}_i V_\alpha (X_u) \# \partial^{k}_{(j_1,...,j_k)}X_u^{(j)}+ \text{ l.o.t}+ \partial^{k}_{(j_1,...,j_k)}X_s\end{split} 
\end{align}
where the lower order terms ($l.o.t$) are with respect to the degree $k$ of differentiation of $X_u$.
Evaluating the differentials and using Lemma \ref{lemhyp} (b), one gets the exponentially decreasing bound  on their norms by induction over $k$ (note that all the other lower order terms are non-linear in derivatives of $X_t$ and thus bring more than one exponential decreasing enabling to compensate the increase via time integrals).
\end{proof}
\begin{Lemma}[It\^o's formula] Under  Assumption \ref{thehyp},  for 
$
P\in B\{ X_1,...,X_n:E_D,R\}$ we have 
\begin{eqnarray}
P(E_{D,X_t^\alpha})(X_t^\alpha)&=&P(E_{D,X_0^\alpha})(X_0^\alpha)+\frac{1}{2}\int_0^t[(\Delta_{V_{\alpha}}+\delta_{V_{\alpha}})P](E_{D,X_s^\alpha})(X_s^\alpha)ds\label{eqito} \\ \nonumber
&&\qquad +\int_0^t\partial[P(E_{D,X_s^\alpha})(X_s^\alpha)]\#dS_s.\end{eqnarray}

\end{Lemma}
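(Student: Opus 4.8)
The plan is to prove the It\^o formula in the usual way: first establish it for monomials in $B\{X_1,\dots,X_n:E_D,R\}$ by induction on the structure of the monomial (in particular on the $Y$-degree $k$ recording the depth of nesting of conditional expectations), and then extend to the whole space $B\{X_1,\dots,X_n:E_D,R\}$ by linearity and continuity, using that the maps involved ($\Delta_{V_\alpha}+\delta_{V_\alpha}$ and the free stochastic integral) are continuous on the relevant completions and that monomials span a dense subspace. Throughout I will use that, by Lemma \ref{EstimXt}, $X_t^\alpha$ is a genuine $C^{1,0}_{tr,V,c}$-valued process, so all the compositions $P(E_{D,X_t^\alpha})(X_t^\alpha)$ make sense and depend continuously (indeed differentiably) on the solution; this is exactly the regularity one needs to apply a stochastic calculus argument.

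The base case $k=0$ is the classical free It\^o formula for analytic functions $P\in B\langle X_1,\dots,X_n:D,R\rangle$ of the solution of the free SDE \eqref{eqn:Xt}: here $\delta_{V_\alpha}$ vanishes, $\Delta_{V_\alpha}=\Delta-\sum_i\partial_i(\cdot)\#\mathscr{D}_iV_\alpha$ by \eqref{deltaV}, and the statement is the well-known second-order Taylor expansion along free Brownian motion, where the second-order (quadratic-variation) term produces the flat Laplacian $\Delta$ via $m\circ(1\otimes E_D\otimes 1)\partial_i\otimes 1\,\partial_i$ — the conditional expectation $E_D$ appearing because $S$ is the free Brownian motion relative to $D$ with covariance $\mathrm{id}_D$ — and the drift term produces $-\tfrac12\sum_i\partial_i(\cdot)\#\mathscr{D}_iV_\alpha$. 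This is essentially Biane--Speicher type computation \cite{biane-speicher0,biane-speicher}, adapted to amalgamation over $D$; I would cite the existing free It\^o calculus and just record the identification of terms with $\Delta_{V_\alpha}$.

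For the inductive step, write a degree-$2k$ monomial $P$ (with its non-crossing pairing $\pi$) as $P = P_1\, E_D(Q)\, P_2$ where $Q$ has $Y$-degree $2(k-1)$ or less and $P_1,P_2$ have strictly smaller total $Y$-degree, or handle the outermost $E_D$. Apply the induction hypothesis to the inner function $Q$ evaluated at $X_s^\alpha$: then $E_D(Q(E_{D,X_s^\alpha})(X_s^\alpha))$ is itself a semimartingale whose drift is $\tfrac12 E_D[(\Delta_{V_\alpha}+\delta_{V_\alpha})Q]$ — and here is the crucial point: applying $E_D$ kills the martingale (free stochastic integral) part because $E_D$ is the conditional expectation compatible with the filtration and the increments $dS_s$ are $D$-central free increments, so $E_D$ of a free stochastic integral against $dS_s$ vanishes. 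This is precisely why the recursive definition $\delta_{V_\alpha}(E_D(Q))=E_D((\Delta_{V_\alpha}+\delta_{V_\alpha})(Q))$ was set up the way it is in the excerpt. Then one differentiates the product $P_1\cdot E_D(Q)\cdot P_2$ using the product (Leibniz/It\^o) rule — the $E_D(Q)$-factor contributes only a finite-variation term (its bracket with the martingale parts of $P_1,P_2$ vanishes, again since $E_D$ of the relevant objects vanishes), so the quadratic-variation interactions occur only among the ``analytic'' letters $X_{i_j}$, reproducing $\Delta_{V_\alpha}$ acting through $\partial_i$ that annihilates $E_D(\cdot)$ by the convention $\partial_i\circ E_D=0$, while the drift from the $E_D(Q)$ factor reproduces exactly the $\delta_{V_\alpha}$ term. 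Collecting the three contributions gives $\tfrac12(\Delta_{V_\alpha}+\delta_{V_\alpha})P$ as the drift and $\sum_j\partial_j[\,P(E_{D,X_s^\alpha})(X_s^\alpha)\,]\#dS_s$ as the martingale part.

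The main obstacle I anticipate is not the algebra of the Leibniz rule but the analytic bookkeeping: one must justify that the stochastic integrals and the time integrals converge in the right completion (the $C^*_{tr}$-type norm) uniformly, that the Picard/limit procedure for $X_t^\alpha$ commutes with the operators $\partial_i$, $\Delta$, $E_D$ used in the expansion, and — most delicately — that $E_D$ genuinely annihilates the stochastic-integral part at each nesting level, which requires the martingale representation/centering property of free Brownian increments relative to $D$ together with adaptedness of the integrands; this is where one leans on the structure of $\mathscr{A}=A*_D(D\otimes W^*(S_t))$ and the conditional expectations $E_u$. A secondary technical point is to make sense of $\Delta_{V_\alpha}$ and $\delta_{V_\alpha}$ on $B\{X_1,\dots,X_n:E_D,R\}$ via the extension \eqref{defDeltaV} (adding auxiliary variables $Z_i$ evaluated at $\mathscr{D}_iV_\alpha(X)$) and to check the chain of definitions is consistent with what the It\^o expansion naturally produces; but these are already granted by Proposition \ref{DeltaAnalytic} and the framework of Section \ref{notation}, so the proof reduces to carefully assembling them.
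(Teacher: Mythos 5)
Your algebraic core matches the paper's: the base case is the Biane--Speicher It\^o formula for genuine polynomials in $B,X_1,\dots,X_n$, and the treatment of nested expectations rests on the same observation the paper uses, namely that $E_D$ (being dominated by the conditional expectation onto the initial algebra) annihilates the free stochastic integral, so that $E_D[Q(X_t^\alpha)]$ is a pure drift term and one can induct over the number of conditional expectations; this is exactly the paper's intermediate step for elements of the algebra generated by $B,X_1,\dots,X_n,E_D$. Where your proposal has a genuine gap is the extension step, which you dispatch with ``linearity and continuity, monomials span a dense subspace.'' The monomial components of $B\{X_1,\dots,X_n:E_D,R\}$ are \emph{extended} Haagerup tensor products $B^{\oeh{D}(|m|+1)}$, and a general coefficient there is \emph{not} a norm limit of finite algebraic tensors $b_0\otimes\cdots\otimes b_p$ (the norm closure of those is only the ordinary Haagerup tensor product); it is only a weak-$*$ limit, obtained in the paper by truncating the standard $M_{1,I}(D)$-type decomposition. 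Consequently a norm-density/continuity argument does not reduce the statement to polynomials, and under weak-$*$ approximation the delicate term is precisely the stochastic integral: the paper has to prove $\int_0^t\partial[(P_n-P)(X_s^\alpha)]\#dS_s\to 0$ by a Clark--Ocone duality pairing against adapted integrands, combined with a time-discretization exploiting the uniform continuity of $s\mapsto X_s^\alpha$, before it can pass to the limit. Your proposal names ``analytic bookkeeping'' as a risk but supplies no mechanism for this step, and the mechanism you do invoke (norm density of monomials) is false in the relevant topology.

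A second, smaller divergence: for the passage from polynomials with nested $E_D$'s to general $P\in B\{X_1,\dots,X_n:E_D,R\}$ you propose redoing the induction at the analytic level plus density, which runs into the same weak-$*$ obstruction at every nesting depth. The paper avoids this entirely by a different device: it embeds $\iota:B\{X_1,\dots,X_n:E_D,R\}\hookrightarrow B\langle X_1,\dots,X_n,S_j,j\in\mathbb N:D,R\rangle$ using auxiliary semicircular variables free with amalgamation over $D$, one family per occurrence of $E_D$, so that $P(E_{D,X})(X)=E_{W^*(X_1,\dots,X_n,B)}[\iota(P)(X,S_i)]$; the case with expectations then follows from the already-established expectation-free case together with weak-$*$ continuity of $E_{W^*(X,B)}$. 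If you want to salvage your route, you must either import this embedding trick or reproduce the weak-$*$ approximation and Clark--Ocone argument at each level of nesting.
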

\begin{proof}For $P$ (later called polynomial) in the algebra generated by $B,X_1,...,X_n$  inside $ B\langle X_1,...,X_n:D,R\rangle$, this is the standard It\^o's formula, see \cite{biane-speicher0,biane-speicher}.
{ By the norm continuity of all operations appearing, the extensions to $\ell^1$ direct sums are obvious, so that it suffices to extend the formula to a monomial $P\in B\langle X_1,...,X_n:D,R\rangle$ having only one term in the direct sum. Finally, using the standard decomposition of elements in extended Haagerup tensor products \cite{M97} thanks to which $P\in B^{\oeh{D}n}$ can be written $P=x_1\otimes_D... \otimes_Dx_n$ with $x_1\in M_{1,I_1}(D), x_i\in M_{I_{i-1},I_{i}}(D)$ with $I_j$ infinite indexing sets but $I_n=1$.
We can truncate these infinite matrices  by finite matrices, giving a net of approximation $P_n$ of $P$. 
 All the terms in It\^o's formula, once evaluated at a given time, will then   converge  in $L^2(M)$ (while staying bounded in $M$).  
Unfortunately, to get convergence of the time integrals we have to be a bit more careful.  Considering evaluations into $L^\infty([0,T],A)$ it is only possible to get a   }
bounded net $P_n$ of polynomials such that   $P_n(X_t^\alpha),P_n(X_0^\alpha)$ converges weak-* to $P(X_t^\alpha),P(X_0^\alpha)$, in $A$, 
 $\partial[P_n(X_s^\alpha)]$ converges weak-* to $\partial[P(X_s^\alpha)]$ in $A\oeh{D}A.$ For every $s\in [0,t]$,  $s\mapsto [\Delta_{V_{\alpha}}P_n](X_s^\alpha)$ 
converges weak-* to $s\mapsto [\Delta_{V_{\alpha}}P](X_s^\alpha)$  in $L^\infty([0,t],A).$ Then considering constant functions with value in $L^1(A)$, it is easy to deduce the first line in the right hand side of It\^o formula for $P_n$ weak-* converges to the one for $P$  in $A$. To check the same result for the stochastic integral term, note that by Clarck-Ocone's formula and a priori boundedness of all the stochastic integrals, it suffices to check that for an adapted bounded $U_s$, we have convergence to $0$ of the pairing $$\langle \int_0^t\partial[(P_n-P)(X_s^\alpha)]\#dS_s,\int_0^tU_s\#dS_s\rangle =\int_0^t\langle\partial[(P_n-P)(X_s^\alpha)], U_s\rangle ds.$$
Since  $(P_n-P)$ is a bounded net in $ B\langle X_1,...,X_n:D,R\rangle$,  $r=\sup_{s\in [0,t]} ||X_s^\alpha||<R$ and 
$X_s^\alpha$ is continuous, for $p$ large enough  $\sup_{s\in [0,t]} ||X_s^\alpha-X_{\lfloor ps\rfloor/p}^\alpha||$ is so small that $||\partial[(P_n-P)(X_s^\alpha))]-\partial[(P_n-P)(X_{\lfloor ps\rfloor/p}^\alpha))]||\leq \epsilon$ uniformly in $n$ for an arbitrary $\epsilon>0$.

Finally $U\in L^2([0,t],L^2(A)\o_DL^2(A))$ so that approximating it by a process with finitely many values and using weak-* convergence of the finitely many values of $\partial[(P_n-P)(X_{\lfloor ps\rfloor/p}^\alpha)]$, one gets  $\int_0^t\langle\partial[(P_n-P)(X_{\lfloor ps\rfloor/p}^\alpha)], U_s\rangle ds\to 0$. This completes the proof of the formula for $P\in B\langle X_1,...,X_n:D,R\rangle$.

For $P$  in the algebra generated by $B,X_1,...,X_n$,  
notice that the previous computations show that
$$E_D[P(X_t^\alpha)]=E_D[P(X_0)]+\frac{1}{2}\int_0^t E_D[\Delta_{V_\alpha} P(X^\alpha_s)] ds$$
so that by induction over the number of conditional expectations, if $P$ belongs to the algebra generated by $B,X_1,...,X_n,E_D$,  
$$E_D[P(X_t^\alpha)]=E_D[P(X_0)]+\frac{1}{2}\int_0^t \delta_{V_\alpha}(E_D(P))(X^\alpha_s) ds$$
Formula \eqref{eqito} follows for $P$ polynomial in  the algebra generated by $B,X_1,...,X_n,E_D$.

The reduction from $
P\in B\{ X_1,...,X_n:E_D,R\}$ to an element of the algebra generated by $B,X_1,...,X_n,E_D$ is similar.  Indeed, we can canonically embed $ \iota: B\{ X_1,...,X_n:E_D,R\}\to B\langle X_1,...,X_n, S_j,j\in\mathbb N:D,R\rangle$ where the $S_i$ are free semi-circle, free with amalgamation over $D$. Each term in $E_D$ corresponds to a different set of $S_i$  and 
$$P(E_{D,X})(X)=E_{W^*(X_1,\ldots,X_n,B)}[\iota(P)(X_1,\ldots,X_n,S_i,i\in\mathbb N)]\,.$$
We can conclude 
by the previous considerations and the weak-* continuity of $E_{W^*(X_1,\ldots,X_n,B)}$.

\end{proof}

\subsection{Semigroup}\label{semigroup}
Hereafter, we will often need a second technical assumption on $D\subset B$ to apply  Theorem \ref{Finite3}.(3)  and Proposition \ref{CyclicPermutations}.(2) in the appendix. The appropriate definitions are given in the appendix in subsection \ref{CyclicH}. \begin{Assumption}\label{thehyp2}
Assume  \begin{itemize}\item[$\bullet$] either that there exists a $D$-basis of  $L^2(B)$ as a right $D$ module $(f_i)_{i\in I}$  which is also a $D$-basis of  $L^2(B)$ as a left $D$ module 
\item[$\bullet$] or that $D$ is a $II_1$ factor and that $L^2(B)$ is an extremal $D-D$ bimodule.\end{itemize}
\end{Assumption}

 As discussed in the appendix, the easiest non-trivial example of a pair $(B,D)$ satisfying this assumption is $B=\Gamma \ltimes D$ a crossed-product by a countable (or finite) discreate group $\Gamma$. In particular, when $B=D$ this assumption is obviously satisfied.

We write $A_{R,App}^n\subset A_{R,UltraApp}^n$ the set $ A_{R,UltraApp}^n$ if $D=\C$ and otherwise the set requiring additionally $M=W^*(B,X_1,...,X_n)\subset W^*(B,S_1,...S_m)=B*_{D}(D\otimes W^*(S_1,...,S_m)$ included into the algebra generated by $m$ semicircular variables over $D$. Here, $m$ can be infinite. This will be crucial  when we will assume  $D\subset B$ satisfying the assumption of Theorem \ref{Finite3}.(3) so that the conclusion of this Theorem and Proposition \ref{CyclicPermutations}.(2) will then be available for $M$ in the sense that $\langle e_D,.\#e_D\rangle$ will be a trace on $D'\cap M\oeh{D} M$. 
 
We define: $$A_{R,\alpha}^n=( {A_{R,App}^n})_{\alpha}.$$
Proposition \ref{ConvSDE} implies $A_{R/3,App}^n\subset A_{R,\alpha}^n.$
Let $$A_{R,\alpha,conj}^n=\{X\in A_{R,\alpha}^n, \partial_i^*(1\o 1)\in W^*(X, B),i=1,...,n\}\,.$$
 Using \cite[Theorem 27]{dabrowski:SPDE}
 (first for $V$ polynomial and then for all $V$  by density), one gets that for any $X\in A_{R,\alpha}^n$, 
$X_t^\alpha \in  A_{R,\alpha,conj}^n$ for any $t>0.$
Hereafter we thus assume that $X_0\in A_{R,\alpha,conj}^n$.

Denote 
$A_{R,\alpha, conj1}^n=
A_{R,\alpha, conj}^n$, $A_{R,\alpha, conj0}^n=
A_{R,\alpha}^n$. Hereafter, we will consider only functions of $X$ and $E_{D,X}$, we therefore drop the dependency in $E_{D,X}$ in the notations.
Because we will need later to apply the cyclic gradient to the image of the semi-group, we will need the following  ad'hoc space 
$C_{tr}^{k,l{;}-1}(A,A_{R,\alpha,conj}^n)$ which is the completion of  $B_c\{X_1,\ldots,X_n:E_D,R,\C\}$ for 
 \begin{align*}&||P||_{C^{k,l{;}-1}_{tr}(A,U:B,E_D)}=||\iota(P)||_{k,l,U}+1_{k\geq 1}{\sum_{p=1}^l\sum_{i=1}^n}||\mathscr{D}_{i}(P)||_{k,p,U}
 \end{align*}
 Generalizations of this norm are discussed in the appendix \eqref{normehorrible}.

\begin{Proposition}\label{semig} Suppose Assumptions \ref{thehyp} and \ref{thehyp2} hold.  Let $k\in\{ 2,3\},l\geq 0$ be given and assume
$V,W\in C_c^{k+l+2}(A,2R:B,D)$.  The process $X_t^\alpha$ of Lemma \ref{EstimXt} defines a 
strongly continuous semigroup $
\varphi_t^{\alpha}$  
on $C_{tr}^{k,l}(A,A_{R,\alpha,conj}^n:B,E_D)$  and, {on $C_{tr}^{k,l{;}-1}(A,A_{R,\alpha,conj}^n:B,E_D),$ if moreover $V,W\in C_c^{k+l+3}(A,2R:B,D)$.}
They are given by  the formula
$$\varphi_t^{\alpha}(P)=E_0(P(X_t^\alpha))\,.$$
It satisfies the exponential bounds  :
$$||\varphi^\alpha_t(P)||_{k,l,A_{R,\alpha,conj }^n\geq 1}\leq C_{k,l}||P||_{k,l,A_{R,\alpha,conj}^n\geq 1}e^{-ct/2},$$

Moreover, when restricted to $C_c^{k+l}(A,A_{R,\alpha,conj}^n:B,E_D)$, one gets  strongly continuous one parameter families of maps $$\varphi_t^{\alpha\prime}:C_c^{k+l}(A,A_{R,\alpha,conj}^n:B,E_D)\to C_{tr,V_\alpha}^{k,l}(A,A_{R,\alpha,conj}^n:B,E_D),$$
 with $\varphi_t^{\alpha}=\iota\varphi_t^{\alpha\prime}$ for 
the canonical map

\noindent $\iota:C_{tr,V_\alpha}^{k,l}(A,A_{R,\alpha,conj }^n:B,E_D)\to C_{tr}^{k,l}(A,A_{R,\alpha,conj}^n:B,E_D)$. It satisfies
$$||\varphi_t^{\alpha\prime}(P)||_{C_{tr,V_\alpha}^{k,l}(A,A_{R,\alpha,conj}^n),\geq 1}
\leq C_k||P||_{k,l,A_{R,\alpha,conj }^n\geq 1}e^{-ct/2}.$$

\end{Proposition}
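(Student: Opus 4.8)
The strategy is to read every assertion off the representation $\varphi_t^{\alpha}(P)=E_0(P(X_t^{\alpha}))$, using the regularity and decay estimates for the flow $X_t^\alpha$ of Lemma~\ref{EstimXt} and the contraction estimates of Lemma~\ref{lemhyp}. First I would check that $\varphi_t^\alpha$ is well defined. By Lemma~\ref{EstimXt}, $X_0\mapsto X_t^\alpha(X_0,\mathscr{S})$ is an element of $C^{k+l}_{c}(A,A_{R,\alpha,conj}^n:\mathscr{B},D:\mathscr{S})$ (resp.\ defines an element of $C^{k,l}_{tr,V_\alpha,c}(A,A_{R,\alpha,conj}^n:\mathscr{B},E_D:\mathscr{S})$), so the composition map $P\mapsto P(X_t^\alpha)$ sends $C^{k,l}_{tr}(A,A_{R,\alpha,conj}^n:B,E_D)$ (resp.\ $C^{k,l;-1}_{tr}$, resp.\ $C^{k+l}_c$) continuously into $C^{k,l}_{tr}(\mathscr{A},(A_{R,\alpha,conj}^n)_A:\mathscr{B},E_D:\mathscr{S})$ by Corollary~\ref{compositionCkl}; applying $E_0=E_{\mathscr{A}_0}$, which by Proposition~\ref{ExpectationCkl} extends to a complete contraction $C^{k,l}_{tr}(\mathscr{A},\cdot:\mathscr{B},E_D:\mathscr{S})\to C^{k,l}_{tr}(A,\cdot:B,E_D)$, brings the result back to a function of $X_0$ alone in the required space. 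The semigroup property is the Markov property of the SDE: relation~\eqref{poll} with $\tau=s$ reads $X_{t+s}^\alpha(\cdot,\mathscr{S})=\theta_s'[X_t^\alpha(\cdot,\mathscr{S})]\circ_s X_s^\alpha(\cdot,\mathscr{S})$, and since $\mathscr{S}_{\geq s}$ is free with amalgamation over $D$ from $\mathscr{A}_s$, which contains $X_s^\alpha(X_0)$, one has $E_{\mathscr{A}_s}\big[P(\theta_s'[X_t^\alpha](Y))\big]=(\varphi_t^\alpha P)(Y)$ for $Y\in\mathscr{A}_s$; evaluating at $Y=X_s^\alpha(X_0)$ and using $E_0=E_0\circ E_{\mathscr{A}_s}$ gives $\varphi^\alpha_{t+s}(P)=\varphi^\alpha_s(\varphi^\alpha_t(P))$. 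Strong continuity follows from the norm continuity of $t\mapsto X_t^\alpha$ in Lemma~\ref{EstimXt} together with continuity of composition and of $E_0$.

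For the exponential bounds of $\varphi_t^\alpha$ on $C^{k,l}_{tr}$ and $C^{k,l;-1}_{tr}$, I would differentiate $\varphi_t^\alpha(P)=E_0(P(X_t^\alpha))$ using the chain rule~\eqref{eqdif} and its iterates. Each free difference quotient $\partial^p_{i}$ ($p\le k$) and each full differential $(D_H^X)^p$ ($p\le l$) of $P(X_t^\alpha)$ is a finite sum of terms in which a coefficientwise derivative of $P$, evaluated at $X_t^\alpha$ and hence bounded by $\|P\|_{k,l,A_{R,\alpha,conj}^n,\geq 1}$, is $\#$-composed with a product of derivatives $\partial^\bullet X_t^\alpha$ and $(D_H^X)^\bullet X_t^\alpha$ at least one of which occurs; each such factor is controlled by $\|X_t^\alpha\|_{k+l,0,A_{R,\alpha,conj}^n,c,\geq 1}\le C_{k+l}e^{-ct/2}$ via~\eqref{bornenorm}. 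Since $E_0$ is a complete contraction this yields $\|\varphi^\alpha_t(P)\|_{k,l,A_{R,\alpha,conj}^n,\geq 1}\le C_{k,l}e^{-ct/2}\|P\|_{k,l,A_{R,\alpha,conj}^n,\geq 1}$. For the $C^{k,l;-1}_{tr}$ norm one must in addition estimate $\mathscr{D}_{i}(\varphi_t^\alpha P)$; by~\eqref{flip} this is $\partial_i(\varphi_t^\alpha P)$ suitably paired, so running the same bookkeeping on its difference quotients and differentials produces terms with one more derivative of $X_t^\alpha$ than before --- this is exactly why $V,W\in C_c^{k+l+3}$ is now needed (so that Lemma~\ref{EstimXt} applies with $k+l$ replaced by $k+l+1$) --- and the factor $e^{-ct/2}$ is again supplied by~\eqref{bornenorm}.

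For the refined map $\varphi_t^{\alpha\prime}:C_c^{k+l}(A,A_{R,\alpha,conj}^n:B,E_D)\to C^{k,l}_{tr,V_\alpha}(A,A_{R,\alpha,conj}^n:B,E_D)$, the only terms of the target norm not already controlled above are $\|(\Delta_{V_\alpha}+\delta_{V_\alpha})\varphi_t^\alpha P\|_{C^*_{tr}}$ (present when $k\geq 2$) and the cyclic contributions $\|\mathscr{D}_{i,Q}\varphi_t^\alpha P\|_{k,p,\cdot}$. Here I would first apply $E_0$ to It\^o's formula~\eqref{eqito}: the stochastic integral is a martingale for the Brownian filtration, so it is killed by $E_0$, giving the Kolmogorov identity in integral form $\varphi_t^\alpha(P)=P+\tfrac12\int_0^t\varphi_s^\alpha\big((\Delta_{V_\alpha}+\delta_{V_\alpha})P\big)\,ds$; differentiating and using the semigroup property (and that $\varphi_t^\alpha P$ lies in the domain of $\Delta_{V_\alpha}+\delta_{V_\alpha}$ since $k\geq 2$) yields $(\Delta_{V_\alpha}+\delta_{V_\alpha})\varphi_t^\alpha P=\varphi_t^\alpha\big((\Delta_{V_\alpha}+\delta_{V_\alpha})P\big)=2\partial_t\varphi_t^\alpha P$. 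Since $\Delta_{V_\alpha}+\delta_{V_\alpha}$ is built from $\partial_i$ (see~\eqref{deltaV} and the recursion defining $\delta_{V_\alpha}$) and hence involves only derivatives of its argument, $\|(\Delta_{V_\alpha}+\delta_{V_\alpha})\varphi_t^\alpha P\|_{C^*_{tr}}\le C\,\|\varphi_t^\alpha P\|_{2,l,\cdot,\geq 1}\le C'e^{-ct/2}\|P\|_{k,l,\cdot,\geq 1}$ by the first bound --- no ergodicity input is needed. For the cyclic part I would apply $\mathscr{D}_i$ to the Kolmogorov equation and use~\eqref{derivcyclic} to get $\partial_t(\mathscr{D}_i\varphi_t^\alpha P)=\tfrac12\big[(\Delta_{V_\alpha}+\delta_{V_\alpha})\mathscr{D}_i\varphi_t^\alpha P-\sum_j\mathscr{D}_{i,\mathscr{D}_j\varphi_t^\alpha P}\mathscr{D}_jV_\alpha\big]$: this is precisely the linear evolution governed by the operator $\tfrac12(\partial_k\mathscr{D}_jV_\alpha)$ of Lemma~\ref{lemhyp}, perturbed by a source that itself decays like $e^{-ct/2}$, so Lemma~\ref{lemhyp}(b) (Duhamel plus Gronwall) gives $\|\mathscr{D}_i\varphi_t^\alpha P\|\le Ce^{-ct/2}\|P\|$, and the same chain-rule bookkeeping on its difference quotients and differentials gives the stated estimate; finally $\iota\varphi_t^{\alpha\prime}=\varphi_t^\alpha$ holds by construction and strong continuity transfers as before.

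The main obstacle I expect is the combinatorial bookkeeping of the third paragraph: enumerating precisely which terms appear when $\mathscr{D}_i$, $\partial^p_\cdot$ and $(D_H^X)^p$ are applied to the composition $P(X_t^\alpha)$, checking that every one of them carries at least one exponentially decaying factor coming from a derivative of $X_t^\alpha$, and verifying that the evolution equation for $\mathscr{D}_i\varphi_t^\alpha P$ is exactly of the form handled by Lemma~\ref{lemhyp} --- i.e.\ that the perturbing term is the Hessian $(\partial_k\mathscr{D}_jV_\alpha)$, so that $(c,2R)$ $h$-convexity of $V_\alpha$ (guaranteed by Assumption~\ref{thehyp}) applies. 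The remaining analytic points --- justifying differentiation under $E_0$ and under the time integral, and that all these operations stay inside the relevant (cyclic) Haagerup-tensor function spaces --- are handled by the composition and expectation lemmas of the appendix.
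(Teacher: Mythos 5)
Your first two paragraphs track the paper's own argument (composition of $X_t^\alpha$, Lemma \ref{compositionCklFixed}/Corollary \ref{compositionCkl}, contractivity of $E_0$ from Proposition \ref{ExpectationCkl}, the Markov identity \eqref{poll}, and the decay via \eqref{composition1stOrder} and \eqref{bornenorm}), but you never verify the consistency condition that makes the semigroup composition legitimate: to iterate you must know that $X_t^\alpha(X_0,\{S_s\})$ again has conjugate variables, i.e.\ lands in $A^n_{R,\alpha,conj}$ (a $Comp(U,U,\cdot)$ condition). This is exactly where Assumption \ref{thehyp2} enters, through Proposition \ref{CyclicPermutations}(2) (traciality of $\langle e_D,\cdot\#e_D\rangle$) and Proposition \ref{boundConj}, and your proposal uses Assumption \ref{thehyp2} nowhere, so this hypothesis of the statement is left idle and the semigroup property is not actually justified at the level of the function spaces.

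The more serious gap is in your third paragraph. The paper bounds the $C^{k,l}_{tr,V_\alpha}$-norm of $\varphi_t^{\alpha\prime}(P)$ \emph{before} applying $E_0$: for $P\in C^{k+l}_c$ the composition $P(X_t^\alpha)$ is still an expectation-free function, on which $\delta_{V_\alpha}$ vanishes and both the $\Delta_{V_\alpha}$-term and the full supremum $\sup_Q\|\mathscr{D}_{i,Q(X')}(\cdot)\|_{k,p,U^m}$ are controlled \emph{algebraically} by free-difference-quotient seminorms (Lemma \ref{Ckc}), which decay by \eqref{bornenorm}; then contractivity of $E_0$ on the $C^{k,l}_{tr,V}$-norm (Proposition \ref{ExpectationCkl}, using \eqref{DeltaExpectation}) finishes. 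Your route works \emph{after} $E_0$ and breaks at several points: (i) the inequality $\|(\Delta_{V_\alpha}+\delta_{V_\alpha})\varphi_t^\alpha P\|_{C^*_{tr}}\le C\|\varphi_t^\alpha P\|_{2,l,\ge 1}$ is unjustified, because $\varphi_t^\alpha P$ contains expectations and on such functions $\delta_{V_\alpha}$ is, by Lemma \ref{cyclic}(1), a directional derivative in the direction $\xi-\mathscr{D}V_\alpha(X)$ of the conjugate variables, which are not uniformly bounded over $A^n_{R,\alpha,conj}$ (membership in that set only asks $\xi_i\in W^*(X,B)$); (ii) the commutation $(\Delta_{V_\alpha}+\delta_{V_\alpha})\varphi_t^\alpha=\varphi_t^\alpha(\Delta_{V_\alpha}+\delta_{V_\alpha})$ and the differentiability of $t\mapsto\varphi_t^\alpha P$ are the content of Proposition \ref{infgen}, which is proved \emph{after} and \emph{using} the present proposition (its bounds and strong continuity on $C^{k,l;-1}_{tr}$), so invoking them here is circular, and the paper explicitly avoids such manipulations because $\mathscr{D}$ is not known to be closable; (iii) even granting the commutation, $\|\varphi_t^\alpha((\Delta_{V_\alpha}+\delta_{V_\alpha})P)\|_{C^*_{tr}}$ is a plain sup-norm and does not decay ``for free'' -- its decay is precisely the ergodic content you claim is not needed; and (iv) your Duhamel/Gronwall treatment of $\mathscr{D}_i\varphi_t^\alpha P$ only addresses $\mathscr{D}_{i,1}$ in sup-norm, whereas the $C^{k,l}_{tr,V_\alpha}$-norm requires $\sup_Q\|\mathscr{D}_{i,Q(X')}(\cdot)\|_{k,p,U^m}$ with free difference quotients in all variables, and Lemma \ref{lemhyp} is an ODE statement for $(A^{\oehc{D}m})^n$-valued processes along the flow, not for function-space-valued evolutions, so it cannot be applied as stated. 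The decay of cyclic-gradient data is obtained in the paper at the level of the derivative processes \eqref{HigherProcess} inside Lemma \ref{EstimXt}, not by a PDE argument on the semigroup.
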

\begin{proof}
$\varphi_t^\alpha$ is well defined  in all cases by composing the maps $X_t^\alpha$ from Lemma \ref{EstimXt}, the composition $(P,X_t)\to P(X_t)$,
see Lemma \ref{compositionCklFixed},   and expectations $E_B$ from Proposition \ref{ExpectationCkl}. 
To get a semigroup we apply composition for $\tilde U=U= A_{R,\alpha,conj}^n$, we have to check the consistency condition for composition, i.e. for any $X_0\in U,$ we have to check that $X_t^\alpha(X_0,\{S_s, s\in [0,t]\})$ has one  conjugate variable. {This is proved in Proposition \ref{boundConj} in the  appendix. Note this is where we need Assumption \ref{thehyp2} and the condition $A_{R,\alpha}^n\subset A_{R,App}^n$ in order to apply Proposition \ref{CyclicPermutations} to get $M=W^*(B,X_0)*_D(D\otimes W^*(S_t,t>0)$,  $\tau=\langle e_D, .\#e_D\rangle$ is a trace on $D'\cap M\oeh{D} M$.} The construction of   $\varphi_t^{\alpha\prime}$ and the consistency follow similarly.
 
 Let us check the semigroup property. It follows from the following formal computation :
\begin{align*}\varphi_u^\alpha(\varphi_{t-u}^\alpha(P))&=E_0(\varphi_{t-u}^\alpha(P)\circ X_u(., \{S_s, s\in [0,u]\}))
\\&=E_0([E_0(P\circ X_{t-u}(., \{S_s, s\in [0,t-u]\}))]\circ X_u(., \{S_s, s\in [0,u]\}))
\\&=E_0(E_u(\theta_u'[(P\circ X_{t-u}(., \{S_s, s\in [0,t-u]\})]\circ_u X_u(., \{S_s, s\in [0,u]\})))
\\&=E_0(E_u(P\circ[\theta_u'[( X_{t-u}(., \{S_s, s\in [0,t-u]\})]\circ_u X_u(., \{S_s, s\in [0,u]\})]))
\\&=E_0(E_u(P\circ X_t(., \{S_s, s\in [0,t]\})))
\\&=E_0(P\circ X_t(., \{S_s, s\in [0,t]\}))
\\&=\varphi_t^\alpha(P)
\end{align*}
where $\circ_u$ is the composition defined in Proposition \ref{ExpectationCkl}.
To justify this computation, the two first and last equations are the definitions of the ``semigroup",  third, fourth and next-to -last lines come from Proposition \ref{ExpectationCkl} and the fifth line from Lemma \ref{EstimXt}.
We thus have the semigroup relation. The  strong continuity  on both spaces comes from continuity in $u$ of $X_u^\alpha$  
(see Lemma \ref{EstimXt}) and continuity of various compositions and $E_0$  (see  Proposition \ref{ExpectationCkl}).

Using the variant of \eqref{composition1stOrder} { with $U=V=A^n_{R,\alpha,conj}$,}
 in the context of Proposition \ref{ExpectationCkl},  that is adding Brownian motions filtration, we get
\begin{align*}&||P(X_t)||_{k,l,U\geq 1}\\&\leq C(k,l,n)||P||_{k,l,V\geq 1}
\left(1+\max_{i=1,...,n}||X_t^i||_{k,l,U\geq 1}\right)^{k+l-1}\max_{i=1,...,n}||X_t^i||_{k,l,U\geq 1}\,.\end{align*}

Using  contractivity of expectations in Proposition \ref{ExpectationCkl} and bounds in Lemmas \ref{Ckc} and \ref{EstimXt}, one gets the  exponential bounds  as claimed. The bounds for the seminorm $||P(X_t)||_{k,l,U\geq 1}$ follow similarly. 

Moreover, we get similar results for $\varphi^{\alpha\prime}$ by noticing that if $P\in C_c^{k+l}(A,A_{R,\alpha,conj }^n,B,E_D)$, 
$P(X_t^\alpha)\in C_c^{k+l}(A,A_{R,\alpha,conj }^n,B,E_D:\mathscr S)$. The seminorm on this space is equivalent to 
$||P(X_t)||_{k,l,U\geq 1}$ which we already estimated. Continuity of conditional expectation, see Proposition \ref{ExpectationCkl},
allows to get the exponential bounds for $\varphi^{\alpha\prime}$.
\end{proof}
We next find the generator for the semi-group $\varphi_t^{\alpha \prime}$:  it is given by $L_\alpha=\frac{1}{2}(\Delta_{V_{\alpha}}+\delta_{V_{\alpha}})$ and we precise in the next Lemma some dense domains of this generator (without looking for the maximal one).   
\begin{Proposition}\label{infgen}Assume Assumption \ref{thehyp} and \ref{thehyp2} and let $k\in\{2,3\},l\ge 2,$ be given with
$V,W\in C_c^{k+l+2}(A,2R:B,D)$ as before.
We let $\iota'$ be the canonical map
$$\iota':C_{tr,V_\alpha}^{k,l}(A,A_{R,\alpha,conj }^n:B,E_D)\to C_{tr}^{k-2,0{;}-1}(A,A_{R,\alpha,conj}^n:B,E_D),$$
then for any $P\in C_c^{k+l}(A,A_{R,\alpha,conj }^n:B,E_D), k\geq 2$,  $t\mapsto \iota'(\varphi_t^{\alpha\prime}(P))$ is $C^1$ and
$$\frac{\partial}{\partial t}\iota'(\varphi_t^{\alpha\prime}(P))=L_\alpha(\varphi_t^{\alpha\prime}(P)),$$
where  $L_\alpha : C_{tr,V_\alpha}^{k,l}(A,A_{R,\alpha,conj }^n:B,E_D)\to C_{tr}^{k-2,0{;}-1}(A,A_{R,\alpha,conj}^n:B,E_D)$ is given by   $L_\alpha=\frac{1}{2}(\Delta_{V_{\alpha}}+\delta_{V_{\alpha}})$. 
 \end{Proposition}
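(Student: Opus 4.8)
The plan is to extract the generator directly from It\^o's formula by averaging out the free Brownian motion with the conditional expectation $E_0$, and then to promote the resulting integral identity to a genuine $C^1$-statement using the continuity and semigroup properties established in Proposition \ref{semig}.

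\textbf{Step 1: the integral identity on a dense subspace.} I would apply $E_0$ to the It\^o formula \eqref{eqito} for the solution $X_t^\alpha$ started at an arbitrary $X_0\in A_{R,\alpha,conj}^n$. The stochastic integral against $S$ is a free martingale and is annihilated by $E_0$ (the conditional expectation onto time $0$, i.e.\ onto $A$), while the uniform bounds of Lemma \ref{EstimXt} together with the continuity of $s\mapsto X_s^\alpha$ allow one to interchange $E_0$ with the time integral. Since $\varphi_s^\alpha(Q)=E_0(Q(X_s^\alpha))$ and $L_\alpha=\frac{1}{2}(\Delta_{V_\alpha}+\delta_{V_\alpha})$, this gives, for every polynomial $P$ in the algebra generated by $B,X_1,\dots,X_n,E_D$,
\[
\varphi_t^\alpha(P)\;=\;P\;+\;\int_0^t \varphi_s^\alpha\big(L_\alpha P\big)\,ds .
\]

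\textbf{Step 2: continuity of $L_\alpha$ and extension.} I would then extend this identity, by density of $B_c\{X_1,\dots,X_n:E_D,R^+\}$ and continuity, to all $P\in C_c^{k+l}(A,A_{R,\alpha,conj}^n:B,E_D)$ and, more generally, to all $P\in C_{tr,V_\alpha}^{k,l}(A,A_{R,\alpha,conj}^n:B,E_D)$. The essential point is that $L_\alpha$ is continuous from $C_{tr,V_\alpha}^{k,l}$ to $C_{tr}^{k-2,0;-1}$: the value $(\Delta_{V_\alpha}+\delta_{V_\alpha})P$ is controlled by the term $1_{k\ge 2}\|(\Delta_{V_\alpha}+\delta_{V_\alpha})P\|_{C^*_{tr}}$ built into the $C_{tr,V_\alpha}^{k,l}$-norm, its free difference quotients $\partial^p_i(\Delta_{V_\alpha}+\delta_{V_\alpha})P$ for $p\le k-2$ are rewritten in terms of $\partial$'s of $P$ and $V$ (this is where the two extra free difference quotients are consumed), and, when the $;-1$ part is present, the cyclic gradients $\mathscr{D}_i(\Delta_{V_\alpha}+\delta_{V_\alpha})P$ are handled by the commutation relation \eqref{derivcyclic}, which replaces them by $(\Delta_{V_\alpha}+\delta_{V_\alpha})\mathscr{D}_iP$ plus lower-order terms $\mathscr{D}_{i,\mathscr{D}_jP}\mathscr{D}_jV$, all dominated by the remaining terms of $\|P\|_{C_{tr,V_\alpha}^{k,l}}$ since $V,W\in C_c^{k+l+2}$. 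Combined with the strong continuity of $\varphi_s^\alpha$ on $C_{tr}^{k-2,0;-1}$ (Proposition \ref{semig}), both sides of the displayed identity depend continuously on $P$, so the identity passes to the completion.

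\textbf{Step 3: differentiation and identification of the derivative.} From the extended identity, $s\mapsto\varphi_s^\alpha(L_\alpha P)$ is continuous into $C_{tr}^{k-2,0;-1}$, so $t\mapsto\iota'(\varphi_t^{\alpha\prime}(P))$ is $C^1$ with $\partial_t\iota'(\varphi_t^{\alpha\prime}(P))=\varphi_t^\alpha(L_\alpha P)$. To rewrite the right-hand side as $L_\alpha(\varphi_t^{\alpha\prime}(P))$ I would use the semigroup property of Proposition \ref{semig}: writing $Q_t:=\varphi_t^{\alpha\prime}(P)\in C_{tr,V_\alpha}^{k,l}$, one has $\varphi_{t+h}^\alpha(P)=\varphi_h^\alpha(\iota(Q_t))$ for $h\ge 0$, and applying the identity of Step 2 to $Q_t$ gives $\iota'(\varphi_{t+h}^{\alpha\prime}(P))-\iota'(\varphi_t^{\alpha\prime}(P))=\int_0^h\varphi_s^\alpha(L_\alpha Q_t)\,ds$; dividing by $h$ and letting $h\to 0^+$ shows that the right derivative of $t\mapsto\iota'(\varphi_t^{\alpha\prime}(P))$ equals $L_\alpha Q_t=L_\alpha(\varphi_t^{\alpha\prime}(P))$. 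Since this is continuous in $t$, the function is genuinely $C^1$ and the formula $\partial_t\iota'(\varphi_t^{\alpha\prime}(P))=L_\alpha(\varphi_t^{\alpha\prime}(P))$ follows (and, incidentally, $\varphi_t^\alpha(L_\alpha P)=L_\alpha\varphi_t^{\alpha\prime}(P)$). I expect the main obstacle to be Step 2: verifying, at the level of the completed spaces rather than pointwise in $L^2$, that $E_0$ legitimately commutes with the time integral and annihilates the martingale term in \eqref{eqito}, and that $L_\alpha$ maps $C_{tr,V_\alpha}^{k,l}$ continuously into $C_{tr}^{k-2,0;-1}$ --- which is precisely the reason the extra term is placed in the definition of the $C_{tr,V_\alpha}^{k,l}$-norm, the ad hoc $;-1$ spaces are introduced, and the higher smoothness of $V,W$ is assumed.
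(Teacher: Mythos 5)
There is a genuine gap, and it sits exactly where you located the "main obstacle" but not where you propose to resolve it. Your Step 2 extends the integral identity $\varphi_t^\alpha(P)=P+\int_0^t\varphi_s^\alpha(L_\alpha P)\,ds$ from polynomials to the completions "by density and continuity". For that to work, the identity must first hold \emph{on the dense subspace as an identity in} $C_{tr}^{k-2,0;-1}$, i.e.\ including equality of the cyclic-gradient data entering that seminorm. But It\^o's formula plus $E_0$ only yields an evaluation-level identity (equality of the functions $X_0\mapsto\cdots$ in $A$, and, via closability of the free difference quotient, of their $\partial^{p}$'s). The elements $\varphi_t^\alpha(P)$, $\varphi_s^\alpha(L_\alpha P)$ of the completed spaces carry cyclic-gradient data coming from their approximating sequences, and since the full cyclic gradient $\mathscr{D}$ is \emph{not known to be closable} (the paper states this explicitly), an element of the separation-completion is not determined by its evaluations; so equality of evaluations does not give equality in $C_{tr}^{k-2,0;-1}$, and your continuity argument has nothing to pass to the limit. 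Your appeal to \eqref{derivcyclic} only addresses boundedness of $L_\alpha$, not this identification problem.

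The paper's proof is organized around precisely this obstruction: it does not claim the It\^o identity \eqref{ItoSemig} holds in the $;-1$ space, but instead shows the relation \emph{after applying the cyclic gradient in each representation}. Concretely, it uses the duality \eqref{CyclicDifferential} between $\mathscr{D}_{X_0,i}$ and the full differential $d_{X_0}$, differentiates the evaluated identity \eqref{ItoSemig} in $X_0$ (this is where the hypothesis $l\ge 2$ and the injectivity of $C^{0,l}_{tr}\to C^0(A_{R,\alpha}^n,A)$ are used — note your argument never invokes $l\ge2$, which is a warning sign), deduces the identity for the evaluated cyclic gradients, and only then applies $k-2$ free difference quotients using \emph{their} closability, together with Lemma \ref{cyclic} and the composition results, to get the $\|\cdot\|_{k-2,0;-1}$ bound and the right derivative; a separate computation (It\^o started at time $t-s$ plus the semigroup property and strong continuity) gives the left derivative. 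Your Step 3 (FTC from the integral identity, or continuity of the right derivative) would be a legitimate simplification of that last part, but only once the identity is actually established in the cyclic-gradient-controlling space, which is the real content of the proof and is missing from your argument.
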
  
\begin{proof}
To compute the generator we start with It\^o formula \eqref{eqito}.
Taking a conditional expectation, we deduce for $P\in B_c\{X_1,\ldots,X_n:E_D,R,\C\}$,
\begin{equation}\label{ItoSemig}\varphi_t^\alpha(P)(X_0)-P(X_0)-\frac{t}{2}(\Delta_{V_{\alpha}}+\delta_{V_{\alpha}})P(X_0) =\frac{1}{2}\int_0^t(\varphi^\alpha_s-\varphi^\alpha_0)[(\Delta_{V_{\alpha}}+\delta_{V_{\alpha}})P](X_0)ds\,.\end{equation}
 We now want to check the same relation under a full cyclic gradient $\mathscr{D}$.  We need to check that all the terms above are in $C^{k,1{;}-1}_{tr}(A,R:B,E_D)$ for our chosen $P$. But  we won't check that  the relation \eqref{ItoSemig} is valid in this space, we will only show this relation holds after application of the cyclic gradient in   each representation. Indeed, we do not know if the full cyclic gradient $\mathscr D$ is closable, on the contrary to the free difference quotient.  
 
From the definition of  $[(\Delta_{V_{\alpha}}+\delta_{V_{\alpha}})P]$  (see Def.~\eqref{defDeltaV})  as an evaluation of $$[\Delta_{V_0(Z)}+\delta_{V_0(Z)}](P)\in B_c\{X_1,\ldots,X_n,Z_1,\ldots,Z_n:E_D,R,\max_i(||\mathscr{D}_i V_\alpha||_{0,0,A_R^n})\C\}$$ at $(X,Z)=(X,\mathscr{D} V_\alpha(X))$
\noindent
$ \in (C^{k+2}_c(A,2R,B,E_D))^{2n}$, the fourth composition result in Corollary \ref{compositionCkl} gives the expected $[(\Delta_{V_{\alpha}}+\delta_{V_{\alpha}})P]\in C^{k,1{;}-1}_{tr}(A,R:B,E_D)$. The fact that the terms below semigroups are in the expected space then follows from Proposition \ref{semig} since $V,W\in C^{k+4}_c(A,2R,B,E_D)$.

Note that all our terms are known to be in our expected space, we can apply \eqref{CyclicDifferential} so that the equation \eqref{ItoSemig} under $\mathscr{D}$  is  true in any representation $X_0\in A_{R,\alpha}^n$ if it is true under the differential $d_{X_0}$. 
 Integrals are dealt with thanks to continuity of the semigroup with value in $C^{k,l{;}-1}_{tr}(A,R:B,E_D)$ from the previous Lemma. Seeing both sides of the equation \eqref{ItoSemig} as a function of $X_0$, one can differentiate both sides of \eqref{ItoSemig} under $d_{X_0}$ and obtain equality of both sides  in each representation.  We deduce the equality under 
  the abstract $d_{X_0}$-differential  in $C^*_{tr}$ by injectivity of the map from  $C^{0,l}_{tr}$ to $C^0(A_{R,\alpha}^n,A)$  (contrary to the space $C^{k,1{;}-1}_{tr}$ before where this is unknown). We have thus deduced the equality in each representation :
\begin{eqnarray*}
&&\mathscr{D}_{X_0,i}\varphi_t^{\alpha}(P)(X_0)-\mathscr{D}_{X_0,i}P(X_0)-\frac{t}{2}\mathscr{D}_{X_0,i}(\Delta_{V_{\alpha}}+\delta_{V_{\alpha}})P(X_0)\qquad \qquad\\
&&\qquad\qquad\qquad\qquad  =\frac{1}{2}\int_0^t\mathscr{D}_{X_0,i}(\varphi_s^{\alpha}-\varphi_0^{\alpha})[(\Delta_{V_{\alpha}}+\delta_{V_{\alpha}})P](X_0)ds.\end{eqnarray*}
Applying Lemma \ref{cyclic} and seeing $P$ as an element of $  C_{tr,V_\alpha}^{k,l}(A,A_{R,\alpha,conj}^n)$, one knows that all the terms of the equality are in the domain of  order $k-2$  free difference quotient 
and without having applied cyclic derivative, also in the domain of  order $k-2$ free difference quotient (since $k,l\geq 2$). By closability, if $X_0\in A_{R,\alpha,conj}^n$ we can apply the $k-2$ order free difference quotient to the relation above and deduce corresponding relations.
Therefore,  the following bound extends for $k\geq 2$ to $P\in C_{tr,V_\alpha}^{k,l}(A,A_{R,\alpha,conj}^n)$:
$$||\frac{1}{t}(\varphi_t^{\alpha}(P)-P)-\frac{1}{2}(\Delta_{V_{\alpha}}+\delta_{V_{\alpha}})P||_{k-2,0{;}-1,A_{R,\alpha,conj}^n}\qquad$$
$$\qquad\leq 
\frac{1}{2t}\int_0^t||(\varphi_s^{\alpha}-\varphi_0^{\alpha})[(\Delta_{V_{\alpha}}+\delta_{V_{\alpha}})P]||_{k-2,0{;}-1,A_{R,\alpha,conj}^n}\to 0$$
goes to zero when $t\to0^+$, by the  strong continuity of $\varphi_s^{\alpha}$ on 
 $C_{tr}^{k-2,0{;}-1}(A,A_{R,\alpha,conj}^n)$. This  gives the right derivative of $\varphi_t^\alpha$ at zero.


Now for $Q\in C_c^{k+l}(A,A_{R,\alpha,conj }^n,B,E_D),$ by the semigroup property $\varphi_{s+t}^{\alpha}(Q)=\varphi_{s}^{\alpha}(\iota'\varphi_t^{\alpha\prime}(Q))$ and applying the reasoning above to $P=\varphi_t^{\alpha\prime}(Q)$, one gets the right derivative at any time.

To compute the left derivative, we start similarly from the result of It\^o Formula to $P=\varphi_{t-s}^{\alpha\prime}Q$ starting at time $t-s$ and using also the semigroup property
\begin{align*}\varphi_t&(Q)(X_0)-\varphi_{t-s}(Q)(X_0)-\frac{s}{2}(\Delta_{V_{\alpha}}+\delta_{V_{\alpha}})\varphi_t^{\alpha\prime}(Q)(X_0) \\&=\frac{1}{2}\int_{t-s}^t(\varphi_{u-t+s}^{\alpha}[(\Delta_{V_{\alpha}}+\delta_{V_{\alpha}})\varphi_{t-s}^{\alpha\prime}Q]-(\Delta_{V_{\alpha}}+\delta_{V_{\alpha}})\varphi_t^{\alpha\prime}(Q))(X_0)du\\&=\frac{1}{2}\int_{t-s}^t\varphi_{u-t+s}^{\alpha}[(\Delta_{V_{\alpha}}+\delta_{V_{\alpha}})(\varphi_{t-s}^{\alpha\prime}-\varphi_t^{\alpha\prime})(Q)](X_0)du\\&+\frac{1}{2}\int_{t-s}^t(\varphi_{u-t+s}^{\alpha}-\varphi_{0}^{\alpha})[(\Delta_{V_{\alpha}}+\delta_{V_{\alpha}})\varphi_{t}^{\alpha\prime}(Q)](X_0)du.\end{align*}

Thus, using  strong continuities  of $\varphi^\alpha$ and $\varphi^{\alpha\prime}$, and reasoning as before in the more general spaces with some free difference quotient and cyclic derivative, we conclude that the left derivative is  in $C_{tr}^{k-2,0{;}-1}(A,A_{R,\alpha,conj}^n).$ 
\end{proof}
\section{Construction of the transport map}\label{cons}

Let $F\in C_{tr}^{k,l}(A,U)^n$, $k,l\ge 1$. Let $X=(X_1,\ldots,X_n) \in U$. Then we define $\partial_F=(\partial_{F^1},\ldots,\partial_{F^n})$ on $B\langle F^1(X),\ldots,F^n(X)\rangle$ as the free difference quotient 
of the variables $F^1(X),\ldots, F^n(X)$. Assume $ W^*(B,X_{1},...,X_{n})=M\subset (A,\tau)$ and let $S$ be a semicircle variable, free from $M$  with amalgamation over $D$.  Let $q\in D'\cap M\oeh{D} M$.
The adjoint  $\partial_F^*$ of $\partial_F$, when it exists, is given by
$$\tau((q\# S)^*\partial_{F^i} P\#S)=\tau( (\partial_{F^i}^*(q))^* P),\quad 1\le i\le n\,.$$
The Jacobian matrix is given by $\mathscr J(F)=(\partial_j F_i)_{ij}$. We define for $G\in  C_{tr}^{1,1}(A,U)^n$,  $\mathscr J_F(G)=(\partial_{F^j}G^i)_{1\le i,j\le n}$.
Its adjoint is given for $q\in M_{n}(D'\cap M\oeh{D} M)$ by
$$\mathscr J^*_F (q)=\left(\sum_{i} \partial_{F^i}^* (q_{ji})\right)_{j=1}^n\,.$$

We will need the following preparatory Lemma regarding conjugate variables.  We will need a temporary technical assumption, satisfied under Assumption \ref{thehyp2} if $X_0\in A_{R,App}^n$ as shown in the proof of Proposition \ref{semig}. This will thus be the case for semicircular variables and then via our transport map for other models with h-convex potential.

\begin{Assumption}\label{thehyp3}
Assume  $ W^*(B,X_{0})=M\subset (A,\tau)$  is such that $X\mapsto \tau(S X\#S)$ is a trace on $D'\cap M\oeh{D} M$ if $S$ is a semicircle variable, free from $M$  with amalgamation over $D$.
\end{Assumption}

\begin{Lemma}\label{adjoint}Assume {Assumption \ref{thehyp3}.} Fix such an $X\in U$ with $U\subset A_{R,conj}^n$. Take $l\ge 0$.
Consider  a $C^1$ map $\alpha\mapsto F_\alpha\in C_{tr}^{k,l}(A,U)^n$,  on $[0,\alpha_0]$ for $k\geq 2$,  so that $F_0=X_0$, $\Vert{}1-\mathscr{J}({ F_\alpha})\Vert{}_{M_n(M\oehc{D} M)}<1$.
Let $1\otimes 1$ be the diagonal matrix with  entries $1\otimes_D 1$ on the diagonal. 
Then 
 $\mathscr{J}_{F_{\alpha}}^* (1\otimes 1)\in M^n$ exists for any  $\alpha\in [0,\alpha_0]$,  $\alpha\mapsto\mathscr{J}_{F_{\alpha}}^* (1\otimes 1)$ is in $C^{1}([0,\alpha_{0}],M^n)$ and \begin{equation}\label{poi}
{ \frac{d}{d\alpha} }\mathscr{J}_{F_{\alpha}}^* (1\otimes 1)=-\mathscr{J}_{F_{\alpha}}^*\left([\mathscr{J}_{F_{\alpha}}(\partial_\alpha F_\alpha)]^*\right)\,. \end{equation}
\end{Lemma}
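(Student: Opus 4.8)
The plan is to first establish existence of $\mathscr{J}_{F_\alpha}^*(1\otimes 1)$ using the invertibility of the Jacobian. Write $\mathscr{J}(F_\alpha) = 1 - K_\alpha$ with $\Vert K_\alpha\Vert_{M_n(M\oehc{D}M)} < 1$, so that $\mathscr{J}(F_\alpha)$ is invertible in $M_n(M\oehc{D}M)$ with inverse given by the Neumann series $\sum_{m\ge 0} K_\alpha^{\# m}$; here we use that $M_n(A^{\oehc{D}2})$ is a Banach algebra for the $\#$-multiplication as recorded in Section \ref{notation}. The key observation is that $\mathscr{D}_j(F_\alpha^i) = \rho(\partial_j F_\alpha^i)$ up to the flip (as in \eqref{flip}), and more importantly, as $\mathscr{J}(F_\alpha)$ is invertible we can express the ``conjugate variable'' data for the $F_\alpha^i$'s in terms of those for the $X$'s. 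Concretely: for any $q\in M_n(D'\cap M\oehc{D}M)$, pairing against the semicircular variable $S$ and using Assumption \ref{thehyp3} so that $q\mapsto\tau(S\,q\#S)$ is a trace on $D'\cap M\oehc{D}M$, one checks that $\mathscr{J}_{F_\alpha}^*$ exists on $M_n(D'\cap M\oehc{D}M)$ iff $\partial_{F_\alpha^j}^*$ exist, and since $\partial_{F_\alpha^j} = \sum_i (\mathscr{J}(F_\alpha)^{-1})_{ji}\#\partial_{X_i}$ (by the chain rule for free difference quotients composed with the inverse Jacobian), we get $\partial_{F_\alpha^j}^*(q) = \sum_i \partial_{X_i}^*\big((\mathscr{J}(F_\alpha)^{-1*})_{ij}\# q\big)$, which exists because $X\in U\subset A^n_{R,conj}$ has conjugate variables. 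Applying this to $q = 1\otimes 1$ (diagonal) yields the formula $\mathscr{J}_{F_\alpha}^*(1\otimes 1) = \mathscr{J}_X^*\big((\mathscr{J}(F_\alpha)^{-1})^*\#(1\otimes 1)\big)$, which already exhibits $\mathscr{J}_{F_\alpha}^*(1\otimes 1)$ as lying in $M^n$.

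Next I would address the $C^1$-regularity in $\alpha$ and derive the ODE \eqref{poi}. Since $\alpha\mapsto F_\alpha$ is $C^1$ into $C_{tr}^{k,l}(A,U)^n$ with $k\ge 2$, the Jacobian $\alpha\mapsto\mathscr{J}(F_\alpha)\in M_n(M\oehc{D}M)$ is $C^1$ (differentiation commutes with the coefficientwise free difference quotient, which is bounded), with $\frac{d}{d\alpha}\mathscr{J}(F_\alpha) = \mathscr{J}(\partial_\alpha F_\alpha)$ — more precisely $\partial_j(\partial_\alpha F_\alpha^i)$, using that $\partial_j$ is continuous and $\partial_\alpha$ of a $C^1$ path into $C^{k,l}_{tr}$ with $k\ge1$ still has one free difference quotient. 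Hence $\alpha\mapsto\mathscr{J}(F_\alpha)^{-1}$ is $C^1$ into the Banach algebra $M_n(M\oehc{D}M)$, with the standard formula
\begin{equation*}
\frac{d}{d\alpha}\mathscr{J}(F_\alpha)^{-1} = -\mathscr{J}(F_\alpha)^{-1}\#\big(\mathscr{J}(\partial_\alpha F_\alpha)\big)\#\mathscr{J}(F_\alpha)^{-1}.
\end{equation*}
Now I would plug the explicit formula $\mathscr{J}_{F_\alpha}^*(1\otimes 1) = \mathscr{J}_X^*\big((\mathscr{J}(F_\alpha)^{-1})^*\#(1\otimes 1)\big)$, note that $\mathscr{J}_X^*$ is a fixed bounded map (it exists on all of $M_n(D'\cap M\oehc{D}M)$ by the argument above, since $X$ has conjugate variables, and it is bounded as a composition of the bounded free-difference-quotient adjoints with $\#$-multiplications), and differentiate under it:
\begin{equation*}
\frac{d}{d\alpha}\mathscr{J}_{F_\alpha}^*(1\otimes 1) = \mathscr{J}_X^*\Big(\big(\tfrac{d}{d\alpha}\mathscr{J}(F_\alpha)^{-1}\big)^*\#(1\otimes 1)\Big) = -\mathscr{J}_X^*\Big(\big(\mathscr{J}(F_\alpha)^{-1}\#\mathscr{J}(\partial_\alpha F_\alpha)\#\mathscr{J}(F_\alpha)^{-1}\big)^*\#(1\otimes 1)\Big).
\end{equation*}
The final step is a bookkeeping identity: rewrite the right-hand side to recognize $-\mathscr{J}_{F_\alpha}^*\big([\mathscr{J}_{F_\alpha}(\partial_\alpha F_\alpha)]^*\big)$. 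Here one uses that $\mathscr{J}_{F_\alpha}(G) = \mathscr{J}(F_\alpha)^{-1}\#\mathscr{J}(G)$ (expressing $\partial_{F^j}$ via the inverse Jacobian as above), that $\mathscr{J}_{F_\alpha}^*(q) = \mathscr{J}_X^*\big((\mathscr{J}(F_\alpha)^{-1})^*\# q\big)$, and that $(\mathscr{J}(F_\alpha)^{-1}\#\mathscr{J}(G))^* = \mathscr{J}(G)^*\#(\mathscr{J}(F_\alpha)^{-1})^*$, so the two occurrences of $(\mathscr{J}(F_\alpha)^{-1})^*$ and the middle $\mathscr{J}(\partial_\alpha F_\alpha)^*$ reassemble exactly into $[\mathscr{J}_{F_\alpha}(\partial_\alpha F_\alpha)]^*$ sitting inside $\mathscr{J}_{F_\alpha}^*$.

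The main obstacle I anticipate is making the chain-rule identity $\partial_{F_\alpha^j} = \sum_i (\mathscr{J}(F_\alpha)^{-1})_{ji}\#\partial_{X_i}$ (equivalently $\mathscr{J}_{F_\alpha} = \mathscr{J}(F_\alpha)^{-1}\#\mathscr{J}$, equivalently $\partial_{F_\alpha^j}^*(q) = \sum_i\partial_{X_i}^*((\mathscr{J}(F_\alpha)^{-1})^*_{ij}\#q)$) fully rigorous at the level of the unbounded adjoints: one must check the defining duality relation $\tau((q\#S)^*\partial_{F^i}P\#S) = \tau((\partial_{F^i}^*(q))^*P)$ holds for $P$ ranging over a core (polynomials in the $F_\alpha^i(X)$, which generate $M$ since $\mathscr{J}$ is invertible, via the inverse function argument), that the candidate $\partial_{F_\alpha^j}^*(q)$ indeed lands in $M$ rather than merely $L^2$, and that Assumption \ref{thehyp3} (traciality of $X\mapsto\tau(SX\#S)$ on $D'\cap M\oehc{D}M$) is exactly what is needed to pass the flip/commutation past $S$ in these manipulations — this is the only place that hypothesis enters, and it must be invoked carefully. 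Everything else is a continuity/Banach-algebra computation that, while needing care with the $\#_i$ operadic compositions and the $*$-structure on $A^{\oehc{D}2}$, is routine.
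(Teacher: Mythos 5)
Your overall route is the same as the paper's: establish the formula $\mathscr{J}^*_{F_\alpha}(1\o 1)=\mathscr{J}^*\big([\mathscr{J} F_\alpha]^{-1,*}\big)$ (this is exactly Lemma \ref{adjointAppendix} in the appendix, where Assumption \ref{thehyp3} enters), differentiate the inverse Jacobian, and reassemble via the chain rule \eqref{chainrule}. The final bookkeeping identity you describe is the one used in the paper. However, there is a genuine gap in your regularity step. You justify exchanging $\frac{d}{d\alpha}$ with $\mathscr{J}^*$ by claiming that $\mathscr{J}^*$ (i.e.\ the adjoints $\partial_{X_i}^*$) ``is a fixed bounded map'' on all of $M_n(D'\cap M\oehc{D}M)$ with values in $M^n$. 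Under the standing hypothesis $U\subset A^n_{R,conj}$ (only \emph{first}-order conjugate variables in $M$) this is false: by Voiculescu's formula \eqref{Dan} the adjoint requires applying $\overline{\partial_i\o_D 1}^{eh}$ and $\overline{1\o_D\partial_i}^{eh}$ to the argument, so $\partial_i^*$ with values in $M$ is only defined and bounded on the domain $D(\overline{\partial_i\o_D1}^{eh}\oplus\overline{1\o_D\partial_i}^{eh})$ equipped with its graph-type norm (Lemma \ref{conjvar}.(4)); it extends to all of $M\oeh{D}M$ only with values in $L^2(M)$ and only under a \emph{second}-order conjugate variable hypothesis, which is not assumed and which in any case would not give $C^1([0,\alpha_0],M^n)$.

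Consequently, differentiating $\alpha\mapsto[\mathscr{J}F_\alpha]^{-1,*}$ merely in the Banach algebra $M_n(M\oehc{D}M)$ (which, as you note, only needs $k\geq 1$) is not enough: you must show the difference quotients of $[\mathscr{J}F_{\alpha+h}]^{-1,*}$ converge in the stronger graph norm of (a matrix variant of) $D(\overline{\partial_i\o_D1}^{eh}\oplus\overline{1\o_D\partial_i}^{eh})$. This is where the paper uses the hypothesis $k\geq 2$ in an essential way — one needs one free difference quotient of the Jacobian entries $\partial_jF_\alpha^i$, hence two of $F_\alpha$ — together with the fact that this domain is an algebra (Lemma \ref{conjvar}.(1), so Neumann series and the resolvent identity $A^{-1}-B^{-1}=A^{-1}(B-A)B^{-1}$ can be controlled there), before invoking the boundedness of $\partial^*$ on that domain to differentiate under $\mathscr{J}^*$. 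The same domain issue already appears in your existence step (checking that the candidate $\partial_{F_\alpha^j}^*(1\o 1)$ lands in $M$ and not just $L^2$), which you flag as an obstacle but do not resolve; the resolution is precisely the membership of $\sigma([\mathscr{J}F_\alpha]^{-1})$ in this domain, again via $k\geq 2$. Repair the proof by carrying out the differentiation in that domain space rather than in $M_n(M\oehc{D}M)$, and the rest of your argument goes through as in the paper.
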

\begin{proof}
The existence of the conjugate variable is a technical variant of \cite{alice-shlyakhtenko-transport} explained in the appendix, see Lemma \ref{adjointAppendix}.  
It is also shown there that
$$\mathscr{J}^*_{F_\alpha}(1\o 1)=\mathscr{J}^*([ \mathscr{J} F_\alpha]^{-1,*})\,.$$
where we  denoted in short
$A^{-1,*}= (A^{-1})^*$.  Let us compute the time derivative of the above right hand side. From the elementary equation $A^{-1}-B^{-1}=A^{-1}(B-A)B^{-1}$,  one deduces  an equation on $(\mathscr{J} F_{\alpha+h})^{-1}$ which after taking the adjoint
reads 
\begin{align*}[&\mathscr{J} F_{\alpha+h}]^{-1,*}=[ \mathscr{J} F_\alpha]^{-1,*}-h [\mathscr{J} F_\alpha]^{-1,*} [\mathscr{J} \partial_\alpha F_\alpha]^*
 [\mathscr{J} F_\alpha]^{-1,*}
 \\&- [\mathscr{J} F_{\alpha+h}]^{-1,*} [\mathscr{J} F_{\alpha+h}-\mathscr{J} F_{\alpha}-h\mathscr{J} \partial_\alpha F_\alpha]^*
 [\mathscr{J} F_\alpha]^{-1,*}
\\& {+}h([\mathscr{J} F_{\alpha+h}]^{-1,*} [\mathscr{J} F_{\alpha+h}-\mathscr{J} F_{\alpha}]^*
 [\mathscr{J} F_{\alpha}]^{-1,*}[\mathscr{J}\partial_\alpha F_\alpha]^*
 [\mathscr{J} F_\alpha]^{-1,*})
\end{align*}

Since all the terms are in a matrix variant of $D(\overline{\partial_i\o_D1}^{eh}\oplus\overline{1\o_D\partial_i}^{eh})$ which is an algebra by Lemma \ref{conjvar}.(1) and $\mathscr{J} F_{\alpha+h}$ is differentiable in this space (using $k\geq 2$), we can conclude from Lemma \ref{conjvar}.(4) to the differentiability under $\mathscr{J}^*$ so that we conclude after letting $h$ going to zero that

$${ \frac{d}{d\alpha} } ( \mathscr{J}^*_{F_\alpha}1\o 1  )= -\mathscr{J}^* [ [\mathscr{J} F_\alpha]^{-1,*}] [\mathscr{J}\partial_\alpha F_\alpha]^*
 [\mathscr{J}_{F_0} F_\alpha]^{-1,*}.$$
 We have the chain rule for any $g\in C^{1,0}_{tr}(A,U)^n$
 \begin{equation}\label{chainrule}
 \mathscr{J} g(F_\alpha)=\mathscr{J}_{F_\alpha}g\#\mathscr{J} F_\alpha\,,
 \end{equation}
 we have, by taking $g=\partial_\alpha F_\alpha$,
 $$[\mathscr{J} \partial_\alpha F_\alpha]^*=[\mathscr{J}_{F_\alpha} \partial_\alpha F_\alpha\#\mathscr{J} F_\alpha]^*=[\mathscr{J} F_\alpha]^*\#[\mathscr{J}_{F_\alpha} \partial_\alpha F_\alpha]^*\,.$$
which completes the proof.
\end{proof}

We will now proceed with the construction of the transport map $F_\alpha$.

 \begin{Lemma}\label{ODE}
Assume that $V,W,B,D,X=X_0$ satisfy  Assumptions \ref{thehyp}, \ref{thehyp2} and \ref{thehyp3}  and that $V,W\in C^6_c(A,2R:B,D)$.  {Fix such an $X\in A_{R/4,conj}^n$.}
Let
 $$
 \mathscr{D}g_\alpha   := -\frac{1}{2}\int_0^\infty  \mathscr{D}(\varphi_t^{\alpha\prime}( W )) dt\in C_{tr,V_{\alpha}}^{2,1}(A,A_{R,\alpha,conj}^n: B,E_D)\,. 
 $$  Then 
 $\mathscr{D}g_\alpha$ satisfies the equation in $C^{0,0}_{tr}(A,A_{R,\alpha,conj}^n: B,E_D)$ :
 \begin{equation}\label{Laplaceg}\mathscr{D}( W )=(\Delta_{V_\alpha}+\delta_{V_\alpha})( \mathscr{D}g_\alpha)- \sum_{j=1}^n\mathscr{D}_{.,\mathscr{D}_jg_\alpha}\mathscr{D}_{j}V_\alpha.\end{equation}
 
 Moreover the differential equation 
 $$ 
 { \frac{d}{d\alpha} } {F}_\alpha = \mathscr{D} g_\alpha (F_\alpha)=(\mathscr{D}_1 g_\alpha (F_\alpha),...,\mathscr{D}_n g_\alpha (F_\alpha))
 $$
 has a unique solution in the space $C^{2,2}_{tr}(A,A^n_{R/4,conj}:B,E_D)$ with the initial condition $F_0= X$ on a small time $[0,\alpha_0]$  for some $\alpha_0\in (0,1]$ which only depends on $c,R, \sup_{\beta\in [0,1]} \| \mathscr{D} g_\beta\|_{C^{2,1}_{tr}(A,A^n_{R/3,conj}:B,E_D)}$, non-increasing in the last variable.
  \end{Lemma}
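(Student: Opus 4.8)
The plan is to prove the two assertions of the Lemma separately: first the convergence of the defining integral of $\mathscr{D}g_\alpha$ together with the Poisson equation \eqref{Laplaceg}, then the solvability of the transport ODE by a fixed point argument.

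\emph{Existence of $\mathscr{D}g_\alpha$.} Since $V,W\in C^6_c(A,2R:B,D)$, I would invoke Proposition \ref{semig} in its version acting on the cyclic-gradient-controlling spaces $C^{k,l{;}-1}_{tr}$ (this is exactly why one degree more than in Proposition \ref{infgen} is required) together with Proposition \ref{infgen}, to get that for each $\alpha\in[0,1]$ and $t\ge 0$ the element $\mathscr{D}(\varphi_t^{\alpha\prime}(W))$ is well defined in $C^{2,1}_{tr,V_\alpha}(A,A_{R,\alpha,conj}^n:B,E_D)$ and obeys the exponential bound
\[
\|\mathscr{D}(\varphi_t^{\alpha\prime}(W))\|_{C^{2,1}_{tr,V_\alpha}(A,A_{R,\alpha,conj}^n),\,\ge 1}\ \le\ C\,e^{-ct/2}\,\|W\|_{C^6_c(A,2R:B,D)} .
\]
Because the cyclic gradient strictly lowers the degree in the $X_i$, this is a $\|\cdot\|_{\ge 1}$ estimate, so the Bochner integral $\mathscr{D}g_\alpha=-\tfrac12\int_0^\infty \mathscr{D}(\varphi_t^{\alpha\prime}(W))\,dt$ converges absolutely in $C^{2,1}_{tr,V_\alpha}(A,A_{R,\alpha,conj}^n:B,E_D)$, which is the first claim.

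\emph{The Poisson equation.} By Proposition \ref{infgen}, $\partial_t\varphi_t^{\alpha\prime}(W)=\tfrac12(\Delta_{V_\alpha}+\delta_{V_\alpha})\varphi_t^{\alpha\prime}(W)$. Applying $\mathscr{D}_i$ and the commutation identity \eqref{derivcyclic} (with $V=V_\alpha$; valid for polynomials, hence for $\varphi_t^{\alpha\prime}(W)$ by density and continuity of all operations involved) yields, for each fixed $t$,
\[
\partial_t\,\mathscr{D}_i(\varphi_t^{\alpha\prime}(W))=\tfrac12\,(\Delta_{V_\alpha}+\delta_{V_\alpha})\,\mathscr{D}_i(\varphi_t^{\alpha\prime}(W))-\tfrac12\sum_{j=1}^{n}\mathscr{D}_{i,\mathscr{D}_j\varphi_t^{\alpha\prime}(W)}\,\mathscr{D}_jV_\alpha .
\]
Integrating over $t\in(0,\infty)$, using $\varphi_0^{\alpha\prime}(W)=W$, the decay $\mathscr{D}_i(\varphi_t^{\alpha\prime}(W))\to 0$ as $t\to\infty$ from the estimate above, the identity $\int_0^\infty\mathscr{D}_j(\varphi_t^{\alpha\prime}(W))\,dt=-2\,\mathscr{D}_jg_\alpha$, and the linearity of $Q\mapsto\mathscr{D}_{i,Q}(\,\cdot\,)$ to move the integral past $\mathscr{D}_{i,\cdot}$ and past $\Delta_{V_\alpha}+\delta_{V_\alpha}$, one gets
\[
-\mathscr{D}_i(W)=-(\Delta_{V_\alpha}+\delta_{V_\alpha})(\mathscr{D}_ig_\alpha)+\sum_{j=1}^{n}\mathscr{D}_{i,\mathscr{D}_jg_\alpha}\,\mathscr{D}_jV_\alpha ,
\]
which is \eqref{Laplaceg}. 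As in the proof of Proposition \ref{infgen}, since the full cyclic gradient $\mathscr{D}$ is not known to be closable, these manipulations should be carried out representation by representation, i.e. after evaluation at each $X\in A_{R,\alpha,conj}^n$, where $\mathscr{D}$ acts on genuine analytic functions; the exchange of $\mathscr{D}$ with $\int_0^\infty dt$ is then licensed by the $C^{2,1}_{tr,V_\alpha}$-valued continuity of $t\mapsto\varphi_t^{\alpha\prime}(W)$, and the resulting identity holds in $C^{0,0}_{tr}(A,A_{R,\alpha,conj}^n:B,E_D)$.

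\emph{The transport ODE.} I would solve $\tfrac{d}{d\alpha}F_\alpha=\mathscr{D}g_\alpha(F_\alpha)$, $F_0=X$, by the Banach-space Cauchy--Lipschitz theorem in $\mathcal{X}:=C^{2,2}_{tr}(A,A_{R/4,conj}^n:B,E_D)^{n}$, running the Picard iteration $F^{(0)}_\alpha\equiv X$, $F^{(m+1)}_\alpha=X+\int_0^\alpha\mathscr{D}g_\beta(F^{(m)}_\beta)\,d\beta$. First, for $\alpha_0$ small and $F$ in a fixed small ball about the coordinate function $X$ in $\mathcal{X}$ one has $F(A_{R/4,conj}^n)\subset A_{R/3,conj}^n$ (using $A_{R/3,App}^n\subset A_{R,\beta}^n$ from Proposition \ref{ConvSDE}, so $\mathscr{D}g_\beta$ is defined there; and since $\mathscr{J}(F)$ stays close to the identity the conjugate-variable condition is inherited), so the composition $\mathscr{D}g_\beta(F)$ is legitimate. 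Then by the composition estimates of Corollary \ref{compositionCkl} — here it is essential that $\mathscr{D}g_\beta$ lies in the $V_\beta$-\emph{twisted} space $C^{2,1}_{tr,V_\beta}$, whose extra control under $\Delta_{V_\beta}+\delta_{V_\beta}$ and under cyclic derivatives compensates the apparent loss of one full differential — the map $F\mapsto\mathscr{D}g_\beta(F)$ sends that ball into $\mathcal{X}$ and is Lipschitz there, uniformly in $\beta\in[0,1]$, with Lipschitz constant (and the time before the orbit could leave the ball) controlled by $c$, $R$ and $\Lambda:=\sup_{\beta\in[0,1]}\|\mathscr{D}g_\beta\|_{C^{2,1}_{tr}(A,A_{R/3,conj}^n:B,E_D)}$, monotonically in $\Lambda$. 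Since $\beta\mapsto\mathscr{D}g_\beta$ is continuous (the flow $X_t^\beta$, hence $\varphi_t^{\beta\prime}$, depends continuously on $\beta$ through $V_\beta=V+\beta W$), Cauchy--Lipschitz gives a unique solution $F_\alpha$ on $[0,\alpha_0]$, with $\alpha_0$ depending only on $c,R,\Lambda$ and non-increasing in $\Lambda$, staying in the ball, hence in $C^{2,2}_{tr}(A,A_{R/4,conj}^n:B,E_D)^n$ with values in $A_{R/3,conj}^n$, as required.

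\emph{Main obstacle.} The genuinely delicate point is the Poisson equation step: propagating \eqref{derivcyclic} from analytic functions to $\varphi_t^{\alpha\prime}(W)$ and exchanging $\mathscr{D}$ with the time integral and with the generator, in the absence of closability of $\mathscr{D}$. This is precisely what the auxiliary spaces $C^{k,l{;}-1}_{tr}$, Proposition \ref{semig} and Proposition \ref{infgen} were built to make rigorous, and one must argue in each representation $X\in A_{R,\alpha,conj}^n$ and then descend to $C^{0,0}_{tr}$. In the ODE step, the only real work is the composition/Lipschitz bookkeeping $C^{2,1}_{tr,V_\beta}\!\circ C^{2,2}_{tr}\subset C^{2,2}_{tr}$ with the stated dependence on $\Lambda$, which is routine once Corollary \ref{compositionCkl} is available.
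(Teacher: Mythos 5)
Your handling of the convergence of the integral and of the Poisson equation follows the paper's own route (exponential decay from Proposition \ref{semig}, time derivative from Proposition \ref{infgen}, commutation of $\mathscr{D}$ with the generator); note only that the commutation is not a bare ``density and continuity'' argument but is exactly Lemma \ref{cyclic}(2) applied to $g=\varphi_t^{\alpha\prime}(W)$, which you implicitly invoke, so this part is essentially fine.

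The genuine gap is in the ODE step. You run Cauchy--Lipschitz in $\mathcal X=C^{2,2}_{tr}(A,A^n_{R/4,conj}:B,E_D)^n$ and assert that $F\mapsto\mathscr{D}g_\beta(F)$ maps a ball of $\mathcal X$ into $\mathcal X$ and is Lipschitz there because the $C^{2,1}_{tr,V_\beta}$ control of $\mathscr{D}g_\beta$ ``compensates the apparent loss of one full differential''. No composition result in the paper gives this, and the claim is not true with the available estimates: the $C^{k,l}_{tr,V}$ norm controls $(\Delta_V+\delta_V)(P)$ only in the sup norm $C^*_{tr}$ and the cyclic-gradient terms only in $\Vert\cdot\Vert_{k,p}$ with $p\le l-1$, so it produces no second full differential of the composite; by the chain rule, two full differentials of $\mathscr{D}g_\beta(F(\cdot))$ would require $\mathscr{D}g_\beta$ with $l=2$, which is precisely what is not available since the semigroup construction only yields $\mathscr{D}g_\alpha\in C^{2,1}$. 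Likewise, the Lipschitz property in Lemma \ref{compositionCklFixed} costs one extra full differential of the outer function over the target norm, so with $\mathscr{D}g_\beta\in C^{2,1}$ you only get Lipschitzness of $F\mapsto\mathscr{D}g_\beta(F)$ into $C^{2,0}$, not into $C^{2,2}$. The paper resolves this by lowering the fixed-point space: the contraction is run in $C^0([0,\alpha],(C^{2,0}_{tr,c}(A,A^n_{R/4,conj}))^n)$ on the set $\mathcal E_{\alpha,K}$ of paths with $F_0(X)=X$, $\Vert 1-\mathscr{J}F_\beta\Vert_{M_n(M\oehc{D}M)}\le \tfrac12$, $\sup_X\Vert F^i_\beta(X)\Vert\le R/3$ and $\Vert F^i_\beta\Vert_{2,0,A^n_{R/4,conj},c}\le K$, so that $\mathscr{D}g_\beta$ bounded in $C^{2,1}_{tr,c}(A,A^n_{R/3,conj})$ supplies exactly the one extra differential needed for the uniform Lipschitz estimate, while Lemma \ref{adjoint} (via the Jacobian condition) guarantees $F_\beta(X)\in A^n_{R/3,conj}$ so the composition is legitimate. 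A related omission in your set-up is the cyclic subscript: the hypothesis $\Vert 1-\mathscr{J}F\Vert_{M_n(M\oehc{D}M)}<1$ and Lemma \ref{adjoint} require the Jacobian to be controlled in the cyclic extended Haagerup norm, i.e.\ membership in the $C^{\,\cdot,\cdot}_{tr,c}$ spaces, which a plain $C^{2,2}_{tr}$ ball does not provide. The remaining ingredients of your scheme (keeping $\Vert F_\beta\Vert\le R/3$, continuity of $\beta\mapsto\mathscr{D}g_\beta$ via Lemma \ref{ContSemigAlpha}, and the dependence of $\alpha_0$ only on $c$, $R$ and $\sup_\beta\Vert\mathscr{D}g_\beta\Vert_{C^{2,1}_{tr}}$) do match the paper, but the contraction itself must be carried out at the $C^{2,0}_{tr,c}$ level as above, not in $C^{2,2}_{tr}$.
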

\begin{proof}
{The integral defining 
$\mathscr{D}g_\alpha$ exists  in the  space $C^{2,1}_{tr,c}(A,A^n_{R,\alpha, conj}:B,E_D)$ 
because of the exponential bound in Proposition \ref{semig} (with $k=2,l=2$).
From the computation of the derivative in $C_{tr,V_\alpha}^{0,0{;}-1}(A,A_{R,\alpha,conj}^n:B,E_D)$ 
in Proposition \ref{infgen},
one gets the derivative in $C^{0,0}_{tr}(A,A_{R,\alpha,conj}^n:B,E_D)$,
 $$\frac{\partial}{\partial t}\mathscr{D}(\varphi_t^{\alpha\prime}( W ))=\mathscr{D}(L_\alpha(\varphi_t^{\alpha\prime}( W )))=L_\alpha\mathscr{D}(\varphi_t^{\alpha\prime}( W ))-\frac{1}{2}\sum_{j=1}^n\mathscr{D}_{.,\mathscr{D}_{j}(\varphi_t^{\alpha\prime}( W ))}\mathscr{D}_{j}V_\alpha$$
where the last identity comes from Lemma \ref{cyclic} with $g=\varphi_t^{\alpha\prime}( W )$ and $k=0$.
Integrating in $t$ and since $\mathscr{D}(\varphi_t^{\alpha\prime}( W ))$ tends to $0$ when $t\to \infty,$ one gets the identity in $C^{0,0}_{tr}(A,A_{R,\alpha,conj}^n: B,E_D)$ :$$\mathscr{D}( W )=(\Delta_{V_\alpha}+\delta_{V_\alpha})( \mathscr{D}g_\alpha)- \sum_{j=1}^n\mathscr{D}_{.,\mathscr{D}_jg_\alpha}\mathscr{D}_{j}V_\alpha.$$ }
Fix $\alpha>0$. We next define an appropriate space on which  the following map $$\chi :F\mapsto \left(\gamma\in [0,\alpha]\mapsto \chi_\gamma=F_0+\int_0^\gamma \mathscr{D} g_\beta(F_\beta) d\beta\right)$$
will be a contraction   for $\alpha$ small enough.  We take $F_\beta\in A_{R/3,conj}^n\subset A_{R,\beta,conj}^n$ to stay in a space independent of $\beta$.
We set, for $\alpha$ to be chosen small enough and for any fixed $K>||F_{0}^i||_{2,0,A_{R/4,conj}^nc}$,
\begin{align*}&\mathcal E_{\alpha,K}=\{ F\in C^0([0,\alpha], (C^{2,0}_{tr,c}(A,A_{R/4,conj}^n:B,E_D))^n):F_0(X)=X,\forall \beta\in [0,\alpha]\\
&\qquad  \Vert 1 - \mathscr{J} F_\beta \Vert_{M_n(M\oehc D M )} \le \frac{1}{2}, \sup_{X\in A_{R/4,conj}^n} ||F_\beta^i(X)||\leq R/3,||F_{\beta}^i||_{2,0,A_{R/4,conj}^n,c}\leq K\}\,.\end{align*}
First, note that $\mathcal E_{\alpha,K}$ is a closed convex set of $C^0([0,\alpha], (C^{2,0}_{tr,c}(A,A_{R/4,conj}^n:B,E_D))^n)$, thus it is complete metric space.

By the previous Lemma (note that we don't need at this point $\alpha\mapsto F_\alpha$  $C^1$), for $F\in \mathcal E_{\alpha,K}$
, $\mathscr{J}^*_{F_\beta}(1\otimes 1)$ exists for $\beta\le \alpha$. Thus for any $X\in A_{R/4,conj}^n$, $F_\beta(X)\in A_{R/3,conj}^n$ and we are in position to apply Lemma \ref{compositionCklFixed} to get $\mathscr{D} g_\beta(F_\beta)\in C^{2,0}_{tr}(A,A_{R/3,conj}^n:B,E_D)^n$. Moreover, applying Lemma \ref{ContSemigAlpha} and the same exponential decay as before  to deal with the tail of the integral,  $\beta\in [0,1]\mapsto \mathscr{D} g_\beta\in C^{2,1}_{tr,c}(A,A^n_{R/3, conj}:B,E_D)$ is continuous. Using our Lemma \ref{compositionCklFixed} for composition, $\alpha\in [0,1]\mapsto \mathscr{D} g_\alpha(F_\alpha)\in C^{2,0}_{tr,c}(A,A^n_{R/4, conj}:B,E_D)$ is also continuous so that the integral defining $\chi$ makes sense. Hence $\chi$ is well defined on $\mathcal E_{\alpha,K}$ with value in $C^0([0,\alpha], (C^{2,0}_{tr}(A,A_{R/4,conj}^n:B,E_D))^n)$. For  $\alpha$ such that
$$\frac{R}{4}+\alpha \sup_{\beta\in [0,1]} \| \mathscr{D} g_\beta\|_{C^0_{tr}(A,A^n_{R/3,conj}:B,E_D)}\le \frac{R}{3}$$
the image of $\chi$ belongs to $A^n_{R/3}$.  Similarly, $\|\chi^i_\beta\|_{2,0,A_{R/4,conj}^n,c}\leq K$ if $\alpha$ is small enough.

Finally, by the chain rule \eqref{chainrule}, we have $\mathscr{J}\mathscr{D} g_\beta (F_\beta)=  \mathscr{J}_{F_\beta}\mathscr{D} g_\beta\#\mathscr{J}F_\beta$ so that 
\begin{eqnarray*}
\Vert \mathscr{J}\mathscr{D} g_\beta (F_\beta)  \Vert_{M_n(A\oehc D A )}&\leq &\Vert \mathscr{J} F_\beta \Vert_{M_n(A\oehc D A )}\Vert \mathscr{J}\mathscr{D} g_\beta  \Vert_{M_n(A\oehc D A )}\\
&\leq & 3/2 \Vert \mathscr{J}\mathscr{D} g_\beta  \Vert_{M_n(A\oehc D A )}.\end{eqnarray*}

Recalling $\mathscr{J}F_0=1$ and using the continuity of $\mathscr{J}\mathscr{D} g_\beta$ one can choose $\alpha=\alpha(K)$ small enough such that $\chi$ is valued in $\mathcal E_{\alpha,K}$. It remains to obtain a contraction, up to choose  $\alpha$ even smaller.

Since $\mathscr{D} g_\beta$ lies in a bounded set in $C^{2,1}_{tr,c}(A,A^n_{R/3, conj}:B,E_D)$ and  $\mathcal{E}_{\alpha,K}$ is bounded, $\mathscr{D} g_\beta$ is uniformly Lipschitz   by Lemma 
\ref{compositionCklFixed}, with a Lipschitz norm which does not depend on $\beta\in (0,1)$.
$\chi$ is thus a contraction on $\mathcal E_{\alpha,K}$. It has therefore a unique fixed point which is our solution which is necessarily  in $C^1([0,\alpha_0], (C^{2,0}_{tr}(A,A_{R/4,conj}^n:B,E_D))^n)$.

\end{proof}

\begin{Lemma}\label{Lemma34}
Assume the Assumption of Lemma \ref{ODE}. Let $\Upsilon_\alpha = \mathscr{J}_{F_{\alpha}(X)}^* (1\otimes 1) - \mathscr{D} V_\alpha (F_\alpha(X))$, where $F_\alpha$ is constructed in Lemma \ref{ODE}.

Then $\Upsilon_\alpha$ satisfies the differential equation in $L^\infty([0,\alpha_0),W^*(X))$:
$$
{ \frac{d}{d\alpha} } \Upsilon_\alpha = -d_{F_\alpha}[\mathscr{D}g_\alpha({F_\alpha}).(\Upsilon_\alpha)].
$$
As a consequence, if $\Upsilon_0=0$, then  $\Upsilon_\alpha=0, \forall \alpha\in [0,\alpha_0]$. 
\end{Lemma}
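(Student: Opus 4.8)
The plan is to differentiate $\Upsilon_\alpha$ in $\alpha$, exhibit a linear homogeneous ODE for it with coefficient bounded on $[0,\alpha_0)$, and conclude by uniqueness. For regularity: by Lemma~\ref{ODE} the curve $\alpha\mapsto F_\alpha$ is $C^1$ into $(C^{2,0}_{tr}(A,A^n_{R/4,conj}:B,E_D))^n$, with $\partial_\alpha F_\alpha=\mathscr{D}g_\alpha(F_\alpha)$ and $F_\alpha(X)\in A^n_{R/3,conj}$; Lemma~\ref{adjoint} (legitimate by Assumption~\ref{thehyp3} and $\|1-\mathscr{J}(F_\alpha)\|<1$) then makes $\alpha\mapsto\mathscr{J}^*_{F_\alpha(X)}(1\otimes 1)$ a $C^1$ curve, and since $V_\alpha=V+\alpha W$ is affine so is $\alpha\mapsto\mathscr{D}V_\alpha(F_\alpha(X))$; hence $\Upsilon_\alpha$ is $C^1$ in $W^*(X)^n$.

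For the first term of $\Upsilon_\alpha$, \eqref{poi} with $\partial_\alpha F_\alpha=\mathscr{D}g_\alpha(F_\alpha)$ gives $\frac{d}{d\alpha}\mathscr{J}^*_{F_\alpha}(1\otimes 1)=-\mathscr{J}^*_{F_\alpha}\big([\mathscr{J}_{F_\alpha}(\mathscr{D}g_\alpha(F_\alpha))]^*\big)$; using the chain rule \eqref{chainrule} to write $\mathscr{J}_{F_\alpha}(\mathscr{D}g_\alpha(F_\alpha))=(\mathscr{J}\mathscr{D}g_\alpha)(F_\alpha)$, this equals $\Delta_{\Xi}(\mathscr{D}_ig_\alpha)(F_\alpha)$ componentwise, where $\Delta_{\Xi}$ is the free Laplacian of the conjugate variables $\Xi:=\mathscr{J}^*_{F_\alpha}(1\otimes 1)$ of $F_\alpha(X)$ (the change-of-variables formula for conjugate variables, cf.\ Lemma~\ref{adjoint} and \cite{alice-shlyakhtenko-transport}, legitimate because $F_\alpha(X)\in A^n_{R/3,conj}$). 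For the second term, $\partial_\alpha(\mathscr{D}_iV_\alpha)=\mathscr{D}_iW$ and the chain rule \eqref{eqdif} in direction $\mathscr{D}g_\alpha(F_\alpha)$ give $\frac{d}{d\alpha}\mathscr{D}_iV_\alpha(F_\alpha)=\mathscr{D}_iW(F_\alpha)+\sum_k(\partial_k\mathscr{D}_iV_\alpha)(F_\alpha)\#\mathscr{D}_kg_\alpha(F_\alpha)$.

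Now I would insert the Poisson equation \eqref{Laplaceg}: it rewrites $\mathscr{D}_iW$ as $(\Delta_{V_\alpha}+\delta_{V_\alpha})(\mathscr{D}_ig_\alpha)-\sum_j\mathscr{D}_{i,\mathscr{D}_jg_\alpha}\mathscr{D}_jV_\alpha$, and by \eqref{flip} and the symmetry of the Hessian $\partial_k\mathscr{D}_iV_\alpha$ the cross term equals $\sum_k(\partial_k\mathscr{D}_iV_\alpha)\#\mathscr{D}_kg_\alpha$, hence cancels in $\frac{d}{d\alpha}\mathscr{D}_iV_\alpha(F_\alpha)$ and leaves $(\Delta_{V_\alpha}+\delta_{V_\alpha})(\mathscr{D}_ig_\alpha)(F_\alpha)$. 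Subtracting, $\frac{d}{d\alpha}\Upsilon_{\alpha,i}=\big[\Delta_{\Xi}-(\Delta_{V_\alpha}+\delta_{V_\alpha})\big](\mathscr{D}_ig_\alpha)(F_\alpha)$. On the polynomial part this difference is $-\sum_k\partial_k(\mathscr{D}_ig_\alpha)(F_\alpha)\#(\Xi_k-\mathscr{D}_kV_\alpha(F_\alpha))=-\sum_k\partial_k(\mathscr{D}_ig_\alpha)(F_\alpha)\#\Upsilon_{\alpha,k}$, and the $\delta_{V_\alpha}$-part (acting under the conditional expectations) combines, through the Schwinger--Dyson identity for $F_\alpha(X)$ relative to $\Xi$ (valid since $F_\alpha(X)\in A^n_{R/3,conj}$), with the ``differentiation under $E_D$'' piece of $\Upsilon_{\alpha,k}=\Xi_k-\mathscr{D}_kV_\alpha(F_\alpha)$ to reconstitute the full differential of $\mathscr{D}_ig_\alpha$ at $F_\alpha(X)$ in direction $\Upsilon_\alpha$. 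Thus $\frac{d}{d\alpha}\Upsilon_\alpha=-d_{F_\alpha}\big[\mathscr{D}g_\alpha(F_\alpha).(\Upsilon_\alpha)\big]$, which is linear and homogeneous in $\Upsilon_\alpha$.

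Since $\mathscr{D}g_\alpha$ remains in a bounded subset of $C^{2,1}_{tr,V_\alpha}(A,A^n_{R/3,conj}:B,E_D)$ uniformly for $\alpha\in[0,1]$ and $F_\alpha(X)\in A^n_{R/3,conj}$, the linear map $\Upsilon\mapsto d_{F_\alpha}[\mathscr{D}g_\alpha(F_\alpha).(\Upsilon)]$ is bounded on $W^*(X)^n$ uniformly in $\alpha$, so Gronwall's Lemma gives uniqueness of the $C^1$ solution of this linear ODE with prescribed initial value; as $\Upsilon\equiv 0$ solves it, $\Upsilon_0=0$ forces $\Upsilon_\alpha=0$ for all $\alpha\in[0,\alpha_0]$. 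The delicate point is the reconstitution of $-d_{F_\alpha}[\mathscr{D}g_\alpha(F_\alpha).(\Upsilon_\alpha)]$ in the previous paragraph: one must carefully track the $\delta_{V_\alpha}$/conditional-expectation contributions and match them, through the free-Laplacian change of variables and the Schwinger--Dyson equation for $F_\alpha(X)$, against the $\#$-bookkeeping permitted by Assumption~\ref{thehyp3}.
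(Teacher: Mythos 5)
Your argument is correct and follows essentially the same route as the paper: differentiate $\Upsilon_\alpha$ via \eqref{poi} and the chain rule, use Voiculescu's formula \eqref{Dan} and the Hessian symmetry \eqref{flip} together with the Poisson equation \eqref{Laplaceg} to cancel $\mathscr{D}W$, identify the leftover $\delta_{V_\alpha}$-term as the differentiation-under-$E_D$ part of the full differential in the direction $\Upsilon_\alpha$, and conclude by Gronwall. The ``reconstitution'' you flag as delicate is precisely Lemma \ref{cyclic}(1) applied at $F_\alpha(X)$, whose conjugate variables are $\mathscr{J}^*_{F_\alpha}(1\otimes 1)$, combined with \eqref{eqdif}; this is exactly how the paper carries out that step.
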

In other words, for $\alpha\in [0,\alpha_0]$, $F_\alpha(X)$ has conjuguate variables $\mathcal{D} V_\alpha$.
\begin{proof}
Using  our previous computation of derivative of conjugate variables in Lemma \ref{adjoint}, we compute
\begin{equation}\label{der1}
{ \frac{d}{d\alpha} } \Upsilon_\alpha =-\mathscr{J}^*_{F_\alpha} [\mathscr{J}_{F_\alpha}\mathscr{D} g_\alpha(F_\alpha)]- \mathscr{J}\mathscr{D}V_\alpha (F_\alpha)\# \mathscr{D}g_\alpha(F_\alpha)- \mathscr{D}W(F_\alpha)\end{equation}
We next  rewrite the right hand side. To this end, notice  that \eqref{Dan} yields
$$(\mathscr{J}_{F_\alpha}^*\mathscr{J}_{F_\alpha}\mathscr{D}g_\alpha(F_\alpha)) = \mathscr{J}\mathscr{D}g_{\alpha}(F_\alpha) \#(\mathscr{J}_{F_\alpha}^*(1\o 1))
-\Delta(  \mathscr{D}g_\alpha(F_\alpha))\,.$$
Moreover, \eqref{deltaV} gives
$$\Delta_{V_\alpha}(  \mathscr{D}g_\alpha(F_\alpha))=\Delta(  \mathscr{D}g_\alpha(F_\alpha))-
\mathscr{J}\mathscr{D}g_{\alpha}(F_\alpha) \#(\mathscr{D}V_\alpha(F_\alpha))\,.$$
Hence, we have
\begin{align*}&-(\mathscr{J}_{F_\alpha}^*\mathscr{J}_{F_\alpha}\mathscr{D}g_\alpha(F_\alpha)) + \mathscr{J}\mathscr{D}g_{\alpha}(F_\alpha) \#(\mathscr{J}_{F_\alpha}^*(1\o 1)
-\mathscr{D}V_\alpha(F_\alpha)) \\&=(\Delta_{V_{\alpha}}+\delta_{V_{\alpha}})\mathscr{D}g_{\alpha}(F_\alpha)-\delta_{V_{\alpha}}\mathscr{D}g_{\alpha}(F_\alpha)\end{align*} 
Moreover \eqref{flip} and $\partial_i\mathscr{D}_jV_\alpha=\rho(\partial_j\mathscr{D}_iV_\alpha)$  result with
$$ \sum_j[\mathscr{D}_{X,\mathscr{D}_jg_\alpha}\mathscr{D}_jV_\alpha](F_\alpha)= \mathscr{J}\mathscr{D}V_\alpha(F_\alpha) \# \mathscr{D}g_\alpha(F_\alpha)\,.$$
Putting these equalities together give:
\begin{align*}&-(\mathscr{J}_{F_\alpha}^*\mathscr{J}_{F_\alpha}\mathscr{D}g_\alpha(F_\alpha)) + \mathscr{J}\mathscr{D}g_{\alpha}(F_\alpha) \#(\mathscr{J}_{F_\alpha}^*(1\o 1)
\\&\qquad-\mathscr{D}V_\alpha(F_\alpha))- \mathscr{J}\mathscr{D}V_\alpha(F_\alpha) \# \mathscr{D}g_\alpha(F_\alpha) \\&=(\Delta_{V_{\alpha}}+\delta_{V_{\alpha}})\mathscr{D}g_{\alpha}(F_\alpha)-\delta_{V_{\alpha}}\mathscr{D}g_{\alpha}(F_\alpha)- \sum_j[\mathscr{D}_{X,\mathscr{D}_jg_\alpha}\mathscr{D}_jV_\alpha](F_\alpha)
\\&=\mathscr{D}W(F_\alpha)-[d\mathscr{D}g_\alpha(E_{F_\alpha,D}).( \mathscr{J}_{F_{\alpha}}^* (1\otimes 1)-\mathscr{D}V_{\alpha}(F_\alpha))](F_{\alpha})\,.
\end{align*}
where we have finally used  equation \eqref{Laplaceg}  and  Lemma \ref{cyclic}(1) applied to  $\mathscr{D} g_\alpha$.

Hence, \eqref{der1} yields
\begin{equation}\label{der2}
{ \frac{d}{d\alpha} } \Upsilon_\alpha
= -\mathscr{J} \mathscr{D}g_\alpha(F_\alpha) \# \Upsilon_\alpha-[d\mathscr{D}g_\alpha(E_{F_\alpha,D}).(\Upsilon_\alpha)](F_{\alpha})\end{equation}
We thus obtain the expected equation from which we deduce the bound :
\begin{align*}&\|\Upsilon_\alpha\|_\infty:=\max_i\Vert{}\Upsilon_{\alpha}^i\Vert{}_{A}\leq \Vert{}\Upsilon_{0}\Vert{}_{\infty}+\int_0^\alpha  ||g_\beta||_{C^{0,2}_{tr,c}(A,A_{R/3,conj}^n)} \Vert{}\Upsilon_{\beta}\Vert{}_{\infty}d\beta\end{align*}
so that Gronwall's Lemma yields the claim. 
\end{proof}

{Recall that $V_0=\frac{1}{2}\sum_{i=1}^nX_i^2.$} We have to reinforce slightly Assumption \ref{thehyp}, a reinforcement which is still satisfied by our examples of quartic potentials.
\begin{Assumption}\label{thehyphyphyp} Let 
 $V,W\in C_c^3(A,2R:B, E_D)$ be two non-commutative $(c,2R)$ h-convex  functions satisfying Assumption \ref{thehyp} and moreover   for any $\alpha\in [0,1]$, there exists  a solution $(X_1^{V+\alpha W},\ldots,X_n^{V+\alpha W})\in  {A_{R/4,UltraApp}^n}$.\end{Assumption}
\begin{Corollary}\label{maincor}
Let $V,W,B,D$ satisfy Assumption \ref{thehyphyphyp} and \ref{thehyp2}  and $V,W\in C^6_c(A,2R:B,D)$.
Assume also the pair $(cV_0,V-cV_0)$ satisfy Assumption \ref{thehyphyphyp}.{ Fix  an $X\in A_{R/4,conj}^n$ and suppose it follows the free Gibbs law with potential $V$.}

 Let $F_\alpha$, $0\leq \alpha\leq \alpha_0$ be the solution constructed in Lemmas~\ref{ODE} and ~\ref{Lemma34}.  Then:
 \begin{itemize}
 \item[(i)] The law of $F_\alpha(X)$ is the free Gibbs law with potential $V_\alpha = V+\alpha W$;
\item[(ii)] The $W^*$-algebras $W^*(F_\alpha(X),B)$ are equal for all $\alpha\in [0,\alpha_0]$.
\end{itemize}

For any $\alpha\in [0,1]$, the von Neumann algebras generated by $B$ and generators of the free Gibbs law with potential $V_\alpha = V+\alpha W$ are isomorphic.
\end{Corollary}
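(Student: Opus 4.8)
The plan is to read off (i) from Lemmas~\ref{ODE} and~\ref{Lemma34}, to obtain (ii) by inverting the transport flow as a non-commutative map, and then to promote the conclusion from $[0,\alpha_0]$ to all of $[0,1]$ by a connectedness/iteration argument, using that the step length $\alpha_0$ is controlled by quantities that are uniform along the interpolation thanks to Proposition~\ref{semig}. For (i), the first point is that, since $X=X_0$ follows the free Gibbs law with potential $V=V_0$, it satisfies the Schwinger--Dyson equation~\eqref{SD}; equivalently its conjugate variables $(\partial_i^*(1\otimes1))_i$ equal $(\mathscr{D}_iV(X))_i$. Because $\mathscr{J}(X)=(\partial_jX_i)_{ij}$ is the identity matrix, this says exactly that the quantity $\Upsilon_0=\mathscr{J}^*_{F_0(X)}(1\otimes1)-\mathscr{D}V_0(F_0(X))$ of Lemma~\ref{Lemma34} vanishes. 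Hence by Lemma~\ref{Lemma34} we get $\Upsilon_\alpha=0$ for all $\alpha\in[0,\alpha_0]$, i.e.\ $F_\alpha(X)$ has conjugate variables $(\mathscr{D}_iV_\alpha)_i$ and thus satisfies~\eqref{SD} for $V_\alpha$. Now $F_\alpha(X)$ lies in $A_{R/3,UltraApp}^n$ (indeed in $A_{R/3,conj}^n$, by the domain constraints in the construction of Lemma~\ref{ODE} together with $\Upsilon_\alpha=0$), while Assumption~\ref{thehyphyphyp} provides a tuple $X^{V_\alpha}\in A_{R/4,UltraApp}^n\subset A_{R/3,UltraApp}^n$ with the same conjugate variables; since $V_\alpha$ is $(c,2R)$ h-convex, Proposition~\ref{ConvSDE}(b) forces these two tuples to have the same law, which is therefore the free Gibbs law with potential $V_\alpha$. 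This gives (i).

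For (ii), observe first that since $F_\alpha\in C^{2,2}_{tr}(A,A^n_{R/4,conj}:B,E_D)^n$ and $E_D$ maps $W^*(B,X_1,\dots,X_n)$ into $D\subset B$, evaluation gives $F_\alpha(X)\in W^*(B,X_1,\dots,X_n)$, so $W^*(F_\alpha(X),B)\subseteq W^*(X,B)$. For the reverse inclusion I would invert the flow: rerunning the fixed-point argument of Lemma~\ref{ODE} with the ``time'' origin placed at $\alpha$ and the identity function as initial datum produces, for $\beta\in[0,\alpha]$, non-commutative functions $\Psi_{\alpha,\beta}$ solving $\tfrac{d}{d\beta}\Psi_{\alpha,\beta}=\mathscr{D}g_\beta(\Psi_{\alpha,\beta})$ with $\Psi_{\alpha,\alpha}=\mathrm{id}$. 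Then $\beta\mapsto\Psi_{\alpha,\beta}(F_\alpha(X))$ and $\beta\mapsto F_\beta(X)$ both solve the same ODE in $A$ with the same value at $\beta=\alpha$, so by uniqueness (Gronwall, as in Lemma~\ref{ODE}) they coincide; taking $\beta=0$ yields $\Psi_{\alpha,0}(F_\alpha(X))=F_0(X)=X$, whence $W^*(X,B)\subseteq W^*(F_\alpha(X),B)$. Thus $W^*(F_\alpha(X),B)=W^*(X,B)$ for all $\alpha\in[0,\alpha_0]$, which is (ii); combined with (i) it shows that the von Neumann algebra generated by $B$ and the free Gibbs law with potential $V_\alpha$ is isomorphic to the one for $V$, for every $\alpha\in[0,\alpha_0]$.

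Finally, to reach all $\alpha\in[0,1]$ I would argue that $\alpha\mapsto(\text{isomorphism class of }W^*(B,\mu_{V_\alpha}))$ is locally constant on $[0,1]$, hence constant. The key quantitative input is that $\alpha_0$ in Lemma~\ref{ODE} depends only on $c$, $R$ and an upper bound for $\sup_\beta\|\mathscr{D}g_\beta\|_{C^{2,1}_{tr}}$, and the exponential bound of Proposition~\ref{semig} gives $\|\mathscr{D}g_\beta\|_{C^{2,1}_{tr}}\le\tfrac{C}{c}\|W\|$ for a constant $C$, uniformly in $\beta$ and uniformly if $V$ is replaced by any $V_{s_0}$, $s_0\in[0,1]$ (these are all $(c,2R)$ h-convex, $W$ is unchanged, and the assumptions, being formulated along $[0,1]$, restrict to each such base point). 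So a single step $\delta=\alpha_0>0$ works everywhere: starting from $X\sim\mu_V$, apply the construction on $[0,\delta]$; restart from the free Gibbs representative $X^{V_\delta}\in A^n_{R/4,UltraApp}$ supplied by Assumption~\ref{thehyphyphyp} (it has conjugate variables $\mathscr{D}_iV_\delta$, so lies in the requisite domain, and by the uniqueness used in (i) generates with $B$ an algebra isomorphic to $W^*(F_\delta(X),B)$); apply the construction with base potential $V_\delta$ on $[\delta,2\delta]$; iterate. After $\lceil1/\delta\rceil$ steps $[0,1]$ is covered, and composing the isomorphisms gives $W^*(B,\mu_{V_\alpha})\cong W^*(B,\mu_V)$ for all $\alpha\in[0,1]$. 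I expect this last paragraph to be the main obstacle: the conceptual ingredients (vanishing of $\Upsilon_0$, uniqueness under h-convexity, invertibility of the flow) are immediate, but making the iteration airtight requires carefully tracking the nested radii $R/4<R/3$ and the domain conditions $A^n_{R,\cdot}$, $A^n_{R,\cdot,conj}$, $A^n_{R,\cdot,UltraApp}$ through each restart, and checking that the bound on $\|\mathscr{D}g_\beta\|$ is genuinely base-point independent.
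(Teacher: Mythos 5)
Your handling of (i) and (ii) follows essentially the paper's own route (the paper realizes your backward flow as the forward transport for the reversed pair $(V_{\alpha_1},-W)$, i.e. $\hat V_\alpha=V_{\alpha_1}-\alpha W$, which is the same thing up to reparametrization), but there is a genuine omission at the very start: you never verify Assumption \ref{thehyp3}, and you never use the hypothesis that the pair $(cV_0,V-cV_0)$ satisfies Assumption \ref{thehyphyphyp}, which is in the statement precisely for that purpose. Lemmas \ref{adjoint}, \ref{ODE} and \ref{Lemma34} all require the traciality of $Y\mapsto\tau(S\,Y\# S)$ (equivalently $\langle e_D,\cdot\,\# e_D\rangle$) on $D'\cap M\oeh{D}M$ for $M=W^*(B,X)$, and this is \emph{not} automatic for an arbitrary $X\in A^n_{R/4,conj}$ with law $\tau_V$: it is only known (via Assumption \ref{thehyp2} and Proposition \ref{CyclicPermutations}(2)) when $X$ lies in $A^n_{R,App}$, e.g.\ for semicircular variables. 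The paper's first step is to run the whole transport construction once for the pair $(cV_0,V-cV_0)$, starting from the semicircular law where Assumption \ref{thehyp3} holds, and to transport that property to $X$; only then do Lemmas \ref{ODE} and \ref{Lemma34} apply to our $X$, so that $F_\alpha$ exists and $\Upsilon_0=0\Rightarrow\Upsilon_\alpha=0$ makes sense. Without this bootstrap your proof of (i) and (ii) does not get off the ground.

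The second gap is the one you flagged yourself: the global iteration. You restart at each stage with the \emph{same} perturbation $W$ and claim a uniform step $\delta=\alpha_0$ covering $[0,1]$ in $\lceil 1/\delta\rceil$ steps. But Assumption \ref{thehyphyphyp} and Lemma \ref{ODE} are formulated for a pair whose interpolation parameter runs over all of $[0,1]$; for the pair $(V_{s_0},W)$ this would require h-convexity and Gibbs states for potentials up to $V+(1+s_0)W$, which is not granted by the hypotheses. The paper instead restarts with $(V_{\alpha_0},(1-\alpha_0)W)$ (same constants $(c,R)$, same semigroups, so the new step can again be taken equal to $\alpha_0$), which in the original parameter yields steps $\alpha_0(1-\alpha_0)$, $\alpha_0(1-\alpha_0)^2,\dots$; this covers only $[0,1)$, and the endpoint $\alpha=1$ needs the extra observation that $(V,(1+\epsilon)W)$ still satisfies all assumptions for $\epsilon$ small. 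So your assertion of a base-point-independent step in the original parameter, and of reaching $\alpha=1$ exactly, is not justified as stated. Also, at each restart one must recheck, for the new base point, both the norm bound $\leq R/4$ (which, as you note, comes from Assumption \ref{thehyphyphyp} together with uniqueness of the Gibbs law) and Assumption \ref{thehyp3}; in the paper the latter again comes from the base point being the image of a transport map applied to variables already satisfying it, a point absent from your restart via the abstract representative $X^{V_\delta}$.
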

\begin{proof}
 We first check that Assumption \ref{thehyp3} is satisfied under our assumptions.  First start with the case $V=cV_0$, in which case Assumption \ref{thehyp3} is satisfied thanks to Assumption \ref{thehyp2} and Proposition \ref{CyclicPermutations} (2). Then building the transport map for the pair $(cV_0,V-cV_0)$ the same Assumption \ref{thehyp3} is satisfied for $X\in A_{R/4,conj}^n$.

By the previous Lemma \ref{Lemma34}, we find that $\Upsilon_\alpha=0$, which means that $\mathscr{J}_{F_\alpha}(1\otimes 1) =\mathscr{D}V_\alpha$ for $\alpha\in[0,\alpha_0]$.  Since $V_\alpha$ is by assumption
$(c,2R)$-convex and $\Vert F_\alpha\Vert < R/3$ it follows that the law of $F_\alpha$ is the free Gibbs law with potential $V_\alpha$.  This proves (i).

To see part (ii) fix $\alpha_1\in [0,\alpha_0]$.   Let $\hat{V}_\alpha = V_{\alpha_1} - \alpha W$, with $\alpha\in [0,\alpha_1]$, and consider the same ODE as in Lemma~\ref{ODE}, and call $\hat{F}_\alpha$ the solution.  $V_\alpha$ replaced by $\hat{V}_\alpha$. Note that $F_{\alpha_1}(X)\in A_{R/4,UltraApp}^n$ by Assumption \ref{thehyphyphyp}.
It is not hard to see that $\hat{F}_\alpha(F_{\alpha_1}(X)) , F_{\alpha_1-\alpha}(X)$ are  solutions to the same ODE (only $W$ is changed into $-W$ as it should be since the time is reversed), and is thus the unique solution.  Thus by what we proved, $W^*(\hat{F}_\alpha(F_{\alpha_1}(X)),B)\subset W^*(F_{\alpha_1}(X),B)$, which proves the reverse inclusion and thus $W^*(F_\alpha(X),B)= W^*(X,B),$ for $\alpha\in [0,\alpha_0]$.

Let us  prove the last point of the Corollary. We have just checked the case $\alpha\in [0,\alpha_0]$.
 Moreover, $(V_{\alpha_0},(1-\alpha_0)W)$ satisfies the same assumption as $(V,W)$ with the same constants $(c,R)$. We can therefore perform the previous construction of a function $F_\alpha$ with $(V,W)$ replaced by $(V_{\alpha_0},(1-\alpha_0)W)$.  This can be done until a parameter $\alpha_0'$ which can be chosen to be equal to $\alpha_0$ as the constants $(c,R)$ are the same and the semi-groups under consideration are the same. 
Note also that Assumption \ref{thehyphyphyp} enables to check that $|| F_{\alpha_0}(X)||\leq R/4$ and thus $ F_{\alpha_0}(X)$ satisfies the same assumption as $X$. Applying (i),(ii) in that case concludes to the isomorphism of $W^*(X^{V+\alpha W},B)$  for $\alpha\in [\alpha_0,\alpha_0+\alpha_{0}(1-\alpha_0)]$ if $X^{V+\alpha W}$ are the unique variables with conjugate variables $\mathscr{D}_i(V+\alpha W).$
 
Inductively, one concludes to the isomorphism for any $\alpha\in [0,1[$.
 To complete the proof, it suffices to note that for $\epsilon$ small enough, $V,(1+\epsilon)W$ satisfy the same assumptions (a priori with a different convexity constant and replacing $R/4<R/3$ by any larger value).
\end{proof}

\section{Appendix 1: Cyclic Haagerup Tensor Products}

Let $M$ be a finite von Neumann algebra and $D\subset M$ be  a von Neumann subalgebra.  
Our goal is to define a {notion of $n$-fold} cyclic tensor product $M^{\oehc{D}n}$ which will be a certain subspace of the Haagerup tensor product $M^{\oeh{D}n}$.  We start by considering the case $n=2$, and then use amalgamated free products to build the more general cyclic tensor powers.  

The inspiration for the construction comes from subfactor theory.  Indeed, if $M_0\subset M_1$ is a finite-index inclusion of II$_1$ factors and if $M_k$ denotes the $k$-th step in the iterated Jones basic construction, then (see e.g. \cite[Prop 4.4.1(ii)]{JonesSunders}) $L^2(M_k)$ are precisely the tensor powers of $L^2(M_1)$ regarded as an $M_0$ Hilbert bimodule: $L^2(M_k) = L^2(M_1)^{\otimes_{M_0} k}
$.  Moreover, the higher relative commutants $M_0' \cap M_k$ are precisely the {\em cyclic} tensor powers of $M_1$.  These ideas have been extended to the infinite-index case  \cite{BurnsPhD,PenneysPhD,Penneys}.  In particular, the notion of Burns rotation will be useful for us to get a certain traciality property.  

\subsection{Preliminaries}

\subsubsection{Background and basic results on tensor powers of Hilbert bimodules}
Let $D$ be a $II_1$-factor and let  ${}_D H_D$ be a $D$-Hilbert bimodule, i.e., a Hilbert space carrying a pair of commuting normal actions of $D$.  Recall that a vector $\xi\in H$ is called left (resp. right) bounded if the left (resp. right) action of $M$ on $\xi$ extends to an action of $L^2(M)$ on $\xi$.  There is always a $D$-basis $\{\alpha\}$ of vectors for $H$ which are both right and left bounded \cite{Popa86}. We write $H_{L^2(D)}$ the set of right bounded vectors and ${}_{L^2(D)} H$ the set of left bounded vectors. We call $B_H={}_{L^2(D)} H\cap H_{L^2(D)}$ the set of vectors which are  both left and right bounded. 

Let us denote by  $H^{\o_D n}$ the $n$-fold Hilbert module relative tensor product (for convenience, we set $H^{\o_D 0}=L^2(D)$). Denote by $P_H^n=D'\cap H^{\o_D n}$ the set of central vectors. Following \cite{Penneys}, we denote by  $\{\alpha^n\}$ the basis {for} $H^{\o_D n}$ of tensors of elements of $\{\alpha\}$. 
Similarly, fix $D$-bases  $\{\beta\}, \{\beta^n\}$  for $H_D$ and $(H^{\o_D n})_D$, respectively. 

Let $$C_{n,H}=D^{op\prime}\cap B(H^{\o_D n})$$ and endow it with the canonical trace $$Tr_n=\sum_{\beta^n}\langle \cdot \beta^n,\beta^n\rangle.$$ An example of this is the Jones basic construction, that we denote $\langle M,e_D\rangle$ for
 $D\subset M$.  Then $\langle M,e_D\rangle=D^{op\prime}\cap B(L^2(M))=C_{1,L^2(M)}.$ 
Similarly, 
let $$C_{n,H}^{op}=D'\cap B(H^{\o_D n})$$ with canonical trace $$Tr_n^{op}=\sum_{\alpha^n}\langle \cdot \alpha^n,\alpha^n\rangle.$$ Finally, define the centralizer algebras $$Q_{n,H}=C_{n,H}\cap C_{n,H}^{op}.$$ 

We recall the following definitions from \cite{Penneys}:
\begin{definition}
(i) A Hilbert bimodule  $H$ on a factor $D$ is said to be \textbf{extremal} if $Tr_1=Tr_1^{op}$ on the positive cone $Q_{1,H}^+.$

(ii) A \textbf{Burns rotation} is a map $\rho:P_H^n\to P_H^n$ such that for all $\zeta\in P_H^n, b_1,...,b_n\in B_H$, we have:
$$\langle \rho(\zeta),b_1\o ...\o b_n\rangle=\langle \zeta,b_2\o ...\o b_n\o b_1\rangle.$$
\end{definition}
Examples are given in \cite[section 5.2]{Penneys}. The easiest example is when $H$ has a two-sided basis \cite[Rmk 4.5]{Penneys}.

\begin{Theorem}\cite[Theorems 4.7, 4.20]{Penneys}\label{BurnsR}If $H$ is extremal, $H^{\o_D n}$ is also extremal and for all $n$, there exists a Burns rotation $\rho$ on $P_H^n$ which is a unitary map.
\end{Theorem}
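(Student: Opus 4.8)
The plan has two parts. First one shows, by induction on $n$, that every relative tensor power $H^{\otimes_D n}$ is again extremal; then, using this, one constructs the Burns rotation on $P_H^n$ and checks that it is a unitary. The hypothesis is the case $n=1$, and $H^{\otimes_D 0}=L^2(D)$ is trivially extremal, so only the inductive step and the rotation require work.

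\emph{Extremality of the tensor powers.} Under the standing assumptions the two canonical traces $Tr_1$ and $Tr_1^{op}$, restricted to the centralizer $Q_{1,H}=C_{1,H}\cap C_{1,H}^{op}$, are semifinite faithful normal traces, so there is a positive ``Radon--Nikodym derivative'' $\delta_H$ affiliated with the centre of $Q_{1,H}$ relating them; since $\delta_H$ is central, $Tr_1=Tr_1^{op}$ on $Q_{1,H}^{+}$ is equivalent to $\delta_H=1$, i.e.\ extremality $\Longleftrightarrow\delta_H=1$. Writing $H^{\otimes_D n}=H\otimes_D K$ with $K=H^{\otimes_D(n-1)}$, the operators $\delta_H\otimes 1$ and $1\otimes\delta_K$ are both well defined and positive on $H^{\otimes_D n}$ (the first commutes with the right $D$-action on $H$, the second descends because $\delta_K$ commutes with the left $D$-action on $K$), they commute, and both are affiliated with $Q_{n,H}$. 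The computational core is the identity $\delta_{H\otimes_D K}=(\delta_H\otimes 1)(1\otimes\delta_K)$, obtained by expanding $Tr_n$ and $Tr_n^{op}$ in the chosen two-sided $D$-bases $\{\alpha^n\}$ and $\{\beta^n\}$ and comparing term by term. Given this, the inductive hypothesis ($\delta_H=1$, $\delta_K=1$) forces $\delta_{H\otimes_D K}=1$, i.e.\ $Tr_n=Tr_n^{op}$ on $Q_{n,H}^{+}$; this is exactly \cite[Thm 4.7]{Penneys}.

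\emph{The Burns rotation.} Fix $n$. The span of the vectors $b_1\otimes\cdots\otimes b_n$ with $b_i\in B_H$ is dense in $H^{\otimes_D n}$, hence its image under the orthogonal projection onto the closed subspace $P_H^n$ of central vectors is dense in $P_H^n$. For $\zeta\in P_H^n$ one wants $\rho(\zeta)\in P_H^n$ with $\langle\rho(\zeta),b_1\otimes\cdots\otimes b_n\rangle=\langle\zeta,b_2\otimes\cdots\otimes b_n\otimes b_1\rangle$; to make sense of this one must check that the linear functional $b_1\otimes\cdots\otimes b_n\mapsto\langle\zeta,b_2\otimes\cdots\otimes b_n\otimes b_1\rangle$ extends to a bounded functional on $H^{\otimes_D n}$, and that the resulting vector is central (the latter follows from centrality of $\zeta$ together with the $D$-module relations for the basis $\{\beta\}$). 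Boundedness, and more precisely the norm-preservation
\begin{equation*}
\langle\rho(\zeta),\rho(\eta)\rangle=\langle\zeta,\eta\rangle ,\qquad \zeta,\eta\in P_H^n,
\end{equation*}
is proved by identifying the inner product of two central vectors in $H^{\otimes_D n}$ with the value of $Tr_n$ (resp.\ $Tr_n^{op}$) on an explicit element of $Q_{n,H}$ built from $\zeta$ and $\eta$ via the $D$-bases; unwinding the cyclic permutation that defines $\rho$ interchanges the roles of $Tr_n$ and $Tr_n^{op}$ (equivalently, cyclically permutes the legs inside one such trace), and the extremality of $H^{\otimes_D n}$ established above makes the two expressions coincide. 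Thus $\rho$ is a well-defined isometry of $P_H^n$; since the defining formula visibly gives $\rho^n=\operatorname{id}$, the isometry $\rho$ is onto, hence unitary, and it satisfies the required pairing identity. (For general $n$ one can alternatively build $H^{\otimes_D n}$ from iterated amalgamated free products, as indicated after the statement, and propagate the $n=2$ construction.)

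\emph{Main obstacle.} The delicate point is the norm-preservation of $\rho$, i.e.\ making precise the dictionary between central vectors in $H^{\otimes_D n}$ and elements of the semifinite von Neumann algebra $Q_{n,H}$ and showing that under it the one-step cyclic rotation corresponds exactly to the passage between $Tr_n$ and $Tr_n^{op}$, which agree precisely when $H$ is extremal; in the non-extremal case $\rho$ genuinely fails to be isometric. Since $D$ has infinite index issues here ($\{\alpha\}$, $\{\beta\}$ may be infinite and the traces only semifinite), one must also be careful with domains, with the choice of simultaneously left- and right-bounded $D$-bases, and with the convergence of the basis sums defining $Tr_n$, $Tr_n^{op}$ — these are the technical points handled in \cite{Penneys}.
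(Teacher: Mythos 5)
You should first note that the paper does not prove this statement at all: Theorem \ref{BurnsR} is imported verbatim from \cite{Penneys} (Theorems 4.7 and 4.20, building on Burns' thesis \cite{BurnsPhD}), so there is no internal proof to compare against; the only meaningful comparison is with the cited source, and your sketch does follow the same Burns--Penneys strategy (extremality passes to relative tensor powers; the rotation is pinned down by the cyclic pairing identity, is isometric because the two canonical traces on the centralizer agree in the extremal case, and satisfies $\rho^n=\mathrm{id}$, hence is surjective and unitary).

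That said, the two steps you treat as routine are precisely the content of the cited theorems, and as written they have genuine gaps. First, to define the Radon--Nikodym derivative $\delta_H$ you need $Tr_1$ and $Tr_1^{op}$ to restrict to semifinite normal faithful traces on $Q_{1,H}$; in infinite index this is not automatic (it is where approximate extremality enters in \cite{Penneys}), and the identity $\delta_{H\otimes_D K}=(\delta_H\otimes 1)(1\otimes\delta_K)$ is not a term-by-term triviality, since $Q_{n,H}$ is not a tensor product of the one-step centralizers and the basis sums defining $Tr_n$, $Tr_n^{op}$ only converge weakly. Second, and more seriously, your existence argument for $\rho$ is circular as stated: the identification of $\langle\zeta,\eta\rangle$ with the value of $Tr_n$ on an operator built from $\zeta,\eta$ requires these central vectors to be left-right bounded (otherwise $L_\zeta L_\eta^*$ is at best affiliated with $Q_{n,H}$ and its trace is not obviously defined), the orthogonal projections of simple tensors $b_1\otimes\cdots\otimes b_n$ onto $P_H^n$, while dense, need not be bounded vectors, and the boundedness of the pairing functional must be established before $\rho(\zeta)$ exists at all. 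In Burns--Penneys this is handled by constructing $\rho$ directly as a weak limit of basis sums $\sum_\beta \beta\otimes_D(\cdot)$, with approximate extremality giving boundedness of the candidate map and extremality upgrading it to an isometry; your closing observation that the defining relation forces $\rho^n=\mathrm{id}$, hence surjectivity and unitarity once isometry is known, is correct and is indeed how the proof is finished.
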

There is also a partial converse \cite[Th 1.4]{Penneys}, although it is not needed for our purposes.

\subsubsection{Haagerup tensor products and the basic construction}
With these preliminaries recalled, we now turn to the definition of cyclic Haagerup tensor product.
We start by a well-known technical result concerning the Jones basic construction. 

If $A$ is an operator space, we write $A^*$ for its dual as an  operator space \cite{PisierBook}.  When $A$ is a  $D-D$ bimodule, we write $A^{\natural }$ for the dual operator $D'-D'$ bimodule in the sense of Magajna \cite{M05}. We will also denote by  $A^{\natural D norm}$ the normal dual defined {when $A$ is itself a tensor product over $D$} in  \cite[Th 3.2]{M05}.  While we will not recall the general definition of the normal dual here, we will mention that in the case that $A$ is itself a tensor product over $D$ (and therefore its dual can be viewed as the space of certain linear maps), the normal dual corresponds to maps that satisfy a normality condition on basic tensors. In the case that $D=\C$, the bimodule dual is the same  as the operator space dual $A^*$.

Let $D\subset M$ be finite von Neumann algebras, let $e_D$ be the Jones projection onto $D$, and denote by $\langle M, e_D\rangle$ the basic construction for $D\subset M$. Let $$A(M,e_D)=\operatorname{Span}\{xe_Dy:x\in L^2(M)_{L^2(D)},y\in {}_{L^2(D)}L^2(M) \}.$$  Denote  by $\mathcal{I}_0(\langle M,e_D\rangle)$ the compact ideal space (cf. \cite[section 1.3.3]{Po02}).  Let $E_{D'}:\mathcal{I}_0(\langle M,e_D\rangle)\to D'\cap \mathcal{I}_0(\langle M,e_D\rangle)$ be the conditional expectation constructed in \cite[Prop 1.3.2]{Po02}.

\begin{Lemma}
With the above notations, $A(M,e_D)$ is weak-* dense in $\langle M, e_D\rangle$, dense in $L^2(\langle M,e_D\rangle)$, $\mathcal{I}_0(\langle M,e_D\rangle)$  as well as $L^1(\langle M,e_D\rangle)$.

The following hold isometrically: $$L^1(\langle M,e_D\rangle)\simeq L^2(M)^*\o_{h D^{op}}L^2(M)= \mathcal{I}_0(\langle M,e_D\rangle)^{* D norm}\subset \mathcal{I}_0(\langle M,e_D\rangle)^*.$$ 

The restriction of  $E_{D'}$  to a normal projection on  $\mathcal{I}_0(\langle M,e_D\rangle)\cap L^2(M)\o_{D}L^2(M)$  induces a cross-section to the quotient map $\mathcal{I}_0(\langle M,e_D\rangle)\to \mathcal{I}_0(\langle M,e_D\rangle)/\overline{[D,\mathcal{I}_0(\langle M,e_D\rangle)]}$. {The Dixmier conditional expectation $E_{D'}:\langle M,e_D\rangle \to D'\cap \langle M,e_D\rangle$ is an extension of $E_{D'}$.}

 The map $E_{D'}$ is pointwise normal in $D$ and thus its adjoint $E_{D'}^*$ induces a projection $E_{D'}^* : L^1(\langle M,e_D\rangle)\to D'\cap L^1(\langle M,e_D\rangle)$  agreeing with the usual projection on $L^1(\langle M,e_D\rangle)\cap L^2(M)\o_{D}L^2(M),$ and giving an isomorphism $$ D'\cap L^1(\langle M,e_D\rangle)\simeq L^1(\langle M,e_D\rangle)/\overline{[D,L^1(\langle M,e_D\rangle)]}.$$ 
\end{Lemma}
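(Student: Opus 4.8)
This statement collects standard structural facts about the basic construction $\langle M,e_D\rangle$, and I would prove it by combining Popa's compact ideal space \cite{Po02} with Magajna's theory of normal module duals \cite{M05}, treating the four assertions in turn. For the density claims, first I would note that $A(M,e_D)$ contains the linear span $\mathcal A_0$ of $\{me_Dm':m,m'\in M\}$, which is a $*$-subalgebra of $\langle M,e_D\rangle$ (using $e_Dme_D=E_D(m)e_D$), is weak-$*$ dense by the very definition of the basic construction, consists of elements in the domain of the canonical semifinite trace $\Tr$ (since $\Tr\big((me_Dm')(me_Dm')^{*}\big)=\tau\big(mE_D(m'm'^{*})m^{*}\big)<\infty$), and lies in the compact ideal $\mathcal I_0(\langle M,e_D\rangle)=\overline{\langle M,e_D\rangle\,e_D\,\langle M,e_D\rangle}^{\;\|\cdot\|}$. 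Weak-$*$ density of this trace-finite $*$-subalgebra then gives, by the usual Kaplansky/GNS argument, its $\|\cdot\|_2$- and $\|\cdot\|_1$-density, and hence density of $A(M,e_D)$, in $L^2(\langle M,e_D\rangle)$ and $L^1(\langle M,e_D\rangle)$; density in $\mathcal I_0(\langle M,e_D\rangle)$ then follows from the definition of the latter once one checks that a general bounded elementary tensor $xe_Dy$ (with $x$ right $D$-bounded and $y$ left $D$-bounded) is a norm limit of elements of $\mathcal A_0$.

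For the isometric identifications, the key input is the description of the predual of $\langle M,e_D\rangle=D^{op\prime}\cap B(L^2(M))$, and of its compact ideal, as module Haagerup tensor products. I would invoke \cite[Th.~3.2]{M05}: the normal $D$-module dual of $\mathcal I_0(\langle M,e_D\rangle)$ is isometrically $L^2(M)^{*}\o_{hD^{op}}L^2(M)$, which sits inside the full operator space dual $\mathcal I_0(\langle M,e_D\rangle)^{*}$; and by the compact-ideal-space description of the basic construction in \cite[\S1.3]{Po02} this normal dual is precisely $L^1(\langle M,e_D\rangle)$. The relevant pairing is the trace pairing $(a,\omega)\mapsto\Tr(a\omega)$, and compatibility of all these identifications with it need only be checked on the dense subspace $A(M,e_D)$ of elementary tensors, which is the previous step.

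Next I would turn to $E_{D'}$. Recall from \cite[Prop.~1.3.2]{Po02} that $E_{D'}\colon\mathcal I_0(\langle M,e_D\rangle)\to D'\cap\mathcal I_0(\langle M,e_D\rangle)$ is a norm-one $D$-bimodular projection which on $\mathcal I_0(\langle M,e_D\rangle)\cap L^2(M)\o_D L^2(M)$ is the orthogonal projection onto the $D$-central vectors. Since $E_{D'}$ is $D$-bimodular with range in the commutant, $\overline{[D,\mathcal I_0(\langle M,e_D\rangle)]}\subseteq\ker E_{D'}$ and $E_{D'}$ restricts to the identity on $D'\cap\mathcal I_0(\langle M,e_D\rangle)$; combined with the Banach space decomposition $\mathcal I_0=(D'\cap\mathcal I_0)\oplus\overline{[D,\mathcal I_0]}$ of \cite{Po02}, this exhibits $E_{D'}$ as a bounded cross-section of the quotient map $\mathcal I_0\to\mathcal I_0/\overline{[D,\mathcal I_0]}$ identifying the quotient with $D'\cap\mathcal I_0$. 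For the Dixmier expectation I would average over $\mathcal U(D)$ in the weak-$*$ topology to obtain a normal $D$-bimodular projection $\langle M,e_D\rangle\to D'\cap\langle M,e_D\rangle$; since it and $E_{D'}$ are both $D$-bimodular projections onto the commutant part and agree on the weak-$*$ dense algebra $A(M,e_D)$, weak-$*$ continuity forces the former to extend the latter.

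Finally, Popa's construction makes $E_{D'}$ pointwise normal in the $D$-variable, i.e.\ it satisfies the normality-on-basic-tensors condition defining the normal dual of \cite{M05}; hence its transpose $E_{D'}^{*}$ carries $L^1(\langle M,e_D\rangle)$ into itself, is a norm-one idempotent (because $E_{D'}^{2}=E_{D'}$), and has range the pre-annihilator of $\ker E_{D'}$. Dualizing the decomposition above yields $L^1(\langle M,e_D\rangle)=\big(D'\cap L^1(\langle M,e_D\rangle)\big)\oplus\overline{[D,L^1(\langle M,e_D\rangle)]}$ with $E_{D'}^{*}$ the projection, whence $D'\cap L^1(\langle M,e_D\rangle)\simeq L^1(\langle M,e_D\rangle)/\overline{[D,L^1(\langle M,e_D\rangle)]}$ isometrically, and $E_{D'}^{*}$ agrees with the usual $D$-central projection on $L^1(\langle M,e_D\rangle)\cap L^2(M)\o_D L^2(M)$. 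The main obstacle, I expect, is the second step: correctly matching Magajna's normal $D$-module dual of the module Haagerup tensor product with $L^1$ of the basic construction — pinning down the normality condition and the trace pairing — and, relatedly, checking that the \emph{specific} $E_{D'}$ of \cite{Po02} is pointwise normal so that $E_{D'}^{*}$ genuinely restricts to $L^1$; once these duality facts are secured, the density statements and the formal properties of $E_{D'}$ (bimodularity, idempotency, the quotient/cross-section bookkeeping) are routine.
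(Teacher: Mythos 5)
Your overall route is essentially the paper's: identify $L^1(\langle M,e_D\rangle)$ with $L^2(M)^*\o_{h D^{op}}L^2(M)$ and with the normal module dual of $\mathcal{I}_0(\langle M,e_D\rangle)$ via Magajna's results, identify $\mathcal{I}_0(\langle M,e_D\rangle)$ with the norm closure of basic tensors (the Haagerup tensor product) inside the extended Haagerup product, and treat $E_{D'}$ through Popa's construction and Dixmier-type averaging. However, there is a genuine gap at precisely the step the paper calls the key part of the Lemma. You assert that "Popa's construction makes $E_{D'}$ pointwise normal in the $D$-variable" and then, in your final sentence, concede that verifying this (so that $E_{D'}^*$ really maps $L^1(\langle M,e_D\rangle)$ into itself) is the main unresolved obstacle. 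This cannot simply be quoted: the normal part $\mathcal{I}_0(\langle M,e_D\rangle)^{*Dnorm}$ is a proper subspace of $\mathcal{I}_0(\langle M,e_D\rangle)^*$, and the whole content of the last assertion is that the Banach adjoint $E_{D'}^*$ preserves it. The paper's argument is short but it is an argument: for $V\in L^1(\langle M,e_D\rangle)$ one must check normality of $d\mapsto \Tr(E_{D'}^*(V)\,\xi d e_D\eta)$ for right/left bounded $\xi,\eta$; since $E_{D'}^*$ is bounded and normality passes to norm limits, one may assume $V\in L^2(M)\o_D L^2(M)$, where $E_{D'}^*(V)=E_{D'}(V)$; this element is in turn approximated by Dixmier averages $\sum\lambda_u uVu^*$, $u\in\mathcal{U}(D)$, each of which yields a normal functional in $d$, and the limit gives the claim. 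Without some such verification, your last paragraph (the projection on $L^1$ and the quotient isomorphism there) does not go through.

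A smaller weak point: for the cross-section statement you invoke a Banach-space decomposition $\mathcal{I}_0(\langle M,e_D\rangle)=(D'\cap\mathcal{I}_0(\langle M,e_D\rangle))\oplus\overline{[D,\mathcal{I}_0(\langle M,e_D\rangle)]}$ attributed to \cite{Po02}; that decomposition is essentially equivalent to what is being proved, so it should be derived, not cited. The paper gets it directly from the averaging description: $E_{D'}(U)$ is a limit of convex combinations $u^*Uu=U+[u^*U,u]$, hence $U-E_{D'}(U)\in\overline{[D,\mathcal{I}_0(\langle M,e_D\rangle)]}$, which together with $D$-bimodularity gives the cross-section, the identification of $D'\cap\mathcal{I}_0(\langle M,e_D\rangle)$ with the quotient, and the compatibility with the Dixmier expectation on $\langle M,e_D\rangle$. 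Your density argument via the trace-finite weak-* dense $*$-subalgebra and your identification of $L^1$ through \cite{M05} (uniqueness of the predual / trace pairing checked on elementary tensors) are fine and consistent with the paper.
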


\begin{proof}
The  identification $$L^1(\langle M,e_D\rangle)\simeq L^2(M)^*\o_{h D^{op}}L^2(M)= \mathcal{I}_0(\langle M,e_D\rangle)^{* D norm}$$ comes from the fact that both spaces are  preduals of the same von Neumann algebra as follows from the computation of their duals in \cite[Corollary 3.3]{M05}, {the computation of $\mathcal{I}_0(\langle M,e_D\rangle)$ as Haagerup tensor product below} and the identification with extended Haagerup products \cite[Rmk 2.18]{M05}: $$L^1(\langle M,e_D\rangle)\simeq L^2(M)^*\o_{h D^{op}}L^2(M)\simeq L^2(M)^*\oeh{D^{op}}L^2(M).$$

From \cite[Th 3.2, 
 Ex 3.15]{M05} we have the isomorphism $$[{}_{\C}({}_ML^2(M)_{L^2(D)})_D\o_{h D}{}_D({}_{L^2(D)}L^2(M)_M)_{\C}]^{* D norm}\simeq L^1(\langle M,e_D\rangle).$$ (Note that here the operator space structure ${}_D({}_{L^2(D)}L^2(M)_M)_{\C}$ is the one of the indicated Hilbert module structure, not the one as a module over $D^{op}$). It remains to check \begin{align*}\mathcal{I}_0(\langle M,e_D\rangle)&\simeq [{}_{\C}({}_ML^2(M)_{L^2(D)})_D\o_{h D}{}_D({}_{L^2(D)}L^2(M)_M)_{\C}]\\&\subset [{}_{\C}({}_ML^2(M)_{L^2(D)})_D\oeh{D}{}_D({}_{L^2(D)}L^2(M)_M)_{\C}]\simeq \langle M,e_D\rangle\end{align*} but the last inclusion comes again from \cite[Th 3.2, 
 Ex 3.15]{M05}. In this way we identify the compact ideal space with the norm closure of basic tensors in the extended Haagerup tensor product.  This norm closure is exactly the Haagerup tensor product  and thus we deduce the first isomorphism.
 
On the dense space  $\mathcal{I}_0(\langle M,e_D\rangle)\cap L^2(M)\o_{D}L^2(M)$, $E_{D'}$ vanishes on $[D,U]$ for any $U$.  Since $E_{D'}(U)$ is a limit of convex combinations of $u^*Uu= U+ [u^*U,u]$, $u\in D$, $E_{D'}(U)$ has the same image as $U$ in the quotient  $\mathcal{I}_0(\langle M,e_D\rangle)/\overline{[D,\mathcal{I}_0(\langle M,e_D\rangle)]}$.  This gives the claimed isomorphism between the image of $E_D$,  $D'\cap \mathcal{I}_0(\langle M,e_D\rangle)$, and the quotient, as well as the identification with the Dixmier conditional expectation.

The key part of our Lemma is to check $D$-normality of $d\mapsto Tr(E_{D'}^*(V)\xi de_D\eta),$ $V\in L^1(\langle M,e_D\rangle),\xi\in L^2(M)_{L^2(D)},\eta\in {}_{L^2(D)}L^2(M).$ Since $E_{D'}$ is bounded, one may assume $V\in L^2(M)\o_{D}L^2(M)$ in which case obviously
$E_{D'}^*(V)=E_{D'}(V).$ This one is again close to $\sum \lambda_u uVu^*$ so that since $d\mapsto Tr(\sum \lambda_u uVu^*\xi de_D\eta),$ is normal, one gets our result. The second quotient statement is analogous. 
 \end{proof}
{The reader should note that the  identification  $L^1(\langle M,e_D\rangle)\simeq L^2(M)^*\o_{h D^{op}}L^2(M)$ is given on basic tensors by:
\begin{equation}\label{L1basic}xe_Dy\mapsto y\o_{D^{op}}x.\end{equation}
 This will be the key to various flips appearing naturally later.}
 
 \subsection{The cyclic Haagerup tensor product, case $n=2$.}
Recall that the spaces $L^p(\langle M,e_D\rangle)$ are made in compatible couples in the sense of interpolation theory \cite{PisierBook}. {We can see them as} the inductive limit of $L^p(q\langle M,e_D\rangle q)$ for $q$ finite projection.  Thus these spaces are realized as an interpolation pair as a subspace of the topological direct sum  $\oplus_{q\in P_{f}(\langle M,e_D\rangle)}L^1(q\langle M,e_D\rangle q)$. 

We refer to \cite[Th 2]{dabsetup} for a literature overview of the main algebraic operations available on module Haagerup tensor products. We will use them extensively. We single out several operations.  The first is the map $^\star$ (see  Section \ref{notation}) which is given on basic tensors by $(a\o b)^\star=b^*\o a^*.$ Next, for a basic tensor $X=a\otimes b\in M\oeh{D}M$ and a basic product $U= x e_D y \in \langle M , e_D\rangle$ we write:
{$$ U \# X = E_{D'}(bx  e_D  ya),\qquad \textrm{(inner action)}.$$  } and {if $U\in D'\cap   \langle M , e_D\rangle$:}
$$ X\# U = a x e_D y b,\qquad \textrm{(outer action)}$$

With these notations, we have the following statements, which we group into { three} Theorems for convenience of presentation.

\begin{Theorem}\label{Finite1}
Let $D\subset (M,\tau)$ finite von Neumann algebras.
\begin{enumerate}
\item[(1a)]  The outer action  $(X,U)\mapsto X\#U$ extends to all $X\in M\oeh{D}M$ and $U\in D'\cap \langle M,e_D\rangle\subset D'\cap B(L^2(M))$, taking values in $\langle M,e_D\rangle$.  The inner action $(X,V)\mapsto V\#X$ extends to all $X\in M\oeh{D}M$ and $V\in L^1(\langle M,e_D\rangle)$ with values in $D'\cap L^1(\langle M,e_D\rangle)$

\item[(1b)] If in addition $X\in D'\cap M\oeh{D}M$, $U\in D'\cap \langle M,e_D\rangle$, then $X\#U \in D'\cap \langle M, e_D \rangle$.

\item[(1c)] The inner and outer multiplication actions give rise to inclusions $\sigma_1$, $\sigma_2$, 
\begin{align*}
\sigma_i:  D'\cap M\oeh{D}M &\to \\ & B\left(D'\cap \langle M,e_D\rangle \cap L^1(\langle M,e_D\rangle), D'\cap(\langle M,e_D\rangle +  L^1(\langle M,e_D\rangle)\right).
\end{align*}
\end{enumerate}
\end{Theorem}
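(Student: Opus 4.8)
The plan is to deduce all three items from the standard row--column factorization of the extended Haagerup tensor product \cite{M97,dabsetup} together with the predual description of $\langle M,e_D\rangle$ from the previous Lemma; throughout, $E_{D'}$ denotes the conditional expectation onto $D'$ furnished there, and I will repeatedly use that $E_{D'}$ kills commutators with $D$ and that $E_{D'}^*$ projects $L^1(\langle M,e_D\rangle)$ onto its $D'$-part (pointwise normality of $E_{D'}$ in $D$ being exactly what guarantees this). Fix $X\in M\oeh{D}M$ and a factorization $X=\sum_{i\in I}a_i\o b_i$ with $a_i,b_i\in M$, $\|\sum_i a_ia_i^*\|\,\|\sum_i b_i^*b_i\|$ arbitrarily close to $\|X\|_{eh}^2$ and both sums weak-$*$ convergent; truncating to finite $F\subset I$ only decreases $\|\cdot\|_{eh}$.

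\textbf{Item (1a).} For $U\in D'\cap\langle M,e_D\rangle$ I define $X\#U$ as the limit of the finite partial sums $\sum_{i\in F}a_iUb_i\in\langle M,e_D\rangle$; for $\xi,\eta\in L^2(M)$ the Cauchy--Schwarz estimate
\[
\Big|\big\langle\textstyle(\sum_{i\in F}a_iUb_i)\xi,\eta\big\rangle\Big|\le\|U\|\Big(\sum_{i\in F}\|b_i\xi\|^2\Big)^{1/2}\Big(\sum_{i\in F}\|a_i^*\eta\|^2\Big)^{1/2}\le\|X\|_{eh}\|U\|\,\|\xi\|\,\|\eta\|
\]
(applied also to tails) shows the partial sums are weak operator Cauchy and bounded, so the limit exists in $B(L^2(M))$ with $\|X\#U\|\le\|X\|_{eh}\|U\|$, and it lies in $\langle M,e_D\rangle$ because that algebra is weak-$*$ closed and contains all the partial sums; independence of the representative is the identity $(ad)Ub=aU(db)$, which uses $U\in D'$. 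For the inner action and $V\in L^1(\langle M,e_D\rangle)$ I set $V\#X=E_{D'}\big(\sum_i b_iVa_i\big)$: writing $V=\xi^*\xi$ with $\xi\in L^2(\langle M,e_D\rangle)$ (general $V$ by the usual decomposition), H\"older in $L^1=L^2\cdot L^2$ and cyclicity of the semifinite trace $\Tr$ on $\langle M,e_D\rangle$ give $\sum_i\|b_iVa_i\|_1\le(\sum_i\Tr(b_i^*b_i\,\xi^*\xi))^{1/2}(\sum_i\Tr(a_ia_i^*\,\xi^*\xi))^{1/2}\le\|X\|_{eh}\|V\|_1$, so $\sum_i b_iVa_i$ converges in $L^1$; since $E_{D'}$ kills commutators with $D$ this is independent of the representative even without assuming $V\in D'$, and $E_{D'}$ lands it in $D'\cap L^1(\langle M,e_D\rangle)$.

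\textbf{Items (1b) and (1c).} If moreover $X\in D'\cap M\oeh{D}M$, i.e. $dX=Xd$ in $M\oeh{D}M$ for all $d\in D$, then by linearity and weak-$*$ continuity of the outer action $d(X\#U)=(dX)\#U=(Xd)\#U=(X\#U)d$, the $d$'s being left/right multiplication on the row and column; hence $X\#U\in D'\cap\langle M,e_D\rangle$, which is (1b). For (1c), equip $D'\cap\langle M,e_D\rangle\cap L^1(\langle M,e_D\rangle)$ with the intersection norm and $D'\cap(\langle M,e_D\rangle+L^1(\langle M,e_D\rangle))$ with the sum norm: by (1a)--(1b) the outer action gives $\sigma_1(X)\colon U\mapsto X\#U$ mapping the first space into $D'\cap\langle M,e_D\rangle$, and the inner action gives $\sigma_2(X)\colon V\mapsto V\#X$ mapping it into $D'\cap L^1(\langle M,e_D\rangle)$, both bounded with norm $\le\|X\|_{eh}$, so both belong to the asserted $B(\cdot,\cdot)$. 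Injectivity follows by evaluating at the Jones projection $e_D$, which lies in $D'\cap\langle M,e_D\rangle\cap L^1(\langle M,e_D\rangle)$ since it is $D$-central and $\Tr(e_D)=\tau(1)=1$: $X\#e_D=\sum_i a_ie_Db_i$ is the image of $X$ under the canonical injection $M\oeh{D}M\hookrightarrow\langle M,e_D\rangle$, so $\sigma_1(X)=0$ forces $X=0$; for $\sigma_2$ the adjunction $\langle V\#X,W\rangle=\langle V,X\#W\rangle$ valid for $W\in D'\cap\langle M,e_D\rangle$ (from $E_{D'}^*$ being adjoint to $E_{D'}$ and cyclicity of $\Tr$) reduces injectivity of $\sigma_2$ to that of $\sigma_1$.

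\textbf{Main obstacle.} The genuinely delicate point is not the norm estimates above but the inner action on $L^1(\langle M,e_D\rangle)$: that $V\mapsto E_{D'}(\sum_i b_iVa_i)$ is well defined, bounded, $D'$-valued, and compatible with the outer action through the adjunction used for $\sigma_2$ all rest on the identification $L^1(\langle M,e_D\rangle)\simeq L^2(M)^*\oeh{D^{op}}L^2(M)$, the basic-tensor flip \eqref{L1basic}, and, crucially, the pointwise normality of $E_{D'}$ in $D$ established in the previous Lemma. Thus the entire content of the theorem is that Lemma together with the Cauchy--Schwarz / H\"older bookkeeping sketched here.
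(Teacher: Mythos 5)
Your construction is correct in outline, but it takes a genuinely more concrete route than the paper. You fix a row--column factorization of $X$ and define both actions by explicitly convergent series: $X\#U$ as a bounded WOT limit of $\sum_{i\in F}a_iUb_i$, and $V\#X$ as an absolutely convergent $L^1$-series followed by $E_{D'}$, with Cauchy--Schwarz/H\"older supplying the bound $\|X\|_{eh}$. The paper never chooses a representative: for the outer action it invokes the canonical completely bounded action of $M\oeh{D}M$ on $D'\cap B(L^2(M))$ and uses $M^{op}$-modularity to force the value into $\langle M,e_D\rangle=(D^{op})'\cap B(L^2(M))$, and for the inner action it builds a predual map out of the canonical contractions $L^2(M)^*\otimes_{eh}M\to L^2(M)^*$, $M\otimes_{eh}L^2(M)\to L^2(M)$, composes with $E_{D'}$, and identifies the result in $D'\cap L^1(\langle M,e_D\rangle)$ through \eqref{L1basic}. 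Your route makes the estimates and the adjunction $\Tr(W[V\#X])=\Tr([X\#W]V)$ (the paper's \eqref{adjointActions}) nearly one-line computations, where the paper runs a two-step approximation; the paper's route gets well-definedness for free from the universal property.

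Two points need repair. First, independence of the factorization: for infinite decompositions the identity $(ad)Ub=aU(db)$ does not by itself identify two representatives of the same element of $M\oeh{D}M$; you need the standard criterion of \cite[(2.4)--(2.5)]{M05} (a projection $P\in M_I(D)$ with $aP=a$, $Pb=0$ whenever the tensor vanishes), combined with $U\in D'$ for the outer action and with $E_{D'}$ killing commutators with $D$ for the inner one. This is routine but should be said, since it is the only place where $U\in D'$ genuinely enters. Second, and more substantively, your injectivity argument for $\sigma_2$ is incomplete as stated: from $V\#X=0$ for all $V$ in the intersection space, the adjunction only gives $\Tr((X\#W)V)=0$ for all such $V$ and all $W$, and to conclude $X\#W=0$ (or $X=0$) you need the functionals $\Tr(\,\cdot\,V)$, with $V\in D'\cap\langle M,e_D\rangle\cap L^1(\langle M,e_D\rangle)$, to separate points of $D'\cap\langle M,e_D\rangle$ --- equivalently, norm density of this intersection in the predual $D'\cap L^1(\langle M,e_D\rangle)$ --- a true but unproved statement in your write-up. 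The paper sidesteps this entirely: it evaluates $\sigma_2(X)$ at $e_D$ and observes that $\sigma_2(X)(e_D)=E_{D'}(i(X))=i(X)$ for $X\in D'\cap M\oeh{D}M$, where $i$ is the injective canonical map into $L^2(M)^*\oeh{D}L^2(M)$, so $\sigma_2(X)=0$ forces $X=0$ directly. Either adopt that one-line argument, or supply the missing density (e.g.\ by applying $E_{D'}^*$ to $L^1\cap L^\infty$ and checking it stays in the intersection); with one of these fixes your proof is complete.
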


\begin{proof}
The {$M^{op}$-modularity} of the action on $D'\cap B(L^2(M))$ whose definition is recalled in \cite{dabsetup} Theorem 2.(4) insures stability of $\langle M,e_D\rangle=(D^{op})'\cap B(L^2(M))$ under the outer action.
 
Let us give an explicit description of the predual map giving the inner action on $D'\cap L^1(\langle M,e_D\rangle).$
From the canonical map $M\otimes_h L^2(M)=M\otimes_{eh} L^2(M)\to L^2(M)$ and its row analogue $L^2(M)^*\otimes_{eh}M\to L^2(M)^*$, (see \cite[Prop 3.1.7]{BLM}), one gets a map from
$$L^2(M)^*\otimes_{h}(M\oeh{D}M)\otimes_{h}L^2(M)\simeq (L^2(M)^*\otimes_{eh}M)\otimes_{eh D}(M\otimes_{eh} L^2(M))$$
into $ L^2(M)^*\otimes_{eh D}L^2(M)$,
inducing in particular a map $$m:L^2(M)^*\otimes_{h}L^2(M)\times M\oeh{D}M\to L^2(M)^*\otimes_{eh D}L^2(M)=L^2(M)^*\otimes_{h D}L^2(M)$$  which is our inner multiplication. 
{Composing with $E_{D'}$ one induces a map $$E_{D'}\circ m:L^2(M)^*\otimes_{h D^{op}}L^2(M)\times M\oeh{D}M\to D'\cap L^2(M)^*\otimes_{eh D}L^2(M)\,.$$
The latter is isomorphic to
 $D'\cap L^2(M)^*\otimes_{eh D^{op}}L^2(M)$, the last inclusion following for instance from the identification of this commutant with a quotient or because $E_{D'}(dU -Ud)=0$.
Note that the last isomorphism sends $a\o_Db\in D'\cap L^2(M)^*\otimes_{eh D}L^2(M)$ to $a\o_{D^{op}}b$  and thus on basic tensors $$E_{D'}\circ m(y\o_{D^{op}} x,a\o_D b)= E_{D'}(ya\o_{D^{op}} bx)$$
which is identified with $E_{D'}(bxe_Dya)$ in $D'\cap L^1(\langle M,e_D\rangle)$ via \eqref{L1basic} and coincides with our inner action.
}

For $X\in D'\cap (M\oeh{D}M),$ $U\in D'\cap \langle M,e_D\rangle$, 
 $X\#U\in D'\cap \langle M,e_D\rangle$. This proves (1b).

We first claim that for $V\in L^1(\langle M,e_D\rangle), X\in (M\oeh{D}M),$ $ U\in D'\cap\langle M,e_D\rangle$ :
\begin{equation}\label{adjointActions}Tr(U [V\#X])=Tr([X\#U] V).\end{equation}
To show this, it
suffices to take 
$V\in A$ by density. 
We can also assume $X$ is a finite sum. Indeed, if  $X=x\o_Dy$ a standard decomposition for $X$ \cite[(2.4),(2.5)]{M05} the ultrastrong convergence of finite families $x_F^*\to x^*$, $y_F\to y$ implies if $X_F=x_F\o_D y_F$ $X_F\#U\to  X\#U$ ultraweakly. Likewise if $V=\xi\o_{D^{op}}\eta$
we have the convergence $$\|V\# (X_F-X)\|_{L^2(M)^*\otimes_{eh D^{op}}L^2(M)}^2 \leq 2\langle \xi\sum_{i\not\in F}x_ix_i^* ,\xi\rangle\|y_F\eta\|_2^2+ 2\|\xi x\|_2^2\langle \sum_{i\not\in F}y_i^*y_i\eta ,\eta\rangle \to 0.$$
Now for the remaining case 
$V=\xi\o_{D^{op}}\eta$, $X=x\o_D y$ (without matrix tensor products), we note that the image of $V$ in the identification with $L^1(\langle M,e_D\rangle)$ is $\eta e_D\xi,$ {as explained in \eqref{L1basic}} so that  $[V\#X]=E_{D'}([y\eta e_D\xi x])$ and 
$$Tr(U [V\#X])=Tr(U[y\eta e_D\xi x])=Tr([xUy] V)=Tr([X\#U] V).$$
 We have also shown the existence of  {an extension for the definition of our inner action}, namely that for $V\in D'\cap\langle M,e_D\rangle\cap L^1(\langle M,e_D\rangle)$, and $x,y\in M$, \begin{equation}\label{flipflap}[V\#(x\o_D y)]=E_{D'}([y\o_D x]\#V).\end{equation}

We now prove (1c); all we need to show is that $\sigma_1(X):U\to X\#U,$ $\sigma_2(X):V\to V\#X$ give inclusions.  Note that  $\sigma_1(\cdot)(e_D)$ is the canonical inclusion $M\oeh{D}M\to \langle M,e_D\rangle=L^2(M)_{L^2(D)}\oeh{D}{}_{L^2(D)}L^2(M)$ given by the theory of extended Haagerup product (see e.g. \cite[Prop 14]{dabsetup}), so that $\sigma_1$ is injective.

By the definition of $\sigma_2$, $\sigma_2(X)(e_D)=E_{D'}(i(X))$ with $i:M\oeh{D}M \to L^2(M)^*\oeh{D} L^2(M)$ since it equals $i(X)$ for $X\in D'\cap M\oeh{D}M$; this gives injectivity of $\sigma_2$. 
\end{proof}

\begin{definition} \label{def:HaagerupCyclic}
Denote by $M^{\oehc{D}2}$ the intersection space of the images $\sigma_i(D'\cap M\oeh{D}M)$, $i=1,2$, in the sense of interpolation theory. This space is called the cyclic extended Haagerup tensor square of $M$.
\end{definition}

\begin{Theorem}  \label{finite2} We keep the notations and assumptions of Theorem~\ref{Finite1} and Definition~\ref{def:HaagerupCyclic}.
\begin{enumerate}

\item[(1d)]
The restriction of the map $^\star$ defined in \cite{dabsetup} Theorem 2.(4) to $M^{\oehc{D}2}$  and the map $\sigma = \sigma_2 \circ \sigma_1^{-1}$ define two commuting isometric involutions on $M^{\oehc{D}2}$. 
\item[(1e)] The involution $U\mapsto U^* := (\sigma(U))^\star$ and the product induced on $M^{\oehc{D}2}$ via $\sigma_1$ give rise to an involutive Banach algebra structure on $M^{\oehc{D}2}$.

\item[(1f)]For each $X\in  M^{\oehc{ D}2}$, $\sigma_1^{-1}(X)\#\cdot :D'\cap \langle M,e_D\rangle\to D'\cap \langle M,e_D\rangle$ and $\cdot \#\sigma_2^{-1}(X):D'\cap L^1(\langle M,e_D\rangle)\to D'\cap L^1(\langle M,e_D\rangle)$ interpolate to give an action of  $X\in  M^{\oehc{ D}2}$ on $D'\cap L^2(M)\o_{D}L^2(M)$.

\item[(1g)] There is also an outer action denoted $X\#_{L^1}.$ of $M^{\oehc{D}2}$ on  $ L^1(\langle M,e_D\rangle)$ leaving $D'\cap L^1(\langle M,e_D\rangle)$ globally invariant and commuting with the inner action.
\item[(2a)]  The map  $Y\in  (M\oeh{D}M)\mapsto Y\#e_D\in  \langle M,e_D\rangle\cap L^1(\langle M,e_D\rangle)$ gives the canonical weak-* continuous inclusion of $M\oeh{D}M$  into $L^2(\langle M,e_D\rangle)\simeq L^2(M)\o_D L^2(M)$  (cf. \cite[Proposition 14]{dabsetup}).

\item[(2b)] For any $Y,Z\in D'\cap 
M\oeh{D}M$ the map $X\mapsto \langle Z\#e_D,X\#Y\#e_D\rangle$ is weak-* continuous on bounded sets of $ 
M\oeh{D}M.$ 
\item[(2c)]  $M^{\oehc{D}2}\#e_D $ is dense in $D'\cap L^2(M)\o_{D}L^2(M)$ and $M^{\oehc{D}2}$ weak-* dense in $D'\cap M^{\oeh{D}2}$.  
\item[(2d)]{The multiplication map $(U,V)\mapsto U\#V$  is separately weak-* continuous on bounded sets in the second variable as a map $$(M\oeh{D}M)\times (D'\cap 
M\oeh{D}M)\to (M\oeh{D}M),$$
 and on each variable when restricted to: $$(D'\cap(M\oeh{D}M))\times (D'\cap 
M\oeh{D}M)\to (D'\cap(M\oeh{D}M)).$$}

\end{enumerate}
\end{Theorem}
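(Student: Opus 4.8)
The plan is to verify every assertion first on \emph{basic tensors} $a\otimes_D b$ and their finite linear combinations, by direct computation from the definitions of the inner and outer actions, from the identification \eqref{L1basic} of $L^1(\langle M,e_D\rangle)$ with $L^2(M)^{*}\otimes_{h D^{op}}L^2(M)$, and from the flip formula \eqref{flipflap}; and then to pass to general elements exactly as in the proof of Theorem~\ref{Finite1}, by taking a standard decomposition $X=x\otimes_D y$ with $x\in M_{1,I}(M)$, $y\in M_{I,1}(M)$, truncating the index set $I$ to finite subsets, and passing to ultrastrong limits (for the algebraic identities) or weak-$*$ limits (for the continuity statements) with uniform norm control on bounded sets.

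For (1d) I would argue as follows. The map $\star$ is an isometric involution of $M\oeh{D}M$ that visibly preserves $D'\cap M\oeh{D}M$; a computation on basic tensors shows that, in either realization $\sigma_i$, the restriction of $\star$ to $M^{\oehc{D}2}$ coincides with conjugation by the operator adjoint of $\langle M,e_D\rangle$ (resp.\ of $L^1(\langle M,e_D\rangle)$, using that $E_{D'}$ commutes with that adjoint), so $\star$ preserves the intersection $M^{\oehc{D}2}$ of Definition~\ref{def:HaagerupCyclic} and is isometric on it. Next, \eqref{flipflap} shows that $\sigma=\sigma_2\circ\sigma_1^{-1}$, transported through $\sigma_1$, is induced by the order-two rotation $\rho$ (the $E_{D'}$ in \eqref{flipflap} acting trivially precisely because membership in $M^{\oehc{D}2}$ forces the $\sigma_1$-realization to stay inside $D'$); hence $\sigma^2=\mathrm{id}$, $\sigma$ maps $M^{\oehc{D}2}$ into itself by the symmetry $\sigma_1\leftrightarrow\sigma_2$ of the defining intersection, $\sigma$ is isometric because it exchanges the two realizations entering the intersection norm, and $\sigma$ commutes with $\star$ since $\rho\star=\star\rho$ on basic tensors.

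For (1e): since $X\#(Y\#U)=(X\#_1 Y)\#U$, the map $\sigma_1$ is an algebra homomorphism from $(D'\cap M\oeh{D}M,\#_1)$ into operator composition; the rotation identity $\rho(X\#_1 Y)=\rho(Y)\#_1\rho(X)$ then shows the product closes on $M^{\oehc{D}2}$, and combining it with the intertwining relations from (1d) and with $(P\#_1 Q)^{\star}=P^{\star}\#_1 Q^{\star}$ yields $(UV)^{*}=V^{*}U^{*}$ first on basic tensors and then everywhere by density; submultiplicativity of the norm is inherited from $B(\,\cdot\,)$ and $\|X^{\star}\|=\|X\|$, so $M^{\oehc{D}2}$ becomes an involutive Banach algebra. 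For (1f), the inner and outer actions of $X\in M^{\oehc{D}2}$ agree on the common dense domain $D'\cap\langle M,e_D\rangle\cap L^1(\langle M,e_D\rangle)$ again by \eqref{flipflap}; since the $L^p(\langle M,e_D\rangle)$ form a compatible couple, $E_{D'}$ is a bounded projection on both ends, and $L^2(\langle M,e_D\rangle)\simeq L^2(M)\otimes_D L^2(M)$, complex interpolation between the $\sigma_1$- and $\sigma_2$-realizations produces the action on $D'\cap L^2(M)\otimes_D L^2(M)$. For (1g) one checks directly on basic tensors $V=\xi e_D\eta\in L^1(\langle M,e_D\rangle)$ that $X\#_{L^1}V:=aVb$ (for $X=a\otimes b$, then extended by the limiting procedure above) is well defined on all of $L^1(\langle M,e_D\rangle)$, leaves $D'\cap L^1(\langle M,e_D\rangle)$ invariant and commutes with the inner action, all from associativity of operator multiplication.

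For (2a), on basic tensors $Y\#e_D=ae_Db$, which under $L^2(\langle M,e_D\rangle)\simeq L^2(M)\otimes_D L^2(M)$ is precisely the canonical inclusion of \cite[Prop.~14]{dabsetup}, so weak-$*$ continuity is inherited. For (2b) and (2d) I would reduce, by the standard decomposition and truncation above, to finitely supported tensors, for which the relevant pairings and products involve only finite matrices over $M$ and are manifestly weak-$*$ continuous, and then upgrade to bounded sets by an $\varepsilon/3$ argument strictly parallel to the one given for \eqref{adjointActions}, using uniform norm bounds to control tails. For (2c), density of $M^{\oehc{D}2}\#e_D$ in $D'\cap L^2(M)\otimes_D L^2(M)$ follows because $E_{D'}$ maps the dense subspace $A(M,e_D)\cap\bigl(L^2(M)\otimes_D L^2(M)\bigr)$ into $M^{\oehc{D}2}\#e_D$ and has dense range there by the preceding Lemma, while the weak-$*$ density of $M^{\oehc{D}2}$ in $D'\cap M^{\oeh{D}2}$ follows by applying $E_{D'}$ to a weak-$*$ approximation of a given central element by finitely supported tensors and using the continuity from (2b)/(2d). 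The step I expect to be the main obstacle is exactly this interplay of the two realizations $\sigma_1$ and $\sigma_2$: one must check that \eqref{flipflap} propagates so that $\star$, $\sigma$, the product, and the interpolated action are all simultaneously well defined on the intersection $M^{\oehc{D}2}$ and agree on the overlap of the underlying $L^p$-spaces; the attendant interpolation-couple bookkeeping for (1f) and the truncation/limit arguments for the weak-$*$ statements (2b), (2d) are technical but follow the template already set in Theorem~\ref{Finite1}.
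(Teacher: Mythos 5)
There is a genuine gap, and it sits exactly where you predicted the main obstacle would be. Your global strategy---verify everything on elementary tensors and then pass to general elements by truncating a standard decomposition $X=x\o_D y$---breaks down at the crucial steps, because finite sums of elementary tensors are \emph{not} norm-dense in $M\oeh{D}M$, truncations of an element of $M^{\oehc{D}2}$ need not commute with $D$ nor remain in $M^{\oehc{D}2}$, and the only density actually available (that $E_{D'}$ of finite sums lies in $M^{\oehc{D}2}$ and is weak-$*$ dense in $D'\cap M^{\oeh{D}2}$) is precisely statement (2c), which in your own sketch is derived from (2b)/(2d); so your route to (1d), (1e) and (1g) is circular, and even granting (2c) you would still need weak-$*$ continuity of $\star$, $\sigma$ and the product together with stability of the intersection norm under bounded weak-$*$ limits, none of which you establish. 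The paper avoids approximating $X$ altogether: (1d) and (1e) are proved purely by duality, computing $Tr(X(U)V)$ in the two realizations via \eqref{adjointActions} with $U,V$ ranging over the dense subspace $D'\cap\langle M,e_D\rangle\cap L^1(\langle M,e_D\rangle)$, which yields $\sigma_1\circ\sigma_2^{-1}=\sigma_2\circ\sigma_1^{-1}$, the stability under $\star$ and the anti-multiplicativity of $\sigma$ for arbitrary $X$. Your claim that in \eqref{flipflap} ``the $E_{D'}$ acts trivially'' for $X\in M^{\oehc{D}2}$ is also unjustified: commutation of $X$ with $D$ does not make $[y\o_D x]\#V$ lie in $D'$; the whole content of $\sigma$ is that the flipped action exists only after $E_{D'}$ and is implemented by a (new) element of $D'\cap M\oeh{D}M$.

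The same problem is fatal for (2b), which is the technical heart of the theorem. Approximating $Z\#e_D$ in $L^2$ by elements of $Me_DM$ is fine, but truncating $Y$ (or the variable $X$ itself) only gives weak/ultraweak convergence, not the convergence of functionals uniformly on the unit ball that an $\varepsilon/3$ upgrade to ``weak-$*$ continuous on bounded sets'' requires. The paper's proof instead \emph{exhibits} the functional as the pairing of $X$ with the explicit predual element $\sum_{k,i}y_k'z_i\o_{D^{op}}z_i'y_k\in L^2(M)^*\o_{\operatorname{eh}D^{op}}L^2(M)$, and to do so it uses the key observation that $Y\#e_D\in D'\cap L^1(\langle M,e_D\rangle)\subset L^1(\langle M',e_D\rangle)\cap\langle M',e_D\rangle$, hence admits a canonical form $\sum_k (y_k')^{op}e_D y_k^{op}$ with $L^2$ row and column entries, which is then fed into a right $D$-basis computation of the trace. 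That identification is the missing idea in your proposal; without it (2b) is not proved, and your arguments for (2c) and (2d), which invoke it, collapse as well. Likewise the outer $L^1$-action in (1g) cannot be obtained ``by the limiting procedure above'': the paper has to define $X\#_{L^1}U$ through the extension of $E_{D'}$ to $L^2(M)^*\o_h L^2(M)$ applied to $U$ and a canonical decomposition of $\sigma_2^{-1}(X)$, with well-definedness checked via the projection criterion of \cite[(2.5)]{M05} and explicit norm estimates---none of which is a routine limit from elementary tensors.
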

\begin{proof}

Note that the intersection space $M^{\oehc{D}2}$ is thus well-defined because of (1c).

We start by proving (1d).  If $X\in M^{\oehc{D}2}$ , let $X'=\sigma(X)\in \sigma_2(D'\cap M^{\oeh{D}2})$ so if we show $X'=\sigma'(X):=\sigma_1(\sigma_2^{-1}(X))$ we will have shown $\sigma$ leaves  $M^{\oehc{D}2}$ globally invariant.  The adjoint relation \eqref{adjointActions} gives for $U,V\in  D'\cap \langle M,e_D\rangle \cap L^1(\langle M,e_D\rangle)$
$$Tr(X(U) V)=Tr([\sigma_1^{-1}(X)\# U]V)=Tr(U[V\#\sigma_1^{-1}(X)])=Tr(U[\sigma(X)(V)]),$$
$$Tr(X(U) V)=Tr([U\#\sigma_2^{-1}(X)]V)=Tr(U[\sigma_2^{-1}(X)\#(V)])=Tr(U[\sigma'(X)(V)]).$$
Since $U$ and $V$ are arbitrary in dense spaces this shows the desired relation and as a consequence that $\sigma$ is involutive.

With the same notation and using the definitions, $\sigma=\sigma'$  and the adjoint relation \eqref{adjointActions} several times, we have: \begin{align*}Tr([X^\star(U)]V)&:=Tr([\sigma_1^{-1}(X)^\star\# U]V)\\&=Tr([\sigma_1^{-1}(X)\# U^*]^*V)
=\overline{Tr([\sigma_1^{-1}(X)\# U^*]V^*)}\\&=\overline{Tr( U^*[V^*\#(\sigma_1^{-1}(X))])}=Tr( U[V^*\#(\sigma_1^{-1}(X))]^*)
\\&=Tr( U[\sigma(X)(V^*)]^*)=Tr( U[\sigma_2^{-1}(X)\#V^*)]^*)
\\&=Tr( U[\sigma_2^{-1}(X)^\star\#V)])=Tr( [U\# \sigma_2^{-1}(X)^\star]V)]).
\end{align*}

This shows both the two possible inductions of ${}^{\star}$ coincide and stability of $M^{\oehc{D}2}$ by ${}^\star$.
The commutation with $\sigma$ also follows since we showed $\sigma_2^{-1}(X)^\star=\sigma_2^{-1}(X^\star),$ $\sigma_1^{-1}(X)^\star=\sigma_1^{-1}(X^\star),$ thus $\sigma(X^\star)=\sigma_1(\sigma_2^{-1}(X)^\star)=\sigma_1(\sigma_1^{-1}(\sigma(X))^\star)=\sigma(X)^\star.$  

To prove (1e), it remains to check the composition and the adjunction ${}*$ give the expected Banach algebra structure.

We can reason similarly using  our formula \eqref{adjointActions} and $\sigma=\sigma'$ to check closure under the product:
\begin{align*}Tr([(XY)(U)]V)&:=Tr([\sigma_1^{-1}(X)\sigma_1^{-1}(Y)\# U]V)\\&=Tr([\sigma_1^{-1}(Y)\# U][V\#\sigma_1^{-1}(X)])
=Tr( U[\sigma(X)(V)\#\sigma_1^{-1}(Y)])\\&=Tr( U[\sigma(Y)(\sigma(X)(V))])
\\&=Tr( U[\sigma_2^{-1}(Y)\#(\sigma_2^{-1}(X)\#V)]
=Tr( [U\#\sigma_2^{-1}(Y)\sigma_2^{-1}(X)]V).
\end{align*}
The middle relation then also shows $\sigma(XY)=\sigma(Y)\sigma(X)$.
Similarly, $(UV)^\star=(U)^\star(V)^\star$ which gives the only missing relation between ${}^{*}$ and product to get an involutive Banach algebra.

We next prove (1f).  Since commutants have conditional expectations on them $D'\cap(L^2(\langle M,e_D\rangle))$ is indeed an interpolation of commutants (see e.g. \cite[Prop 2.7.6]{PisierBook}). For $X\in  M^{\oehc{D}2}$, the very definition of $M^{\oehc{D}2}$ give the compatibility for interpolation of the pair of maps $\sigma_1^{-1}(X)\#\cdot :D'\cap \langle M,e_D\rangle\to D'\cap \langle M,e_D\rangle$ and $\cdot \#\sigma_2^{-1}(X):D'\cap L^1(\langle M,e_D\rangle)\to D'\cap L^1(\langle M,e_D\rangle)$. This gives the action on $D'\cap L^2(M)\o_{D}L^2(M)$.

We now turn to (1g).   
Because $L^2(M)^*\o_{h}L^2(M)=L^2(M)^*\o_{eh}L^2(M)\supset M\o_{eh} M$ (obviously weak-* continuous injection), one can extend the projection $E_{D'}$
 from  $M\o_{eh} M\to D'\cap M\o_{eh} M$
to  a map $L^2(M)^*\o_{h}L^2(M)\to D'\cap L^2(M)^*\o_{h}L^2(M)$.

Indeed, by construction, the projection $E_{D'}(U)$ is built as a weak-* limit of convex combinations $\sum \lambda_u uUu^*$ converging thanks to the embedding $M\o_{eh} M\subset L^2(M)\o L^2(M)$. Moreover, we have $\|\sum \lambda_u uUu^*\|_{L^2(M)^*\o_{h}L^2(M)}\leq \|U\|_{L^2(M)^*\o_{h}L^2(M)}.$ Because the injection is weak-* continuous, one also gets weak-* convergence of the convex combination in $L^2(M)^*\o_{h}L^2(M)$ and thus, for any $U\in M\o_{eh} M$, $$\|E_{D'}(U)\|_{L^2(M)^*\o_{h}L^2(M)}\leq \|U\|_{L^2(M)^*\o_{h}L^2(M)}.$$
By density, $E_{D'}$ extends to a bounded map on $L^2(M)^*\o_{h}L^2(M)$ which obviously induces a map $L^2(M)^*\o_{h D^{op}}L^2(M)\to D'\cap L^2(M)^*\o_{h}L^2(M)$, a cross-section to  the quotient map (as seen first for $U\in M\o_{eh} M$ by the weak-* limit above).

 Now take $U\in L^1(\langle M,e_D\rangle)\simeq L^2(M)^*\o_{h D^{op}}L^2(M), X\in M^{\oehc{D}2}$  write $\sigma_2^{-1}(X)=y\o_D x$, a canonical decomposition with $y\in M_{1,I}(M), x\in M_{I,1}(M)$ and take $U'=E_{D'}(U)=\sum_j u_j\o v_j\in D'\cap L^2(M)^*\o_{h}L^2(M)$ sent to $U$ by the quotient map $\pi: L^2(M)^*\o_{h}L^2(M)\to L^2(M)^*\o_{h D^{op}}L^2(M)$.

Then $X\#_{L^1}U:=\sum_{i,j} \pi(x_iu_j\o v_jy_i)$ is well defined in $L^2(M)^*\o_{h D^{op}}L^2(M)$. Indeed, if $\sigma_2^{-1}(X)=0\in M\oeh{D}M$, by 
 \cite{M05} (2.5) there exists $P\in M_{I}(D)$ with { $Px=x$}, $yP=0$ so that 
 $\sum \pi(xE_{D'}(U)y)=\sum \pi(PxE_{D'}(U)y)=\sum \pi(xE_{D'}(U)yP)=0$.
Moreover, we have a bound $\sum_{i,j}\|x_iu_j\|_2^2\leq \|\sum x_i^*x_i\|\sum_{j}\|u_j\|_2^2$ so that $(x_iu_j)$ is indeed a row vector in $L^2(M)^*,$ and similarly $(v_jy_i)$ is a column vector in $L^2(M).$ Thus we have indeed $\sum_{i,j} x_iu_j\o v_jy_i\in 
 L^2(M)^*\o_{eh }L^2(M)\simeq L^2(M)^*\o_{h }L^2(M)$ as claimed.
 
Moreover, by the definition of the norm, it is now easy to see $$\|X\#_{L^1}U\|_{L^2(M)^*\o_{eh}L^2(M)}\leq \|\sigma(X)\|_{M\oeh{D} M}\|E_{D'}(U)\|\leq \|X\|_{M^{\oehc{D}2}}\|U\|_{L^2(M)^*\o_{h D^{op}}L^2(M)}.$$ 
This gives the outer action on $L^1(\langle M,e_D\rangle)$ as is easily seen using the identity $\sigma_2^{-1}(XY)=\sigma_2^{-1}(Y)\#\sigma_2^{-1}(X)$. The stability and commutation are easy.

We now turn to (2a)--(2d). 
 First note that $\sigma_{TC}: L^2(M^{op})^*\o_{hD}L^2(M^{op})\to  L^2(M)^*\o_{hD^{op}}L^2(M)$, given by $\sigma_{TC}(a\o_D b)=b\o_{D^{op}} a$, is isometric. This uses that a row vector of $ L^2(M^{op})^*$ is the same as a column vector of  $L^2(M)$.
 
To prove (2a) note that 
the canonical map  $j: M\oeh{D} M\to L^2(M)^*\oeh{D}L^2(M)=L^2(M)^*\o_{h D}L^2(M)$ composed with $\sigma_{TC}$ above  gives the map $\sigma_{TC} j$ valued in $L^2(M)^*\o_{hD^{op}}L^2(M)=L^1(\langle M,e_D\rangle)$ such that $Y\#e_D$ coincides in the canonical identification with $\sigma_{TC} j(Y)$, proving $Y\#e_D\in   \langle M,e_D\rangle\cap L^1(\langle M,e_D\rangle)$. 
The statement about the agreement with canonical inclusion is then obvious.
 
{Let us prove (2b).} Since  $D'\cap L^1(\langle M,e_D\rangle)\cap \langle M,e_D\rangle$ is dense in $D'\cap L^2(\langle M,e_D\rangle)$, by approximating $Z\#e_D$ by $Z'\in D'\cap L^1(\langle M,e_D\rangle)\cap \langle M,e_D\rangle$ and even $Z'=\sum_i z_i'e_Dz_i\in Me_DM$ in $L^2$ norm, we see that 
it suffices to prove that
$X\to \langle Z',X\#Y\#e_D\rangle$ is weak-* continuous on bounded sets. 

For $Y\in D'\cap M\oeh{D}M$, note that $Y\#e_D\in D'\cap \langle M,e_D\rangle\subset \langle M^{op},e_D\rangle$.  Since $Y\#e_D\in  D'\cap L^1(\langle M,e_D\rangle)=(D'\cap \langle M,e_D\rangle)_*\subset L^1(\langle M',e_D\rangle)$, we see that $Y\#e_D\in L^1(\langle M',e_D\rangle)\cap \langle M',e_D\rangle$.  Since $L^1(\langle M',e_D\rangle)\simeq L^2(M)^*\o_{h D} L^2(M)$ we have a canonical form $Y\#e_D=\sum (y_k')^{op}e_D{y_k}^{op}$ with $(y_k')$ column vector in $L^2(M)$ and $(y_{k})$ row vector in $L^2(M)^*$. Note that for $\xi\in M$, one can compute the evaluation with the formula above $[(x'\o x)\#(Y\#e_D)](\xi)=\sum_k x'E_D(x\xi y_k)y_k'\in L^1(M)$ (one can first approximate $y_k,y_k'$ by elements of $M$ to establish the formula).

If we take $(g_j)_{j\in J}$ a basis of $L^2(M)$ as a right D-module (of elements of $M$ if we want), then one can use the well-known formula 
 
  $$ Tr( Z'[X\#(Y\#e_D)])=\sum_j \langle g_j, Z'[X\#(Y\#e_D)(g_j)]\rangle.$$
We compute a term in the last formula.    We continue our computation by applying $Z'$ which also gives a map on $L^1(M)$ : 
$$Z'[(x'\o x)\#(Y\#e_D)(g_j)]=\sum_i z_i'E_D(z_i\sum_k x'E_D(xg_jy_k)y_k')\in L^1(M).$$ Then since $g_j\in M$, one can compute the trace :
$$\tau(g_j^*Z'[(x'\o x)\#(Y\#e_D)(g_j)])=\sum_i \tau(E_D(g_j^*z_i')z_i\sum_k x'E_D(xg_jy_k)y_k')$$ which could be expressed as a duality formula for $Y\#e_D\in D'\cap L^1(\langle M',e_D\rangle)$ since the sum in $i$ is finite, thus one can use its commutativity with $D$ :
\begin{eqnarray*}
\tau(g_j^*Z'[(x'\o x)\#(Y\#e_D)(g_j)]&=&\sum_i \tau(z_i\sum_k x'E_D(xg_jE_D(g_j^*z_i')y_k)y_k')\\
&=&\sum_i\tau(z_i (x'\o x)\#(Y\#e_D)(g_jE_D(g_j^*z_i')),\end{eqnarray*}
where we finally used one of our previous formulas with $\xi=g_jE_D(g_j^*z_i')$ instead of $g_j$ before. But looking again at  $(x'\o x)\#(Y\#e_D)$ as the bounded operator on $L^2$ and using the relation for a right basis $\sum_jg_jE_D(g_j^*z_i')=z_i'$ with convergence in $L^2$,  one may use operator weak-* convergence to replace $(x'\o x)$ by $X=\sum(x_l'\o x_l) $:

\begin{eqnarray*} Tr( Z'[X\#(Y\#e_D)])&=&\sum_i\tau(z_i X\#(Y\#e_D)(z_i')))\\
&=&\sum_{i,k,l} \tau(z_i x_l'E_D(x_lz_i'y_k)y_k')=\langle X, \sum_{k,i}y_k'z_i\o_{D^{op}}z_i'y_k \rangle.\end{eqnarray*}
Since $\sum_{k,i}y_k'z_i\o_{D^{op}}z_i'y_k\in L^{2}(M^*)\o_{ehD^{op}}L^2(M)\subset L^1(M)^{\o_{hD'}2},$ the predual of the weak-* Haagerup tensor product, one gets the claimed weak-* continuity and thus the proof of (2b) is complete.

To prove  the density part in (2c), it is enough to show that for a finite sum, $E_{D'}(\sum_i x_i\o_D y_i)\in M^{\oehc{D}2}$. 

More precisely, we will show that \begin{equation}\label{finitesum}\sigma_1(E_{D'}(\sum_i x_i\o_D y_i))=\sigma_2(E_{D'}(\sum_i y_i\o_D x_i)).\end{equation}
We thus want to prove, for any $U,V\in D'\cap \langle M,e_D\rangle \cap L^1(\langle M,e_D\rangle)$ : 
\begin{eqnarray*}
Tr([(E_{D'}(\sum_i x_i\o_D y_i))\#U]V)&=&Tr([U\#(E_{D'}(\sum_i y_i\o_D x_i))]V)\\
&=&Tr(U[(E_{D'}(\sum_i y_i\o_D x_i))\#V]).\end{eqnarray*}
By density (simultaneous weak-* and $L^1$ using the agreeing conditional expectations) it suffices to take $U=X\#e_D,V=Y\#e_D, X,Y\in D'\cap M^{\oeh{D}2}.$

But now we can use the weak-* continuity we just proved to replace the conditional expectations by the limit of a net of convex combinations of conjugates by unitaries of $D$, and thus by commutativity with $D$, the conditional expectations can be removed, and the relation then becomes obvious.

Finally, for (2d), taking bounded nets $U_n\to U, V_\nu \to V$ we note that $U_n\#V, U\#V_\nu$ are still bounded, thus weak-* precompact and it thus suffices to show that $U\#V$ is the unique cluster point, 
 for instance by showing the nets converge weakly in $L^2(M)\o_DL^2(M)$ or $D'\cap L^2(M)\o_DL^2(M)$.
 {For $Z\in D'\cap 
M\oeh{D}M$, by (2b) we have $\langle Z\#e_D, U_n\#V\#e_D\rangle \to \langle Z\#e_D, U\#V\#e_D\rangle$, and since the elements $Z\#e_D$ are dense in $D'\cap L^2(M)\o_DL^2(M)$, one deduces the wanted weak convergence in $D'\cap L^2(M)\o_DL^2(M)$. Applying formula \eqref{adjointActions} to $(Z\#e_D)^*\in  L^1(\langle M, e_D\rangle)$, one gets for $Z\in M\oeh{D}M$, \begin{align*}\langle Z\#e_D, U\#V_\nu\#e_D\rangle&= Tr(U\#(V_\nu\#e_D)(Z\#e_D)^*)=Tr((V_\nu\#e_D)[(Z\#e_D)^*\#U])\\&\to Tr((V\#e_D)[(Z\#e_D)^*\#U])=\langle Z\#e_D, U\#V\#e_D\rangle.\end{align*}}
{The convergence is due to the weak-* continuity of the map $.\#e_D$ $M\oeh{D} M\to \langle M,e_D\rangle$ (following from the corresponding one with value $L^2(\langle M,e_D\rangle)$).}
{Again, by density we deduce the weak convergence in $L^2(M)\o_DL^2(M)$, and since $.\#e_D$ is the canonical weak-* continuous map to $L^2(M)\o_DL^2(M)$, this concludes.}

\end{proof}

\begin{Theorem} We keep the assumptions and notation of Theorem~\ref{Finite1} and Definition~\ref{def:HaagerupCyclic}. \label{Finite3}
\begin{enumerate}
\item[(3)] Assume  either that there exists a $D$-basis of  $L^2(M)$ as a right $D$ module $(f_i)_{i\in I}$  which is also a $D$-basis of  $L^2(M)$ as a left $D$ module or that $D$ is a $II_1$ factor and that $L^2(M)$ is an extremal $D-D$ bimodule. Then (writing $\sigma_1^{-1}(X)\#e_D=X\#e_D$) $\tau(X)=\langle e_D,X\#e_D\rangle$ is a trace on $D'\cap M^{\oeh{D}2}$ such that $L^2(M^{\oehc{D}2},\tau)=D'\cap L^2(M)\o_{D}L^2(M).$
 Moreover the involution on $M^{\oehc{D}2}$ coincides with the adjoint in its action on $D'\cap L^2(M)\o_{D}L^2(M).$

\item[(4)]  Assuming the conclusion of (3), the inner action of  $M^{\oehc{D}2}$ on $L^2(M^{\oehc{D}2},\tau)=D'\cap L^2(M)\o_{D}L^2(M)$ extends to an action on $L^2(M)\o_{D}L^2(M).$

\end{enumerate}
\end{Theorem}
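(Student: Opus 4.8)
The plan is to transport all the structure onto the Hilbert space $P^2 := D'\cap L^2(M)\o_D L^2(M)$ through the map $j\colon X\mapsto X\#e_D$, and to control the flip of the two legs by the Burns rotation of Theorem~\ref{BurnsR}. By the hypothesis of (3) --- either the existence of a two-sided $D$-basis of $L^2(M)$, or $D$ a $II_1$ factor with $L^2(M)$ extremal --- Theorem~\ref{BurnsR} (in the two-sided basis case, the elementary instance recalled before it) produces a \emph{unitary} Burns rotation $\rho$ on $P^2$, which for $n=2$ is nothing but the flip $a\o_D b\mapsto b\o_D a$ restricted to the central part. First I would record, from (2a) and (2c), that $j$ embeds $D'\cap M\oeh{D}M$ into $P^2$ with weak-* dense range and is injective (it is the canonical inclusion $M\oeh{D}M\to L^2(\langle M,e_D\rangle)$ composed with $\sigma_1^{-1}$). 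Then \eqref{finitesum} gives, on finite sums of basic tensors, $j\circ\sigma=\rho\circ j$ and $j\circ{}^\star = J\circ j$, where $J$ is the conjugate-linear isometry $a\o_D b\mapsto b^*\o_D a^*$ on $L^2(\langle M,e_D\rangle)\cong L^2(M)\o_D L^2(M)$ (the modular conjugation of $\langle M,e_D\rangle$, under (2a)); by the weak-* continuity statements (2b) and (2d) these identities persist on all of $M^{\oehc{D}2}$, so $\sigma$ extends to the unitary $\rho$ and $^\star$ to $J$ on $P^2$.

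Next I would prove the ``Moreover'' assertion, namely that for $X\in M^{\oehc{D}2}$ the inner action $L_X:=\sigma_1^{-1}(X)\#\cdot$ of $X$ on $P^2$ (Theorem~\ref{finite2}(1f)) satisfies $L_{X^*}=(L_X)^*$. This is a formal manipulation from the adjunction relation \eqref{adjointActions}, the elementary identity $(V\#Y)^\ast = V^\ast\#Y^\star$ for $Y\in M\oeh{D}M$ (checked on basic tensors via the $E_{D'}(b u_1 e_D u_2 a)$ formula), the commutation $\sigma(X^\star)=\sigma(X)^\star$ from (1d), and the defining intersection property that the outer action by $\sigma_1^{-1}(X)$ and the inner action by $\sigma_2^{-1}(X)$ agree on $P^2$. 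Granting this, $\tau(Y^\ast Y)=\langle e_D,L_{Y^\ast}L_Y e_D\rangle=\|L_Y e_D\|^2=\|Y\#e_D\|^2\ge 0$, so $\tau$ is a positive functional, faithful because $j$ is injective; the GNS inner product is $\langle Y,Z\rangle_\tau=\tau(Z^\ast Y)=\langle Z\#e_D,Y\#e_D\rangle$, whence $L^2(M^{\oehc{D}2},\tau)=\overline{j(M^{\oehc{D}2})}=P^2$ (by (2c)) with $M^{\oehc{D}2}$ acting by the left regular representation, and the involution of $M^{\oehc{D}2}$ is the Hilbert space adjoint there.

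For the trace property I would write $\tau(XY)=\langle e_D,L_X(Y\#e_D)\rangle=\langle X^\ast\#e_D,\,Y\#e_D\rangle$ and, symmetrically, $\tau(YX)=\langle Y^\ast\#e_D,\,X\#e_D\rangle$. Since $X^\ast\#e_D=j((\sigma X)^\star)=J\rho(X\#e_D)$ by the first paragraph, the two are equal for all $X,Y$ iff $\langle J\rho\,\xi,\eta\rangle=\langle J\rho\,\eta,\xi\rangle$ for $\xi,\eta\in P^2$, i.e. iff $(J\rho)^\ast=J\rho$; and this holds because $\rho$ is a self-adjoint unitary (it is the flip, an involution) together with the identity $J\rho J=\rho$ (immediate on basic tensors). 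This last step is exactly where the hypothesis of the theorem is consumed, through the \emph{boundedness} of $\rho$, and I expect it to be the main obstacle: without extremality the flip is unbounded on $P^2$ and no amount of formal manipulation produces traciality --- one really needs Theorem~\ref{BurnsR}.

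For (4), assuming the conclusion of (3): the outer action $U\mapsto\sigma_1^{-1}(X)\#U$ makes sense on all of $\langle M,e_D\rangle$, not merely its central part, because $\sigma_1^{-1}(X)\in M\oeh{D}M$ acts by two-sided multiplication $a(\cdot)b$ and $a\langle M,e_D\rangle b\subset\langle M,e_D\rangle$; together with the outer action $X\#_{L^1}\cdot$ of $M^{\oehc{D}2}$ on $L^1(\langle M,e_D\rangle)$ from (1g) --- built from the same two-sided multiplication data and hence compatible with the previous one on $\langle M,e_D\rangle\cap L^1(\langle M,e_D\rangle)$ --- these interpolate to an action of $M^{\oehc{D}2}$ on $L^2(\langle M,e_D\rangle)\cong L^2(M)\o_D L^2(M)$. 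Multiplicativity follows from $\sigma_1^{-1}(XY)=\sigma_1^{-1}(X)\sigma_1^{-1}(Y)$ and its $L^1$ analogue $\sigma_2^{-1}(XY)=\sigma_2^{-1}(Y)\#\sigma_2^{-1}(X)$, both in the proof of Theorem~\ref{finite2}; and on $P^2=D'\cap L^2(M)\o_D L^2(M)$ the outer action coincides with the inner one by the intersection definition of $M^{\oehc{D}2}$, so this is indeed an extension of the action of (3). The only care needed is unwinding the explicit formulas of Theorems~\ref{Finite1} and~\ref{finite2} to confirm the compatibility on the overlap and the correct restriction to $P^2$.
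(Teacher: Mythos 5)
Your part (3) is essentially the paper's own argument. The pairing formula $\tau(X^*Y)=\langle X\#e_D,Y\#e_D\rangle$ that you extract from \eqref{adjointActions} is the paper's \eqref{ScalarProductTrace}; the identification $L^2(M^{\oehc{D}2},\tau)=D'\cap L^2(M)\o_D L^2(M)$ and the agreement of the involution with the Hilbert space adjoint follow from it together with the density statement of Theorem \ref{finite2}(2c), exactly as you say; and traciality is reduced via (2b)--(2d) to the projected finite tensors $E_{D'}(x_1\o_D x_2)$, where your identity $\langle J\rho\,\xi,\eta\rangle=\langle J\rho\,\eta,\xi\rangle$ (with $\rho$ the unitary Burns rotation of Theorem \ref{BurnsR} and $J$ the adjoint map on $L^2(\langle M,e_D\rangle)$) is only a repackaging of the paper's computation $\langle E_{D'}(x_1^*e_Dx_2^*),E_{D'}(y_1e_Dy_2)\rangle=\langle E_{D'}(y_1^*e_Dy_2^*),E_{D'}(x_1e_Dx_2)\rangle$. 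One caution: you do not need, and should not assert without a bounded (Kaplansky-type) approximation, the identities $j\circ\sigma=\rho\circ j$ and $j\circ{}^\star=J\circ j$ on all of $M^{\oehc{D}2}$; checking traciality on the weak-* dense family $E_{D'}(x_1\o_D x_2)$, as the paper does, is enough.

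Part (4), however, has a genuine gap, and it sits exactly where the real work lies. You propose to interpolate the far-outside multiplication $U\mapsto\sigma_1^{-1}(X)\#U$ on $\langle M,e_D\rangle$ against the outer $L^1$-action $X\#_{L^1}\cdot$ of (1g), asserting that they are compatible on $\langle M,e_D\rangle\cap L^1(\langle M,e_D\rangle)$ because both are ``built from the same two-sided multiplication data''. They do not agree there: already on the dense family $Y\#e_D$, $Y\in\pi(M\o_{alg}M)$, the first map produces $(\sigma_1^{-1}(X)\#Y)\#e_D$, while the computation in the paper's proof of (4) (applied with $X$ in place of $\sigma(X)$) gives $X\#_{L^1}(Y\#e_D)=(Y\#\sigma(X))\#e_D$; these are different elements in general (indeed (1g) only asserts that this outer $L^1$-action \emph{commutes} with the inner action, not that it equals it). So the pair you propose is not a compatible couple and there is nothing to interpolate. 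The paper's interpolands are $U\mapsto\sigma(X)\#_{L^1}U$ on $L^1\cap L^2$ and the map on $\langle M,e_D\rangle$ defined by \emph{duality} against $X\#_{L^1}\cdot$, and the verification that these two coincide on the vectors $Y\#e_D$ is a genuine computation using the weak-* continuity (2b), the adjunction \eqref{adjointActions} and, crucially, the traciality just established in (3) --- which is precisely why the statement of (4) assumes the conclusion of (3), an assumption your argument never invokes. Note also that one cannot instead use far-outside multiplication on the $L^1$ side: without the central lift $E_{D'}$ built into (1g), that operation is not bounded on $L^1(\langle M,e_D\rangle)$ for a general element of $M\oeh{D}M$.
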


We may later identify $M^{\oehc{D}2}$ as a subset of $D'\cap M^{\oeh{D}2}$ via $\sigma_1^{-1}.$
\begin{proof}
(3) Our proof relies on the existence of a unitary Burns rotation, which exists in the extremal case.  The case with a two-sided basis is an easy variant of that case and is left to the reader.

First, note that, without any assumption on $M$ related to traciality, for $X\in M^{\oehc{D}2},Y\in D'\cap M^{\oeh{D}2}$, one can apply the relation established  during the proof of (1):
$$Tr([\sigma_1^{-1}(X)^\star\# U]V)=Tr( U[\sigma_2^{-1}(X)\#V^*)]^*)$$ to $U=e_D,V=(Y\#e_D)$ to get  \begin{equation}\label{ScalarProductTrace}\begin{split}\tau(X^*Y)&=Tr( e_D[(\sigma_1^{-1}(\sigma(X)))\#(Y\#e_D)^*]^*)\\&=Tr( Y\#e_D[\sigma_2^{-1}(\sigma(X))\#e_D)]^*)\\&=Tr([\sigma_1^{-1}(X)\#e_D)]^* (Y\#e_D))
\\&
=\langle X\#e_D,Y\#e_D \rangle. \end{split}\end{equation}
In particular, this realizes canonically isometrically $L^2(M^{\oehc{D}2},\tau)$ as a subspace of $D'\cap L^2(M)\o_{D}L^2(M)$ and as a consequence shows the agreement of the previously defined adjoint with the Hilbert space one.
 The density in our part (2c)
  give the identification $L^2(M^{\oehc{D}2},\tau)=D'\cap L^2(M)\o_{D}L^2(M)$.

  It remains to prove traciality $\tau(XY)=\tau(YX)$;  it is enough to prove it  for $X,Y\in M\oehc{D} M$. Indeed, using the proof of the density and weak-* continuity in our part (2), we only need to consider $X=E_{D'}(x_1\o x_2),  Y=E_{D'}(y_1\o y_2)$ for $x_i,y_i\in M.$ But from our previous computation, this reduces to: \begin{align*}\langle X^*\#e_D,Y\#e_D \rangle&=\langle E_{D'}(x_1^*e_Dx_2^*),E_{D'}(y_1e_Dy_2) \rangle\\&=\langle E_{D'}(y_1^*e_Dy_2^*),E_{D'}(x_1e_Dx_2) \rangle\\&=\langle Y^*\#e_D,X\#e_D \rangle\end{align*}
  
  Now the key equality in the middle line comes {from} the extremality of $L^2(M)$ that gives from Theorem \ref{BurnsR} a unitary Burns rotation. From unitarity it is easy to see that $\rho(E_{D'}(y_1\o_Dy_2))=E_{D'}(y_2\o_Dy_1)$ so that the equality in the middle line comes from 
\begin{align*}\langle E_{D'}(x_1^*e_Dx_2^*),E_{D'}(y_1e_Dy_2) \rangle&
=\langle E_{D'}(x_1^*\o_Dx_2^*),  E_{D'}(y_1\o_Dy_2)\rangle\\&=\langle \rho(E_{D'}(x_1^*\o_Dx_2^*)), \rho( E_{D'}(y_1\o_Dy_2))\rangle\\&=\langle (E_{D'}(x_2^*\o_Dx_1^*)), ( E_{D'}(y_2\o_Dy_1))\rangle\\&=\langle E_{D'}(x_2^*e_Dx_1^*),E_{D'}(y_2e_Dy_1)\rangle\\& =Tr(x_1e_Dx_2{E_{D'}(y_2e_Dy_1)}) \\&=\langle E_{D'}(y_1^*e_Dy_2^*),E_{D'}(x_1e_Dx_2) \rangle.\end{align*}

(4) The extension of the inner action of  $M^{\oehc{D}2}$ to an action on $L^2(M)\o_{D}L^2(M)$ will require more work.
The action of $X\in M^{\oehc{D}2}$ will extend for $U\in L^1(\langle M,e_D\rangle) \cap L^2(M)\o_{D}L^2(M)$, $$U\#X:=\sigma(X)\#_{L^1}U,$$
with the outer action on $L^1(\langle M,e_D\rangle)$ built at the end of (1).

We aim to construct  the action of  $M^{\oehc{D}2}$ by interpolation of the previous action with a dual action on $\langle M,e_D\rangle$, defined by duality for $V\in \langle M,e_D\rangle$:
$$Tr((V\#_{L^\infty}X)U)=Tr(V(X\#_{L^1}U)).$$ It thus remains to see these two actions agree on a common dense subspace. 

Take $U=Y\#e_D\in L^1(\langle M,e_D\rangle)\cap \langle M,e_D\rangle$, for $Y\in \pi(M\o_{alg} M)\subset M\o_{h D}M\subset M\oeh{D}M$. We already noticed they form a dense subspace in both $L^1(\langle M,e_D\rangle)$ and (for the weak-* topology) in  $\langle M,e_D\rangle$. Note that  this indeed gives (even for $Y\in M\oeh{D}M$)  the expected inner action $$\sigma(X)\#_{L^1}U=\sigma(X)\#_{L^1}(\sigma_{TC}j(Y))=\sigma_{TC}j(Y\#X)$$

For the last key equality, take a canonical representation of $Y=\sum y_j\o_{D} y_j',X=\sum x_i\o_{D} x_i'$ then we note that
$$\sigma(X)\#_{L^1}(\sigma_{TC}j(Y))=\sigma(X)\#_{L^1}(\sum y_j'\o_{D^{op}} y_j)=\sum_{ij} x_i'y_j'\o_{D^{op}} y_jx_i=\sigma_{TC}j(Y\#X)$$

Now, take also $V=Z\#e_D\in L^1(\langle M,e_D\rangle)\cap \langle M,e_D\rangle$, for $Z=\sum z_i'\o_Dz_i\in  \pi(M\o_{alg} M)$ to compute $V\#_{L^\infty}X$:
\begin{align*}Tr((V\#_{L^\infty}X)U)&=Tr((Z\#e_D)[(Y\#\sigma(X))\#e_D])
\\&=Tr((Z\#e_D)[Y\#(\sigma(X)\#e_D)])
\\&=Tr([(Z\#e_D)\#Y](\sigma(X)\#e_D))
\\&=Tr([\sum_{ij}y_j'z_i'e_Dz_iy_j](\sigma(X)\#e_D))
\\&=Tr(e_D[(E_{D'}(\sum_{ij} z_iy_j\o_Dy_j'z_i')\#\sigma(X)\#e_D)),\end{align*}
where we started by using the relations we just established, the adjoint relation \eqref{adjointActions} in the third line, an explicit computation in the fourth valid for finite sums and the weak-* continuity on bounded sets of our part (2b) to introduce a conditional expectation.

Now having elements in $D'\cap M\oeh{D}M$ we can use the traciality we just proved,  the adjoint relation \eqref{adjointActions}, then in the third line the definition of $\sigma$ and a removal of conditional expectation since $X\#e_D\in D'\cap \langle M,e_D\rangle$ and finally again explicit computations for finite sums to get:
\begin{align*}Tr((V\#_{L^\infty}X)U)&=Tr(e_D[\sigma(X)\#((E_{D'}(\sum_{ij} z_iy_j\o_Dy_j'z_i')\#e_D))])
\\&=Tr((e_D\#\sigma(X))[((E_{D'}(\sum_{ij} z_iy_j\o_Dy_j'z_i')\#e_D))]) 
\\&=Tr((X\#e_D)[((\sum_{ij} z_iy_j\o_Dy_j'z_i')\#e_D)]) 
\\&=Tr([\sum_iz_i'(X\#e_D)z_i](Y\#e_D)) 
\\&=Tr([(Z\#X)\#e_D](Y\#e_D)) 
.\end{align*}
Thus $(V\#_{L^\infty}X)=(Z\#X)\#e_D=\sigma(X)\#_{L^1}V$ and we can thus interpolate both maps to get the desired action. 
Finally, the agreement with the inner action on the commutant comes from the equality $\sigma(X)\#_{L^1}(Y\#e_D)=(Y\#X)\#e_D$ we proved for $Y\in M\oeh{D}M$. 
\end{proof}


\subsection{$k$-fold cyclic module extended Haagerup tensor products }
\label{CyclicH}
We now turn to the construction of $k$-fold cyclic tensor powers $M^{\oehc{D}k}$ extending the case $k=2$ we have just dealt with. The desired properties of these tensor powers include the action of cyclic permutations, commutation with left-right actions of $D$ as well as compatibility with various multiplication and evaluation operations.  Elements in these modules will serve as  coefficients for our generalized analytic functions, on which free difference quotient and cyclic gradients will be well-defined.

We will use  free products with amalgamation as a convenient trick to reduce to the case of  $2$-fold cyclic modules we have already considered. 

We thus now fix the appropriate notation. Let $D\subset M$ finite von Neumann algebras and consider $D\subset N_\kappa=M*_D(D\otimes W^*(S_1,...,S_\kappa))$ the free product with amalgamation with a free semicircular element $S_1,...,S_\kappa$ for $\kappa$ an ordinal. This of course gives an isomorphic result for each ordinal of same cardinality. 
Note that as $D$-bimodules, $L^2(N_\kappa)\simeq \bigoplus_{n=0}^\infty (L^2(M)^{\o_D n})^{\kappa^{n-1}}$, with $\bigoplus_{n=0}^k (L^2(M)^{\o_D n})^{\kappa^{n-1}}$ being the usual orthonormalisation of $\overline{Span\{ (MS_{i_1})...(MS_{i_{n-1}})M,1\leq n \leq k, i_j\in [1,\kappa]\}}$ (``Wick words'').
 
 In particular, for any word $n=n_1...n_{|n|}$ in $\kappa$ letters there is an embedding $$\iota_n: M^{\oeh{D}(|n|+1)}\to L^2(N_\kappa)$$ valued in $L^2(M)^{\o_D |n |+1}\cap N_\kappa$ obtained by first sending the tensor $x_0\otimes\cdots \otimes x_{|n|}$ to $x_0 S_{n_1} x_1 \cdots S_{n_{|n|}} x_{|n|}$ and then projecting onto the orthogonal complement of  the space 
 $\overline{Span\{ (MS_{i_1})...(MS_{i_{k-1}})M,1\leq k \leq |n|, i_j\in [1,\kappa]\}}$.   We will write $$L^2(M)^{\o_D n}\simeq L^2(M)^{\o_D (|n|+1)}$$ for the closure of the image of $\iota_n$.
 
 \subsubsection{Construction of intersection spaces}
 To handle the action of a basic cyclic permutation, we need an intersection space similar to the intersection  $L^1(\langle M,e_D\rangle)\cap \langle M,e_D\rangle$ in the previous section (which corresponds to the case $|n|+1=2$). For this, we will use $L^1(\langle N_\kappa,e_D\rangle)\cap \langle N_\kappa,e_D\rangle$ (for any fixed $\kappa\geq k$, e.g. $\kappa=\omega$)

Let  $\mathcal{K}_{m,m}=L^2(M)^{\o_D |m|+1}$, $\mathcal{K}_{m,n}=L^2(M)^{\o_D |m|+1}\oplus L^2(M)^{\o_D |n|+1},$ if $m\neq n$, considered with the right normal action of $D$, and consider the corresponding basic construction  $B(M:D,(m, n))=B(\mathcal{K}_{m,n},\mathcal{K}_{m,n})_D$ 
 with a canonical semifinite trace $Tr$ (see e.g. \cite[section  2.3]{PV11} or the beginning of section 6.1). In our operator space terminology, we have, by \cite[Corol 3.3]{M05} (and the preceding Theorem to change the reference Hilbert space structure to compute duality), $B(M:D,(m, n))\simeq (\mathcal{K}_{m,n})_{L^2(D)}\oeh{D}{}_{L^2(D)}(\mathcal{K}_{m,n}^*).$ Via this isomorphism $\xi d\o_D\overline{\eta}=\xi \o_D\overline{\eta d^*}$ is send to $L_{\xi d} L_\eta^*=L_{\xi }d L_\eta^*=L_{\xi} L_{\eta d^*}^*,$ where $L_\xi$ denotes left multiplication by $\xi$, see \cite[Section 2.3]{PV11}.
Its predual is $\mathscr{TC}(M:D,(m,n)):=L^1(B(M:D,(m, n)),Tr)\simeq \mathcal{K}_{m,n}^*\o_{h D^{op}}\mathcal{K}_{m,n}.$
The spaces  $B(M:D,(m, n))$ and $\mathscr{TC}(M:D,(m,n))$ are considered as an interpolation pair as before.

We will be mostly interested in off-diagonal block matrices in these constructions, namely (for $k\neq l$), $$\mathscr{TC}(M:D,k,l):=L^2(M)^{\o_D |k|+1*}\o_{h D^{op}}L^2(M)^{\o_D |l|+1},$$ $$B(M:D,k,l):=B(L^2(M)^{\o_D |k|+1},L^2(M)^{\o_D |l|+1})_D$$ so that $B(M:D,k,l)=\mathscr{TC}(M:D,l,k)^*$ and the duality can be seen as induced by $Tr$ above when they are seen as block matrices in the space above. 

Let us start with a Lemma making explicit this relation.
Consider, for $n$ a word in $\kappa$ letters, $P_n\in \langle N_\kappa ,e_D\rangle\cap B(L^2(N_\kappa),L^2(M)^{\o_D n})$ the orthogonal projection on the $n$-th component in the decomposition $L^2(N_\kappa)\simeq \bigoplus_{k=0}^\infty \bigoplus_{|n|=k}L^2(M)^{\o_D n}$. Note that we make the difference between the adjoint $P_n^*\in B(L^2(M)^{\o_D n},L^2(N_\kappa))$ and the map $\overline{P_n}\in B(L^2(N_\kappa)^*,L^2(M)^{\o_D n*})$: $\overline{P_n}(\overline{\xi})=\overline{\xi}\circ P_n^*=\overline{P_n\xi},$ even though they may be conjugated by some isomorphisms above. 
\begin{Lemma}\label{basic2}
\begin{enumerate}
\item  Let $X\in \langle N_\kappa ,e_D\rangle$ and $Y\in L^1(\langle N_\kappa ,e_D\rangle)\simeq L^2(N)^*\o_{h D^{op}}L^2(N).$ $X$ and $Y$ agree in the classical intersection space, if and only if for all $k,l$ words in $\kappa$ letters, $P_lXP_k^*\in B(M:D,k,l)$ and $(\overline{P_k}\o_{D^{op}}P_l)(Y)$ agree in the intersection space coming from the inclusions $B(M:D,k,l)\subset B(M:D,(k,l))$, $\mathscr{TC}(M:D,k,l)\subset\mathscr{TC}(M:D,(k,l)).$

\item We have the inclusions: $$\mathscr{TC}(M:D,k,l)\subset B(L^2(M)^{\o_D k},L^1(D)\o_{h D^{op}}L^2(M)^{\o_D l})_D\supset B(M:D,k,l)$$ (the right module structure on $L^1(D)\o_{h D^{op}}L^2(M)^{\o_D l}$ given by right multiplication on $L^1(D)$). Moreover the intersection space of interpolation theory 
$\mathscr{TC}(M:D,k,l)\cap B(M:D,k,l)$ coincides with the one coming from the inclusions $B(M:D,k,l)\subset B(M:D,(k,l))$, $\mathscr{TC}(M:D,k,l)\subset\mathscr{TC}(M:D,(k,l)),$ those spaces being realized as classical compatible couple for interpolation  of $L^p$ spaces of a semifinite von Neumann algebra.


\end{enumerate}
\end{Lemma}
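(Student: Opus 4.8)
$\textbf{Proof plan for Lemma \ref{basic2}.}$

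The plan is to reduce both assertions to statements about ordinary $L^1$–$L^\infty$ duality of the basic construction, exploiting the block decomposition of $L^2(N_\kappa)$ given by the Wick word orthonormalisation. First I would set up notation carefully: since $L^2(N_\kappa)\simeq\bigoplus_{k}\bigoplus_{|n|=k}L^2(M)^{\o_D n}$ as $D$–$D$ bimodules, the projections $P_n$ are mutually orthogonal with $\sum_n P_n^*P_n=1$ (strongly), and they are $D$-bimodular, hence $P_n\in D'\cap\langle N_\kappa,e_D\rangle$. Consequently, for $X\in\langle N_\kappa,e_D\rangle$ the family $(P_lXP_k^*)_{k,l}$ is the matrix of $X$ relative to this decomposition, with $P_lXP_k^*\in B(L^2(M)^{\o_D k},L^2(M)^{\o_D l})_D=B(M:D,k,l)$; and similarly for $Y\in L^1(\langle N_\kappa,e_D\rangle)\simeq L^2(N)^*\o_{hD^{op}}L^2(N)$ the family $(\overline{P_k}\o_{D^{op}}P_l)(Y)$ is the corresponding matrix of $Y$, living in $\mathscr{TC}(M:D,k,l)$. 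The point is that $X$ and $Y$ agree in the classical intersection space iff they agree as elements of $L^1(\langle N_\kappa,e_D\rangle)+\langle N_\kappa,e_D\rangle$, and—since the two decompositions are compatible and the matrix entries are obtained by applying the same cutting maps—this happens iff each matrix entry $P_lXP_k^*$ agrees with $(\overline{P_k}\o_{D^{op}}P_l)(Y)$ in $B(M:D,(k,l))+\mathscr{TC}(M:D,(k,l))$, i.e. in the intersection space coming from the inclusions into the $2\times 2$ block basic construction $B(M:D,(k,l))$. This gives part (1); the only slightly delicate point is to check that the isomorphism $L^1(\langle N_\kappa,e_D\rangle)\simeq L^2(N)^*\o_{hD^{op}}L^2(N)$ intertwines the cutting by $P_n$ with the maps $\overline{P_k}\o_{D^{op}}P_l$ on basic tensors $\eta e_D\xi\mapsto\xi\o_{D^{op}}\eta$, which is a direct computation using \eqref{L1basic} and the modularity of $E_{D'}$.

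For part (2), the first inclusion $\mathscr{TC}(M:D,k,l)\subset B(L^2(M)^{\o_D k},L^1(D)\o_{hD^{op}}L^2(M)^{\o_D l})_D$ comes from the standard realization of a trace-class element $\xi^*\o_{D^{op}}\eta$ as the (finite-rank) operator $\zeta\mapsto \langle\zeta,\xi\rangle_D\,\eta$ with $\langle\zeta,\xi\rangle_D\in L^1(D)$ when $\zeta\in L^2(M)^{\o_D k}$, $\xi\in L^2(M)^{\o_D k}$, $\eta\in L^2(M)^{\o_D l}$; and the inclusion $B(M:D,k,l)\subset B(L^2(M)^{\o_D k},L^1(D)\o_{hD^{op}}L^2(M)^{\o_D l})_D$ is obvious since $L^2\subset L^1(D)\o_{hD^{op}}L^2$. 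Both inclusions are $D$-bimodular and compatible, and restricting the ambient space to the off-diagonal block of $B(M:D,(k,l))$, one sees that the interpolation pair $(\mathscr{TC}(M:D,k,l),B(M:D,k,l))$ embedded this way is the same compatible couple as the one obtained by putting both spaces in the appropriate corners of $(\mathscr{TC}(M:D,(k,l)),B(M:D,(k,l)))$: indeed an off-diagonal corner of the $L^p$ spaces of a semifinite von Neumann algebra is again realized as a compatible couple of $L^p$ spaces (of the reduced/cornered algebra), and the two identifications of $B(M:D,k,l)$ and $\mathscr{TC}(M:D,k,l)$ with such corners agree because they are induced by the same duality $Tr$. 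Hence the two notions of intersection space coincide.

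The main obstacle I anticipate is not any single deep step but the bookkeeping in part (1): namely verifying that the canonical identification $L^1(\langle N_\kappa,e_D\rangle)\simeq L^2(N)^*\o_{hD^{op}}L^2(N)$ is genuinely compatible with the block cutdowns on both sides — that $(\overline{P_k}\o_{D^{op}}P_l)$ applied to the trace-class picture of $Y$ really corresponds to the operator-theoretic $P_lYP_k^*$ once $Y$ is also viewed inside $B(L^2(N),L^2(N))$ on the overlap, and that the $E_{D'}$–flip \eqref{L1basic} does not introduce a spurious transpose between the $k$ and $l$ indices. Once this compatibility is nailed down, the equivalence of the two intersection spaces is a formal consequence of the fact that $(P_n)$ is a $D$-bimodular partition of unity together with the standard functoriality of interpolation under compatible cutdowns; everything else is routine.
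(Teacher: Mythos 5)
Your plan follows the paper's general strategy (block matrix entries with respect to the Wick decomposition, reduction to corners), but two steps that carry the real content are missing. First, in part (2) the inclusion $\mathscr{TC}(M:D,k,l)\subset B(L^2(M)^{\o_D k},L^1(D)\o_{h D^{op}}L^2(M)^{\o_D l})_D$ is an \emph{injectivity} claim for the whole Haagerup tensor product, not just for elementary tensors: a general element has an infinite standard-form decomposition $x\o_{D^{op}}y$ with row/column entries, and you must show that if the associated $L^1(D)$-valued operator vanishes then the tensor itself is zero. Your appeal to the ``standard realization of a trace-class element as a finite-rank operator'' does not address this, since the issue arises precisely for infinite sums. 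The paper's argument here is genuinely needed: identify $L^2(M)^{\o_D k}\simeq CB(L^2(M)^{\o_D k*},L^1(D))_{D^{op}}$, use the vanishing criterion (2.5) of \cite{M05} to produce, for each functional $\phi$, a projection $P_\phi\in M_J(D^{op})$ with $\phi(x)P_\phi=0$ and $P_\phi y=y$, and take $P=\bigwedge_\phi P_\phi$ to conclude $xP=0$, $Py=y$, hence $x\o_{D^{op}}y=0$.

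Second, in part (1) you declare the equivalence a ``formal consequence'' of the $P_n$ forming a bimodular partition of unity and locate the only delicate point in the flip \eqref{L1basic}. But with the paper's realization of the semifinite $L^p$-spaces as inductive limits over finite projections, ``agreement in the classical intersection space'' means agreement of compressions by every finite projection; so the forward direction requires that a finite projection $p$ of $B(M:D,(k,l))$ yields a finite projection $P_k^*pP_k$ of $\langle N_\kappa,e_D\rangle$ (compatibility of the two semifinite traces), and the converse requires that it suffices to test on finite projections $q\leq P_{\leq n}$ because $P_{\leq n}$ increases to the identity, after which $q$ is reduced to the blocks via $q\wedge P_k=P_k^*(q\wedge P_k)P_k$. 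Likewise, the coincidence of the two intersection spaces at the end of (2) is not simply ``induced by the same duality $Tr$'': the paper reduces equality in $L^1\cap L^\infty$ to equality of compressions by rank-one projections built from a fixed right $D$-module basis, which in the $B(L^2(M)^{\o_D k},L^1(D)\o_{h D^{op}}L^2(M)^{\o_D l})_D$ picture becomes evaluation at that basis. Without these arguments your text is a plausible outline rather than a proof.
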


\begin{proof} 

(1) This point readily comes from the agreement of the trace induced by projections from $\langle N_\kappa,e_D\rangle$ with the one defined on $B(M:D,(k,l)).$ Thus if $p$ finite projection in $B(M:D,(k,l))$, $P_k^*pP_k$ is finite in $\langle N_\kappa,e_D\rangle$.  Hence agreement of $X$ and $Y$ which boils down to the agreement for any finite projection of their compressions, gives 
$P_l^*pP_lXP_k^*pP_k=(\overline{P_k^*pP_k}\o_{D^{op}}P_l^*pP_l)(Y)$ and thus the agreement after removing one application of $P_i^*,$ i.e. as we said since this is for all finite projection $p$, 
$P_lXP_k=(\overline{P_k}\o_{D^{op}}P_l)(Y).$
Conversely, since $P_{\leq n}=P_0+...+\sum_{|m|=n}P_m$ increases to identity, it suffices to consider finite projection $q\in \langle N_\kappa,e_D\rangle$ with $q\leq P_{\leq n}$ which readily reduces to compression by $q\wedge P_k=P_k^*(q\wedge P_k)P_k$ 
(on the right and $q\wedge P_l$ on the left) for a projection $p$ on $B(M:D,(k,l))$. And we can then apply the converse reasoning.

(2) Note that \begin{align*}(L^2(M)^{\o_D k})&=(L^2(M)^{\o_D k}_{L^2(D)})\oeh{D} L^2(D)\\&\simeq CB(L^2(M)^{\o_D k*},L^2(D^{op})^*)_{D^{op}}\oeh{D} L^2(D)\\&\simeq CB(L^2(M)^{\o_D k*},L^2(D^{op})^*\oeh{D} L^2(D))_{D^{op}}=CB(L^2(M)^{\o_D k*}, L^1(D))_{D^{op}}.\end{align*}
For any $\phi\in (L^2(M)^{\o_D k})$ we have a map $\phi\o_{h D^{op}}1:L^2(M)^{\o_D k*}\o_{h D^{op}}L^2(M)^{\o_D l}\to L^1(D)\o_{h D^{op}}L^2(M)^{\o_D l}.$ Moreover, take $Z=x\o_{D^{op}} y$ a typical element in $L^2(M)^{\o_D k*}\o_{h D^{op}}L^2(M)^{\o_D l}$, if its image vanishes, this means for all $\phi\in L^2(M)^{\o_D k}$, $\phi(x)\o_{D^{op}} y=0$, thus by \cite{M05} formula (2.5) there is $P_{\phi}\in M_J(D^{op})$ such that $\phi(x)P_{\phi}=0$, $P_{\phi}y=y$. Take $P=\bigwedge_{\phi \in L^2(M)^{\o_D k}}P_\phi$ then $Py=y$ and $\phi(xP)=\phi(x)P=\phi(x)P_{\phi}P=0$ thus since $\phi$ is arbitrary in a space containing the dual of the space of $x$, $xP=0$ and thus $x\o_{D^{op}} y=0$; thus we get the first claimed injectivity. 

The agreement of intersections spaces comes from the fact that the intersection space of $L^1$ and $L^\infty$ can be reduced to equality when compressed by rank 1 projections coming from 
elements in a fixed right-module basis. Then the agreement corresponds in the second picture to agreement when evaluating at this fixed basis (and evaluating by duality at this basis too).
\end{proof}

\subsubsection{Wick formula}
We will also need a straightforward tensor variant of Wick formula.            
For $k=k_1...k_{|k|},m=m_1...m_{|m|}$ words in $\kappa$ letters, we write $k\circ m=k_1...k_{|k|}m_1...m_{|m|}$ for the concatenation, and also $k\circ_i m=k_1...k_{|k|-i}m_{1+i}...m_{|m|}$,$|k|\wedge |m|\geq i\geq 0$ (defined only if  the last $i$ letters of $k$ and the first $i$ letters of $m$ form identical words).  Note that  $|k\circ_im|=|k|+|m|-2i.$ We also write $\overline{k}=k_{|k|}...k_1$. Sometimes, we will need to emphasize the following isomorphism: \begin{align*}\iota_{m_1,m_2,l_1,l_2}:&B(M:D,m_1m_2,l_1l_2)\simeq (L^2(M)^{\o_D l_1l_2})_{L^2(D)}\oeh{D}{}_{L^2(D)}(L^2(M)^{\o_D m_1m_2*})\\&\simeq (L^2(M)^{\o_D |l_1|})_{L^2(D)}\oeh{D}B(M:D,m_2,l_2)\oeh{D}{}_{L^2(D)}(L^2(M)^*)^{\o_D |\overline{m_1}|}\end{align*} given by 
$\iota_{m_1,m_2,l_1,l_2}(\xi_{1}\o_D...\o_D\xi_{|l_1|+|l_2|+1})\o_D\overline{(\eta_{1}\o_D...\o_D\eta_{|m_1|+|m_2|+1})}=\xi_{1}\o_D...\o_D\xi_{|l_1|}\o_D(\xi_{|l_1|+1}\o_D...\o_D\xi_{|l_1|})\o_D\overline{(\eta_{|m_1|+1}\o_D...\o_D\eta_{|m_1|+|m_2|+1})}\o_D\overline{\eta_{|m_1|}}\o_D...\o_D\overline{\eta_{1}}.$

Likewise, we have : \begin{align*}&\hat{\iota}_{m_1,m_2,l_1,l_2}:\mathscr{TC}(M:D,m_1m_2,l_1l_2)\simeq (L^2(M)^{\o_D m_1m_2*})\oeh{D^{op}}(L^2(M)^{\o_D l_1l_2})\\&\simeq (L^2(M)^{\o_D m_2*})\oeh{D}{}_{L^2(D)}(L^2(M)^*)^{\o_D |\overline{m_1}|}\oeh{D^{op}}(L^2(M)^{\o_D |l_1|}){}_{L^2(D)}\oeh{D}(L^2(M)^{\o_Dl_2})\end{align*} given by 
\begin{align*}&\hat{\iota}_{m_1,m_2,l_1,l_2}\overline{(\eta_{1}\o_D...\o_D\eta_{|m_1|+|m_2|+1})}\o_{D^{op}}(\xi_{1}\o_D...\o_D\xi_{|l_1|+|l_2|+1})\\&=\overline{(\eta_{|m_1|+1}\o_D...\o_D\eta_{|m_1|+|m_2|+1})}\o_D\\&(\overline{\eta_{|m_1|}}\o_D...\o_D\overline{\eta_{1}})\o_{D^{op}}(\xi_{1}\o_D...\o_D\xi_{|l_1|})\o_D(\xi_{|l_1|+1}\o_D...\o_D\xi_{|l_1|+|l_2|+1}).\end{align*}

\begin{Lemma}\label{Wick}
Let $X\in \langle N_\kappa ,e_D\rangle, Y\in L^1(\langle N_\kappa ,e_D\rangle),$ $k,l,m,n,p,q$ words in $\kappa$ letters, $U\in D'\cap M^{\oeh{D}(|k|+|l|+2)}$ and $V=(\iota_k\oeh{D}\iota_l)(U)\in D'\cap N^{\o_{eh D }2}$.

If we consider
$P_mXP_{n}^*\in 
B(M:D, n,m)\cap \mathscr{TC}(M:D,n,m)$, we have 
$$\iota_{\overline{l},n,k,m}(P_{k\circ m}[V\#(P_mXP_{n}^*)]P_{\overline{l}\circ n}^*)\in M^{\oeh{D} |k|}\oeh{D}B(M:D,n,m)\oeh{D}\overline{M}^{\oeh{D} |l|},$$ and $P_{p}[V\#(P_mXP_{n}^*)]P_{q}^*=0$ for either $|q|> |n|+|l|$ or $|q|<|n|+|l|-2(|l|\wedge |n|)$ or $|p|<|m|+|k|-2(|k|\wedge |m|)$ or $|p|>|m|+|k|.$ 

Moreover, if we consider the canonical map$$m_{\infty}^{(|k|,k\circ k',|l|,l\circ l')}:M^{\oeh{D} |k|}\oeh{D}B(M:D,l\circ l',k\circ k')\oeh{D}\overline{M}^{\oeh{D} |l|}\to B(M:D,l',k')$$ extending: 
\begin{align*}m_{\infty}^{(|k|,k\circ k',|l|,l\circ l')}(&m_1\o \cdots m_{|k|}\o \xi_{|k|+1}\o \xi_{|k|}\o \cdots \o \xi_{1}\o \xi_{|k|+2}\o \cdots \o \xi_{|k|+|k'|+1}\\&\o\overline{\eta_{|l|+1}\o \eta_{|l|}\o \cdots \o \eta_{1}\o \eta_{|l|+2}\o \cdots \o \eta_{|l|+|l'|+1}}
\o \overline{n_{|l|}}\o \cdots \o \overline{n_1})\\&= m_1E_D(m_2\cdots E_D(m_{|k|}E_D(\xi_{|k|+1})\xi_{|k|})\cdots \xi_2)\xi_1\o  \xi_{|k|+2}\o \cdots \o \xi_{|k|+|k'|+1}\\&\o\overline{n_1E_D(n_2\cdots E_D(n_{|l|}E_D(\eta_{|l|+1})\eta_{|l|})\cdots \eta_2)\eta_1\o  \eta_{|l|+2}\o \cdots \o \eta_{|l|+|l'|+1}}
\end{align*}for $|k|,|l|\geq 0$, (by convention $m_{\infty}^{(0, k',0, l')}=Id$) then we have the relation for $P\in [\![0,|k|\wedge |m|]\!],Q\in [\![0,|l|\wedge |n|]\!]$:
 \begin{equation}\label{TensorWick}\begin{split}&\iota_{\overline{l_{[Q+1,|l|]}},n_{[Q+1,|n|]},k|_{[1,|k|-P]},m_{[P+1,|m|]}}(P_{k\circ_Pm}[V\#(P_mXP_{n}^*)]P_{\overline{l}\circ_Qn}^{*})=\prod_{i=1}^P1_{k_{|k|-(i-1)}=m_i}\times\\&\times\prod_{i=1}^Q1_{l_{i}=n_i}[1^{\o |k|-P}\o m_{\infty}^{(P,m,Q,n)}\o 1^{\o |l|-Q}]\iota_{\overline{l},n,k,m}\left(P_{k\circ m}[V\#(P_mXP_{n}^*)]P_{\overline{l}\circ n}^*\right).\end{split}\end{equation}
 
Likewise we have:
$$\hat{\iota}_{\overline{k},n,l,m}(\overline{P_{\overline{k}\circ n}}\o_{D^{op}} P_{l\circ m})[ (\overline{P_n}\o_{D^{op}}P_m)](Y)\#V]\qquad$$
$$\qquad\qquad \in D'\cap [(L^2(M)^{\o_D n*}\oeh{D}\overline{M}^{\oeh{D} |k|}]\oeh{D^{op}}[M^{\oeh{D} |l|}\oeh{D}L^2(M)^{\o_D m})]$$ and $(\overline{P_{q}}\o_{D^{op}} P_{p})[ (\overline{P_n}\o_{D^{op}}P_m)](Y)\#V]=0$ for $|q|> |n|+|k|$ or $|q|<|n|+|k|-2(|k|\wedge |n|)$ or $|p|<|m|+|l|-2(|l|\wedge |m|)$ or $|p|>|m|+|l|.$ 
Moreover there is a canonical map  $$m_{1}^{(|l|,P,m,|k|,Q,n)}:((L^2(M)^{\o_D n*}\oeh{D}\overline{M}^{\oeh{D} |k|})\oeh{D^{op}}({M}^{\oeh{D}|l|}\oeh{D}L^2(M)^{\o_D m}))\quad\qquad$$
$$\qquad\qquad \to \mathscr{TC}(M:D,k\circ_Qn,l\circ_Pm ),$$ given on elementary tensors by: \begin{align*}&m_{1}^{(|l|,P,m,|k|,Q,n)}(\overline{\eta_{|k|+1}\o \eta_{|k|}\o \cdots \o \eta_{1}\o \eta_{|k|+2}\o \cdots \o \eta_{|n|+1}}
\\&\o \overline{n_{|k|}}\o \cdots \o \overline{n_1})\o (m_1\o \cdots m_{|l|}\o \xi_{|l|+1}\o \xi_{|l|}\o \cdots \o \xi_{1}\o \xi_{|l|+2}\o \cdots \o \xi_{|m|+1}))=\\& [\overline{n_1\o \cdots 
 \o n_{|l|-Q+1}E_D(n_{|l|-Q+1}\cdots E_D(n_{|l|}E_D(\eta_{|l|+1})\eta_{|l|})\cdots \eta_{|l|-Q+2})\eta_{|l|-Q+1}  }
 \\ & \qquad \overline{ 
\o \cdots \o \eta_{1}\o  \eta_{|l|+2}\o \cdots }
\\&\o [(m_1\o \cdots m_{|k|-P}\o m_{|k|-P+1}E_D(m_{|k|-P+2}\cdots E_D(m_{|k|}E_D(\xi_{|k|+1})\xi_{|k|})\cdots \xi_{|k|-P+2})\xi_{|k|-P+1}\\&\o \xi_{|k|-P}\o \cdots \o \xi_{1}\o \xi_{|k|+2}\o \cdots \o \xi_{|k|+|k'|+1}].
\end{align*}
These maps satisfy: $$\iota^{-1}_{\overline{l_{[Q+1,|l|]}},n_{[Q+1,|n|]},k|_{[1,|k|-P]},m_{[P+1,|m|]}}\circ(1^{\o |k|-P}\o m_{\infty}^{(P,m,Q,n)}\o 1^{\o |l|-Q})\circ \iota_{\overline{l},n,k,m}=m_{1}^{(|l|,Q,n,|k|,P,m)}\hat{\iota}_{\overline{l},n,k,m}$$ when restricted to the intersection of their domain viewed as a subset of $B(M:D,\overline{l}\circ n,k\circ m)+\mathscr{TC}(M:D,\overline{l}\circ n,k\circ m)$.   For $P\in [\![0,|l|\wedge |m|]\!],Q\in [\![0,|k|\wedge |n|]\!]$:
 \begin{equation}\label{TensorWickL1}\begin{split}
 (\overline{P_{\overline{k}\circ_Qn}}&\o_{D^{op}} P_{l\circ_Pm})[ (\overline{P_n}\o_{D^{op}}P_m)](Y)\#V]=\prod_{i=1}^Q1_{k_{i}=n_i} \prod_{i=1}^P1_{l_{|l|-(i-1)}=m_i}\times \\\times&m_{1}^{(|l|,P,m,|k|,Q,n)}\hat{\iota}_{\overline{k},n,l,m}\left((\overline{P_{\overline{k}\circ n}}\o_{D^{op}} P_{l\circ m})[ (\overline{P_n}\o_{D^{op}}P_m)](Y)\#V]\right).\end{split}\end{equation}
\end{Lemma}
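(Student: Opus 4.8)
The plan is to reduce every assertion to the explicit multiplication rule for reduced words in the amalgamated free product $N_\kappa$, and then to recognise all the expressions appearing as instances of the inner and outer actions of Theorems~\ref{Finite1}--\ref{Finite3}, applied with $N_\kappa$ in place of $M$ and restricted to the ``block'' elements $V=(\iota_k\oeh{D}\iota_l)(U)$. Writing $N=|k|+|l|$, I would first reduce by density and by the (norm and weak-$*$) continuity of every map in sight --- the embeddings $\iota_\bullet$, $\hat\iota_\bullet$, the projections $P_\bullet$, the actions $\#$ and $\#_{L^1}$, the conditional expectations, and the reorganisation isomorphisms --- to the case where $U=u_0\otimes_D u_1\otimes_D\cdots\otimes_D u_{N+1}$ is an elementary central tensor with $u_i\in M$, and $X=xe_Dy$, $Y\leftrightarrow\eta e_D\xi$ with $x,y,\xi,\eta$ reduced words. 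The general case then follows: the algebraic identities \eqref{TensorWick}, \eqref{TensorWickL1} and the intertwining of $m_\infty$ with $m_1$ pass to closures by continuity, while the membership and intersection-space statements follow from boundedness estimates combined with Lemma~\ref{basic2} (which reduces the question to the block level $B(M:D,\cdot,\cdot)\cap\mathscr{TC}(M:D,\cdot,\cdot)$) and with the compatibility-with-interpolation arguments already run for $n=2$ in Theorems~\ref{Finite1}--\ref{Finite3}, carried out now inside $\langle N_\kappa,e_D\rangle$ and pulled back along $\iota_k,\iota_l$.

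The one external ingredient is the Wick multiplication rule in $N_\kappa$: with respect to $L^2(N_\kappa)=\bigoplus_n L^2(M)^{\otimes_D n}$, the product of reduced words takes the form
\[
\iota_r(\vec a)\cdot\iota_s(\vec b)=\sum_{P=0}^{|r|\wedge|s|}\Big(\prod_{i=1}^P 1_{r_{|r|-i+1}=s_i}\Big)\,\iota_{r\circ_P s}\big(c^{(P)}_{r,s}(\vec a,\vec b)\big),
\]
where $c^{(P)}_{r,s}(\vec a,\vec b)$ is obtained from $\vec a\otimes_D\vec b$ by collapsing the $2P+1$ innermost legs to an iterated conditional expectation over $D$, one $E_D$ per cancelled matching pair $S_j(\,\cdot\,)S_j$ (the $S_i$ being free semicircular over $D$ of covariance $\mathrm{id}_D$). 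This is the standard Wick calculus for $D$-valued semicircular families (cf.\ \cite{dnv}); read in the appropriate variables, the nested-$E_D$ operation $c^{(P)}_{r,s}$ is exactly the map $m_\infty^{(P,\cdot,\cdot,\cdot)}$ of the statement, and $m_1$ is its $\mathscr{TC}$-valued counterpart. In particular $m_\infty$ and $m_1$ are well defined and bounded, being assembled from the bounded module map $E_D$ and the canonical $\oeh{D}$-reorganisations.

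Granting this, the computation is direct. I would identify $V=A\otimes_D B$ with $A=\iota_k(u_0\otimes\cdots\otimes u_{|k|})$ and $B=\iota_l(u_{|k|+1}\otimes\cdots\otimes u_{N+1})$, and compute $V\#(P_mXP_n^*)$ via Theorem~\ref{Finite1}(1a)--(1b): with $X=xe_Dy$ it equals the operator $E_{D'}(Bx\,e_D\,yA)$, whose $(p,q)$-block $P_p[\,\cdot\,]P_q^*$ is obtained by multiplying the type-$m$ range word on the left by $A$ and the type-$n$ domain word on the left by $B$ (which, acting on the domain side, enters with its word reversed, hence $\overline l$), the $E_{D'}$ being automatic since $U$ is central. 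Applying the Wick rule to these one-sided products immediately gives the vanishing statement: the non-zero blocks are precisely $p=k\circ_Pm$ with $0\le P\le|k|\wedge|m|$ and $q=\overline l\circ_Qn$ with $0\le Q\le|l|\wedge|n|$, and $|k\circ_Pm|$ sweeps $[\,|k|+|m|-2(|k|\wedge|m|),\,|k|+|m|\,]$; expressing the $(k\circ_Pm,\overline l\circ_Qn)$-block in terms of the top block $(k\circ m,\overline l\circ n)$ gives exactly \eqref{TensorWick}, with $\prod 1_{k_{|k|-i+1}=m_i}$ and $\prod 1_{l_i=n_i}$ the admissibility conditions for the cancellations and $m_\infty^{(P,m,Q,n)}$ the associated contraction. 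For the membership statement: in the top block the first $|k|$ ket-legs are literally $u_0,\dots,u_{|k|-1}\in M$ and the last $|l|$ bra-legs are $\overline{u_{N+1}},\dots,\overline{u_{|k|+2}}\in\overline M$, so after the reorganisation $\iota_{\overline l,n,k,m}$ the block lies in $M^{\oeh{D}|k|}\oeh{D}B(M:D,n,m)\oeh{D}\overline{M}^{\oeh{D}|l|}$. The $Y$-statements are the mirror image, the inner action being replaced by the outer action of $M^{\oehc{D}2}$ on $L^1(\langle N_\kappa,e_D\rangle)$ constructed in the proof of Theorem~\ref{finite2}(1g) and using the flip \eqref{L1basic}; $\hat\iota$ replaces $\iota$ and $m_1$ replaces $m_\infty$. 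Finally, the intertwining $\iota^{-1}\circ(1^{\otimes(|k|-P)}\otimes m_\infty\otimes 1^{\otimes(|l|-Q)})\circ\iota=m_1\circ\hat\iota$ on the common domain is checked on elementary tensors: both sides perform the same iterated $E_D$ on the same legs, only the ambient packaging (operator on a module versus trace-class element) differs, and that packaging is intertwined by the definitions of $\iota$, $\hat\iota$ and the flip between $B(M:D,\cdot,\cdot)$ and $\mathscr{TC}(M:D,\cdot,\cdot)$ recorded in Lemma~\ref{basic2}(2).

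\emph{The main obstacle} is that the whole proof is bookkeeping, and keeping it consistent is delicate on two fronts. First, one must track simultaneously --- across the $B(M:D,\cdot,\cdot)$-picture, the $\mathscr{TC}(M:D,\cdot,\cdot)$-picture and all the reorganisation isomorphisms --- the several word reversals (chiefly $l\mapsto\overline l$ on the domain side), the index truncations $k\mapsto k_{[1,|k|-P]}$, $m\mapsto m_{[P+1,|m|]}$, and above all the nesting order of the iterated conditional expectations, which must match the direction of contraction in the Wick rule on both sides. Second --- the only genuinely non-formal point --- the membership and intersection-space claims are not purely algebraic: one must verify that $V\#(\cdot)$ and $(\cdot)\#V$ send the relevant intersection $B(M:D,\cdot,\cdot)\cap\mathscr{TC}(M:D,\cdot,\cdot)$ into the corresponding intersection on the target, not merely into the sum, and that the output lands in the Haagerup tensor subspace rather than merely in the Hilbert-module tensor product --- precisely the boundedness-plus-interpolation argument made for $n=2$ in Theorems~\ref{Finite1}--\ref{Finite3}, reused here for $N_\kappa$ and transported along $\iota_k,\iota_l$. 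Once these are in place, the elementary-tensor computation above together with the density reduction completes the proof.
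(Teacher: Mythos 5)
Your proposal is correct and is essentially the paper's own argument: one first checks that $m_{\infty}$ and $m_{1}$ are well defined with the appropriate (weak-$*$ resp.\ norm) continuity, reduces by density to $X\in [\mathrm{Alg}(S,M)]e_D[\mathrm{Alg}(S,M)]$ (resp.\ treats $Y$ by norm density) and, via canonical forms of the extended Haagerup tensor product and strong convergence of their truncations, to $V$ a finite sum of elementary tensors, after which \eqref{TensorWick} and \eqref{TensorWickL1} are exactly the usual Wick formula for $D$-valued semicircular families, the iterated $E_D$-contractions being precisely the maps $m_{\infty}$ and $m_{1}$, and the membership/intersection claims pass to the limit by the same boundedness and weak-$*$ (resp.\ norm) convergence. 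The only slip in your sketch is writing the outer action as $E_{D'}(Bxe_DyA)$ --- it is $Axe_DyB$, with no $E_{D'}$ and no flip --- but your subsequent block analysis (range word multiplied by $A$, domain word by the reversed $B$, whence $\overline{l}$) is the correct mechanism, so nothing essential is affected.
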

\begin{proof}
The definition of the map $m_{\infty}^{(|k|,k\circ k',|l|,l\circ l')}$ and its weak-* continuity in the  variable $B(M:D,l\circ l',k\circ k')$ are easy. 
Thus one can assume $X\in [Alg(S,M)]e_D[Alg(S,M)].$
Then using canonical forms for the extended Haagerup tensor product and strong convergence of corresponding finite sums, we are reduced to the case where $V$ is a finite sum.
For finite tensors, the relation reduces to the usual Wick formula. The second part of the statement is similar using norm- instead of weak-* density.
\end{proof}
What really matters for us in the previous result is that the highest component of the product is a tensor product, while the remaining terms are then determined by applying multiplication and conditional expectations to its various components. For convenience for words $m,n$ and $k\leq |m|,$ we write : $$m\#_Kn=m_1...m_{K-1}n_1...n_{|n|}m_{K+1}...m_{|m|},$$ $$m\hat{\#}_Kn=m_1...m_Kn_1...n_{|n|}m_{K+1}...m_{|m|}.$$ 

\subsubsection{Flips and cyclic permutations.} 
We start by interpreting a cyclic permutation $\sigma=(l+2,l+3,\ldots l+k+2,1,2,\ldots,l+1)$  in $\mathfrak{C}_n$, $n=l+k+2$, as the flip (i.e., period two permutation) of the blocks $[\![ l+2,\dots,l+k+2]\!]$ and 
$[\![1,\dots,l+1]\!]$.  We mimic this point of view in terms of injections in our free product von Neumann algebra $N_\kappa$.  We thus make use of our results on the two-fold cyclic Haagerup tensor product in this context to construct a suitable intersection space, using which we then construct the $n$-fold cyclic Haagerup tensor product.

\begin{Proposition}\label{OnePermutation}
Let $D\subset M$ finite von Neumann algebras and $N=N_\kappa=M*_D(D\otimes W^*(S_1,...,S_\kappa))$. We assume $\kappa$ infinite $k,l$ words in $\kappa$ letters. 

  Let $\sigma\in \mathfrak{C}_n$  be a cyclic permutation as above, $n=|k|+|l|+2$, $\sigma(1)=|l|+2$. Using $\sigma_i$ of  Theorem \ref{Finite1} for $D\subset N_\kappa$ we have two inclusions $I_1(\sigma)=\sigma_1\circ(\iota_k\oeh{D}\iota_l),I_2(\sigma)=\sigma_2\circ(\iota_l\oeh{D}\iota_k)$ \begin{align*}I_i(\sigma):  &D'\cap M^{\oeh{D}n}\to       B(D'\cap \langle N,e_D\rangle \cap L^1(\langle N,e_D\rangle), D'\cap(\langle N,e_D\rangle +  L^1(\langle N,e_D\rangle)).\end{align*}  The intersection space in the sense of interpolation of these inclusions, written $M^{\oeh{D}(k\circ l,\sigma)}=M^{\oeh{D}(k,l)}$, has a change  of inclusion $I(\sigma)=I_2(\sigma^{-1})\circ I_1(\sigma)^{-1}:M^{\oeh{D}(k\circ l,\sigma)}\to M^{\oeh{D}(\sigma.(k\circ l),\sigma^{-1})}$ which satisfies $I(\sigma)=I(\sigma^{-1})^{-1}$ (with $\sigma\cdot k\circ l= l\circ k$).
 
Moreover,  the isometric involution ${}^\star$ induced on $M^{\oeh{D}n}\subset NCB((D')^{n-1},B(L^2(M))$ 
 given by $U^\star(X_1,...,X_{n-1})=U(X_{n-1}^*,...,X_{1}^*)^*$ extending  $(x_1\o_{D}\cdots\o_{D} x_n)^\star=(x_n^*\o_{D}\cdots\o_{D}x_1^*)$  sends $M^{\oeh{D}(k\circ l,\sigma)}$ to  $M^{\oeh{D}(l\circ k,\sigma^{-1})}.$ 

The product $.\#_{K}.:M^{\oeh{D}|n|+2}\times (D'\cap M^{\oeh{D}|m|+1})\to M^{\oeh{D}(|n|+|m|+1)}$ for $K\in [\![1,|n|+1]\!],$ induced by the composition in the $K$-th entry of $NCB((D')^{|n|+1}$ 
 $$NCB((D')^{|n|+1},B(L^2(M))\times NCB((D')^{|m|},D'\cap B(L^2(M)))\to NCB((D')^{|n|+|m|},B(L^2(M))$$ corresponds on tensors to the map $(x_1\o_{D}\cdots\o_{D} x_{|n|+2})\#_{K}(y_1\o_{D}\cdots\o_{D} y_{|m|+1})=x_1\o_{D}\cdots\o_{D}x_{K}y_1\o_D y_2 \o_{D}\cdots\o_{D} y_{|m|+1}x_{K+1}\o_{D}\cdots\o_{D} x_{|n|+2}$.  The product   is separately weak-* continuous on bounded sets in each variable and has the following stability properties:
\begin{itemize}
\item If $\sigma\in \mathfrak{C}_{|n|+2}$, $\tau\in \mathfrak{C}_{|n|+|n'|+1}$ $\sigma(1)=|n|-k'+2$, $\tau(1)=|n|+|n'|+1-k'$, $k'<K-1$,  then for any $U\in M^{\oeh{D}(n,\sigma)}, V\in D'\cap  M^{\oeh{D}(|n'|+1)} $ we have $U\#_KV\in  M^{\oeh{D}(n\#_Kn',\tau)},$
\item  If $\sigma\in \mathfrak{C}_{|n|+2}$, $\tau\in \mathfrak{C}_{|n|+|n'|+1}$ $\sigma(1)=|n|-k'+2=\tau(1)$, $k'\geq K$, then for any $U\in M^{\oeh{D}(n,\sigma)}, V\in D'\cap  M^{\oeh{D}(|n'|+1)} $ we have $U\#_KV\in  M^{\oeh{D}(n\#_Kn',\tau)},$
\item  If $\sigma\in \mathfrak{C}_{|n|+2},\rho\in \mathfrak{C}_{|n'|+2}$, $\tau\in \mathfrak{C}_{|n|+|n'|+2}$ $\sigma(1)=|n|-K+3,\rho(1)=|n'|-k'+2$ $\tau(1)=|n|+|n'|-K-k'+3$, $k'\in [\![1 , |n'|-1 ]\!]$, then for any $U\in M^{\oeh{D}(n,\sigma)}, V\in  M^{\oeh{D}(n',\rho)} $ we have $U\#_KV\in  M^{\oeh{D}(n\hat{\#}_Kn',\tau)}.$
\end{itemize}
 
 Similarly, the map $M^{\oeh{D}|n|+1}\times ( M^{\oeh{D}|m|+1})\to M^{\oeh{D}(|n|+|m|+1)}$  induced by the product in $B(L^2(M))$ 
 $$NCB((D')^{|n|},B(L^2(M))\times NCB((D')^{|m|},B(L^2(M)))\to NCB((D')^{|n|+|m|},B(L^2(M))$$ and corresponds on tensors to the map $(x_1\o_{D}\cdots\o_{D} x_{|n|+1})(y_1\o_{D}\cdots\o_{D} y_{|m|+1})=x_1\o_{D}\cdots\o_{D}x_{|n|+1}y_1\o_D y_2 \o_{D}\cdots\o_{D} y_{|m|+1}$.  It has the following stability properties:
 \begin{itemize}
\item If $\sigma\in \mathfrak{C}_{|n|+2}$, $\tau\in \mathfrak{C}_{|n|+|m|+2}$ $\sigma(1)=|n|-k'+2$, $\tau(1)=|n|+|m|+2-k'$, then for any $U\in M^{\oeh{D}(n,\sigma)}, V\in D'\cap  M^{\oeh{D}|m|+1} $ we have $UV\in  M^{\oeh{D}(nm,\tau)},$
\item If $\sigma\in \mathfrak{C}_{|m|+2}$, $\tau\in \mathfrak{C}_{|n|+|m|+2}$ $\sigma(1)=|m|-k'+2=\tau(1)$,  then for any $V\in M^{\oeh{D}(m,\sigma)}, U\in D'\cap  M^{\oeh{D}|n|+1} $ we have $UV\in  M^{\oeh{D}(nm,\tau)}.$
\end{itemize}
 
\end{Proposition}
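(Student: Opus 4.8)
The plan is to reduce every assertion to the two-fold case already settled in Theorems~\ref{Finite1}, \ref{finite2}, \ref{Finite3}, using the Wick-word embeddings $\iota_k$ as the bridge. First I would record the properties of $\iota_k\colon M^{\oeh{D}(|k|+1)}\to N_\kappa$ making this possible: it is a $D$-bimodular, weak-$*$ continuous complete isometry onto the closure $L^2(M)^{\o_D k}$ of its range in $L^2(N_\kappa)$ (a standard consequence of the Wick decomposition of $L^2(N_\kappa)$ recalled above, valid since $\kappa$ is infinite so that the requisite words are available), and it intertwines the adjoint of $N_\kappa$ with word reversal, $\iota_k(a)^*=\iota_{\overline k}(a^\star)$. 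Since the module Haagerup tensor product is injective for complete isometries, $\iota_k\oeh{D}\iota_l$ is then a $D$-bimodular, weak-$*$ continuous complete isometry $M^{\oeh{D}n}\to N_\kappa^{\oeh{D}2}$ carrying $D'\cap M^{\oeh{D}n}$ into $D'\cap N_\kappa^{\oeh{D}2}$. Composing with the inclusions $\sigma_1,\sigma_2$ of Theorem~\ref{Finite1}(1c) for $D\subset N_\kappa$ produces the injective maps $I_1(\sigma)=\sigma_1\circ(\iota_k\oeh{D}\iota_l)$, $I_2(\sigma)=\sigma_2\circ(\iota_l\oeh{D}\iota_k)$, so the interpolation-intersection $M^{\oeh{D}(k,l)}$ is well defined exactly as $M^{\oehc{D}2}$ was.

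Next I would treat the change-of-inclusion map and $^\star$. Under the two-fold splitting, the one-line permutation $\sigma$ --- which carries the first $|k|+1$ legs past the last $|l|+1$ --- is nothing but the flip of the two $N_\kappa$-tensor legs. By Theorem~\ref{finite2}(1d) this flip $\sigma_2\circ\sigma_1^{-1}$ is an isometric involution of $N_\kappa^{\oehc{D}2}$; since $I_1(\sigma)$ factors through $\sigma_1$, the space $M^{\oeh{D}(k,l)}$ is carried by $\sigma_1^{-1}$ inside $N_\kappa^{\oehc{D}2}$, the flip sends it back into $N_\kappa^{\oehc{D}2}$, and $(\iota_l\oeh{D}\iota_k)^{-1}$ followed by $\sigma_2$ identifies the image with $M^{\oeh{D}(l\circ k,\sigma^{-1})}$; this composite is $I(\sigma)$, and $I(\sigma)=I(\sigma^{-1})^{-1}$ because the flip squares to the identity. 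For $^\star$, the relation $\iota_k(a)^*=\iota_{\overline k}(a^\star)$ together with the isometric $^\star$-involution on $N_\kappa^{\oehc{D}2}$ of Theorem~\ref{finite2}(1d), shown there to commute with the flip, gives at once that $^\star$ restricts to an isometric involution carrying $M^{\oeh{D}(k\circ l,\sigma)}$ onto $M^{\oeh{D}(l\circ k,\sigma^{-1})}$ (each block being reversed, which does not affect the cyclic structure).

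For the products I would argue as follows. Depending on whether the $K$-th leg of $U$ lies in the first or the second block of its $\sigma$-structure, the operation $U\#_K V$ (and likewise the concatenation product) becomes, at the level of $N_\kappa^{\oeh{D}2}$, either the outer action $X\#U$ or the inner action $V\#X$ of Theorems~\ref{Finite1}(1a),(1b) and \ref{finite2}(1f),(1g), composed with the elementary tensor multiplication of $N_\kappa^{\oeh{D}2}$ by a central element; when $V$ is itself cyclically structured one instead gets a composition of two such flips. The stability statements then follow from the corresponding two-fold stabilities --- Theorem~\ref{Finite1}(1b), Theorem~\ref{finite2}(1f),(1g), and stability of $D'\cap N_\kappa^{\oeh{D}2}$ under multiplication --- the new cyclic permutation $\tau$ (resp.\ $\rho$) being read off from the position of the new block boundary, which is the direct but lengthy one-line-notation bookkeeping. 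Separate weak-$*$ continuity on bounded sets comes from Theorem~\ref{finite2}(2d) together with the weak-$*$ continuity of the $\iota$'s. The explicit identification of which tensor lands where is precisely what Lemma~\ref{Wick} supplies: it isolates the top tensor component of a product and recovers the lower ones by inserting conditional expectations.

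I expect the main obstacle to be the third (``both $U$ and $V$ cyclic'') case of $\#_K$, where one must show that composing two flip-structures on $N_\kappa^{\oeh{D}2}$ produces a single flip-structure on the concatenated word $n\hat{\#}_K n'$ with the precise cyclic permutation $\tau$ claimed; this is where Lemma~\ref{Wick} is genuinely needed and where the index accounting is most delicate. The only other point requiring care --- though classical --- is verifying that the Wick-word maps $\iota_k$ are complete isometries rather than merely bounded, since the whole intersection-space construction is meaningful only for an honest compatible couple.
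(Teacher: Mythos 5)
Your overall route is the paper's route (split the word at the block boundary, push into $N_\kappa^{\oeh{D}2}$ via the Wick maps, and quote the two-fold results), and your treatment of $I(\sigma)$ and of ${}^\star$ is exactly the paper's: the change of inclusion is the identity $\sigma_1\circ\sigma_2^{-1}=\sigma_2\circ\sigma_1^{-1}$ on $N_\kappa^{\oehc{D}2}$ from Theorem \ref{finite2}(1d), and ${}^\star$ is handled through $[(\iota_k\oeh{D}\iota_l)(U)]^\star=(\iota_{\bar l}\oeh{D}\iota_{\bar k})(U^\star)$. However, the foundation you lay is wrong as stated: the Wick maps $\iota_k$ are \emph{not} complete isometries into $N_\kappa$ (already $\|\iota_k(1\o_D 1)\|=\|S_{k_1}\|=2$ while $\|1\o_D1\|_{M\oeh{D}M}=1$; in general one only has completely bounded comparability with the Haagerup norm). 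Consequently you cannot import injectivity of $\iota_k\oeh{D}\iota_l$ from the injectivity of $\oeh{D}$ under complete isometries, and the point you single out as ``the only other point requiring care'' is a verification that would fail. The repair is that isometry is never needed: an interpolation-compatible couple only requires a pair of injections into a common Hausdorff space, injectivity of $\sigma_1,\sigma_2$ is Theorem \ref{Finite1}(1c), and injectivity of $\iota_k\oeh{D}\iota_l$ is recovered by compressing onto the top Wick component as in Lemma \ref{basic2}, which is how the paper implicitly uses it.

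The second gap is in the product stabilities, which are the real content of the proposition. They do not ``follow from the two-fold stabilities (1b),(1f),(1g)'': those describe actions of a single element on $\langle N,e_D\rangle$, $L^1$ and $L^2$, whereas here one must prove that $U\#_KV$ again lies in an intersection space, i.e.\ exhibit its flip partner and check that the outer and inner actions agree. The paper's mechanism is to name the candidate explicitly -- $I_2(\tau)^{-1}I_1(\tau)(U\#_KV)=V'\#_{|n'|+1-k'}U'$ in the doubly-cyclic case and $U'\#_{|n|+2-K}V$ in the second case, with $U',V'$ the flips of $U,V$ -- then verify the identity on the top Wick components from the defining relations of $U$ and $V$, and propagate it to \emph{all} compressions $P_p(\cdot)P_q^*$ via \eqref{TensorWick} and \eqref{TensorWickL1}, concluding by Lemma \ref{basic2}(1); the second case additionally needs the embedding of Lemma \ref{basic2}(2) and an application of the multiplication $\cdot\#V$ to the known identity, none of which is contained in the two-fold theorems. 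You do point at Lemma \ref{Wick} and correctly identify the doubly-cyclic case as the crux, so the skeleton is right, but without the identification of the partner (note the reversal of the order of $U'$ and $V'$ and the shifted index) the ``lengthy bookkeeping'' you defer is the proof itself. Your weak-* continuity argument (Theorem \ref{finite2}(2) plus weak-* continuity of the embeddings on bounded nets, tested against $Z\#e_D$) does match the paper.
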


\begin{proof}
\noindent (i) For the first statement, we only have to prove that $I_2(\sigma^{-1})\circ I_1(\sigma)^{-1}=I_1(\sigma^{-1})\circ I_2(\sigma)^{-1}$. In this it becomes clear that the image of  $I(\sigma)$ is indeed  $M^{\oeh{D}(l\circ k,\sigma^{-1})}$ and that $I(\sigma)^{-1}=I(\sigma^{-1})$. Take $X\in M^{\oehc{D}(k,l)}$
We know there is $U\in D'\cap M^{\o_{eh D(|k|+|l|+2)}}$ such that $U'=\iota_k\oeh{D}\iota_l(U)\in  D'\cap N^{\oeh{D} 2}$ and $U'=\sigma_{1}^{-1}(X)$ there is also $V\in D'\cap M^{\o_{eh D(|k|+|l|+2)}}$ such that $V'=\iota_l\oeh{D}\iota_k(V)\in D'\cap N^{\oeh{D} 2}$ is $V'=\sigma_2^{-1}(X)$.
Then by definition $$I_2(\sigma^{-1})\circ I_1(\sigma)^{-1}(X)=\sigma_2(U')=\sigma(X)=\sigma_1(\sigma_2^{-1}(X))=I_1(\sigma^{-1})\circ I_2(\sigma)^{-1}(X),$$ using in the middle the key relation proved in Theorem \ref{Finite1}.(1) and then the definition of our maps $I_i.$

\noindent (ii) For the statement about the adjoint, one  uses $$[(\iota_k\oeh{D}\iota_l)(U)]^\star=[(\iota_l\oeh{D}\iota_k)(U^\star)],$$
and our previous results in Theorems \ref{Finite1} and Theorem \ref{finite2}.(1) to deduce:
$$I(\sigma)(U^\star)=[I(\sigma^{-1})(U)]^\star.$$

\noindent (iii) For the weak-* continuity of composition products, take bounded nets $U_n\to U, V_\nu \to V$.   
By weak-* precompactness of $U_n\#_KV, U\#_KV_\nu$, it suffices to show that they converge weakly in $L^2(M)^{\o_D(|n|+|m|+1)}$ to $U\#_KV$.  By density it is enough to check convergence dually against any $Z\in M^{\oeh{D}(|n|+|m|+1)}$. Take any word $o$ of length $|o|=|m|-1$. We claim that $(\iota_{\epsilon}\o_D\iota_{o})(V_\nu-V)\to 0$ weak-* in $N\oeh{D}N$. This is obvious again by the isometric embedding at $L^2$ level and since it suffices to check weak convergence in $L^2(N)\o_DL^2(N)$. Take similarly $n=kl$, $|k| =K-1$, then $(\iota_{k}\o_D\iota_{l})(U_n-U)\to 0$. From the result in Theorem \ref{finite2}.(2), $$[(\iota_{k}\o_D\iota_{l})(U_n)]\#[(\iota_{\epsilon}\o_D\iota_{o})(V)]\to [(\iota_{k}\o_D\iota_{l})(U)]\#[(\iota_{\epsilon}\o_D\iota_{o})(V)],$$
$$ [(\iota_{k}\o_D\iota_{l})(U)]\#[(\iota_{\epsilon}\o_D\iota_{o})(V_\nu)]\to [(\iota_{k}\o_D\iota_{l})(U)]\#[(\iota_{\epsilon}\o_D\iota_{o})(V)],$$
in $N^{\oeh{D}2}$. 
Since from the computation below coming from Lemma \ref{TensorWick} $$\langle Z, U\#_KV\rangle=\langle(\iota_{k\epsilon}\o_D\iota_{ol})(Z)\#e_D, [(\iota_{k}\o_D\iota_{l})(U)]\#[(\iota_{\epsilon}\o_D\iota_{o})(V)]\rangle,$$ 
we get the weak convergence by duality against $(\iota_{k\epsilon}\o_D\iota_{ol})(Z)\#e_D$. 
 
\noindent (iv) For the stability of composition products, consider first the situation of the third point, $U\in M^{\oeh{D}(n,\sigma)}=M^{\oeh{D}(k_1,l_1)}, V\in  M^{\oeh{D}(n',\rho)}=M^{\oeh{D}(k_2,l_2)} $  $n=k_1l_1,n'=k_2l_2$, $|k_1|=K-1,|k_2|=k' $ and consider $U'=I_2(\sigma)^{-1}I_1(\sigma)(U),V'=I_2(\rho)^{-1}I_1(\rho)(V).$
But from the definitions, one easily gets for $X\in \langle N, e_D\rangle\cap L^1(\langle N, e_D\rangle)$:
\begin{align*}P_{k_1\circ k_2\circ m}&\left([\iota_{k_1k_2}\o_{D}\iota_{l_2l_1}(U\#_KV)]\#(P_mXP_l^*)\right)P_{\overline{l_2\circ l_1}\circ l}^*\\&=P_{k_1\circ k_2\circ m}\left([\iota_{k_1}\o_{D}\iota_{l_1}(U)]\#[\iota_{k_2}\o_{D}\iota_{l_2}(V)]\#(P_mXP_l^*)\right)P_{\overline{l_2\circ l_1}\circ l}^*.\end{align*}
and similarly :
\begin{align*}&(\overline{P_{\overline{l_2\circ l_1}\circ l}}\o_{D^{op}} P_{k_1\circ k_2\circ m})\left([ (\overline{P_l}\o_{D^{op}}P_m)](X)\#[\iota_{l_2l_1}\o_{D}\iota_{k_1k_2}(V'\#_{|n'|+1-k'}U')]\right)\\&=(\overline{P_{\overline{l_2\circ l_1}\circ l}}\o_{D^{op}} P_{k_1\circ k_2\circ m})\left([ (\overline{P_l}\o_{D^{op}}P_m)](X)\#[\iota_{l_2}\o_{D}\iota_{k_2}(V')]\#[\iota_{l_1}\o_{D}\iota_{k_1}(U')]\right)
\end{align*}

From the assumptions on $U$ and $V$ the two second lines are equal, and then, from \eqref{TensorWick} and \eqref{TensorWickL1}, one deduces the conclusion we wanted, for all $p,q$ :
\begin{align*}P_{p}&\left([\iota_{k_1\circ k_2}\o_{D}\iota_{l_2\circ l_1}(U\#_KV)]\#(P_mXP_l^*)\right)P_{q}^*\\&=(\overline{P_q}\o_{D^{op}} P_{p})\left([ (\overline{P_l}\o_{D^{op}}P_m)](X)\#[\iota_{l_2l_1}\o_{D}\iota_{k_1k_2}(V'\#_{|n'|+1-k'}U')]\right),
\end{align*}
which, using Lemma \ref{basic2}.(1), implies our statement  and :
$$I_2(\tau)^{-1}I_1(\tau)(U\#_KV)=V'\#_{|n'|+1-k'}U'.$$

The other statements about composition product and product are similar, the first statement in each case always following from the second using the stability by adjoint proved before. We  give a few details concerning the second point for the composition product.

Take $U\in M^{\oeh{D}(n,\sigma)}=M^{\oeh{D}(k_1,l_1)}, V\in D'\cap  M^{\oeh{D}n'}, k'\geq K, n=k_1l_1$, $|k_1|=k'$ and let $U'=I_2(\sigma)^{-1}I_1(\sigma)(U)$. Note that $n\#_K(n')=[k_1\#_K(n')]\circ l_1.$ As before it suffices to prove :
\begin{align*}&P_{[k_1\#_K(n')]\circ  m}\left([\iota_{k_1\#_K(n')}\o_{D}\iota_{l_1}(U\#_KV)]\#(P_mXP_l^*)\right)P_{\overline{l_1}\circ  l}^*\\&=(\overline{P_{\overline{l_1}\circ  l}}\o_{D^{op}} P_{[k_1\#_K(n')]\circ  m})\left([ (P_l\o_{D^{op}}P_m)](X)\#[\iota_{l_1}\o_{D}\iota_{k_1\#_K(n')}(U'\#_{|n|+2-K}V)]\right),
\end{align*}

But now, by assumption, we know :
\begin{align*}&P_{k_1\circ m}\left([\iota_{l_1}\o_{D}\iota_{k_1}U]\#(P_mXP_l^*)\right)P_{\overline{l_1}\circ  l}^*\\&=(\overline{P_{\overline{l_1}\circ  l}}\o_{D^{op}} P_{k_1\circ m})\left([ (P_l\o_{D^{op}}P_m)](X)\#[\iota_{l_1}\o_{D}\iota_{k_1}U']\right),
\end{align*}
Moreover, by Lemma \ref{Wick} they are valued respectively in 
$\iota_{\overline{l_1},l,k_1,m}^{-1} [D'\cap M^{\oeh{D} |k_1|}\oeh{D}B(M:D,l,m)\oeh{D}\overline{M}^{\oeh{D} |l_1|}]$ and $\hat{\iota}_{\overline{l_1},l,k_1,m}^{-1}[D'\cap ((L^2(M)^{\o_D l*}\oeh{D}\overline{M}^{\oeh{D} |l_1|})\oeh{D^{op}}(M^{\oeh{D} |k_1|}\oeh{D}L^2(M)^{\o_D m}))]$. By Lemma \ref{basic2}.(2), it suffices to see that the two elements we wish to prove equal in $B(L^2(M)^{\o_D \overline{l_1}\circ  l},L^1(D)\o_{h D^{op}}L^2(M)^{\o_D [k_1\#_K(n')]\circ  m})_D$ have the same value on any $\xi\in L^2(M)^{\o_D \overline{l_1}\circ  l}$. 

Since $P_{k_1\circ m}\left([\iota_{l_1}\o_{D}\iota_{k_1}U]\#(P_mXP_l^*)\right)P_{\overline{l_1}\circ  l}^*
(\xi)\in L^2(D)\o_{h D^{op}}(M^{\oeh{D} |k_1|}\oeh{D}L^2(M)^{\o_D m})$ the equality we want can be obtained from the one we know by applying the multiplication $.\#V$ which is well defined on the appropriate extended Haagerup tensor powers of $M$ in the range of our maps.

The reader should note that in this case, we thus actually proved $$I_2(\tau)^{-1}I_1(\tau)(U\#_KV)=U'\#_{|n|+2-K}V.$$
\end{proof}

\subsubsection{Cyclic Haagerup tensor products: the general case}
 We are now ready to introduce our cyclic extended Haagerup tensor product as an intersection space with enough compatibility condition to have a cyclic group action on it. Once those cyclic group actions are obtained, our various products and actions leave stable our intersection space as expected. We also obtain a density result saying that our spaces are non-trivial as soon as $D'\cap L^2(M)^{\o_D n}$ are. We also obtain traciality and functoriality results crucial to build later evaluations maps.

\begin{Proposition}\label{CyclicPermutations}
Let $D\subset M$ finite von Neumann algebras and $N_\kappa=M*_D(D\otimes W^*(S_1,...,S_\kappa))$, $\kappa$ infinite. We write $n$ a generic word in $\kappa$ letters of length $N$. 
 Let $M^{\o_{ehsc D}(N+2)}$ the intersection space of $$I_1(\sigma,n)^{-1}(M^{\oeh{D}(n,\sigma)})=I_2(\sigma^{-1},\sigma.n)^{-1}(M^{\oeh{D}(\sigma.n,\sigma^{-1})})\subset (D'\cap  M^{\oeh{D}(N+2)})$$ for $\sigma\in \mathfrak{C}_{N+2}$, completely isometrically included via $I=\bigoplus_{n, |n|=N}(Id\oplus (J(\sigma,n))_{ \sigma\in (\mathfrak{C}_{n}-\{Id\})})$
 into $(D'\cap  M^{\oeh{D}(N+2)})^{\oplus(\mathfrak{C}_{N+2}\times \kappa^N)},$ (with operator space direct sum norm) and write $J(Id)=Id,$ $J(\sigma,n)=I_1(\sigma^{-1},\sigma.n)^{-1}\circ I_2(\sigma^{-1},\sigma.n),$ with $I_i$ associated to $n$. This intersection space is independent of $\kappa$ infinite.

Consider $M^{\o_{ehSc D}N+2}=\left(\bigcap_{n\neq m, |n|=|m|=N}Ker(J(\sigma,n)-J(\sigma,m)) \right)\subset  M^{\o_{ehsc D}N+2}$ and on $(M^{\o_{ehSc D}n})^{\mathfrak{C}_{N+2}}$, $P_{\sigma}$ the projection on the $\sigma$ component and the maps $J(\sigma_1,\sigma_2)=J(\sigma_1)P_{\sigma_2,n}$, with $J(\sigma)=J(\sigma,n)$ for any $n$, and a corresponding $I$ without repetition over $n$ and then define   \begin{align*}M^{\oehc{D}N+2}&:=I^{-1}\left(\bigcap_{(\sigma_1,\sigma_2)\in \mathfrak{C}_{N+2}^2}Ker(J(\sigma_1,\sigma_2)-J(Id,\sigma_1\sigma_2)) \right)\\&\subset  I^{-1}((M^{\o_{ehSc D}N+2})^{\mathfrak{C}_{N+2}})\subset M^{\o_{ehsc D}N+2},\end{align*}
with the induced norm, for which we have equality with the previous definition when $N=0$.

\begin{enumerate} 
\item For any $U\in M^{\o_{ehsc D}N+2}, V\in M^{\o_{ehsc D}M+2},$ we deduce $U^\star\in M^{\o_{ehsc D}N+2}, U\#_iV\in M^{\o_{ehsc D}N+M+2}$ for all $i\in [\![1,N+1]\!],$  $UV\in M^{\o_{ehsc D}N+M+3},$ and similarly for $s$ replaced by $S$.  

Moreover the maps $ J(\sigma)$  
induce a continuous action of $\mathfrak{C}_{N+2}$ on 
$M^{\oehc{D}N+2}$. 
 For any $U\in M^{\oehc{D}N+2}, V\in M^{\oehc{D}M+2},$ we have:  $U^\star\in M^{\oehc{D}N+2}, U\#_iV\in M^{\oehc{D}N+M+2}$ for all $i\in [\![1,N+1]\!],$  $UV\in M^{\oehc{D}N+M+3}.$ Moreover, $M^{\oehc{D}N+2}$ is weak-* dense in $D'\cap M^{\oeh{D}N+2}$ and dense in $D'\cap L^2(M)^{\o_D N+2}.$
\item  Assume  either that there exists a $D$-basis of  $L^2(M)$ as a right $D$ module $(f_i)_{i\in I}$  which is also a $D$-basis of  $L^2(M)$ as a left $D$ module or that $D$ is a $II_1$ factor and that $L^2(M)$ is an extremal $D-D$ bimodule. 
Then, the linear map $J(\sigma)$  extends to an isometry on the subspace generated  $D'\cap L^2(M)^{\o_D n}.$ 
As a consequence, $\tau(X)=\langle e_D,X\#e_D\rangle$ is a trace on $D'\cap N_{ \kappa}^{\oeh{D}2}.$

 \item {[Partial fonctoriality] If $\phi_1:M^{\oeh{D}n_1}\to N_\kappa$,...,$\phi_p:M^{\oeh{D}n_p}\to N_\kappa$  are multiplication maps to canonical semicircular variables in $N_\kappa$, then $$\phi_1\o_D...\o_D\phi_p: M^{\oehc{D}n}\to {N_\kappa}^{\oehc{D}p}$$  is a completely bounded map with $n=\sum n_i$. In particular, in the degenerate case $\forall i, n_i=1$, we have a complete isometry $M^{\oehc{D}n}\subset {N_\kappa}^{\oehc{D}n}.$
 
 Moreover if $E:N_\kappa \to M$ is the canonical conditional expectation, $E^{\o_D p}:{N_\kappa}^{\oehc{D}p}\to M^{\oehc{D}p}$ is a completely contractive map.}
\end{enumerate}
\end{Proposition}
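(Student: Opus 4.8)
The plan is to build everything from the two-fold cyclic case ($n=2$) handled in Theorems~\ref{Finite1}, \ref{finite2} and \ref{Finite3} for the inclusion $D\subset N_\kappa$, by reading a cyclic permutation $\sigma\in\mathfrak{C}_{N+2}$ as a flip of two consecutive blocks of a Wick word and transporting the two-fold statements through the embeddings $\iota_n$, the Wick identities \eqref{TensorWick}, \eqref{TensorWickL1} of Lemma~\ref{Wick}, and the block decomposition of Lemma~\ref{basic2}. First I would settle well-definedness and $\kappa$-independence: for a fixed word $n$ of length $N$, every $M^{\oeh{D}(n,\sigma)}$ is realized inside the $\kappa$-independent space $D'\cap M^{\oeh{D}(N+2)}$, and the defining condition ($I_1(\sigma,n)$ and $I_2(\sigma^{-1},\sigma.n)$ agreeing after the change of inclusion) tests only compressions of $\langle N_\kappa,e_D\rangle$ by finite projections sitting in a fixed copy $\langle N_\omega,e_D\rangle\subset\langle N_\kappa,e_D\rangle$; compatibility of the Jones basic constructions then gives the same intersection space for all infinite $\kappa$. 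With Lemma~\ref{basic2}(2) realizing $\mathscr{TC}(M:D,\cdot)$ and $B(M:D,\cdot)$ as a genuine compatible couple of $L^p$-spaces, each $M^{\o_{ehsc D}(N+2)}$, $M^{\o_{ehSc D}N+2}$ and $M^{\oehc{D}N+2}$ is a well-defined closed subspace with its interpolation norm, and each $J(\sigma,n)$ is by construction an isometric change of inclusion, hence continuous; when $N=0$ there is no word to carry and all three spaces collapse to the $M^{\oehc{D}2}$ of Definition~\ref{def:HaagerupCyclic}, which is the asserted consistency.

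\textbf{Part (1).} Stability under ${}^\star$ is read off from Proposition~\ref{OnePermutation}, which carries $M^{\oeh{D}(k\circ l,\sigma)}$ to $M^{\oeh{D}(l\circ k,\sigma^{-1})}$ and intertwines $I(\sigma)$ with $I(\sigma^{-1})$, so it preserves every kernel condition cutting out $M^{\o_{ehsc D}}$, $M^{\o_{ehSc D}}$ and $M^{\oehc{D}}$. Stability under $\#_i$ and under the product $UV$ is obtained by invoking, for each $\sigma$, the three ``stability of composition product'' bullets and the two ``stability of product'' bullets of Proposition~\ref{OnePermutation}; the work is to check that the cyclic permutation $\tau$ produced by the applicable bullet matches the one demanded by the corresponding kernel condition, which is exactly the content of the identities $I_2(\tau)^{-1}I_1(\tau)(U\#_KV)=V'\#\cdots U'$ and $=U'\#\cdots V$ established inside the proof of Proposition~\ref{OnePermutation}. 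For the $\mathfrak{C}_{N+2}$-action one reduces to the cocycle identity $J(\sigma_1)J(\sigma_2)=J(\sigma_1\sigma_2)$ on $M^{\o_{ehSc D}N+2}$, which follows from the compatibility of the two-fold changes of inclusion (the key relation of Theorems~\ref{Finite1} and \ref{finite2}) applied to the nested flips; $M^{\oehc{D}N+2}$ is then precisely the locus where all $J(\sigma_1,\sigma_2)$ agree with $J(Id,\sigma_1\sigma_2)$, hence it is preserved by every $J(\tau)$, and the isometric $J(\tau)$ furnish the continuous action. For density, a finite sum $u=E_{D'}(\sum_i x_i^{(0)}\otimes\cdots\otimes x_i^{(N+1)})\in D'\cap M^{\oeh{D}N+2}$ lands in $M^{\oehc{D}N+2}$ because, pushed through $\iota_k\oeh{D}\iota_l$, the identity \eqref{finitesum} of Theorem~\ref{finite2}(2c) for $D\subset N_\kappa$ forces $I_1(\sigma,n)^{-1}$ and $I_2(\sigma^{-1},\sigma.n)^{-1}$ to agree on it for every $\sigma$, and that identity is ``word-blind'' (the flip only permutes tensor legs), so the stronger kernel conditions hold too; weak-$*$ density of such $u$ in $D'\cap M^{\oeh{D}N+2}$ and $L^2$-density in $D'\cap L^2(M)^{\o_D N+2}$ then follow as in the proof of Theorem~\ref{finite2}(2c), approximating conditional expectations by convex combinations of conjugations by unitaries of $D$.

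\textbf{Part (2).} Under the stated hypothesis the $D$-$D$ bimodule $L^2(N_\kappa)$ again satisfies it: a two-sided $D$-basis of $L^2(M)$ yields one for $L^2(N_\kappa)$ (tensor and direct sums of two-sided bases), and if $L^2(M)$ is extremal over the $II_1$ factor $D$ then each relative tensor power $L^2(M)^{\o_D k}$ is extremal by Theorem~\ref{BurnsR} and a countable direct sum of extremal bimodules is extremal. Thus Theorem~\ref{Finite3}(3) applies to $D\subset N_\kappa$ and gives directly that $\tau(X)=\langle e_D,X\#e_D\rangle$ is a trace on $D'\cap N_\kappa^{\oeh{D}2}$, the last assertion of (2). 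For the isometry statement, $J(\sigma)$ is, after the $\iota$-embeddings, the restriction of the two-fold change of inclusion $\sigma=\sigma_2\circ\sigma_1^{-1}$ for $D\subset N_\kappa$; by Theorem~\ref{Finite3}(3) this acts on $L^2(N_\kappa^{\oehc{D}2},\tau)=D'\cap L^2(N_\kappa)\o_D L^2(N_\kappa)$ as the unitary Burns rotation and is compatible with the Hilbert-space adjoint, hence is isometric on the $L^2$-closure; restricting to the (closed, invariant) $\iota_n$-image of $D'\cap L^2(M)^{\o_D n}$ gives the claim, the $L^2$-density from part~(1) identifying this closure.

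\textbf{Part (3), and the main obstacle.} A multiplication map $\phi_i:M^{\oeh{D}n_i}\to N_\kappa$ is completely bounded, being the composition of the canonical inclusion $M^{\oeh{D}n_i}\to L^2(M)^{\o_D|n_i|+1}\subset L^2(N_\kappa)$ (which by Theorem~\ref{finite2}(2a) is the $\cdot\#e_D$ map) with an orthogonal projection; hence $\phi_1\o_D\cdots\o_D\phi_p$ is completely bounded on $\oeh{D}$ by functoriality of the extended Haagerup tensor product, and $E^{\o_D p}$ is completely contractive because $E:N_\kappa\to M$ is. That these maps carry the cyclic intersection space into the cyclic intersection space is once more the flip-in-a-free-product device: a composite of multiplication maps is again a multiplication map into an iterated free product, so Proposition~\ref{OnePermutation} applies to it verbatim, while $E$ commutes with the conditional expectations $E_{D'}$ and the projections $P_n$ and is therefore compatible with the $L^1$--$L^\infty$ block decomposition of Lemma~\ref{basic2}; in the degenerate case $n_i\equiv1$ the relevant $\phi$ is $\iota_n$, isometric at each level, whence the complete isometry $M^{\oehc{D}n}\subset N_\kappa^{\oehc{D}n}$. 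The hard part throughout is the permutation bookkeeping --- for every stability and compatibility claim, matching the applicable bullet of Proposition~\ref{OnePermutation} to the precise kernel condition of the target so that one genuinely only ever invokes the $n=2$ theorems rather than re-proving them --- together with keeping all the intersection spaces realized inside one fixed compatible couple of $L^p$-spaces so that ``intersection in the sense of interpolation theory'' stays unambiguous.
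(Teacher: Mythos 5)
Your part (3) is the place where the argument does not go through as written. The cyclic structure on $N_\kappa^{\oehc{D}p}$ is, by definition, tested inside a \emph{further} free product: to write the conditions $I_1,I_2$ for tensors over $N_\kappa$ one must adjoin a fresh family of semicirculars $S_1',\dots,S_\kappa'$ and work in $N'=W^*(N_\kappa,S_1',\dots,S_\kappa')$. So ``Proposition~\ref{OnePermutation} applies to it verbatim'' is not available: that proposition treats the cyclic conditions of $M$ tested in $N_\kappa$, not those of $N_\kappa$ tested in $N'$. What the paper actually does is prove the evaluation identity $\iota_l'\circ(\iota_{k_1}\o_D\cdots\o_D\iota_{k_p})=\iota''_{l\circ(k_1,\dots,k_p)}$ (orthogonality in the free product) and then uses the absorption $(N',E_M)\simeq(N_\kappa,E_M)$, valid because $\kappa$ is infinite, to transport the kernel conditions defining $M^{\oehc{D}n}$ into exactly those defining $N_\kappa^{\oehc{D}p}$, with the intersection norms controlled by the same identification. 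Your cb-bound for a single $\phi_i$ via $\cdot\#e_D$ into $L^2(N_\kappa)$ followed by an orthogonal projection only controls an $L^2$-norm, not the operator-space norm into $N_\kappa$, and, more importantly, boundedness into the cyclic target requires compatibility with each $I_1(\sigma),I_2(\sigma)$ of the doubled construction. Likewise, for $E^{\o_D p}$, ``$E$ commutes with $E_{D'}$ and the $P_n$'' is not the intertwining one needs: the relevant identities are $I_1(\sigma)\circ E^{\o_D p}(X)=E_{W^*(M,S')}[I_1(\sigma)(X)]E_{W^*(M,S')}$ and $I_2(\sigma)\circ E^{\o_D p}(X)=(E_{W^*(M,S')}\o_{D^{op}}E_{W^*(M,S')})I_2(\sigma)(X)$, which the paper verifies on elementary tensors using freeness with amalgamation over $M$ of $N_\kappa$ and $W^*(M,S')$. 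Without this doubling device, part (3) has a genuine hole.

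Your preliminaries and part (1) essentially coincide with the paper's proof (stability from the bullets of Proposition~\ref{OnePermutation}, the action read off the defining kernels, density via $E_{D'}$ of elementary tensors and \eqref{finitesum}); note only that the cocycle identity $J(\sigma_1)J(\sigma_2)=J(\sigma_1\sigma_2)$ does not ``follow from the two-fold compatibility'' on $M^{\o_{ehsc D}(N+2)}$ --- it is imposed by the extra kernels, which is exactly why the intermediate spaces are introduced; your next sentence in fact gets this right. In part (2) you take a genuinely different route to the trace on $D'\cap N_\kappa^{\oeh{D}2}$: you transfer the hypothesis (two-sided basis, resp.\ extremality) from $L^2(M)$ to $L^2(N_\kappa)$ and invoke Theorem~\ref{Finite3}(3) for $D\subset N_\kappa$, whereas the paper never needs extremality of $L^2(N_\kappa)$: it obtains the isometry of $J(\sigma)$ on $D'\cap L^2(M)^{\o_D(N+2)}$ from the unitary Burns rotation of Theorem~\ref{BurnsR} applied to $L^2(M)$ and the explicit formula $J(\sigma)E_{D'}(x_1\o_D\cdots\o_D x_{N+2})=E_{D'}(x_{\sigma^{-1}(1)}\o_D\cdots\o_D x_{\sigma^{-1}(N+2)})$, and then deduces traciality on $D'\cap N_\kappa^{\oeh{D}2}$ from weak-* density of the span of $E_{D'}(\iota_k\o\iota_l(\cdot))$ together with the weak-* continuity of Theorem~\ref{finite2}(2). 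Your route can be made to work, but the transfer statements you assert are not free: that two-sided bases pass to relative tensor powers and direct sums, and that a countable direct sum of extremal $D$-$D$ bimodules is extremal, both require proof (the latter via the block-diagonal computation of $Tr_1$ and $Tr_1^{op}$ with bases adapted to the decomposition, using that each diagonal compression lies in $Q_{1,H_i}^+$); and the invariance of the $\iota_n$-image under the two-fold flip of $N_\kappa$, which your restriction argument presupposes, is exactly the explicit permutation formula above and must be established before the isometry can be restricted.
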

\begin{proof}
The independence of the intersection space of $\kappa$ infinite is obvious since any equation to check can be reduced to a countably generated algebra, and thus to countably many $S_i$ as variables. The agreement with the previous definition in the case $N=0$ is easy from Lemma \ref{basic2}.(2) and left to the reader.

(1) The stability of $M^{\o_{ehsc D}n}$ by adjoint, composition product and product are obvious from Proposition \ref{OnePermutation}. The stability of $M^{\oehc{D}n}$ comes from the equations we  (could have) got on $J(\tau)(U\#_kV),J(\sigma(U^\star))$ in the proof in each case.
We fix $n$ and first compute the inverse of $J(\sigma)=J(\sigma,n)=I_1(\sigma^{-1})^{-1}\circ I_2(\sigma^{-1})$ on $M^{\o_{ehsc D}N+2}$. Note first that $J(\sigma^{-1})J(\sigma)=I_1(\sigma)^{-1}\circ I_2(\sigma)I_1(\sigma^{-1})^{-1}\circ I_2(\sigma^{-1})=I_1(\sigma)^{-1}I(\sigma^{-1})I_2(\sigma^{-1})$ so that $J(\sigma^{-1})J(\sigma)I_1(\sigma)^{-1}=I_1(\sigma)^{-1}I(\sigma^{-1})I(\sigma)=I_1(\sigma)^{-1}$
by (1) and since $I_1(\sigma)^{-1}$ is surjective, one gets $J(\sigma^{-1})J(\sigma)=Id$ and likewise the converse to that $J(\sigma,n)^{-1}=J(\sigma^{-1},\sigma.n)$. 

 By definition as an intersection, $ (J(\sigma))$ defines an action on $M^{\oehc{D}n}$ since on the intersection of kernels we exactly have $J(\sigma_1)J(\sigma_2)=J(\sigma_1\sigma_2).$

It mostly remains to show the density results.
For, we prove that for any $x_1,...,x_{N+2}\in M$, then $E_{D'}(x_1\o_D...\o_D x_{N+2})\in M^{\oehc{D}N+2}$. From the weak-* continuity on bounded sets of $E_{D'}$, this implies the weak-* density. The $L^2$ density is even easier.
More precisely, we show that $E_{D'}(x_1\o_D...\o_D x_{N+2})\in M^{\o_{ehsc D}N+2}$ and  $$
J(\sigma,n)(E_{D'}(x_1\o_D...\o_D x_{N+2}))=E_{D'}(x_{\sigma^{-1}(1)}\o_D...\o_Dx_{\sigma^{-1}(N+2)})$$
and on this formula one reads it is also in the intersection of kernels defining $M^{\oehc{D}n}.$

Thus we can fix $\sigma\in \mathfrak{C}_{N+2}$ and $n=kl$, $\sigma(1)=|l|+2$, $|l|=N+1-|k|$. We have to show for any $X\in \langle {N_\kappa},e_D\rangle\cap L^1(\langle {N_\kappa},e_D\rangle)$:
$$(\iota_{k}\o_D\iota_l(E_{D'}(x_1\o_D...\o_D x_{N+2})))\#X=X\#(\iota_{l}\o_D\iota_k(E_{D'}(x_{k+1}\o_D...\o_D x_k))).$$

This reduces to \eqref{finitesum} if we show that $$\iota_{k}\o_D\iota_l(E_{D'}(x_1\o_D...\o_D x_{N+2}))=E_{D'}(\iota_{k}\o_D\iota_l(x_1\o_D...\o_D x_{N+2})).$$
But we saw both sides can be further included in $L^2(M)^{\o_D {N+2}}$ as a subspace with both $E_{D'}$ agreeing with the projection there. This concludes.

(2) From the action property in (1) on the dense set where $J(\sigma)$ is defined, it suffices to consider $\sigma$ a generator of the cyclic group. We thus extend $J(\sigma)$ isometrically in the case $\sigma$ is  such that $\sigma(1)=N+2$.

Moreover, by the density of (linear combinations of) vectors of the form $E_{D'}(x_1\o_D...\o_D x_{N+2})$ obtained in the proof of (1), it suffices to show that the restriction of $J(\sigma)$ to those vectors is an isometry. 

But note that with our fixed $\sigma$, we have obtained the relation :
$$J(\sigma)[E_{D'}(x_1\o_D...\o_D x_{N+2})]=E_{D'}(x_2\o_D...\o_D x_{N+2}\o_D x_1).$$
Moreover, assuming extremality, there is by Theorem \ref{BurnsR} a unitary Burns rotation, and by its defining relation, it coincides with $J(\sigma)^{-1}$ so that $J(\sigma)$ is an isometry as stated. The case with a basis is left to the reader.

For the last statement about traciality of $\tau(X)=\langle e_D,X\#e_D\rangle$ 
on $D'\cap N_\kappa^{\oeh{D}2},$ 
we start from the result we obtained using the action for a general $\sigma$. Let $U,U'\in D'\cap M^{\oehc{D}(N+2)}$, $V=J(\sigma)(U),V'=J(\sigma)(U')\in D'\cap M^{\oeh{D}(N+2)}.$ One easily gets from the isometry relation :
\begin{align*}Tr( e_D[(\iota_{\overline{k}}\o \iota_{\overline{l}}(V^\star))\#[(\iota_k\o \iota_{l}(U'))\#e_D]]&=\langle U,U'\rangle  =\langle V,V'\rangle \\& = Tr( [(\iota_k\o \iota_{l}(U'))\#[(\iota_{\overline{k}}\o \iota_{\overline{l}}(V^\star))\#e_D]]e_D)
\end{align*} and one easily gets zero for various other injections.

Finally, we know that linear combinations of $E_{D'}(n\o_D n')$, $n,n'\in N_\kappa$ are weak-* dense in $D'\cap {N_\kappa}^{\oeh{D}2},$  and then using the strong density of $Span( \iota_k(M^{\oeh{D}k}),k\in \kappa^N,N\geq 0)$ in ${N_\kappa}$, we get the same result, with $n,n'$ in this span. But now, we already noticed that $E_{D'}(\iota_k\o \iota_{l}(U))=(\iota_k\o \iota_{l}(E_{D'}(U))$ thus proving the weak-* density of $Span\{ E_{D'}(\iota_k\o \iota_{l}(D'\cap M^{\oehc{D}N+2}), |k|+|l|=N\geq 0\}$ in $D'\cap {N_\kappa}^{\oeh{D}2}$ (and even of the intersection of the unit ball in the intersection of the unit ball using moreover Kaplansky density Theorem in the reasoning above). Now, the weak-* continuity proved in  Theorem \ref{finite2}.(2) of $X\mapsto \langle e_D,X\#(Y\#e_D)\rangle$ (and the obvious one of $Y\mapsto \langle e_D,X\#(Y\#e_D)\rangle$ using \eqref{adjointActions}
) concludes.

(3) The complete boudedness statements follow from replacing $M$ by $M_n(M)$ and checking the bounds don't depend on $n$.  For the first statement, using Wick expansion, it suffices to prove boundedness of $\iota_{k_1}\o_D...\o_D\iota_{k_p}: M^{\oehc{D}n}\to {N_\kappa}^{\oehc{D}p}$ for $|k_p|=n_p-1.$ Since the map is defined $D'\cap M^{\oeh{D}n}\to D'\cap {N_\kappa}^{\oeh{D}p}$ by the universal property, it suffices to check the stability of corresponding subspaces. Since ${N_\kappa}$ is involved, we consider $N'=W^*({N_\kappa},S_1',...,S_\kappa')$ and $\iota'_k$ the corresponding evaluation for a word in $\kappa$ letters (with primes), $\iota''_k$ the evaluation for $M$ with a word $k$ in $2\kappa$ letters. If $|l|=p-1$ is a word in $\kappa$ letters with primes , and $k_i$'s are word in $\kappa$ letters without prime as before, we write $l\circ(k_1,...,k_p)=k_1l_1...l_{p-1}k_p$ and one then notices (using some orthogonality in free products) that $\iota_l'\circ (\iota_{k_1}\otimes_D...\o_D\iota_{k_p})=\iota''_{l\circ(k_1,...,k_p)}.$ One easily deduces from this the stated stability, the boundedness following from the very definitions of norms involving more specific evaluation and from $(N',E_M)\simeq ({N_\kappa},E_M)$ since $\kappa$ infinite.

For the statement on conditional expectations, it suffices to prove the boundedness on $E^{\o_D p}:{N_\kappa}^{\o_{ehscD}p}\to M^{\o_{ehscD}p}$ by the symmetry of this map which induces easily the stability of kernels involving the action of the cyclic group.
It suffices to check that  $I_1(\sigma)\circ E^{\o_D p}(X)=E_{W^*(M,S_1',...,S_\kappa')}[I_1(\sigma)(X)]E_{W^*(M,S_1',...,S_\kappa')} $ and $I_2(\sigma)\circ E^{\o_D p}(X)=(E_{W^*(M,S_1',...,S_\kappa')}\o_{D^{op}}E_{W^*(M,S_1',...,S_\kappa')})I_2(\sigma)(X),$ which are easily checked on elementary tensors by freeness with amalgamation over $M$ of ${N_\kappa}$ and $W^*(M,S_1',...,S_\kappa')$.
\end{proof}

\section{Appendix 2: Function spaces}

In this section, we study several function spaces crucial to our constructions.  We start by considering spaces of analytic functions as well as cyclic analytic functions (these can be regarded as enlargements of spaces of non-commutative polynomials and cyclically {symmetrizable} non-commutative polynomials). We then consider analytic functions that depend on expectations, i.e., enlargements of functions of the form $X_{i_1} E(X_{i_2}X_{i_3} E(X_{i_4} X_{i_5}) X_{i_6}) X_{i_7}$, where $E$ is a (formal) conditional expectation.  Finally, we consider analogues of spaces of $C^k$-functions, defined as completions in certain $C^k$ norms. 

 \subsection{Generalized Cyclic non-commutative analytic functions}
\label{CyclicA}

In this section we study the properties of cyclic $B_c\langle X_1,...,X_n:D,R,\C\rangle$ and ordinary $B\langle X_1,...,X_n:D,R,\C\rangle$  generalized analytic functions in $n$ variables with radius of convergence at least $R$, defined in subsection \ref{notation}. Here, as before, $D\subset B$ are finite von Neumann algebras.
We will also consider a variant with several radius of convergence $R,S$, $B\langle X_1,...,X_n:D,R;Y_1,...,Y_m: D,S\rangle,B_c\langle X_1,...,X_n:D,R;Y_1,...,Y_m: D,S\C\rangle.$ We will use it freely later.
If $X=(X_1,\cdots,X_n)$, we also write $B\langle  X:D,S\rangle$ for $B\langle X_1,...,X_n:D,S\rangle$, etc.



We have the following basic result:


\begin{Proposition}\label{analytic} Let $X=(X_1,\dots,X_n)$, $Y=(Y_1,\dots, Y_m)$.  Then
(a) The linear spaces $B_c\langle X:D,R,\C\rangle,B\langle X:D,R,\C\rangle$ (resp. $B\langle X:D,R\rangle$) are Banach $*$-algebras 
as well as operator spaces  (resp. Banach algebra and strong operator $D$ module).  {Moreover, $B\langle X:D,R,\C\rangle$, $B\langle X:D,R\rangle$ are dual operator spaces when seen as (module) duals of (module) $c_0$ direct sums of the fixed preduals of each term of the $\ell^1$ direct sum. We always equip them with this weak-* topology. Finally the algebra generated by $B,X$ is weak-* dense in those spaces.}

(b) For $P\in B\langle X:D,R\rangle, Q_1,...,Q_n\in D'\cap B\langle X:D,S,\C\rangle$, such that $\|Q_i\|\leq R,$  there is a  well defined composition obtained by evaluation at $Q_j$:  $P(Q_1,...,Q_n)\in B\langle  X:D,S\rangle$.  The composition also makes sense on the  cyclic variants and is compatible with  canonical inclusion maps on these function spaces. 

(c) If $B_{\otimes k}\langle X:D,R,C\rangle$  (with $C=\C$ or $C=D$) is the subspace of $B\langle X ,Y:D,R,C\rangle$ consisting of functions linear in each $Y_1,\dots,Y_m$ and so that in each monomial each letter $Y_j$ only appears to the right of all letters $Y_i$ with $i<j$,then there are canonical maps 
 $$.\#(.,...,.):B_{\otimes k}\langle X:D,R\rangle\times \prod_{i=1}^kB_{\otimes l_i}\langle X:D,R,\C\rangle\to B_{\otimes(\sum_il_i)}\langle X:D,R\rangle,$$ $l_i\geq 0$ induced from composition in the $Y$ variables. (Note that by definition $B_{\otimes 0}\langle X:D,R\rangle = B\langle X:D,R\rangle$.)

(d) For any  $N\supset B$ a finite von Neumann algebra, $P\in B\langle  X:D,R\rangle$ defines a map $(D'\cap N)_R^n\to  N,$  by evaluation, with $P(X_1,...,X_n)\in W^*(B,X_1,...,X_n).$ 
\end{Proposition}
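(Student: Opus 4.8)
\textbf{Proof plan for Proposition \ref{analytic}.}

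The plan is to prove the four assertions in order, since each one feeds into the next. For (a), the key point is that $B\langle X:D,R\rangle$ is by construction a module $\ell^1$ direct sum of extended Haagerup tensor powers $R^{|m|}B^{\oeh{D}(|m|+1)}$ indexed by monomials, so the operator space (resp. $D$-module) structure and the weak-$*$ topology (as the module dual of the corresponding $c_0$ direct sum) are inherited termwise. To get the Banach $*$-algebra structure one defines the product of monomials by concatenation, identifying $B^{\oeh{D}(|m|+1)}\oeh{D}B^{\oeh{D}(|m'|+1)}$ with $B^{\oeh{D}(|m|+|m'|+1)}$ via the $\#_1$ composition from Section \ref{notation}; the norm estimate $\|PQ\|\le\|P\|\|Q\|$ follows from the submultiplicativity of the Haagerup tensor norm together with the $R^{|m|}$ weighting (this is essentially \cite[Theorem 39]{dabsetup}, which I would cite). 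The involution is $(a\otimes\cdots)^*$ reversed; it is isometric because $*$ is isometric on each $B^{\oeh{D}k}$. Weak-$*$ density of the polynomial algebra is immediate: a general element is an $\ell^1$-sum of terms, each approximated in norm by a finite partial sum which by \cite{M97}-type standard-form arguments and matrix truncation is a weak-$*$ limit of elements of the algebra generated by $B$ and the $X_i$.

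For (b), given $Q_1,\dots,Q_n\in D'\cap B\langle X:D,S,\C\rangle$ with $\|Q_i\|\le R$, one substitutes: a monomial $b_0X_{i_1}b_1\cdots X_{i_p}b_p$ (i.e. the tensor $b_0\otimes\cdots\otimes b_p$ in the $m$-component) is sent to $b_0 Q_{i_1}b_1\cdots Q_{i_p}b_p$, computed via the iterated $\#$ products inside $B\langle X:D,S\rangle$; here the hypothesis $Q_i\in D'$ is exactly what is needed for the $\#_i$ maps to be defined (recall $A^{\oehc D m}\subset D'\cap A^{\oeh D m}$). The norm of the image of the $m$-component is at most $R^{|m|}\|(\text{coefficient})\|$ times $\prod\|Q_{i_j}\|\le R^{|m|}$-absorbed factors, so the $\ell^1$-sum over $m$ converges and $\|P(Q)\|_{S}\le\|P\|_{R}$ (after tracking the constant); convergence of the defining series is where the radius bound $\|Q_i\|\le R$ is essential. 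Compatibility with the cyclic variants and with the canonical inclusion maps is checked on monomials, where it is a tautology, and extended by density/continuity. Part (c) is the same substitution restricted to the linear-in-$Y$ subspaces, and one only needs to observe that substituting functions with the stated ordering constraint on the $Y$-letters preserves that constraint, so the image lands in $B_{\otimes(\sum l_i)}$; again this reduces to a monomial computation plus a Haagerup-norm estimate.

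For (d), fix $N\supset B$ finite von Neumann. For $P$ a monomial the evaluation $X\mapsto P(X_1,\dots,X_n)$ at a point of $(D'\cap N)_R^n$ is literally a product in $N$ with coefficients sandwiched in, so $P(X)\in W^*(B,X_1,\dots,X_n)$; the bound $\|P(X)\|_N\le R^{|P|}\|\text{coeff}\|\cdot R^{\#\text{letters}}$ and summing over the $\ell^1$ index shows the evaluation of a general $P\in B\langle X:D,R\rangle$ converges in $N$ (uniformly for $\|X_i\|\le R'<R$, which is what gives the $C^0_b$-statements elsewhere) and still lies in the von Neumann algebra $W^*(B,X_1,\dots,X_n)$ since that algebra is weak-$*$ closed and contains all partial sums. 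The one technical subtlety — and the step I expect to be the main obstacle — is making sense of evaluating the coefficients, which live in an \emph{extended} Haagerup tensor power $B^{\oeh{D}(|m|+1)}$ realized via possibly infinite matrices $x_1\in M_{1,I_1}(D),\dots$: one must check that the "matrix times vector" contraction $b_0\otimes X_{i_1}\otimes b_1\otimes\cdots\mapsto$ (convergent sum) is well-defined and bounded in $N$, which follows from the standard-form description of extended Haagerup tensors (\cite{M97}, as used in the It\^o's formula proof above) together with the fact that $\|X_{i_j}\|\le R$ controls the middle factors; I would do the finite-matrix truncation first, get uniform bounds, and pass to the limit. Everything else is routine bookkeeping of Haagerup norms and geometric series in $R$.
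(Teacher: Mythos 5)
Your handling of the non-cyclic spaces is essentially the paper's own route: the Banach-algebra, dual-operator-space and weak-* density statements for $B\langle X:D,R\rangle$ are imported from \cite[Th 39]{dabsetup}, composition and evaluation are reduced to the completely bounded operations on extended Haagerup tensor products together with the universal property of the $\ell^1$ direct sums, and your truncation-of-standard-forms argument for (d) is a workable variant of the paper's shorter appeal to the inclusion $B^{\oeh{D}k}\subset N^{\oeh{D}k}$ and the $\#$ maps of \cite[Th 2]{dabsetup} (note only that truncations converge weak-*, not in norm, so membership in $W^*(B,X_1,\dots,X_n)$ must be argued by weak-* closedness, and these evaluations need not be weak-* continuous in $P$).

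The genuine gap is the cyclic variants, which the statement explicitly covers in (a) and (b) and which are the non-trivial part. The coefficients of $B_c\langle X:D,R,\C\rangle$ lie in the cyclic powers $B^{\oehc{D}(|m|+1)}$, defined as intersection spaces carrying a cyclic group action; concatenating two monomials, or substituting $Q_j$'s into a monomial, a priori only produces an element of $D'\cap B^{\oeh{D}k}$, and one must prove that the result again belongs to the cyclic subspace with the right norm bound. This is exactly Proposition \ref{CyclicPermutations}(1) (stability of $M^{\oehc{D}n}$ under $\star$, $\#_i$ and products), which rests on the two-fold case in Theorems \ref{Finite1}--\ref{Finite3} and the Wick-formula bookkeeping of Lemma \ref{Wick}; your remark that compatibility with the cyclic variants is ``checked on monomials, where it is a tautology'' verifies the algebraic formula but not membership in the intersection space, which is the whole issue. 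Two smaller corrections: the isometric involution exists only on the $\C$-direct sums (the adjoint is not a $D$-bimodule map), which is precisely why $B\langle X:D,R\rangle$ is asserted to be only a Banach algebra and strong $D$-module, so your (a) should not claim a $*$-structure uniformly; and the hypothesis $Q_i\in D'\cap B\langle X:D,S,\C\rangle$ is needed not merely so that $\#$ makes sense, but because the composition $(P,Q_1,\dots,Q_n)\mapsto P(Q)$ is a $D$-$D$ module map only in $P$, so the module $\ell^1$ universal property can be used in $P$ (where one also needs the targets to be strong operator $D$-modules, as they are, being normal dual modules) while the plain operator-space $\ell^1$ universal property must be used in each $Q_i$; your termwise series estimate should be organized around this distinction rather than around the $D'$-condition alone.
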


\begin{proof}
The fact that $B\langle X_1,...,X_n:D,R\rangle$ is a Banach algebra is obtained in \cite[Th 39]{dabsetup}. {The dual operator space structure and weak-* density also come from this result.}  The stability by adjoint only works for direct sums over $\C$ (since adjoint is not a module map and would require the conjugate module structure). The stability by multiplication obtained in Proposition \ref{CyclicPermutations} gives the same result for 
$B_c\langle X_1,...,X_n:D,R,\C\rangle.$
For the stability by composition, the well-known composition map in \cite{dabsetup} Theorem 2  is completely bounded in each of the middle variables and it is easy to see that the compositions built in Proposition \ref{CyclicPermutations} also are (since the intersection norm is obtained from Haagerup norms dealt with in the non-cyclic case). Thus, $\ell^1$ direct sums are dealt with  using universal property, the only key point is that we use operator space (and not module) $\ell^1$ direct sum for composition in $Q_i$ variables since the multilinear map $(P,Q_1,...,Q_n)\to P(Q_1,...,Q_n)$ is a $D-D$ module map only in the variable $P$. In this way, the previous complete contractivity can be used in each variable with the right universal property for each type of $\ell^1$ direct sum. In order to use the universal property in $P$, one also needs to know the source and target modules are strong operator modules over $D$ in the non-cyclic case, and they are since those extended Haagerup products are even normal dual operator modules. 
 The statements for $B_{\otimes k}$ are obvious consequences.
  The evaluation map comes from the  standard inclusion $B^{\oeh{D}n}\subset  N^{\oeh{D}n}$ (see e.g. \cite{dabsetup} Theorem 2.(2)), and from the multiplication maps explained e.g. in \cite{dabsetup} Theorem 2.(4). The reader should note that they can be applied on a larger space than the one in \cite[Th 39]{dabsetup} since in general $D'\cap N\supset E_D'\cap N$. Note the evaluation maps used here may not a have any kind of weak-* continuity, contrary to those of \cite[Th 39]{dabsetup}.
 \end{proof}

\subsubsection{Difference quotient derivations and cyclic derivatives} 
\begin{Proposition} \label{analytic2}Let $S<R$. 
(a) The   iterated free difference quotients  $\partial_{(i_1,...,i_k)}^{k}=(\partial_{X_{i_1}}\o 1^{\o k-1})\circ \ldots \circ \partial_{X_{i_k}}$  define 
completely bounded maps from  $B\langle X:D,R,C\rangle$  to 
$B_{\otimes k}\langle X:D,S,C\rangle$  {(with $C=\C$ or $C=D$, and thus in both cases to $B\langle X:D,S\rangle^{\oeh{D} k+1}$)}.

(b) The space  $B_c\langle X:D,R,\C\rangle$  is mapped by  $\partial_{(i_1,...,i_k)}^{k}$ to $B_{\otimes k c}\langle X:D,R,\C\rangle$. 

(c) For  $d\in B_c\langle X:D,S\C\rangle$, the cyclic gradient  $\mathscr{D}_{X_i,d}$ defines a bounded map from   $B_c\langle X:D,R,\C\rangle$ to $B_c\langle X_1,...,X_n:D,S\C\rangle$ 

(d) The following cyclic derivation relation holds: \begin{equation}\label{CyclicDerivation}\mathscr{D}_{X_i,d}(PQ)=\mathscr{D}_{X_i,Qd}(P)+\mathscr{D}_{X_i,dP}(Q).\end{equation}

(e) The following relations between derivatives and composition hold, denoting $Q=(Q_1,\dots,Q_n)$: 
 \begin{align}\label{CompositionAnalytic}\begin{split}&\partial^{k}_{(j_1,...,j_k)}(P(Q))=\sum_{l=2}^k\sum_{n_1,...,n_l}\sum_{1\leq i_1<...<i_l=k} (\partial_{(n_1,...,n_l)}^{l}(P))(Q)  \# \\ & \qquad\qquad\qquad(\partial_{(j_1,...,j_{i_1})}^{i_1}Q_{n_1},\partial_{(j_{i_1+1},...,j_{i_2})}^{i_2-i_1} Q_{n_2},...,\partial_{(j_{i_{l-1}+1},...,j_{k})}^{k-i_{l-1}} Q_{n_l}).\end{split}\end{align}  and 
\begin{align}\label{CompositionCyclic}\begin{split}&\mathscr{D}_{X_i,d}(P(Q))=\sum_{j=1}^n \mathscr{D}_{X_i,\mathscr{D}_{Q_j,d}(P)(Q)}(Q_j),\end{split}\end{align}
where we wrote $\mathscr{D}_{Q_j,d}(P)(Q)=[\mathscr{D}_{X_j,[d(X_1',...,X_n')}(P)](Q,X)$ considering $P\in B_c\langle X:D,R,\C\rangle\subset B_c\langle X,X':D,R,\C\rangle, d(X')\in B_c\langle X':D,R,\C\rangle\subset B_c\langle X,X':D,R,\C\rangle,$ so that $\mathscr{D}_{X_j,[d(X')}(P)]\in B_c\langle X,X':D,R,\C\rangle$ is well defined and can be evaluated at $X_i=Q_i$, $X'_i=X_i.$ 
\end{Proposition}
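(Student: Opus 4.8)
The overall plan is to prove every assertion first on the norm-dense subalgebra of non-commutative polynomials with finite-rank coefficients, where all the identities are classical free-probabilistic computations, and then to propagate them to the completed spaces by continuity; the only thing this really needs are the boundedness statements, so the crux is (a) and (b). On the summand indexed by a word $m$ of $X$-degree $d$, one checks that $\partial^k_{(i_1,\dots,i_k)}$ acts on the coefficient space $B^{\oeh D(d+1)}$ as the sum, over the at most $\binom{d}{k}$ increasing tuples of positions of $m$ carrying the letters $X_{i_1},\dots,X_{i_k}$, of the canonical coassociativity isomorphism $B^{\oeh D(d+1)}\cong B^{\oeh D(|m_0|+1)}\oeh D\cdots\oeh D B^{\oeh D(|m_k|+1)}$, which is a complete isometry; the resulting monomial in $X_1,\dots,X_n,Y_1,\dots,Y_k$ has the same total degree $d$ (the $k$ deleted $X$'s being replaced by the $Y$'s). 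Summing in the weighted $\ell^1$-norm defining $B_{\otimes k}\langle X:D,S,C\rangle$ gives
$$\|\partial^k_{(i_1,\dots,i_k)}P\|_{B_{\otimes k}\langle X:D,S\rangle}\ \le\ \Big(\sup_{d\ge k}\binom{d}{k}(S/R)^{d-k}\Big)\,\|P\|_{B\langle X:D,R\rangle},$$
a finite constant precisely because $S<R$, and complete boundedness follows since a weighted $\ell^1$-sum of complete isometries is completely bounded (the $D$-module/$\ell^1_D$ variant being identical). For (b) one notes that these isomorphisms leave the coefficients themselves untouched, only relabelling the indexing monomials, and that they restrict to maps of cyclic Haagerup tensor powers by the stability results of Proposition~\ref{CyclicPermutations}(1); hence the cyclic structure is preserved and $\partial^k_{(i_1,\dots,i_k)}$ is completely bounded from $B_c\langle X:D,R,\C\rangle$ into $B_{\otimes k,c}\langle X:D,S,\C\rangle$.

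For (c) I would use the flip identity \eqref{flip}, extended from polynomials to $B_c\langle X:D,R,\C\rangle$ (itself an instance of the density-and-continuity principle): $\mathscr{D}_{X_i,d}(P)=\rho(\partial_i P)\#_1 d$, where $\partial_i$ is bounded into the cyclic Haagerup square of $B_c\langle X:D,S,\C\rangle$ by (a)--(b), $\rho$ is a cyclic permutation, hence bounded on that square by Proposition~\ref{CyclicPermutations}(1), and $(\cdot)\#_1 d$ for fixed $d\in B_c\langle X:D,S,\C\rangle$ is bounded by Proposition~\ref{analytic}(c); composing yields the bounded map claimed. Alternatively the same weighted-$\ell^1$ estimate as in (a) applies verbatim, the function $d$ now being inserted at the seam produced by deleting one $X_i$; the series in the degree of $d$ converges since $\|d\|_{B_c\langle X:D,S\rangle}<\infty$, the one in the degree of $m$ since $S<R$. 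Granted (c), identity (d) is exactly the defining recursion \eqref{CyclicDerivationNotation} evaluated on monomials; since both sides are jointly norm-continuous in $(P,Q)$ — by (c) together with the Banach-algebra structure of Proposition~\ref{analytic}(a) — and finite-rank polynomials are norm-dense, (d) holds on all of $B_c\langle X:D,R,\C\rangle$.

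For (e), the first-order chain rule $\partial_j(P(Q))=\sum_l(\partial_l P)(Q)\#\partial_j Q_l$ is classical on polynomials and extends to $P\in B\langle X:D,R\rangle$ since every operation it involves — composition (Proposition~\ref{analytic}(b)), $\partial_l$ (part (a)), and the $\#$-product — is bounded. Iterating it $k$ times and organizing the resulting sum according to which of the derivations $\partial_{j_1},\dots,\partial_{j_k}$ hit the outer copy of $P$ and which are carried inside the various $Q_{n_s}$ — the Fa\`a-di-Bruno bookkeeping encoded by the partition $1\le i_1<\cdots<i_l=k$ — yields \eqref{CompositionAnalytic}, and the polynomial identity then propagates to the completion by the same continuity argument. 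Finally \eqref{CompositionCyclic} follows either by applying \eqref{flip} to \eqref{CompositionAnalytic} with $k=1$, or directly by induction from the cyclic Leibniz rule (d), again extended by density using (c).

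The step I expect to be the real obstacle is (a)--(b): one must verify carefully that the free difference quotient, defined a priori only as a derivation on the polynomial algebra, is genuinely continuous for the weighted $\ell^1$-norms built on (cyclic) extended Haagerup tensor products — that is, that its action on each coefficient block is literally the completely isometric coassociativity isomorphism and that these isomorphisms respect the cyclic tensor powers constructed in Appendix~1 — and that the combinatorial growth factor $\binom{d}{k}$ is absorbed by the strict inequality $S<R$. Once (a)--(c) are in hand, (d) and (e) are routine density-and-boundedness arguments.
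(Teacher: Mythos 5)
Your treatment of (a)--(c) is essentially the paper's own argument: the operators are defined summand-by-summand on the $\ell^1$ direct sum, their action on a coefficient block $B^{\oeh{D}(|m|+1)}$ is an isometric relabelling coming from associativity of $\oeh{D}$, the polynomially growing combinatorial factor is absorbed by the change of radius $S<R$, the cyclic case rests on the stability results of Proposition \ref{CyclicPermutations}, and the cyclic gradient is obtained as ``rotate, then multiply by $d$'', i.e.\ exactly the paper's $\mathscr{D}_{X_i,d}=m_d\circ\sigma\circ\partial_{X_i}$, which is your flip-identity route. Up to that point the proposal is fine.

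There is, however, a genuine gap in how you propagate (d) and (e). Your plan rests on the claim that non-commutative polynomials with finite-rank coefficients are \emph{norm}-dense in $B_c\langle X:D,R,\C\rangle$; this is false. The coefficient spaces are \emph{extended} Haagerup tensor powers, which strictly contain the norm closure of elementary tensors, and Proposition \ref{analytic}(a) only asserts weak-* density of the algebra generated by $B$ and $X$. What is norm dense is the set of finite sums of monomial components with \emph{arbitrary} coefficients in $B^{\oeh{D}(|m|+1)}$, so to make your continuity argument work you must verify the Leibniz rule and the chain rules on such a component for a general coefficient, not just for an elementary tensor. The paper does precisely this: (d) and (e) are read off ``by construction'' from the associativity of the $\#$-operations of Proposition \ref{CyclicPermutations} applied to the explicit component-wise formulas, with no density step. (Alternatively one could approximate a general coefficient by a bounded net of finite tensors and use weak-* continuity on bounded sets of all operations involved, but then the weak-* continuity of the composition $P\mapsto P(Q)$ must be checked, which you do not address.) A second, smaller omission: part (a) also claims the image lies in $B\langle X:D,S\rangle^{\oeh{D}(k+1)}$; passing from the $\ell^1_D$-direct-sum space $B_{\otimes k}\langle X:D,S\rangle$ to that tensor power is not automatic and is exactly what the Lemma following the paper's proof (the module variant of \cite[Lemma 7]{OP97}, identifying $(E_1\oplus^1_DE_2)\oeh{D}(F_1\oplus^1_DF_2)\supset(E_1\oeh{D}F_1)\oplus^1_D(E_2\oeh{D}F_2)$) provides; your proposal is silent on this step.
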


\begin{proof}
Let us write $n_{X_i}(m)$ for the $X_i$ degree of a monomial $m$, i.e., the number of times the variable $X_i$ occurs in $m$.
To define the free difference quotient and cyclic gradient, we start from the formal differentiation on monomial, add appropriate change of radius of convergences $S<R$ to allow boundedness of the map and then gather the monomials at the $\ell^1$ direct sum level by the universal property:
$$\partial_{X_i} :B\langle X_1,...,X_n:D,R,C\rangle\to\ell^1_C\left(S^{|m|}(B^{\oeh{D}(|m|+1)})^{\oplus^1_C n_{X_i}(m)};m\in M(X_1,...,X_n), |m|\geq 1\right),$$
and similarly in the cyclic cases. 

In order to for the value to belong to the claimed space, we also need to specify a canonical map $I$ with values in  $B_{\otimes 2}\langle X:D,S,C\rangle$. Of course, we want it to send the $j$-th component in the ${\oplus^1_C n_{X_i}(m)}$ direct sum to the component of the monomial $m_{X_i,j}$ which is identical to $m$ but with the $j$-th $X_i$ replaced by $Y_1$. Since there is a bijection between the disjoint union over monomials of  $\{m\}\times [\![1,n_{X_i}(m)]\!]$ and the set of monomials in $X$ and $Y_1$ linear in $Y_1$, it is easy to see that $I$ extend to a complete isomorphism of $\ell^1_C$ direct sums. We still write $\partial_{X_i}$ for $I\circ \partial_{X_i}$.

{For the cyclic gradient, one can then apply a different cyclic permutation on each term of the direct sum and we gather them in a map $\sigma: B_{\otimes 2 c}\langle X:D,S,\C\rangle\to B_{\otimes 2 c}\langle X:D,S,\C\rangle$  and a multiplication map $m_d:B_{\otimes 2 c}\langle X:D,S,\C\rangle\to B_{c}\langle X:D,S,\C\rangle$ (based on composition $\#$ at $d$ on the appropriate term of the tensor product and extending $m_d(P\otimes Q)=PdQ=(P\otimes Q)\#d$) to get the expected cyclic gradient: $\mathscr{D}_{X_i,d}=m_d\sigma\partial_{X_i}.$}

For the free difference quotient, to see there is a canonical map to the range space $B\langle X_1,...,X_n:D,R\rangle\oeh{D}B\langle X_1,...,X_n:D,R\rangle,$ one applies the following Lemma to each term of the direct sum inductively, and then the universal property of $\ell^1$ direct sums to combine them. (We of course apply after mapping $\ell^1_C$ to $\ell^1_D$ direct sums).
The various relations then follow by construction from the various associativity properties of the compositions and multiplication defined in Proposition \ref{CyclicPermutations}. We explain those associated to cyclic gradients. First, we obtain the derivation property of $\partial_{X_i}$ and $\partial_{X_i}(PQ)=\partial_{X_i}(P)Q+P\partial_{X_i}(Q)$ so that :
$$\sigma\partial_{X_i}(PQ)=[\sigma\partial_{X_i}(P)]\#(Q\otimes 1)+ [\sigma\partial_{X_i}(Q)]\#(1\otimes P)$$
and applying $m_d$ one gets \eqref{CyclicDerivation}.
Similarly, one obtains first the relation $$\partial_{X_i}(P(Q))=\sum_{j=1}^n \partial_{X_j}P(Q)\#(\partial_{X_i}Q_j)$$ and then $$\sigma\partial_{X_i}(P(Q))=\sum_{j=1}^n [\sigma(\partial_{X_i}Q_j)]\#[\sigma\partial_{X_j}P(Q)]$$ and applying $m_d$ gives \eqref{CompositionCyclic}.
\end{proof}

The following result is a module extended Haagerup variant of \cite[Lemma 7]{OP97}, the proof is the same using universal property of $\ell^1$ direct sums and \cite[Th 3.9]{M97}. We thus leave the details to the reader.
\begin{Lemma}
Let $E_1,E_2\in {}_DSOM_D,F_1,F_2\in {}_DSOM_D$, let $X=(E_1\oplus^1_D E_2)\oeh{D}(F_1\oplus^1_D F_2)$. Let $S$ be the closure of the subspace obtained by injectivity of Haagerup tensor product $(E_1\oeh{D} F_1)+(E_2\oeh{D} F_2).$ Then we have:
$$S\simeq (E_1\oeh{D} F_1)\oplus^1_D(E_2\oeh{D} F_2),$$
completely isometrically.
\end{Lemma}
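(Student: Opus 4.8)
The statement is a direct-sum-decomposition lemma for module extended Haagerup tensor products, and the strategy is to mirror the classical argument of \cite[Lemma 7]{OP97} in the setting of strong operator $D$-modules, replacing the ordinary Haagerup tensor product by $\oeh{D}$ and ordinary $\ell^1$-sums by the module $\ell^1$-direct sum $\oplus^1_D$. First I would set up notation: write $E = E_1 \oplus^1_D E_2$ and $F = F_1 \oplus^1_D F_2$, with canonical $D$-bimodule inclusions $\iota_i : E_i \to E$ and $\kappa_i : F_i \to F$ and the corresponding module projections $p_i : E \to E_i$, $q_i : F \to F_i$ (these exist and are completely contractive $D$-bimodule maps by the very definition of the module $\ell^1$-direct sum, see \cite[section 2.6]{PisierBook} and \cite{M97}). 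The subspace $S$ is by hypothesis the closure inside $X = E \oeh{D} F$ of the algebraic sum $(\iota_1 \oeh{D} \kappa_1)(E_1 \oeh{D} F_1) + (\iota_2 \oeh{D} \kappa_2)(E_2 \oeh{D} F_2)$, where we use injectivity of $\oeh{D}$ (i.e. complete isometry on subspaces, \cite[Th 3.9]{M97}) so that each $E_i \oeh{D} F_i$ sits completely isometrically inside $X$.

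The key point is to produce a completely contractive $D$-bimodule map going backwards, $\Phi : X \to (E_1 \oeh{D} F_1) \oplus^1_D (E_2 \oeh{D} F_2)$, which restricted to $S$ is inverse to the obvious map $\Psi : (E_1 \oeh{D} F_1) \oplus^1_D (E_2 \oeh{D} F_2) \to S$ induced by $\iota_i \oeh{D} \kappa_i$ and the universal property of the module $\ell^1$-sum. For $\Phi$ one takes $\Phi = (p_1 \oeh{D} q_1) \oplus (p_2 \oeh{D} q_2)$, i.e. the map sending $\xi \in X$ to $\big((p_1 \oeh{D} q_1)(\xi), (p_2 \oeh{D} q_2)(\xi)\big)$; each $p_i \oeh{D} q_i$ is completely contractive by functoriality of $\oeh{D}$ (tensoring completely contractive module maps, \cite{M97}), and one must check that $\Phi$ indeed lands in the $\ell^1$-module direct sum with norm control — this uses that on elementary tensors $\xi = (x_1 \oplus x_2)\otimes_D(y_1\oplus y_2)$ one has $\|(p_1\otimes q_1)(\xi)\| + \|(p_2\otimes q_2)(\xi)\| = \|x_1\otimes_D y_1\| + \|x_2 \otimes_D y_2\| \le \|x_1\| \|y_1\| + \|x_2\|\|y_2\| \le (\|x_1\|+\|x_2\|)(\|y_1\|+\|y_2\|)$ at the level of the $\ell^1$-module norms, and then one passes to general elements via the standard column-row representation of elements of a module Haagerup tensor product and the approximation/universal-property machinery. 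Then $\Psi \circ \Phi = \mathrm{id}_S$ is checked on the dense algebraic sum (where it is immediate from $p_i \iota_i = \mathrm{id}$, $q_i \kappa_i = \mathrm{id}$, $p_i \iota_j = 0$ for $i \neq j$), and $\Phi \circ \Psi = \mathrm{id}$ likewise; both maps being completely contractive, $S$ and $(E_1 \oeh{D} F_1) \oplus^1_D (E_2 \oeh{D} F_2)$ are completely isometrically isomorphic.

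The main obstacle I anticipate is verifying the norm estimate for $\Phi$ — i.e. that the two ``diagonal'' projections $p_1 \oeh{D} q_1$ and $p_2 \oeh{D} q_2$ together define a \emph{contraction} into the $\ell^1$-direct sum, not merely each a contraction separately. This is exactly the cross-term cancellation $\|(p_1\otimes q_1)(\xi)\| + \|(p_2\otimes q_2)(\xi)\| \le \|\xi\|_X$ which is clean on elementary tensors but requires care in general, since a typical element of $E \oeh{D} F$ is an (infinite) sum $\sum_k a_k \otimes_D b_k$ with $a = (a_k)$ a row over $D$ in $E$ and $b = (b_k)$ a column over $D$ in $F$, and applying $p_i$ to the $E$-entries decomposes the row $a$ into the orthogonal blocks landing in $E_1$ versus $E_2$ (and similarly for $b$); one then has to see that the resulting two pieces have $\ell^1$-module norms summing to at most $\|a\|_{\mathrm{row},\oplus^1_D E} \cdot \|b\|_{\mathrm{col},\oplus^1_D F}$, which is where the definition of $\oplus^1_D$ as in \cite[section 2.6]{PisierBook} and the factorization description of the module Haagerup norm must be invoked carefully. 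Everything else (the module-map and weak-$*$ density bookkeeping, the identifications on the dense algebraic sum, and the application of injectivity of $\oeh{D}$) is routine, so I would state those steps briefly and spend the bulk of the argument on this estimate, following \cite[Lemma 7]{OP97} line by line with the module $\ell^1$-sum in place of the scalar one and \cite[Th 3.9]{M97} in place of the Effros--Ruan injectivity statement.
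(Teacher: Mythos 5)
Your overall architecture is consistent with what the paper intends (the paper gives no details at all, saying only that the proof is the same as \cite[Lemma 7]{OP97}, using the universal property of $\ell^1$ direct sums and \cite[Th 3.9]{M97}): the map $\Psi$ out of $(E_1\oeh{D}F_1)\oplus^1_D(E_2\oeh{D}F_2)$ is completely contractive by the universal property, each block sits completely isometrically inside $X$ by injectivity, and the whole lemma reduces to the reverse estimate, i.e.\ to showing that the diagonal compression $\Phi=\bigl((p_1\oeh{D}q_1),(p_2\oeh{D}q_2)\bigr)$ is completely contractive into the $\ell^1$-sum, equivalently that $\Vert u_1+u_2\Vert_X\geq \Vert u_1\Vert+\Vert u_2\Vert$ for $u_i\in E_i\oeh{D}F_i$. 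You correctly identify this as the crux.

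The gap is that your justification of this crux would not work as written. Verifying the inequality on one elementary tensor $(x_1\oplus x_2)\otimes_D(y_1\oplus y_2)$ proves nothing: a general element of $E\oeh{D}F$ is given by a (possibly infinite, weakly convergent) row--column factorization $\xi=\sum_k a_k\otimes_D b_k$ with $a\in M_{1,I}(E)$, $b\in M_{I,1}(F)$, its norm is an \emph{infimum} over such factorizations, and splitting $\xi$ into elementary tensors and using the triangle inequality only yields $\sum_k\Vert x^k\Vert\,\Vert y^k\Vert$, which can vastly exceed the Haagerup norm; moreover contractivity \emph{into} an $\ell^1$-direct sum cannot come from a universal property, since $\oplus^1_D$ only has one for maps out of it. What you actually have to prove is the quantitative fact hiding behind the compression: if $a=(a^1,a^2)$ and $b=(b^1;b^2)$ are the block decompositions of a factorization of $\xi$, then $\Vert a^1\Vert^2+\Vert a^2\Vert^2\leq \Vert a\Vert^2_{M_{1,I}(E_1\oplus^1_DE_2)}$ and its column analogue, after which Cauchy--Schwarz gives $\Vert (p_1\oeh{D}q_1)(\xi)\Vert+\Vert (p_2\oeh{D}q_2)(\xi)\Vert\leq\Vert a\Vert\,\Vert b\Vert$ and one takes the infimum over factorizations. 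Proving this $\ell^2$-additivity is exactly the nontrivial step of \cite[Lemma 7]{OP97} transported to the module setting: one realizes the $\oplus^1_D$-norm by pairs of completely contractive $D$-bimodular maps into a common $B(H)$ and glues near-optimal representations for $a^1$ and $a^2$ so that, from a single vector, their contributions land in orthogonal subspaces, checking that the auxiliary intertwiners can be taken compatible with $D$ and that \cite[Th 3.9]{M97} (together with the standard factorizations of \cite{M05}) allows the extended, weak-$*$ convergent rows and columns. Equivalently one can argue directly on $S$ by gluing near-optimal pairs witnessing $\Vert u_1\Vert$ and $\Vert u_2\Vert$ into a single pair so the two contributions add at one matrix entry. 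Without one of these arguments, the assertion that $\Phi$ maps contractively into the $\ell^1$-sum is unsupported, and that assertion is the entire content of the lemma.
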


{We will also need a more subtle evaluation result for $B_{\otimes k c}\langle X_1,...,X_n:D,R,\C\rangle$. which require  that our variables are nice functions of semi-circular variables.

We write $A_{R,UltraApp}^n$  for the set of $X_1,...,X_n\in A, X_i=X_i^*, [X_i,D]=0, \|X_i\|\leq R$ and such that $B,X_1,...,X_n$ is the limit in $E_D$-law (for the $*$-strong convergence of $D$) of variables in $B_c\langle X_1,...X_m:D,2,\C\rangle(S_1,...,S_m)$ with $S_i$ a family of semicircular variables over $D$, that is of elements in the set of analytic functions evaluated in $S_{1},\ldots,S_{m}$. Here $m$ is some large enough fixed integer number.

\begin{Proposition}\label{TensorCyclicEvaluation}
For any $(X_1,...,X_n)\in A_{R,UltraApp}^n,$ 
if $\phi_j:B^{\oeh{D}n_j}\to M$, $j=1,\dots,p$  are multiplication maps { $\phi_j(Z) = Z\# (X_{1+\sum_{l=1}^{j-1}n_l},\dots,X_{\sum_{l=1}^j n_l})$}, 
 $M=W^*(B,X_1,...,X_n)$, then $\phi_1\o_D...\o_D\phi_p: B^{\oehc{D}n}\to M^{\oehc{D}p}$  is a completely bounded map of norm less than $R^{n-p}$ with $n=\sum n_i$. 
As a consequence, any $(X_1,...,X_n)\in A_{R,UltraApp}^n,$  induces an evaluation map $B_{\otimes k c}\langle X_1,...,X_n:D,R,\C\rangle\to M^{\oehc{D}(k+1)}.$
\end{Proposition}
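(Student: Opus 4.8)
\textbf{Proof plan for Proposition \ref{TensorCyclicEvaluation}.}

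The plan is to reduce the statement to the functoriality results already available for cyclic Haagerup tensor products over a fixed free product algebra (Proposition \ref{CyclicPermutations}(3)) together with a limiting argument over the approximating net in the definition of $A^n_{R,UltraApp}$. First I would treat the model case: suppose $B, X_1,\dots,X_n$ are literally given by analytic functions of free semicircular variables, i.e. $X_i = Q_i(S_1,\dots,S_m)$ for $Q_i \in B_c\langle S_1,\dots,S_m : D, R', \mathbb C\rangle$ with $\|Q_i\| \le R$, where $S_1,\dots,S_m$ are semicircular over $D$ and $N_\kappa = B *_D (D \otimes W^*(S_1,\dots,S_m))$. In this situation each multiplication map $\phi_j : B^{\oeh{D} n_j} \to M \subset N_\kappa$ factors through the canonical maps of Proposition \ref{CyclicPermutations}(3): first the (completely isometric, by that Proposition) inclusion $B^{\oehc{D} n} \hookrightarrow N_\kappa^{\oehc{D} n}$ obtained from the $\iota$-maps to the semicircular variables, then the completely bounded maps built from composing with the $Q_i$'s (these are exactly the multiplication maps $.\#(.,\dots,.)$ and composition maps of Proposition \ref{OnePermutation}, which are shown there to be completely bounded with the stated norm controlled by powers of $R$), and finally the conditional expectation $E^{\oehc{D} p}_{W^*(B,X)} : N_\kappa^{\oehc{D} p} \to W^*(B,X)^{\oehc{D} p}$, which is completely contractive by Proposition \ref{CyclicPermutations}(3). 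Composing these three completely bounded maps gives $\phi_1 \o_D \cdots \o_D \phi_p : B^{\oehc{D} n} \to M^{\oehc{D} p}$ with norm at most $R^{n-p}$, the power $R^{n-p}$ coming from the $n-p$ ``extra'' letters $X_i$ that get inserted and each contributing a factor at most $R = \|X_i\|$ in the multiplication maps.

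Second I would remove the hypothesis that $X_i$ is \emph{exactly} such a function, using that $B, X_1,\dots,X_n$ is only a limit in $E_D$-law of such tuples. Here I would fix a net $(X^{(\lambda)})$ of tuples in the model class converging to $X$ in $E_D$-law for the $*$-strong topology on $D$, with all $\|X_i^{(\lambda)}\| \le R$; by the first step each $X^{(\lambda)}$ induces an evaluation $B^{\oehc{D} n} \to W^*(B,X^{(\lambda)})^{\oehc{D} p}$ of norm $\le R^{n-p}$. On elementary tensors $b_0 \o \cdots \o b_n$ the image of $\phi_1^{(\lambda)} \o_D \cdots \o_D \phi_p^{(\lambda)}$ is an explicit monomial-type element, and convergence in $E_D$-law ensures that all the inner products $\langle e_D, (\cdots)\# e_D\rangle$ computed in $W^*(B,X^{(\lambda)})^{\oehc{D} p}$ (which is where the trace lives, by the traciality in Proposition \ref{CyclicPermutations}(2) / Theorem \ref{Finite3}(3)) converge. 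Combined with the uniform norm bound $R^{n-p}$ and the fact that $M^{\oehc{D} p}$ is realized inside $D' \cap L^2(M)^{\o_D p}$ as an interpolation/intersection space with a faithful trace, one gets that the maps converge to a well-defined completely bounded map $\phi_1 \o_D \cdots \o_D \phi_p : B^{\oehc{D} n} \to M^{\oehc{D} p}$ with the same norm bound. For the ``consequence'' about evaluation of $B_{\otimes k c}\langle X_1,\dots,X_n:D,R,\mathbb C\rangle$, I would observe that an element of this space is (by definition, via Proposition \ref{analytic2}(b)) a convergent $\ell^1$-series of monomials of the form (coefficient in $B^{\oehc{D}(k+1)}$) composed with insertions of the $X_i$'s in the $X$-slots while the $Y$-slots stay free; applying the multiplication-map statement slot by slot and summing using the $\ell^1$ universal property gives the evaluation map $B_{\otimes k c}\langle X_1,\dots,X_n:D,R,\mathbb C\rangle \to M^{\oehc{D}(k+1)}$.

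The main obstacle I expect is the second step: making precise the sense in which the evaluation is ``continuous in $E_D$-law'' so that the limiting procedure actually lands in $M^{\oehc{D} p}$ rather than merely in $D' \cap L^2(M)^{\o_D p}$ or in some weak closure. The subtlety is that $M^{\oehc{D} p}$ is defined as an \emph{intersection} of images of several inclusions $I_i(\sigma)$ (Proposition \ref{CyclicPermutations}), so one must check that each of the finitely many ``compatibility'' conditions (the kernels of the $J(\sigma_1,\sigma_2) - J(\mathrm{Id}, \sigma_1\sigma_2)$) is preserved in the limit; this should follow because each such condition is tested by pairing against vectors of the form $E_{D'}(y_1 \o \cdots \o y_{N+2}) \# e_D$ and these pairings are continuous in $E_D$-law on norm-bounded nets, but organizing this cleanly — in particular checking the net of evaluation maps is Cauchy in completely bounded norm rather than just pointwise weak-$*$ — is where the real work lies. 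A secondary technical point is keeping the norm bound uniformly $R^{n-p}$ along the net, which requires that the approximants $X^{(\lambda)}$ genuinely have $\|X_i^{(\lambda)}\| \le R$ (not just asymptotically); this is part of the definition of $A^n_{R,UltraApp}$ but should be invoked explicitly.
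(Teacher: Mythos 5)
Your step 2 contains the real gap, and it is exactly the one you flag: you try to obtain $\phi_1\o_D\cdots\o_D\phi_p$ as a limit of the evaluation maps attached to approximating tuples $X^{(\lambda)}$, and you then need the net of maps to converge (you even ask for Cauchyness in completely bounded norm). That convergence is not available — the approximants live in $W^*(B,X^{(\lambda)})\subset N_\kappa$ while $X$ lives in $A$, so the maps do not even take values in a common space, and convergence of the $D$-valued pairings $\langle e_D,\cdot\,\#e_D\rangle$ only controls scalar quantities, not a limit of cb maps — and, more to the point, it is not needed. The paper's proof never takes a limit of maps: for the actual limiting variables $X$, each $\phi_j$ is a completely bounded multiplication map $B^{\oeh{D}n_j}\to M$ (with norm $\le R^{n_j-1}$), so $\phi_1\o_D\cdots\o_D\phi_p:B^{\oeh{D}n}\to M^{\oeh{D}p}$ exists with the bound $R^{n-p}$ purely by functoriality of the extended Haagerup tensor product, with no approximation at all. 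The only thing the $A^n_{R,UltraApp}$ hypothesis is used for is to check that this map carries the cyclic subspace $B^{\oehc{D}n}$ into $M^{\oehc{D}p}$, i.e.\ that the finitely many compatibility identities $(\iota_{k}\o_D\iota_l)(\phi_{1}\o_D\cdots\o_D\phi_p(U))\#X=X\#(\iota_{l}\o_D\iota_k)(\phi_{\sigma^{-1}(1)}\o_D\cdots\o_D\phi_{\sigma^{-1}(p)}(J(\hat\sigma)U))$ hold. Tested against the dense family of vectors of Lemma \ref{basic2}(2) (elements of $Ce_DC$ with $C$ spanned by analytic words in the $X$'s and auxiliary semicirculars), each such identity becomes an equality in $L^1(D)$ of expressions determined by the $E_D$-joint law of $(B,X)$ together with free semicirculars; it holds in your model case (step 1), and $*$-strong convergence in $E_D$-law transports it to the general case. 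So the approximation enters only to verify identities, never to construct or bound the map, and the "Cauchy in cb norm" problem you identify simply does not arise in the correct argument.

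Two smaller points on your step 1. First, your use of the conditional expectation $E^{\o_D p}$ from Proposition \ref{CyclicPermutations}(3) is not quite licit as stated: that statement concerns the canonical expectation of the free product onto its base algebra (killing the semicirculars), whereas you need to land in $W^*(B,X)$, which is an intermediate subalgebra of $N_1=W^*(B,S_1,\dots,S_m)$ over which $N_1$ is not presented as a free product with amalgamation; the passage from values in $N_1^{\oehc{D}p}$ to values in $M^{\oehc{D}p}$ has to be argued differently (the element already has all entries in $M$, and one checks the cyclic compatibility relative to $M$ directly). Second, in the "consequence" about $B_{\otimes k c}\langle X_1,\dots,X_n:D,R,\C\rangle$, your appeal to the $\ell^1$ universal property is the right mechanism, but note that summability of the series is exactly where the quantitative bound $R^{n-p}$ (against the weights $R^{|m|}$ in the $\ell^1$ sum) is used, so that bound must be kept uniform, as you do correctly require of the approximants.
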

\begin{proof}
Assuming first $X_i\in B_c\langle X_1,...X_m:D,2,\C\rangle(S_1,...,S_m)$ the result is obvious in a similar way as for composition of corresponding analytic functions and from the evaluation map to $(S_1,...,S_m)$ in Proposition \ref{CyclicPermutations}.(3). 
At first, the result is valued in $N_1^{\oehc{D}p}$ with $N_1=W^*(B,S_1,...,S_m)$ but one easily deduces the more restricted space of value.

We now consider the more general case {with $$X_i\in C^{*,+}(B,S_1,...,S_m):= C^*(ev_{S_1,...,S_m}(B\langle X_1,...,X_m:D,2,\C\rangle)),$$ in the $C^*$ algebra generated in  $W^*(B,S_1,...,S_m)$ by evaluations of our analytic functions at semicircular variables.}  There is a map $\phi_1\o_D...\o_D\phi_p$ on the extended Haagerup tensor product by functoriality and nothing is required to get a map on the intersection space $\phi_1\o_D...\o_D\phi_p: B^{\o_{ehscD}n}\to M^{\o_{ehscD}p}.$
To get the stated map and even first a map $\phi_1\o_D...\o_D\phi_p: B^{\o_{ehScD}n}\to M^{\o_{ehScD}p}$, we have to check various stability properties of kernels appearing in their definition as an intersection space. From the formula below describing the commutation of the cyclic action and various tensor products of the maps $\phi$, this stability of kernels will become obvious. More precisely, let $U\in B^{\o_{ehscD}n}$ for $\sigma\in \mathfrak{C}_{p}$, we write $\hat{\sigma}$ the induced permutations on blocks  and $V=J(\hat{\sigma}) (U)$ and $n=kl$, $\sigma(1)=|l|+2$, $|l|=p-2-|k|$. We want to show for any $X\in \langle N,e_D\rangle\cap L^1(\langle N,e_D\rangle)$ with $N=W^*(M,S_1,...,S_\kappa):$
$$(\iota_{k}\o_D\iota_l)(\phi_{1}\o_D...\o_D\phi_p(U))\#X=X\#(\iota_{l}\o_D\iota_k)((\phi_{\sigma^{-1}(1)}\o_D...\o_D\phi_{\sigma^{-1}(p)}(V))))).$$
For, it suffices to evaluate them to $Y,Z\in [B\langle X_1,...,X_n:D,R,\C\rangle(X_1,...,X_n)] \langle S_1,...,S_\kappa\rangle$ $=:C \subset L^2(N)$ as in Lemma \ref{basic2}.(2) and to take $X\in Ce_DC$, and see equality in $L^1(D)$. The statement for $X_1,...,X_n$ analytic as above gives exactly this in this case. In the evaluated form, the convergence in $E_D$-law is clearly enough to get the general case from this one. The evaluation map is then obtained by the universal property of $\ell^1$ direct sums. It crucially uses the bound on the norm of the completely bounded map above $R^{n-p}$ that easily follows from the bounds on canonical evaluations and the sup norm on $M^{\o_{ehScD}p},M^{\oehc{D}p}.$
\end{proof}

}

\subsection{Analytic functions with expectations}
\label{CyclicE}

For $X=(X_1,\dots,X_n)$, the spaces $B_{c}\{ X:E_D,R,\C\}$, $B\{ X:E_D,R\}$ have been defined in section \ref{notation}. To prove various results on them, we need  some formal notation to explain several computations combinatorially.
First, since those spaces are defined as $\ell^1$ direct sums over pairs of monomials $m$ and non-crossing partitions $\sigma\in NC_2(2k)$ (indexing the parenthesizing where conditional expectations are inserted), we can write $\pi_{m,\sigma}$ for the projection on the corresponding component of the $\ell^1$ direct sum, and $\epsilon_{m,\sigma}$ for the corresponding injection.

We write $E_D$ for the formal conditional expectation characterized for $P\in$ $ B_{c,k}\{ X_1,...,X_n:E_D,R\}$ by $E_D(P)\in B_{c,k+1}\{ X_1,...,X_n:E_D,R\}$ and such that the only-nonzero projections $\pi$ are of the form $$\pi_{YmY,\hat{\sigma}}(E_D(P))=\pi_{m,\sigma}(P)$$ for $\hat{\sigma}=\{\{1,2i+2\}\}\cup (\sigma+1)$ where the blocks of $\sigma+1$ are $\{a+1,b+1\}$ if $\{a,b\}$ are the blocks of $\sigma$. All other components of $\pi_{m',\sigma'}(E_D(P))$ are $0$.  
 $E_D$ is obviously $D-D$ bimodular and completely bounded.

The scalar case $D=\mathbb{C}$  was considered in \cite{Ceb13}; in this case we note the density of 
$\C\{ X_1,...,X_n\}\supset Span\{P_0tr(P_1)...tr(P_k), P_i\in \C\langle X_1,...,X_n\rangle\}.$ 

For $P\in \C\{ X_1,...,X_n\}$ and a linear form $\tau \in (\C\langle X_1,...,X_n\rangle)^*$ there is a canonical element $P(\tau)\in \C\langle X_1,...,X_n\rangle$ defined by extending linearly $[P_0tr(P_1)...tr(P_k)](\tau)= P_0\tau(P_1)...\tau(P_k)$. In this way, one embeds $$\C\{ X_1,...,X_n\}\hookrightarrow C^0( (\C\langle X_1,...,X_n\rangle)^*,\C\langle X_1,...,X_n\rangle)$$ (where the continuity is coefficientwise on the range and for the weak-* topology induced by $\C\langle X_1,...,X_n\rangle$ on the source).

Similarly, for $P\in B\{ X_1,...,X_n;E_D,R\}$ and a unital $D$ bimodular 
 completely bounded linear map $E\in UCB_{D-D}(B\langle X_1,...,X_n:D,R\rangle,D)$, there is a canonical element $P(E)\in  B\langle X_1,...,X_n:D,R\rangle$.
Since $P\mapsto P(E)$ will be completely bounded $D-D$ bimodular on monomials, by the universal property of $\ell^1$ direct sums, it suffices to define it for monomials 
$P=\pi_{m,\sigma}(P),$  $\sigma\in NC_2(2k)$. It is defined by induction on $k$. Write $\sigma_-\in NC_2(2(k-1))$ the unique pair partition obtained by removing from $\sigma$ the  pair $\{i,i+1\}$ of smallest index $i$ and re-indexing by the unique increasing bijection $[\![1,2k]\!]-\{i,i+1\}\to [\![1,2(k-1)]\!].$ Let  also $j(i)$ the index in the word $m$ of the i-th $Y$ (this being $1$ if $i=1$ and $m$ starts by $Y$).
Then $P=\pi_{m,\sigma}(P)\in B^{\oehc{D}(|m|+1)}$, then  $$P(E)=[\epsilon_{m',\sigma_{-}}[1^{\o j(i)}\o E\circ \epsilon_{m''}\o 1^{\o |m|-j(i+1)+1}](P)](E),$$ with $m'=m_1...m_{j(i)-1}m_{j(i+1)+1}...m_{|m|}$, $m''=m_{j(i)+1}...m_{j(i+1)-1}$. Indeed the letters between the index $j(i)$ and $j(i+1)$ in $m''$ are only $X$'s and we can thus apply $E$ identifying $B^{\oeh{D}j(i+1)-j(i)}$ via $
\epsilon_{m''}$ with the corresponding subspace of $B\langle X_1,...,X_n:D,R\rangle$. Since $E$ is $D-D$ bimodular $[\epsilon_{m',\sigma_{-}}[1^{\o j(i)}\o E\circ \epsilon_{m''}\o 1^{\o |m|-j(i+1)+1}]]$ is well defined and we can apply $E$ inductively.

In this way, we have a canonical map  $$B\{ X_1,...,X_n:E_D,R\}\to
C^0( UCB_{D-D}(B\langle X_1,...,X_n:D,R\rangle,D),B\langle X_1,...,X_n:D,R\rangle).$$
where the topology on $UCB_{D-D}(B\langle X_1,...,X_n:D,R\rangle,D)$ is the topology of pointwise normwise convergence of $id_{M_I}\o E$ on all $M_I(B\langle X_1,...,X_n:D,R\rangle)$ {(for $I$ a cardinal smaller than the cardinal of $B$).}

To state the algebraic and differential properties we will use, we also need the following variant (for $C=\C$ or $C=D$):
\begin{align*}&B_{op (l)}\{ X_1,...,X_n:E_D,R,C\}\\&:=\ell^1_{ C}\left(R^{|m|_X}B^{\oeh{D}(|m|+1)};m\in M_{2k}'(X_1,...,X_n;Z_1,...,Z_l;Y),\pi\in NC_{2}(2k),k\geq 0\right),\end{align*}
where $M_{2k}'(X_1,...,X_n;Z_1,...,Z_l;Y)$ is the set of monomials linear in each $Z_i$, without constraint on the order of appearance of $Z_1,...,Z_n$ 
 and  of order $2k$ in $Y$ 
The blocks in $Z_i$ are made to evaluate a variable in $D'\cap N$. {We call $B_{\otimes (l)}\{ X_1,...,X_n:E_D,R,C\}$ the subspace involving monomials with $Z_k$ ordered in increasing order of $k$ and with all variables $Z_i$ having an even number of $Y$ before them  and with their pair partitions unions of those restricted to the intervals between them (thus $Z_i$'s are interpreted as not being inside conditional expectations.) We write $B_{\otimes (l)c}\{ X_1,...,X_n:E_D,R,C\}$ the cyclic variant generalizing $B_{\otimes (l)c}\langle X_1,...,X_n:E_D,R,C\rangle.$ }

The following result is obvious:

\begin{Proposition}\label{ExpectationAnalytic} Let $X=(X_1,\dots,X_n)$. 
(a) The spaces 
$B_{c}\{ X:E_D,R,{\C}\}$, $B\{ X:E_D,R,\C\}$ are Banach *-algebras for usual adjoint and multiplication, extending the ones of $B\langle X:D,R,\C\rangle$.   $B\{ X:E_D,R,\C\}$ is a dual Banach space and the smallest algebra generated by $B, X$ and stable by $E_D$ is weak-* dense in it.

(b) $B\{ X:E_D,R\}$ is a Banach algebra. $B\{ X:E_D,R\}$ is a dual Banach space and the smallest algebra generated by $B, X$ and stable by $E_D$ is weak-* dense in it.

(c) There is a composition rule, for $P\in B\{ X:E_D,R\}, Q_1,...,Q_n\in D'\cap B\{ X:E_D,S,\C\}$, such that $\|Q_i\|\leq R,$  then there is a composition $P(Q_1,...,Q_n) \in B\{ X:E_D,S\}$ extending the composition on $B\langle  X:D,S\rangle$.  There are similar cyclic variants compatible with canonical maps and with the evaluation map below.

(d) For finite von Neumann algebras $N\supset B$, $P\in B\{ X:E_D,R\}$ defines a map $(D'\cap N)_R^n\to  N$  by evaluation, with $P(X):=P(E_{X,D})(X)\in W^*(B,X),$ thus extending the value on $B\langle X:D,R\rangle$ and where $E_{X,D}\in UCB_{D-D}(B\langle X:D,R\rangle,D)$ comes from the conditional expectation.

(e) 
Similarly there is a canonical evaluation $ev_{op}(P,E_{X,D},X)\in CB((D'\cap N)^{\otimes_h l},N)$, $P\in  B_{op l}\{ X:E_D,R\}$, where $N$ are evaluated in the  $Z_i$'s and then each pair of $Y$'s is replaced by a conditional expectation.

(f)
There are also canonical continuous compositions (in the $Z_i$ variables) with commuting with evaluation {(with variants for $B_{\otimes (l)c}\{ X:E_D,R,C\},B_{\otimes (l)}\{ X:E_D,R,C\}$)}:  $$.\circ(.,...,.):  B_{op (k)}\{ X:E_D,R\}\times \prod_{i=1}^k B_{op (l_i)}\{ X:E_D,R,\C\}\to B_{op(\sum_il_i)}\{ X:E_D,R\}.$$ 

(g)
{ Finally for $(X_1,...,X_n)\in A_{R,UltraApp}^n$ we in particular have an evaluation map $B_{\otimes (l)c}\{ X:E_D,R,C\}\to M^{\oehc{D}(l+1)}$ with $M=W^*(B,X_1,...,X_n)$ as in Proposition \ref{TensorCyclicEvaluation}.}
\end{Proposition}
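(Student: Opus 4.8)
The plan is to establish Proposition~\ref{ExpectationAnalytic} by reducing every statement to the corresponding facts about the generalized cyclic analytic functions without expectations (Proposition~\ref{analytic}, Proposition~\ref{analytic2} and Proposition~\ref{CyclicPermutations}), since the spaces with expectations are built as $\ell^1$ direct sums of module extended Haagerup tensor powers indexed by pairs $(m,\sigma)$ of a monomial and a non-crossing pair partition. The recurring mechanism is: define the operation combinatorially on the generating monomials $\pi_{m,\sigma}(P)$ (where it is just a composition of the already-available $\#_i$, multiplications, cyclic permutations $J(\sigma)$, and the formal insertion $E_D$), check it is completely bounded and $D$-bimodular (or $\C$-linear in the non-module case) with the right norm bound on each summand, and then invoke the universal property of $\ell^1$ (resp. $\ell^1_D$) direct sums to extend. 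I would carry out the points roughly in the order (a), (b), (c), (e), (f), (d), (g), because (d) and (g) are most naturally phrased once the composition rule (c) and the $op$-evaluation (e) are in place.

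First I would treat (a) and (b): the $\ast$-algebra (resp. algebra) structure. Multiplication on a pair of basis monomials is, after concatenation of words and the obvious union of the two non-crossing pair partitions, a map into a single summand $B^{\oeh{D}(|mm'|+1)}$ given by the multiplication maps of Proposition~\ref{CyclicPermutations}.(1) (or the module $\#$ of Proposition~\ref{analytic}); it is completely bounded with the radii behaving multiplicatively, so the universal property of $\ell^1$ (over $\C$ for the $\ast$-algebra, over $D$ for $B\{X:E_D,R\}$) gives a Banach algebra, and associativity is inherited term by term. The adjoint is handled as in Proposition~\ref{analytic}(a): it reverses each word and conjugates tensor factors, which is an isometry on $\ell^1_\C$ sums but not $D$-bimodular, hence only available in the $\C$-variant. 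The dual Banach space structure is the module (resp. ordinary) dual of the $c_0$-direct sum of the fixed preduals of each $B^{\oeh{D}(|m|+1)}$, exactly as in Proposition~\ref{analytic}(a), and weak-$\ast$ density of the algebra generated by $B$, $X$ and closed under $E_D$ follows because finite sums of basis monomials already exhaust a weak-$\ast$ dense subspace of each summand and the sum is $\ell^1$.

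Next, (c): the composition $P\mapsto P(Q_1,\dots,Q_n)$. On a basis monomial of $B\{X:E_D,R\}$ I would substitute each $X_i$ by $Q_i\in D'\cap B\{X:E_D,S,\C\}$ using the module extended Haagerup composition maps, noting $\|Q_i\|\le R$ guarantees we land in radius $S$; the only subtlety is that $E_D$ must be left intact and that the composition map $(P,Q_1,\dots,Q_n)\mapsto P(Q)$ is $D$-bimodular only in $P$, so one uses $\ell^1_\C$ sums for the $Q_i$-slots and $\ell^1_D$ (resp. $\ell^1_\C$) in the $P$-slot, precisely the bookkeeping already done in the proof of Proposition~\ref{analytic}(b). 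Compatibility with the cyclic variants and with the evaluation maps is then automatic since all constituent operations were shown compatible. For (e) I would do the same for $B_{op\,l}\{X:E_D,R\}$: the $Z_i$-blocks are linear slots to be plugged with elements of $D'\cap N$ (giving a completely bounded multilinear map into $N$), and each pair of $Y$'s is replaced by the conditional expectation on $N$, so $ev_{op}$ is the corresponding iterated composition of module multiplications and $E$'s — this is the obvious definition, exactly parallel to the construction of $P(E)$ given in the body. (f) is then purely formal associativity of these $Z$-slot compositions, again by the universal property.

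For (d) I would define $P(X):=P(E_{X,D})(X)$, where $E_{X,D}\in UCB_{D-D}(B\langle X:D,R\rangle,D)$ is the completely bounded map $Q\mapsto E_D(Q(X))$ of section~\ref{notation}: first apply the canonical map $P\mapsto P(E_{X,D})\in B\langle X:D,R\rangle$ (the inductive construction over the number of conditional expectations recalled just before this proposition, which is continuous because each insertion is a completely bounded $D$-bimodular map), then evaluate in $B\langle X:D,R\rangle$ via Proposition~\ref{analytic}(d), landing in $W^*(B,X)$; on $B\langle X:D,R\rangle$ itself (no $Y$'s) this reduces to the old evaluation. Finally (g) is the expectation-with-cyclic-tensor-target refinement: for $(X_1,\dots,X_n)\in A_{R,UltraApp}^n$ one combines the evaluation map of Proposition~\ref{TensorCyclicEvaluation} on the $\ast$-slots of a monomial in $B_{\otimes(l)c}\{X:E_D,R,C\}$ with the insertion of $E_{X,D}$ in the $Y$-pairs, exactly as $P(E)$ was built, the $Z$/$\otimes$-slots now assembling into a $(l+1)$-fold cyclic tensor $M^{\oehc{D}(l+1)}$; the norm bound from Proposition~\ref{TensorCyclicEvaluation} ($R^{\#X-p}$ per slot) lets the $\ell^1$ universal property close the argument. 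The main obstacle I anticipate is bookkeeping the interaction between the non-crossing pair partition structure (which $Y$-pairs get contracted by $E$) and the cyclic permutation actions $J(\sigma)$ in (g) and in the cyclic variants of (c), (f) — one must check that removing an innermost $\{i,i+1\}$-pair and applying $E$ commutes with the chosen cyclic reindexing and with the module structure, which is where the careful definition of $P(E)$ by induction on the depth of nesting, and Proposition~\ref{CyclicPermutations}.(3)'s partial functoriality, earn their keep; everything else is routine application of the universal property and of previously established complete boundedness of the constituent maps.
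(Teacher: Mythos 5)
Your proposal is correct and is essentially the paper's own argument: the paper declares Proposition \ref{ExpectationAnalytic} obvious precisely because every item reduces, exactly as you describe, to defining the operation on monomial summands via the previously established Haagerup/cyclic tensor machinery (Propositions \ref{analytic}, \ref{CyclicPermutations}, \ref{TensorCyclicEvaluation}) together with the inductive construction of $P(E)$, and then extending by the universal property of the $\ell^1$ (resp.\ $\ell^1_D$) direct sums with the dual/weak-$*$ structure inherited termwise. Your flagged bookkeeping point about non-crossing pair partitions interacting with compositions and cyclic actions is the right one, and it is handled by the same combinatorial insertions the paper later makes explicit in the proof of Proposition \ref{CalcDiffAnalytic}.
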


\subsubsection{Various derivatives of analytic functions with expectations} 
\begin{Proposition}\label{CalcDiffAnalytic} For  $C=\C$ or $C=D$ and any $S<R$,
(a) The free difference quotient (FDQ) derivations give rise to bounded  maps \begin{align*}\partial_i:B\{ X:E_D,R,C\}&{\to B_{\otimes (1)}\{ X:E_D,S,C\}}\\&\to B\{ X:E_D,S\}\oeh{D}B\{ X:E_D,S\}\end{align*} extending the free difference quotient from $B\langle X:D,R,C\rangle$ and {determined by weak-* continuity of the first line and} by the requirement that the composition with the formal $E_D$ is zero: $\partial_i E_D=0$. 

(b) The iterated FDQ $\partial_{(i_1,...,i_k)}^{k}:B_{c}\{X:D,R,\C\}\to B_{\otimes (k) c}\{ X:D,S,\C\}$ and $\partial_{(i_1,...,i_k)}^{k}:B\{ X:D,R,\C\}\to B_{\otimes k }\{ X:D,S,\C\}$ are also bounded maps.

(c) Let $d:B\{ X:E_D,R,C\} \to B_{op}\{ X:E_D,S,C\}^n$ and the operator variant $d:B_{op (l)}\{ X:E_D,R,C\} \to B_{op (l+1)}\{ X:E_D,R,C\}^n$  
be the {\em formal differentiation}, i.e.  a derivation {uniquely determined among weak-* continuous maps} by  $$d(B\langle X,Z_1,...Z_l:D,R\rangle)=0$$ and for any monomial $P\in B_{op (l)}\{ X:E_D,R\}$ (possibly $l=0$): $$dE_D(P)=E_D(d_XP),\ \ \  d_XP:=dP+(\partial_i(P)\#Z_{l+1}))_{i}$$
and $d_{X(i_1,...,i_l)}^l=d_{Xi_l}...d_{Xi_1}:B\{ X:E_D,R,C\} \to B_{op (l)}\{ X:E_D,S,C\}.$ Then $d$ and $d^l$ 
 are bounded maps.

(d) We define the cyclic gradients on $B_c\{ X:E_D,R,\C\}\to B_c\{ X:E_D,S,\C\}$ for $d\in B_c\{ X:E_D,S,\C\},S<R$ 
   as a natural continuous extension of the cyclic gradient  on $B_c\langle X:D,R,\C\rangle$,   satisfying $\mathscr{D}_{X_i,d}(X_j)=d1_{i=j}$, \eqref{CyclicDerivation} 	and for $P, Q$ monomials and for $d,P$ monomials $$\mathscr{D}_{i,d}(E_D(P))=\mathscr{D}_{i,E_D(d)}(P).$$  

(e) The following relation with compositions \eqref{CompositionAnalytic}, \eqref{CompositionCyclic} holds:
\begin{align}\label{CompositionAnalExpectation}\begin{split}&d^{k}_{X(j_1,...,j_k)}(P(Q))=\sum_{l={1}}^k\sum_{n_1,...,n_l}\sum_{1\leq i_1<...<i_l=k}\sum_{l_{1,1}=1,l_{m-1,1}<l_{m,1}<...<l_{m,i_m-i_{m-1}}\leq k} \\ & (d_{X(n_1,...,n_l)}^{l}(P))(Q)\circ
(d_{X(j_{l_{1,1}},...,j_{l_{1,i_1})})}^{i_1}Q_{n_1},d^{i_2-i_1}_{X(j_{l_{2,1}},...,j_{l_{2,i_2-i_1})}} Q_{n_2},...,d_{X(j_{l_{l,1}},...,j_{l_{l,k-i_{l-1}})}}^{k-i_{l-1}} Q_{n_l}),\end{split}\end{align} and in particular:
$$d_X(P(Q_1,...Q_n))=\sum_{i=1}^n((d_X(P)
)(Q_1,...,Q_n))_i\circ d_X(Q_i).$$\end{Proposition}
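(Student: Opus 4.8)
\textbf{Plan of proof for Proposition \ref{CalcDiffAnalytic}.}

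The strategy throughout is the same one already used in Propositions \ref{analytic2} and \ref{ExpectationAnalytic}: define each operator first on monomials by an explicit combinatorial formula, check that it is completely bounded on each summand of the $\ell^1$ direct sum (with a strictly smaller radius $S<R$ absorbing the polynomial growth of the number of terms), and then assemble the global operator by the universal property of $\ell^1$ direct sums; uniqueness among weak-* continuous maps follows from the weak-* density of the algebra generated by $B$, $X$ and $E_D$ (Proposition \ref{ExpectationAnalytic}(a,b)). For (a), I would define $\partial_i$ on a monomial $\pi_{m,\sigma}(P)$ by the Leibniz rule applied only to the letters $X_i$ visible \emph{outside} conditional expectations (i.e.\ the letters $m_j=X_i$ whose index position $j$ is not enclosed by any parenthesis-pair of $\sigma$), producing an element of $B_{\otimes(1)}\{X:E_D,S,C\}$, which then maps canonically into $B\{X:E_D,S\}\oeh{D}B\{X:E_D,S\}$ by the same ``splitting of a two-sided $\ell^1$ Haagerup sum'' Lemma invoked in the proof of Proposition \ref{analytic2}; the requirement $\partial_iE_D=0$ is then precisely the statement that the letters inside an $E_D$ are not differentiated, which is forced once one insists on the derivation property together with $\partial_i$ vanishing on $B\langle X:D,R,C\rangle$ and weak-* continuity. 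Complete boundedness on each monomial is immediate since $\partial_i$ just relabels one occurrence of $X_i$ by $Y_1$, exactly as in Proposition \ref{analytic2}(a); the factor $S^{|m|}/R^{|m|}$ makes the sum over monomials converge.

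For (b) I would iterate (a) and the cyclic bookkeeping already set up in Proposition \ref{CyclicPermutations}: the only extra point over Proposition \ref{analytic2}(a,b) is that each intermediate space $B_{\otimes(k)}\{X:E_D,S,C\}$ and its cyclic variant are again $\ell^1$ direct sums of (cyclic) extended Haagerup powers of $B$, so the compositions $\#_i$ used to form $\partial^k_{(i_1,\dots,i_k)}=(\partial_{i_1}\otimes 1^{\otimes k})\circ\cdots\circ\partial_{i_k}$ are the ones of Proposition \ref{OnePermutation}/\ref{CyclicPermutations}(1), and boundedness is preserved at each step. For (c), the map $d$ (``differentiation under $E_D$'') is defined on monomials by induction on the number $k$ of conditional expectations via the stated recursion $d\,E_D(P)=E_D(d_XP)$ with $d_XP=dP+\sum_i(\partial_i P)\#Z_{l+1}$, each application introducing one new ``slot'' letter $Z_{l+1}$; this is exactly the formal version of the computation $D_H(\omega(P))$ displayed in the introduction (``differentiation under $E_D$''). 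Boundedness of $d$ on each monomial is clear — it produces a finite sum of monomials of the same $X$-degree with one extra $Z$ — and the recursion shows the number of output monomials grows only polynomially in the $Y$-degree, so the $S<R$ trick again gives a bounded global operator; $d^l=d_{Xi_l}\cdots d_{Xi_1}$ then inherits boundedness. Part (d) is the cyclic-gradient analogue: one writes $\mathscr{D}_{X_i,d}=m_d\circ\sigma\circ\partial_i$ exactly as in the proof of Proposition \ref{analytic2}(c), where now $\partial_i$ is the FDQ of part (a) on $B_c\{X:E_D,S,\C\}$, $\sigma$ is the term-by-term cyclic permutation on $B_{\otimes(1)c}$, and $m_d$ is the multiplication-by-$d$ map; the relation $\mathscr{D}_{i,d}(E_D(P))=\mathscr{D}_{i,E_D(d)}(P)$ and the Leibniz rule \eqref{CyclicDerivation} follow from $\partial_iE_D=0$ together with the corresponding identities for $\partial_i$ and for the composition maps, just as \eqref{CyclicDerivation} was deduced in Proposition \ref{analytic2}(d).

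Finally, for (e): both chain-rule identities are, at the level of a single monomial, purely formal consequences of the derivation property of $d_X$ (resp.\ $\partial_i$) and the associativity/compatibility of the composition operations $\circ$ and $\#$ with these derivations — precisely the relations \eqref{CompositionAnalytic}–\eqref{CompositionCyclic} and \eqref{CompositionAnalExpectation}'s non-expectation shadow, now carried along the inductive structure of the $E_D$-recursion. So I would prove them first on the dense subalgebra generated by $B$, $X$ and $E_D$ by induction on the number of conditional expectations (the base case $k=0$ being Proposition \ref{analytic2}(e)), using at the inductive step that $d_X$ commutes with $E_D$ in the sense built into (c), and then extend to all of $B\{X:E_D,R,C\}$ by weak-* continuity of every operation involved (guaranteed by Proposition \ref{ExpectationAnalytic} and parts (a),(c)). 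The main obstacle I anticipate is purely notational/combinatorial rather than conceptual: keeping precise track, in the recursion defining $d$ and in the chain rule \eqref{CompositionAnalExpectation}, of \emph{which} derivative-slots $Z_j$ land inside which conditional expectation and of the induced non-crossing pair partitions — i.e.\ making the maps $\epsilon_{m',\sigma_-}$, $\pi_{m,\sigma}$ of the preceding subsection interact correctly with $\partial_i$, $\sigma$ and the composition $\#_i$. Once that bookkeeping is pinned down, completeness of boundedness and the identities drop out by the same universal-property-of-$\ell^1$-direct-sums argument used repeatedly above.
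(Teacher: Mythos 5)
Your treatment of (a), (b), (c) and (e) follows essentially the paper's route: define each operator by an explicit combinatorial rule on each monomial component $\epsilon_{m,\sigma}(P)$, obtain uniform bounds (the radius change $S<R$ absorbing the linear-in-$|m|$ number of terms), assemble via the universal property of the $\ell^1$ direct sums, and use weak-* density of the $E_D$-algebra generated by $B,X$ for uniqueness; your recursive definition of $d$ produces the same operator as the paper's closed formula, which sums over the decompositions $m=m'X_im''$ in which the differentiated letter sits under at least one conditional expectation and replaces it by a $Z$. (One small slip: $\partial_iE_D=0$ is not forced by the derivation property, vanishing on $B\langle X:D,R,C\rangle$ and weak-* continuity; the value on monomials of the form $E_D(Q)$ must be prescribed separately, which is exactly why the statement lists it as a defining requirement.)

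The genuine gap is in (d). You set $\mathscr{D}_{X_i,d}=m_d\circ\sigma\circ\partial_i$ with $\partial_i$ the FDQ of part (a), i.e.\ the one satisfying $\partial_iE_D=0$. That operator kills every occurrence of $X_i$ lying inside a conditional expectation, so on a monomial of the form $E_D(P)$ your candidate cyclic gradient vanishes identically; but the cyclic gradient the proposition requires (and which is used later in \eqref{derivcyclic}, Lemma \ref{cyclic} and the transport construction) must satisfy $\mathscr{D}_{i,d}(E_D(P))=\mathscr{D}_{i,E_D(d)}(P)$, which is generically nonzero — e.g.\ by \eqref{CyclicDerivationNotation} one has $\mathscr{D}_{1,p}(X_2E_D(X_1bX_2)X_1)=pX_2E_D(X_1bX_2)+bX_2E_D(X_1pX_2)$, and the second term, coming from the $X_1$ under the expectation with the auxiliary element wrapped into $E_D$, is invisible to $m_d\sigma\partial_i$. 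So your claim that the defining relation ``follows from $\partial_iE_D=0$'' is backwards: that identity forces your left-hand side to be zero. The factorization through the FDQ works only in the expectation-free setting of Proposition \ref{analytic2}(c); the paper's proof instead defines $\mathscr{D}_{i,\epsilon_{M,\Sigma}(d)}(\epsilon_{m,\sigma}(P))$ by summing over \emph{all} decompositions $m=m'X_im''$ (including those under expectations), rotating the non-crossing pair partition and inserting the partition $\Sigma$ of $d$ at the right spot (the label $\rho_{|m''|_Y}(\sigma)\#_{|m''|_Y}\Sigma$ on the output monomial), and this extra combinatorial layer is precisely what makes the defining relation and the Leibniz rule \eqref{CyclicDerivation} hold. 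That new ingredient, the genuinely nontrivial content of (d), is missing from your proposal, and with it the verification of \eqref{CyclicDerivation} and \eqref{CompositionCyclic} in the presence of expectations.
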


[Note the sum of $l_{i,j}$ in formula \eqref{CompositionAnalExpectation} is only a sum over partitions, the first term of the first set being written $l_{1,1}$, the first term of the second set in the partition $l_{2,1}$, the ordering between sets in the partition being by the ordering of the smallest element]
\begin{proof}
For the most part, we only have to give a combinatorial formula for the derivations acting on monomials. Then by the bimodularity of the formula and explicit uniform bounds, the universal property of the $\ell^1$ sum will extend them to module $\ell^1$ direct sums. They will be moreover weak-* continuous as soon as they are weak-* continuous when restricted to monomial components since the $c_0$ sum of predual maps will then give a predual map. The derivation properties then determine $d,\partial$ on the $E_D$-algebra  generated by $B,X_1,...,X_n$ which is weak-* dense in the $\ell^1$ direct sum (actually in each monomial space by properties of the extended Haagerup product and then, the finite sum of monomial spaces are normwise dense), thus weak-* continuity determine those maps everywhere.

 For $\sigma\in NC_{2}(2k)$, $m\in M_{2k}(X_1,...,X_n,Y)$, let us say a submonomial $m'\subset m$ (with a fixed starting indexed, $m'$ is thus formally a pair of the monomial and the starting index) is compatible with $\sigma$ and write $m'\in C(\sigma,m)$ if $m'\in M_{2l}(X_1,...,X_n,Y)$, $l\leq k$ and $l'$ the index in $m$ of the first $Y$ in $m'$, then $\sigma|_{m'}:=\sigma|_{[\![l',l'+2l-1]\!]}\subset \sigma$ (which means there is no pairing in $m$ broken in $m'$ by our extraction of $m'$). We then write $Sub(\sigma,m')\in NC_{2}(2l)$ the partition $\sigma|_{[\![l',l'+2l-1]\!]}$ reindexed. 
 
 Then we define:
$$\partial_i(\epsilon_{m,\sigma}(P))=\sum_{\substack{m=m'X_im'',\\ m',m''\in C(\sigma,m)}}(\epsilon_{m',sub(\sigma,m')}\o_D\epsilon_{m'',sub(\sigma,m'')})(P).$$

Of course the sum is $0$ if its indexing set is empty, this in particular explains $\partial_iE_D=0$ and the remaining properties are easy.

The definition of $d$ is complementary. When $m'$ or $m''$ are not both in $C(\sigma,m)$ and $m=m'X_im''$ (some $i$), we write $(m',m'')\in IC(\sigma,m)$ (and this corresponds to a differentiation of $X_i$ below a conditional expectation). 

 Then we define 
$$d(\epsilon_{m,\sigma}(P))=\left(\sum_{\substack{m=m'X_im'',\\ (m',m'')\in IC(\sigma,m)}}(\epsilon_{m'Z_1m'',\sigma})(P)\right)_i.$$

For $\sigma_1,\sigma_2\in NC_2(2k_i)$ we define for $i\in [\![0,2k_1]\!]$ the obvious insertion  $\sigma_1\#_i\sigma_2=\sigma$  such that $\sigma|_{[\![i+1,i+k_2]\!]}=\sigma_2$, $\sigma|_{[\![i+1,i+k_2]\!]^c}=\sigma_1$ the equalities being understood after increasing reindexing. Likewise  $\rho_i(\sigma_1)=\{\{i_j+i,i_k+i\}: \{i_j,i_k\}\in \sigma_1\}$ addition being understood modulo $2k_1$ so that $\rho_{2k_1}=\rho_0=id,$ and write also $\rho_i$ the corresponding permutation $\rho_i(k)=k+i$ modulo $2k_1$.

We now define the cyclic gradient as follows:
$$\mathscr{D}_{i,\epsilon_{M,\Sigma}(d)}(\epsilon_{m,\sigma}(P))=\sum_{m=m'X_im''}\epsilon_{m''Mm',\rho_{|m''|_Y}(\sigma)\#_{|m''|_Y}\Sigma}((\rho_{|m''|}.P)\#_{|m''|}d)$$
and the relations are then easy. We give details for two of them involving cyclic gradients.

Let us explain \eqref{CyclicDerivation} on spaces of monomials. We have to compute $\mathscr{D}_{i,\epsilon_{M,\Sigma}(d)}(\epsilon_{m,\sigma}(P)\epsilon_{\mu,\pi}(Q))$. 
First note that $\epsilon_{m,\sigma}(P)\epsilon_{\mu,\pi}(Q)=\epsilon_{m\mu,\sigma\cup \pi}(PQ)$. Here $\sigma\cup \pi$ is merely the concatenation of non-crossing partitions and $PQ$ the product of tensors defined in Proposition \ref{CyclicPermutations} (1).
Note that the sum over $m\mu=m'X_im''$ splits into two sums depending on whether $X_i$ comes from $m$ or $\mu$. This gives the following computation (using relations on rotation and product such as $\rho_{|m''\mu|}.(PQ)=\rho_{|m''|}.(P))\#_{|m''|}(Q\o 1)$) :
\begin{align*}\mathscr{D}_{i,\epsilon_{M,\Sigma}(d)}&(\epsilon_{m,\sigma}(P)\epsilon_{\mu,\pi}(Q))\\&=\sum_{m=m'X_im''}\epsilon_{m''\mu Mm',\rho_{|m''\mu|_Y}(\sigma\pi)\#_{|m''\mu|_Y}\Sigma}((\rho_{|m''\mu|}.PQ)\#_{|m''\mu|}d)
\\&+\sum_{\mu=m'X_im''}\epsilon_{m'' Mmm',\rho_{|m''|_Y}(\sigma\pi)\#_{|m''|_Y}\Sigma}((\rho_{|m''|}.PQ)\#_{|m''|}d)
\\&=\sum_{m=m'X_im''}\epsilon_{m''\mu Mm',\rho_{|m''|_Y}(\sigma)\#_{|m''|_Y}(\pi\Sigma)}((\rho_{|m''|}.(P))\#_{|m''|}(Qd))
\\&+\sum_{\mu=m'X_im''}\epsilon_{m'' Mmm',\rho_{|m''|_Y}(\pi)\#_{|m''|_Y}(\Sigma\sigma)}((\rho_{|m''|}.Q)\#_{|m''|}(dP))
\\&=\mathscr{D}_{i,\epsilon_{\mu,\pi}(Q)\epsilon_{M,\Sigma}(d)}(\epsilon_{m,\sigma}(P))+ \mathscr{D}_{i,\epsilon_{M,\Sigma}(d)\epsilon_{m,\sigma}(P)}(\epsilon_{\mu,\pi}(Q)).
\end{align*}

Let us finally explain \eqref{CompositionCyclic}. By linearity (in $P$) and continuity (in $P$ and $Q$), it suffices to consider the case of finite sums 
$$Q_k=\sum_{i}\epsilon_{M_{k,i},\sigma_{k,i}}(Q_{k,i}),\qquad k=1,...,n$$ and where $P$ is replaced by a monomial $\epsilon_{m,\sigma}(P)$. Then write $Q_{X_k,i}=Q_{k,i}$ and $Q_{Y,i}=1\o 1,$ $M_{Y,k}=Y,M_{X_l,k}=M_{l,k}$ 	and note that $$[\epsilon_{m,\sigma}(P)](Q)=\sum_{i_1,....,i_{|m|}}\epsilon_{M_{m_1,i_1}...M_{m_{|m|},i_{|m|}},\sigma\#^m(\sigma_{m_1,i_1},...,\sigma_{m_{|m|},i_{|m|})}}\left(P\#(Q_{m_1,i_1},...,Q_{m_{|m|},i_{|m|}})\right)$$
where if $m_{j_1},..., m_{j_{2l}}$ is the set of $Y$'s in $m$, $\sigma\in NC(2l)$, $\sigma_{X_k,l}=\sigma_{k,l}$ and \begin{align*}\sigma\#^m(\sigma_{m_1,i_1},...,\sigma_{m_{|m|},i_{|m|}})=
&(\dots((\sigma\#_{2l}(\sigma_{m_{j_{2l}+1},i_{j_{2l}+1}}\cdots\sigma_{m_{|m|},i_{|m|}}))\\&\#_{2l-1}(\sigma_{m_{j_{2l-1}},i_{j_{2l-1}}}\cdots\sigma_{m_{j_{2l}-1},i_{j_{2l}-1}}))\dots \#_{0}(\sigma_{m_{1},i_{1}}\cdots\sigma_{m_{j_1-1},i_{j_1-1}}))\end{align*}

Thus one gets in writing for short $M_{m,i,L,+}=M_{m_{L+1},i_{L+1}}...M_{m_{|m|},i_{|m|}}$:
\begin{align*}&\mathscr{D}_{j,\epsilon_{M,\Sigma}(d)}[\epsilon_{m,\sigma}(P)](Q)=\sum_{L=1...|m|, m_L\neq Y}\sum_{M_{m_L,i_L}=m'X_jm''}\sum_{i_1,....,i_{|m|}}\\&\epsilon_{m''M_{m,i,L,+}MM_{m_1,i_1}...M_{m_{L-1},i_{L-1}}m',\rho_{|m''M_{m,i,L,+}|_Y}(\sigma\#^m(\sigma_{m_1,i_1},...,\sigma_{m_{|m|},i_{|m|})})\#_{|m''M_{m,i,L,+}|_Y}\Sigma}
\\&\Big((\rho_{|m''M_{m,i,L,+}|}.\left(P\#(Q_{m_1,i_1},...,Q_{m_{|m|},i_{|m|}})\right))\#_{|m''M_{m,i,L,+}|}d\Big)
.
\end{align*}

Then note the following combinatorial identities. We fix $m=M'X_jM''$ with $|M'|=L-1$, $\overline{m}=M''X_jM'$ 
\begin{align*}&\rho_{|m''M_{m,i,L,+}|_Y}(\sigma\#^m(\sigma_{m_1,i_1},...,\sigma_{m_{|m|},i_{|m|})})\#_{|m''M_{m,i,L,+}|_Y}\Sigma\\&=
\rho_{|m''|_Y}(\sigma_{{m}_{L},i_{L}})\#_{|m''|}\big([\rho_{|M''|_Y}(\sigma)]\#^{\overline{m}}(\sigma_{{m}_{L+1},i_{L+1}},...,\sigma_{m_{|m|},i_{|m|}},\Sigma, \sigma_{m_{1},i_{1}},...,\sigma_{m_{L-1},i_{L-1}})\big),
\end{align*}
and similarly:
\begin{align*}&\Big((\rho_{|m''M_{m,i,L,+}|}.\left(P\#(Q_{m_1,i_1},...,Q_{m_{|m|},i_{|m|}})\right))\#_{|m''M_{m,i,L,+}|}d\Big)=\\&
(\rho_{|m''|}.Q_{m_{L},i_{L})})\#_{|m''|}\left((\rho_{|M''|}(P))\#(Q_{{m}_{L+1},i_{L+1}},...,Q_{m_{|m|},i_{|m|}},d,Q_{m_{1},i_{1}},...,Q_{m_{L-1},i_{L-1}})\right)
.
\end{align*}
An inspection shows that gathering these terms leads to the definition of the right hand side in \eqref{CompositionCyclic} as expected.

\end{proof}
Finally, we will need  a second order operator and its commutation with cyclic gradients.

\begin{Proposition}\label{DeltaAnalytic}
There are continuous maps $\Delta,\delta_\Delta$ on  $B\{ X:E_D,R\}{\to B\{ X:E_D,S\}}$ for $S<R$ {uniquely defined as weak-* continuous map} by the following properties (a) and (b):

(a) For $P\in B\{ X:E_D,R\}$ monomial 
$$\Delta(P)=\sum_{i}m\circ (1\o E_D\o 1)\partial_i\o 1\partial_i(P)$$
and $\Delta E_D=0$

(b)  $\delta_\Delta$ is a derivation, $\delta_\Delta(P)=0$ for any $P\in  B\langle X:D,R\rangle$, and for $Q$ monomial in $B\{ X:E_D,R\}$, $\delta_\Delta(E_D(Q))=E_D((\Delta +\delta_\Delta)(Q)).$

(c)
Moreover, \begin{equation}\label{commutationDDelta}\mathscr{D}_{i}(\Delta+\delta_\Delta)=(\Delta+\delta_\Delta)\mathscr{D}_{i}.\end{equation}

(d) Likewise, for any $V\in B\langle X:D,R\rangle,$ the map $\Delta_V=\Delta+\sum_{i}\partial_i(.)\#\mathscr{D}_iV$ produces a derivation $\delta_V$ such that $\delta_V(P)=0$ for $P\in B\langle X:D,R\rangle$ and for $Q$ monomial in $B\{ X:E_D,R\}$, $\delta_V(E_D(Q))=E_D((\Delta_V +\delta_V)(Q)).$
Moreover, for any $g\in B\langle X:D,R\rangle$: $$\mathscr{D}_{i}(\Delta_V+\delta_V)(g)=(\Delta_V+\delta_V)\mathscr{D}_{i}(g)+\sum_{j=1}^n\mathscr{D}_{i,\mathscr{D}_{j}g}\mathscr{D}_{j}V.$$
\end{Proposition}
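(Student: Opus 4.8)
The statement of Proposition \ref{DeltaAnalytic} packages four assertions: existence and continuity of $\Delta,\delta_\Delta$ on $B\{X:E_D,R\}\to B\{X:E_D,S\}$, the commutation \eqref{commutationDDelta} with cyclic gradients, and the analogous statements for the twisted operators $\Delta_V,\delta_V$. The overall strategy is the same as in the proofs of Propositions \ref{ExpectationAnalytic}--\ref{CalcDiffAnalytic}: define the operators combinatorially on the monomial components $\epsilon_{m,\sigma}(P)$, check a uniform bound so that the universal property of the $\ell^1_D$ direct sum extends them to the whole space, and observe that they are weak-$*$ continuous on monomial components (hence weak-$*$ continuous everywhere, since the $E_D$-algebra generated by $B,X_1,\dots,X_n$ is weak-$*$ dense). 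Uniqueness then follows from this density together with the recursive defining relations.

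First I would treat $\Delta$. On a monomial $\epsilon_{m,\sigma}(P)$ the formula $\Delta=\sum_i m\circ(1\o E_D\o1)\,(\partial_i\o1)\,\partial_i$ makes sense because $\partial_i:B\{X:E_D,R\}\to B\{X:E_D,S'\}\oeh{D}B\{X:E_D,S'\}$ is bounded by Proposition \ref{CalcDiffAnalytic}(a), iterating it lands in a triple extended Haagerup tensor product, and $m\circ(1\o E_D\o1)$ is a completely bounded $D$-bimodular multiplication-and-expectation map; composing with a small shrinkage of the radius $S<S'<R$ absorbs the finite combinatorial multiplicity and gives the uniform bound. Since $\partial_i E_D=0$, automatically $\Delta E_D=0$. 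For $\delta_\Delta$ I would define it recursively over the number $k$ of conditional expectations in a monomial: it is $0$ on $B\langle X:D,R\rangle$ ($k=0$), and for $\epsilon_{YmY,\hat\sigma}(E_D(Q))$ one sets $\delta_\Delta$ to be the derivation extension of the rule $\delta_\Delta(E_D(Q))=E_D((\Delta+\delta_\Delta)(Q))$; the derivation property fixes it on products, and an induction on $k$ gives the bound in $\|\cdot\|_{B\{X:E_D,S\}}$ (each nested expectation contributes a bounded operator and one radius shrinkage, but since $k$ is fixed on each monomial component this is fine, and the combined constant is controlled because $\|\Delta\|,\|\delta_\Delta\|$ on degree-$\le k$ parts grow at most linearly in $k$ while the norm of the $\ell^1$ component carries an $S^{|m|}$ factor with $S<R$). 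This simultaneously proves (a), (b), and part (d) with $\Delta$ replaced by $\Delta_V=\Delta+\sum_i\partial_i(\cdot)\#\mathscr D_iV$, since $\mathscr D_iV\in B\langle X:D,S\rangle$ and $\#$ is bounded, so $\Delta_V$ has exactly the same structure.

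The commutation relations \eqref{commutationDDelta} and its $V$-twisted analogue are the combinatorial heart and, I expect, the main obstacle. I would prove them first on the weak-$*$ dense $E_D$-algebra generated by $B,X_1,\dots,X_n$, then extend by weak-$*$ continuity of both sides (both $\mathscr D_i(\Delta+\delta_\Delta)$ and $(\Delta+\delta_\Delta)\mathscr D_i$ are weak-$*$ continuous as composites of the weak-$*$ continuous maps just constructed). On that algebra one argues by induction on the number of conditional expectations $k$. For $k=0$ the identity $\mathscr D_i(\Delta_V+\delta_V)(g)=(\Delta_V+\delta_V)\mathscr D_i(g)+\sum_j\mathscr D_{i,\mathscr D_jg}\mathscr D_jV$ is the relation \eqref{derivcyclic}/\eqref{CyclicDerivation}-type identity already recorded in subsection \ref{diffopsection} (equation \eqref{derivcyclic}) for polynomials; for the untwisted $V=0$ case the extra term vanishes because $\mathscr D_jV=0$, giving \eqref{commutationDDelta} on $B\langle X:D,R\rangle$. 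For the inductive step one writes a general monomial as a product of $X_i$'s, elements of $B$, and terms $E_D(Q)$ with strictly fewer expectations in $Q$; both $\mathscr D_i$ (by \eqref{CyclicDerivation}) and $\Delta+\delta_\Delta$ (derivation property of $\delta_\Delta$, Leibniz-type expansion of $\Delta$ via $\partial_i\o1\partial_i$) are manipulated by their respective product rules, and on an atom $E_D(Q)$ one uses $\mathscr D_{i,d}(E_D(Q))=\mathscr D_{i,E_D(d)}(Q)$ together with $\delta_\Delta(E_D(Q))=E_D((\Delta+\delta_\Delta)(Q))$ and the induction hypothesis applied inside. The bookkeeping of which cyclic rotation and which partition insertion $\#^m$ is produced — exactly the combinatorics appearing in the proof of \eqref{CompositionCyclic} — is where care is needed; one checks that $\mathscr D_i$ commuting past $\Delta$ produces precisely the correction term $\sum_j\mathscr D_{i,\mathscr D_jg}\mathscr D_jV$ coming from the two derivatives of $\partial_i\o1\partial_i$ landing on the $X$'s that $\mathscr D_jV$ is inserted at, and that the $\delta_\Delta$-part and the $E_D$-part match up by the inductive identity. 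Finally, since \eqref{commutationDDelta} has no correction term, one obtains it directly from the $V$-twisted identity by setting $V=0$, or proves it in parallel with the same induction. Uniqueness of $\Delta,\delta_\Delta,\Delta_V,\delta_V$ follows because (a),(b),(d) determine them on the weak-$*$ dense algebra and they are required weak-$*$ continuous.
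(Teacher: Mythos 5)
Your scaffolding — define each operator on the monomial components $\epsilon_{m,\sigma}(P)$, bound it there because the polynomial growth of the combinatorial multiplicity is absorbed by the factor $(S/R)^{|m|}$ with $S<R$, extend by the universal property of the $\ell^1_D$ direct sum, and deduce uniqueness from weak-$*$ continuity plus the weak-$*$ density of the $E_D$-algebra generated by $B,X_1,\dots,X_n$ — is exactly the paper's, and your recursive description of $\delta_\Delta$ is consistent with the paper's construction, which defines $\Delta+\delta_\Delta$ in one stroke on a monomial as the sum over all decompositions $m=nX_jn'X_jn''$ with $n'$ compatible with $\sigma$ of the monomial obtained by replacing the two matched $X_j$'s by a new pair of $Y$'s. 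The genuine gap is in part (c), which is the only non-routine claim (the paper's own proof says all properties but \eqref{commutationDDelta} are easy). Your induction over the number of conditional expectations does not close as formulated: as soon as you invoke the product rule $\mathscr{D}_{i,d}(PQ)=\mathscr{D}_{i,Qd}(P)+\mathscr{D}_{i,dP}(Q)$ you must commute $\Delta+\delta_\Delta$ past a cyclic gradient $\mathscr{D}_{i,d}$ with a nontrivial insertion $d$, and these do \emph{not} commute: $\Delta$ also differentiates the inserted factor $d$ and creates cross terms coupling $d$ and $P$. The induction hypothesis therefore has to be a strengthened identity for $\mathscr{D}_{i,d}$ with explicit correction terms, which you never formulate; ``one checks that the bookkeeping works out'' is precisely the content of the proposition. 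The paper instead proves (c) directly at the level of monomial components: it writes closed combinatorial formulas for both $\mathscr{D}_{i,d}$ and $\Delta+\delta_\Delta$, expands $\mathscr{D}_i((\Delta+\delta_\Delta)\epsilon_{m,\sigma}(P))$ and $(\Delta+\delta_\Delta)(\mathscr{D}_i\epsilon_{m,\sigma}(P))$ as double sums, and exhibits a bijection between the two index sets, split into three cases according to whether the letter $X_i$ removed by the cyclic gradient sits in $n$, in $n'$, or in $n''$.

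A smaller but symptomatic error: your explanation of the correction term in (d) is wrong. For $g\in B\langle X:D,R\rangle$, the term $\sum_j\mathscr{D}_{i,\mathscr{D}_jg}\mathscr{D}_jV$ does not come from ``the two derivatives of $\partial_i\o 1\,\partial_i$'' — by (c) the second-order part commutes with $\mathscr{D}_i$ on the nose. It comes from the first-order drift $\sum_j\partial_j(\cdot)\#\mathscr{D}_jV$: writing $\partial_jg=\sum a\o b$, the cyclic derivation rule applied to $\sum a\,(\mathscr{D}_jV)\,b$ yields, for the terms where the derivative lands on $\mathscr{D}_jV$, exactly $\mathscr{D}_{i,\sum ba}(\mathscr{D}_jV)=\mathscr{D}_{i,\mathscr{D}_jg}(\mathscr{D}_jV)$ by \eqref{flip}, while the remaining terms reassemble into $\sum_j\partial_j(\mathscr{D}_ig)\#\mathscr{D}_jV$; since $\delta_V$ kills both $g$ and $\mathscr{D}_ig$, this computation together with (c) is all of (d). Cross terms of the same nature (one derivative hitting the inserted element, one hitting the function) are exactly what your proposed induction for (c) would have to keep track of, which is why the missing strengthened statement is not a formality.
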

\begin{proof}
{Again it suffices to define those $D-D$ bimodular maps on monomials spaces, i.e., at the level of extended Haagerup tensor products. Then the universal property of the direct sum will extend them as weak-* continuous maps as soon as each component map is weak-* continuous. The algebraic relation then determines the maps on the $E_D$ algebra generated by $B,X_1,...,X_n$ and weak-* density of this algebra implies the uniqueness of the weak-* continuous extension.} For $\Delta$ we use the formula  above.  
Let $\sigma\in NC_{2}(2k)$, $m\in M_{2k}(X_1,...,X_n,Y)$.

For $m=nX_in'X_in'', n'\in C(\sigma,m)$ with the notation of the previous proof, we define \begin{align*}Add(\sigma, n,n',n'')&=\left\{\{|n|_Y+1,|n|_Y+|n'|_Y+2\}\}\right.\\&\cup \{\{i+1,j+1\}:\{i,j\}\in \sigma, |n|_Y<i<j\leq |n|_Y+|n'|_Y\}\\&\cup \{\{i,j+2\}:\{i,j\}\in \sigma, i\leq |n|_Y< |n|_Y+|n'|_Y<j\}\\&\cup \{\{i,j\}:\{i,j\}\in \sigma, i<j\leq |n|_Y\}\\&\left.\cup \{\{i+2,j+2\}:\{i,j\}\in \sigma, |n|_Y+|n'|_Y<i<j\right\}\in NC_{2}(2k+2).\end{align*} 
Then we define for a monomial $\epsilon_{m,\sigma}(P)$ :
$$(\Delta+\delta_\Delta)(\epsilon_{m,\sigma}(P))=\sum_{j=1}^n\sum_{m=nX_jn'X_jn'', n'\in C(\sigma,m) }\epsilon_{nYn'Yn'',Add(\sigma, n,n',n'')}(P).$$
All properties but the last equation \eqref{commutationDDelta} are easy. By definition, we have:
\begin{align*}&\mathscr{D}_{i}((\Delta+\delta_\Delta)(\epsilon_{m,\sigma}(P))\\&=\sum_{j=1}^n\sum_{m=nX_jn'X_jn'', n'\in C(\sigma,m) }\sum_{nYn'Yn''=m'X_im''}\epsilon_{m''m',\rho_{|m''|_Y}(Add(\sigma, n,n',n''))}((\rho_{|m''|}.(P)))\end{align*}
The sums can be divided into 3 cases depending on whether $X_i\in n,n',n''$.
Similarly, we have \begin{align*}(\Delta+\delta_\Delta)\mathscr{D}_{i}(\epsilon_{m,\sigma}(P))=\sum_{m=M'X_iM''}&\sum_{j=1}^n\sum_{M''M'=NX_jN'X_jN'', N'\in C(\rho_{|M''|_Y}(\sigma),M''M') }\\&\epsilon_{NYN'YN'',Add(\rho_{|M''|_Y}(\sigma), N,N',N''))}((\rho_{|M''|}.(P)))\end{align*}
and there are also 3 cases depending $X_j$'s are both in $M''$, in $M'$ or one in each. The proof of the equality is combinatorial, we check we have a bijection of the indexing sets of the sum, with equality of the terms summed in each case.

If $X_i\in n$, then $n=oX_io'$ and $m=oX_io'X_jn'X_jn''$ this suggests $M'=o$, $M''=o'X_jn'X_jn''$ corresponding bijectively to a term where both $X_j$'s are in $M''$, $N=o',N'=n',N''=n''M'$, $m'=o$, $m''=o'Yn'Yn''$ so that $m''m'=NYN'YN''$ as expected, $|M''|=|m''|$ implying the same rotation of $P$ and $Add(\rho_{|M''|_Y}(\sigma), N,N',N''))=\rho_{|m''|_Y}(Add(\sigma, n,n',n''))$, as is easily checked with the same condition on $n'=N'$, implying the final equality. The case $X_i\in n''$ is similar corresponding bijectively to the case where both $X_j$'s are in $M'$.

If $X_i\in n',$ $n'=oX_io'$ and $m=nX_joX_io'X_jn'', m'=nYo,m''=o'Yn''$. This suggests, $M'=nX_jo,M''=o'X_jn''$ corresponding bijectively to a term where one $X_j$ is in $M''$ the other in $M'$ with $N=o',N'=n''n,N''=o.$ Since $N''$ is related to a complement  of $n'$, the relations imposed on $n'$, $N'$ are equivalent after rotation. We also have $m''m'=NYN'YN''$ as expected, $|M''|=|m''|$ implying the same rotation of $P$ and $Add(\rho_{|M''|_Y}(\sigma), N,N',N''))=\rho_{|m''|_Y}(Add(\sigma, n,n',n''))$, as is easily checked, implying the final equality.
\end{proof}

\subsection{Non-commutative $C^{k,l}$-functions and their stability properties} 
 
 \subsubsection{$C^{k,l}$ norms.}
As in the main text, we consider {several} variants  $C^{k,l;\epsilon_1,\epsilon_2}_{tr,V}(A,U:B,E_D)$, $\epsilon_1\in\{0,1\},\epsilon_2\in\{-1,0,1,2\}$:
 
 \begin{align}\label{normehorrible}\begin{split}&\|P\|_{C^{k,l;\epsilon_1,\epsilon_2}_{tr,V}(A,U:B,E_D)}=\|\iota(P)\|_{k,l,U}+{\epsilon_1}\|{(\Delta_V+}\delta_V)(P)\|_{C^*_{tr}(A,U)}\\&\qquad + 1_{k\geq \max(\epsilon_2-1,- \epsilon_2)}\sum_{p=0}^{l-1+1_{odd}(|\epsilon_2|)}{\sum_{i=1}^n}\max\Big[\|\mathscr{D}_{i,1}(P)\|_{k,p,U},\\ &\qquad\qquad (0\vee\frac{\epsilon_2}{2})\sup_{\textrm{\tiny$\begin{array}{c}Q\in (C^{k,p}_{tr}(A,U^{m-1}:B,E_D))_1\\ m\geq 2\end{array}$}} \|\mathscr{D}_{i,Q(X')}(P)\|_{k,p,U^m}\Big].\end{split}\end{align}

{  We of course also define a first order part seminorm $\|P\|_{C^{k,l;\epsilon_1,\epsilon_2}_{tr,V}(A,U:B,E_D),\geq 1}$ only replacing the first term in the sum by $\Vert{}\iota(P)\Vert{}_{k,l,U\geq 1}$.
Note that $\|P\|_{C^{k,l;1_{k\geq 2},2}_{tr,V}(A,U:B,E_D)}=\|P\|_{C^{k,l}_{tr,V}(A,U:B,E_D)}$ enables to include our previous case in an ad-hoc way.
{ We may write $C^{k,l{;0,}\epsilon_2}_{tr,V}(A,U:B,E_D)=C^{k,l;0,\epsilon_2}_{tr}(A,U:B,E_D)$ since there is no more dependence in $V$ in this case. \textbf{ Note that we wrote $C^{k,l{;}\epsilon_2}_{tr}(A,U:B,E_D)=C^{k,l{;0,}\epsilon_2}_{tr,V}(A,U:B,E_D)$ for short in the text before the appendices since we only used this case $\epsilon_1=0$.}}

In the last seminorm we considered $P$ in variable $X=(X_1,...,X_n)$ and $Q$ in variable $X'=(X_{(1)}',...,X_{(m-1)}')\in U^{m-1}$ and $U^m\subset A_R^{mn}=(A_R^{n})^m.$
{ In order to get a consistent definition, we still have to check the last term is finite for $P\in B_c\{ X_1,...,X_n;E_D,R,\C\}$. We gather this and a complementary estimate in the following Lemma. A variant explains the inclusion $C^{k,l}_c\subset C^{k,l}_{tr,V,c}$ at the end of subsection \ref{diffopsection} with norm equivalent to the restricted norm (explaining why the completions are included in one another)  
}

\begin{Lemma}\label{Ckc}
{Assume $U\subset A_{R, appB-E_D}^n$.} For any $P\in B_{c}\{ X_1,...,X_n;E_D,R,\C\}$, we have $$\sup_{\textrm{\tiny$\begin{array}{c}Q\in (C^{k,p}_{tr}(A,U^{m-1}:B,E_D))_1\\ m\geq 2\end{array}$}} \|\mathscr{D}_{i,Q(X')}(P)\|_{k,p,U^m}<\infty$$ and moreover if $P\in B_c\langle X_1,...,X_n;D,R,\C\rangle$, for any $p\geq0$ we have:

$$\sup_{\textrm{\tiny$\begin{array}{c}Q\in (C^{k,p}_{tr}(A,U^{m-1}:B,E_D))_1\\ m\geq 2\end{array}$}} \|\mathscr{D}_{i,Q(X')}(P)\|_{k,p,U^m}\leq C\Vert{}P\Vert{}_{k+1,p,U,c{,\geq 1}}$$ so that we have {extensions} of the identity which give injective bounded linear maps: \begin{gather*}C^{k+l}_{c}(A,U:B,D)\to C^{k,l}_{tr,V}(A,U:B,E_D),\\ C^{k+l+1}_{c}(A,U:B,D)\to C^{k,l;0,1}_{tr,V}(A,U:B,E_D),\end{gather*} {and we have for some $C>0$:
$$\|P\|_{C^{k,l}_{tr,V}(A,U:B,E_D),\geq 1}\leq C\Vert{}P\Vert{}_{k+1,l-1,U,c{,\geq 1}}.$$}
\end{Lemma}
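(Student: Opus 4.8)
The statement asserts two kinds of facts: (i) finiteness of the supremum over $Q$ in the unit ball of $C^{k,p}_{tr}(A,U^{m-1}:B,E_D)$, uniformly in $m$, of the norm $\|\mathscr{D}_{i,Q(X')}(P)\|_{k,p,U^m}$ for $P\in B_c\{X_1,\dots,X_n:E_D,R,\C\}$; (ii) the quantitative bound $\|\mathscr{D}_{i,Q(X')}(P)\|_{k,p,U^m}\le C\|P\|_{k+1,p,U,c,\ge 1}$ when $P$ has no expectations, from which the claimed bounded injective maps between the completions follow. The plan is to treat (i) and (ii) together: reduce to a monomial $P=\epsilon_{m,\sigma}(P_0)$ (by the $\ell^1$-direct-sum structure and the universal property already exploited throughout Appendix~2), use the explicit combinatorial formula for $\mathscr{D}_{i,d}$ from Proposition~\ref{CalcDiffAnalytic}(d) (and Proposition~\ref{DeltaAnalytic} for the interplay with $\Delta_V,\delta_V$), and then estimate the $\|\cdot\|_{k,p,U^m}$-seminorm of the result. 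The key point is that the cyclic gradient $\mathscr{D}_{i,d}(P)$ is obtained from $P$ by first applying a single free difference quotient $\partial_i$ (which lands in $B_{\otimes(1)c}$, costing one unit of ``$k$''), then inserting the coefficient $d=Q(X')$ in place of the newly created $Y$-slot via the $\#$-operation, together with a cyclic permutation of the tensor legs; all of these operations are completely bounded on the relevant (cyclic) extended Haagerup tensor products by Propositions~\ref{CyclicPermutations}, \ref{analytic2}, \ref{TensorCyclicEvaluation}.

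First I would record the reduction: by weak-$*$ density of the $E_D$-algebra generated by $B,X_1,\dots,X_n$ and the fact that all operations in sight ($\partial_j$, $d_X$, $\mathscr{D}_{i,\cdot}$, evaluation, $\#$) are coefficientwise continuous and $D$-bimodular, it suffices to bound $\|\mathscr{D}_{i,Q(X')}(P)\|_{k,p,U^m}$ for $P$ a single monomial $\epsilon_{m,\sigma}(P_0)$, with constants depending only on $m$ (the word) through $|m|$, and then reassemble via the $\ell^1$-sum (here the radius drop $S<R$ in Proposition~\ref{CalcDiffAnalytic} supplies the geometric series that makes the $\ell^1$-sum converge — this is exactly why the target completions use a strictly smaller radius, or rather why $\cap_{S>R}$ appears). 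Next, apply the combinatorial identity in the proof of Proposition~\ref{CalcDiffAnalytic}(d): $\mathscr{D}_{i,\epsilon_{M,\Sigma}(d)}(\epsilon_{m,\sigma}(P_0))=\sum_{m=m'X_im''}\epsilon_{m''Md m',\,\rho_{|m''|_Y}(\sigma)\#_{|m''|_Y}\Sigma}\big((\rho_{|m''|}.P_0)\#_{|m''|}d\big)$, where $d=Q(X')$. The number of terms in this sum is bounded by $n_{X_i}(m)\le |m|$.

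Then I would estimate each term. Applying $\partial^p$ and $d_X^p$ (up to order $k$ free difference quotients and $p$ full differentials in the direction $H$) to such a term and using the composition formulas \eqref{CompositionAnalytic}, \eqref{CompositionAnalExpectation}, the derivatives of $\mathscr{D}_{i,Q(X')}(P)$ decompose into a finite sum (of size controlled by $|m|$, $k$, $p$, $n$) of terms each of which is a $\#$-product of: a derivative of order $\le k+1$ of $P_0$ (here is where the ``$k+1$'' on the right of the claimed bound comes from — passing the cyclic gradient through costs one free difference quotient applied to $P$), a derivative of order $\le k$ and full-differential order $\le p$ of $Q$ (whence $Q\in (C^{k,p}_{tr}(A,U^{m-1}:B,E_D))_1$ contributes a factor $\le 1$), and powers of $X$-derivatives of the identity coordinates (which are bounded by $R$ on $U$, $U^m\subset A^n_R$). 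Using complete contractivity/boundedness of the $\#$-maps on (cyclic) extended Haagerup tensor powers of $A$ (Proposition~\ref{CyclicPermutations}(1), and the evaluation bound $R^{n-p}$ of Proposition~\ref{TensorCyclicEvaluation} for the cyclic case, which is what forces the hypothesis $U\subset A^n_{R,appB\text-E_D}$), together with the cyclic permutation being isometric on the cyclic tensor product, one gets $\|\mathscr{D}_{i,Q(X')}(P)\|_{k,p,U^m}\le C(k,p,n)\,|m|\,S^{|m|_X-?}\|P_0\|_{k+1,p}$, uniformly in $m$ and in $Q$ in the unit ball; summing the geometric series over the $\ell^1$-index yields (i), and restricting to $P\in B_c\langle X:D,R,\C\rangle$ (no $E_D$, so $\Sigma$ is trivial and no $\delta_V$-contribution) and keeping only the $\ge 1$ part yields the explicit bound in (ii). The maps $C^{k+l}_c\to C^{k,l}_{tr,V}$ etc.\ are then the continuous extensions of the identity, and injectivity is inherited from injectivity of $\iota$ on $C^{0,l}_{tr}\to C^0(U,A)$ (as already used in Proposition~\ref{infgen}), since the first term $\|\iota(P)\|_{k,l,U}$ in the target norm already dominates a norm in which the source embeds.

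\textbf{Main obstacle.} The genuinely delicate point is the bookkeeping for the cyclic case with expectations: one must verify that, after applying $\mathscr{D}_{i,Q(X')}$ and then up to $k$ free difference quotients and $p$ full differentials, every piece still lands in a \emph{cyclic} extended Haagerup tensor power (so that the isometry property of $\rho$ and the trace $\langle e_D,\cdot\#e_D\rangle$ of Theorem~\ref{Finite3}(3)/Proposition~\ref{CyclicPermutations}(2) are available, and the $\|\cdot\|_{k,p,U^m,c}$-seminorm is finite), and that the insertion of $Q(X')$ — which is itself only a $C^{k,p}_{tr}$ element, not an analytic function — is legitimate. This requires the evaluation map of Proposition~\ref{TensorCyclicEvaluation} (hence the standing assumption $U\subset A^n_{R,UltraApp}$, here sharpened to $A^n_{R,appB\text-E_D}$), and a careful tracking of which $Y$-slots of $P$ are ``inside'' an $E_D$ versus exposed — exactly the $C(\sigma,m)$ vs.\ $IC(\sigma,m)$ dichotomy from the proof of Proposition~\ref{CalcDiffAnalytic}. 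Once one grants those structural facts, the estimate itself is a routine (if lengthy) application of complete boundedness of $\#$ and of the composition identities, with the radius drop absorbing the $\ell^1$-summation.
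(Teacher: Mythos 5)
Your skeleton (reduce to analytic $Q$ by density, write $\mathscr{D}_{i,d}=m_d\sigma\partial_i$, push the derivatives through and bound the resulting $\#$-products) is in the right spirit, but the way you run the estimate leaves a real gap. You expand $P$ into monomials $\epsilon_{m,\sigma}(P_0)$, bound each monomial's contribution by a constant times $|m|$ times the Haagerup norm of the coefficient $P_0$, and then resum the $\ell^1$-series using a radius drop. What this produces is a bound of $\sup_Q\|\mathscr{D}_{i,Q(X')}(P)\|_{k,p,U^m}$ by the \emph{analytic} ($\ell^1$-coefficient) norm of $P$, not by the evaluated seminorm $\|P\|_{k+1,p,U,c,\geq 1}$ that the lemma asserts. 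These are not comparable in the direction you need: the $C$-seminorms only see evaluations on $U$ and are in general much weaker than the coefficient norm (indeed $C^{k+l}_c$ and $C^{k,l}_{tr,V}$ are separation-completions for the evaluated seminorms). Consequently your estimate gives neither the displayed inequality nor the bounded extensions of the identity $C^{k+l}_c(A,U:B,D)\to C^{k,l}_{tr,V}(A,U:B,E_D)$: to pass to those completions you must dominate the cyclic-gradient part of the target norm by the source seminorm on the dense subspace of analytic functions, and a bound by the analytic norm does not survive completion in the $C$-seminorm. (A secondary symptom of the same issue: the factor $|m|$ produced by differentiation is only absorbed against the weight $R^{|m|}$ uniformly over $U$ if $\sup_{X\in U}\|X_i\|$ stays strictly below the radius, which is not guaranteed.)

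The device the paper uses, and which your argument is missing, is the identity \eqref{CompoCkltrFormula}: because $P$ depends on $X$ while $Q$ depends on the disjoint variables $X'$, each mixed derivative $d^{s}_{(X,X')}\partial^{l}\mathscr{D}_{i,Q(X')}(P)$ is a \emph{single} $\#$-product of an evaluated derivative of $P$ of free-difference-quotient order at most $l+1\leq k+1$ (suitably rotated, with the extra $\partial_i$ inserted) with an evaluated derivative of $Q$ of order at most $(k,p)$ --- there is no splitting sum between $P$ and $Q$ and no monomial expansion at all. Taking suprema in the seminorms then yields directly
$$\|\mathscr{D}_{i,Q(X')}(P)\|_{k,p,U^{m},c}\leq C(k,p)\,\|P\|_{k+1,p,U,c}\,\|Q\|_{k,p,U^{m-1}},$$
i.e.\ a bound by exactly the seminorms with respect to which the completions are taken, from which the stated inequality (with the $\geq 1$ refinement), the bounded maps and their injectivity (equivalence of norms on the image) all follow. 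If you redo your estimating step entirely in terms of evaluated derivatives of $P$ and $Q$ on $U$ --- never in terms of coefficients --- your argument becomes essentially the paper's proof.
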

\begin{proof}
We can assume $Q\in B_c\{ X'_{(1)1},...,X'_{(1)n},...,X'_{(m)1},...,X'_{(m)n};E_D,R^+,\C\},m\geq 1$ $X'=X'_{(1)1},...,X'_{(1)n},...,X'_{(m)1},...,X'_{(m)n}$. We detail only the second estimate, since the first one mainly needs $P$ monomial and is an easy extension.

To compute differentials we introduce partial differentials $d^{s}_{(X,X')(r_1,...,r_s)}$ so that a full differential is $$\sum_{r\in [1,(m+1)n]^s}d^{s}_{(X,X')(r_1,...,r_s)}\mathscr{D}_{i,Q(X')}(P)(X,X').(H_1^{r_1},...,H_s^{r_s}).$$ 

Recall this $d^{s}_{(X,X')}$ is the full differential so that $d^{s}_{X}$ applied to  $P\in B_c\langle X_1,...,X_n;D,R,\C\rangle$ is a certain expression involving free difference quotients but is not necessarily $0$ (unlike $d^{s}$ by its definition).

We have to compute as easily checked on monomials, for $s,l\leq k-1$ \begin{align}\label{CompoCkltrFormula}\begin{split}d^{s}_{(X,X')(r_1,...,r_s)}&(\partial^{l}_{(j_1,...,j_k)}\mathscr{D}_{i,Q(X')}(P))
\\&=\sum_{0\leq o\leq p\leq l}[d^{\# R}_{X R}(\rho^{-(l-p+1)}.\partial^{(o+l-p+1)}_{(j_{p+1},...,j_l,i,j_1,...,j_o)}(P))]\#d^{\# R'}_{X' R'}\partial^{(p-o)}_{(j_{o+1},...,j_p)}(Q)
\end{split}\end{align}
 where $R=(r_{i_1},...,r_{i_{\# R}})$ with the underlying set $uR=\{r_{i_1},...,r_{i_{\# R}}\}=\{r_i, r_i \in [1,n]\}$, $i_1<...<i_{\# R}$ and $R'=(r_{j_1}-n,...,r_{j_{\# R'}}-n)$ with $\{r_{j_1},...,r_{j_{\# R'}}\}=\{r_1,...,r_s\}-uR$ $j_1<...<j_{\# R'}$ so that $d_{(X,X') (R'+n)}=d_{X' R'}$, and note there is no real sum to split the derivatives between $P,Q$ (the sum can contain only one non-zero term) since the variables of $Q$ and $P$ are not the same.
 
Using this remark and the natural bound on products defined in Proposition \ref{CyclicPermutations}, one gets the term in the seminorm to estimate for a fixed order $s$ of differentials $d^s$:
\begin{align*}&\left(\Vert{}\sum_{r\in [1,(m+1)n]^s}d^{s}_{(X,X')(r_1,...,r_s)}\mathscr{D}_{i,Q(X')}(P)(X,X').(H_1^{r_1},...,H_s^{r_s})\Vert{}_{A}+\sum_{l=1}^{k} \right.
\\&\ \ \ \ \left.\sum_{j\in[1,n(m+1)]^l}\Vert{}\sum_{r\in [1,(m+1)n]^s}d^{s}_{(X,X')(r_1,...,r_s)}\partial^l_{j}\mathscr{D}_{i,Q(X')}(P)(X,X').(H_1^{r_1},...,H_s^{r_s})\Vert{}_{A^{\oehc{D}(l+1)}} \right)
\\&\leq k\sum_{\textrm{\tiny$ \begin{array}{c}V\subset[1,s]\\V=\{i_1,...,i_v\}\\V^c=\{j_1,...,j_{s-v}\}\end{array}$}}\left(\|d^{v}_{X}\partial_i P\|_{A^{\oehc{D}2}}+\sum_{l=1}^{k}\sum_{j\in[1,n]^l} \Vert{}d^{v}_{X}\partial^{l+1}_{(i,j)}(P)(X).(H_{i_1},...,H_{i_{v}})\Vert{}_{A^{\oehc{D}(l+2)}}\right) 
\\&\times\left(\Vert{}d^{s-v}_{X'}Q(X')(H_{j_1},...,H_{j_{s-v}})\Vert{}_{A}+\sum_{l=1}^{k} 
\sum_{j\in[1,nm]^l}\Vert{}d^{s-v}_{X'}\partial^l_{j}(Q)(X').(H_{j_1},...,H_{j_{s-v}})\Vert{}_{A^{\oehc{D}(l+1)}} \right).\end{align*}
The factor $k$ appears for a the same reason as the sum over $V$, because in the sum over $j$ (resp. over $r$) the position of differentials $X$, $X'$ need to be determined by a starting point for the block of  $X'$ variables (resp. a set of $X$ variables) and in the first case the number is less than $l\leq k.$

Thus taking suprema in the definition of seminorms, one gets 
 the concluding result for any $p$:
$$\|\mathscr{D}_{i,Q(X')}(P)\|_{k-1,p,U^{m+1},c}\leq (k-1)2^p \|P\|_{k,p,U,c}\|Q\|_{k-1,p,U^m,c},$$
and similarly $$\|\mathscr{D}_{i,Q(X')}(P)\|_{k-1,p,U^{m+1}}\leq (k-1)2^p \|P\|_{k,p,U,c}\|Q\|_{k-1,p,U^m}.$$

The definition of the two bounded linear maps are then straightforward and injectivity comes from the fact that the bounds enable us to get equivalent norms on the image so that the separation completion defining the first space can be computed in the second.\end{proof}
}

\subsubsection{Composition of functions}
To understand the relationship between the Laplacian and composition of functions we need the following basic remark.  Let $P,Q_1,...,Q_n\in  \cup_{R>0}B_c\{ X_1,...,X_n: E_D,R,\C\}$.  Then: 

\begin{align*}\Delta(P\circ Q)&=\sum_{i,j}m\circ (1\o E_D\o 1)\partial_i\o 1((\partial_jP)\circ Q\#\partial_i(Q_j))\\&=\sum_{i,j}((\partial_jP)\circ Q\#m\circ (1\o E_D\o 1)(\partial_i\o 1\partial_i(Q_j)))\\&+\sum_{i,j,k}m\circ (1\o E_D\o 1)((\partial_k\o 1\partial_jP)\circ Q\#(\partial_i(Q_k),\partial_i(Q_j))\,.\end{align*}

Thus we have a lack of stability of the form of the second order term so that it is natural to introduce  for $P\in  B\{ X_1,...,X_n: E_D,R\}$, ${\mathcal{R}}=({\mathcal{R}}^{kl})=\sum_K(R_{1,K}^{kl}\o R_{2,K}^{kl})_{kl}\in [(D'\cap A^{\oeh D 2})\widehat{\o}(D'\cap A^{\oeh D 2})]^{n^2}$ the following expression:

$$\Delta_{\mathcal{R}}(P)=\sum_{i,j,K}m\circ (1\o E_D\o 1)[\partial_i\o 1\partial_j(P)\#(R_{1,K}^{ij},R_{2,K}^{ij})]\in A\{ X_1,...,X_n: E_D,R\}\,,$$
and similarly $$(\partial(Q)\otimes \partial(Q))\#R)^{kj}=\sum_{K,l,i}[(\partial_i(Q_k)\#(R_{1,K}^{il})]\o[\partial_l(Q_j))\#(R_{2,K}^{il}).]$$
In this way one gets 
\begin{equation}\label{basicsecondorder}\Delta_{\mathcal{R}}(P\circ Q)=(\partial_{\Delta_{\mathcal{R}}(Q)}P)\circ Q+\Delta_{(\partial(Q)\o\partial(Q))\#{\mathcal{R}}}(P)\circ Q\,.\end{equation}

As before we can also define $\delta_{\mathcal{R}}$ as a derivation  $$\delta_{\mathcal{R}}:B\{ X_1,...,X_n: E_D,R\}\to A\{ X_1,...,X_n: E_D,R\}$$ by requiring that it vanishes on $B\langle X_1,...,X_n: D,R\rangle\ni P$
and satisfies
$$\delta_{\mathcal{R}}(P)=0\,,\quad \delta_{\mathcal{R}}(E_D(Q))=E_D((\Delta_{\mathcal{R}} +\delta_{\mathcal{R}})(Q))\,.$$

We consider the variants  $C^{k,l;\epsilon_1,\epsilon_2}_{tr,(2)}(A,U:B,E_D)$, $\epsilon_1\in\{-1,0,1\},\epsilon_2\in\{-1,0,1,2\},o\in[\![0,\max(0,l-2)]\!]$: 
 
 \begin{align*}&\|P\|_{C^{k,l;\epsilon_1,\epsilon_2}_{tr,(2,o)}(A,U:B,E_D)}=\|\iota(P)\|_{k,l,U}+1_{k\geq \max(\epsilon_2-1,- \epsilon_2)}\sum_{p=0}^{l-1+1_{odd}(|\epsilon_2|)}{\sum_{i=1}^n}\\&\quad \max\Big[\|\mathscr{D}_{i,1}(P)\|_{k,p,U} , \\
 &  \qquad\qquad (0\vee\frac{\epsilon_2}{2})\sup_{\textrm{\tiny$\begin{array}{c}Q\in (C^{k,p}_{tr}(A,U^{m-1}:B,E_D))_1\\ m\geq 2\end{array}$}} \|\mathscr{D}_{i,Q(X')}(P)\|_{k,p,U^m}\Big]
 \\&\quad+\max\Big[{(0\vee\epsilon_1)}\sup_{\|{\mathcal{R}^{kl}}\|_{[(D'\cap A^{\oeh D 2})\widehat{\o}(D'\cap A^{\oeh D 2})]}\leq 1}\|(\Delta_{\mathcal{R}} +\delta_{\mathcal{R}})(P)\|_{0,o,U}, \\ & \qquad\qquad {(0\vee(-\epsilon_1))}\|(\Delta +\delta_{\Delta})(P)\|_{0,o,U}\Big].\end{align*}

{
Finally to deal with our universal norms we need to consider in what space of variables our functions are valued to handle  composition properly. For this consider $U\subset A_R^n,V\subset A_S^n$ 
 sets, $S\geq R$ and $C$ a class of functions on $U$ as before or one defined later, $B_C$ the space of analytic function (either $B_{c}\{ X_1,...,X_n;E_D,R^+,\C\}$ for classes with index $tr$ or $B_c\langle X: D,R,\C\rangle$ or $\cap_{T>R}C^{l+1}_b(A_R^n,B_c\langle X_1,...,X_n: D,T,\C\rangle)$ for classes with index $u$ etc.) used to define it as a separation-completion with canonical map $\iota : B_C\to C$. We define two candidates of sets admissible for composition $$Comp(U,V,C)=\{Q=(Q_1,...,Q_n)\in C^n, 
 \forall X\in U, Q(X)\in V\},$$
 $$Comp^-(U,V,C)=Comp(U,V,C)\cap \overline{Comp(U,V,C)\cap (\iota(B_C))^n)}^{C^n},$$
 which are 
 subspaces of $Comp(U,A_S^n,C)$ 
 .}
 We first define composition on the dense subspace of $Q_i \in \cap_{T>R}C^{l+1}_b(U,B_c\langle X: D,T,\C\rangle)$, with $Q(X)\in V$ for all $X\in U$, 
   for $P\in \cap_{T>S}C^{l+1}_b(V,B_c\langle X: D,T,\C\rangle)$ 
    by $$P(Q_1,...,Q_n):X\in U\mapsto P[(Q_1(X),...,Q_n(X))](Q_1[X],....,Q_n[X])$$
     where $P[(Q_1(X),...,Q_n(X))]\in \cap_{T>S} B_c\langle X:E_D,S,\C\rangle$ is then composed with $Q_i[X]\in B_c\langle X:E_D,R\rangle$, since $\|Q_i[X]\|\leq T$ for some $T\geq S$ one can apply the definition of composition at analytic level from Propositions \ref{analytic}, \ref{ExpectationAnalytic}. 
     
    { If $P\in B_{ c}\{ X_1,...,X_n;E_D,S^+,\C\}$, $P$ defines $X\mapsto P(E_{D,X})$ on any $V\subset A_S^n$, so that we can define $P(Q_1,...,Q_n)$ assuming only $\|Q_i(X)\|<S$ (case $V=A_S^n$ above).}

We can now extend these maps. We first deal with the cases of stability by compositions and then deal with the variants we used in the main texts obtained via various compositions with canonical maps.

\begin{Lemma}\label{compositionCklFixed} Fix $V,U$ as above {with $U\subset V$} {(with $V\subset A_{R,UltraApp}^n$ as soon as a space with index $c$ is involved)}.
The above map $(P,Q_1,...,Q_n)\mapsto P(Q_1,...,Q_n)$ extends continuously to 
$Q_1,...,Q_n\in {Comp^-(U,V,}C^{k,l}_{u}(A,U:B,E_D))$ 
to give a map $$C^{k,l}_{u}(A,V:B,E_D)\times {Comp^-(U,V,}(C^{k,l}_{u}(A,U:B,E_D)))\to C^{k,l}_{u}(A,U:B,E_D),$$
 { for $k\geq l$}.
Moreover, { for any $(k,l)\in \N^2$},  it also extends { continuously consistently } to 
\begin{gather*}C^{k,l}_{tr}(A,V:B,E_D)\times {Comp(U,V,}(C^{k,l}_{tr}(A,U:B,E_D)))\to C^{k,l}_{tr}(A,U:B,E_D),\\ C^{k,l}_{tr,c}(A,V:B,E_D)\times {Comp(U,V,}(C^{k,l}_{tr,c}(A,U:B,E_D)))\to C^{k,l}_{tr,c}(A,U:B,E_D),\\ C^{k,l;0,\epsilon_2}_{tr}(A,V:B,E_D)\times {Comp(U,V,}(C^{k,l;0,1\vee \epsilon_2}_{tr}(A,U:B,E_D)))\to C^{k,l;0,\epsilon_2}_{tr}(A,U:B,E_D),\\ C^{k,l;1,\epsilon_2}_{tr,(2,o)}(A,V:B,E_D)\times {Comp(U,V,}(C^{k,l;\epsilon_1,1\vee \epsilon_2}_{tr,(2,o)}(A,U:B,E_D)))\to C^{k,l;\epsilon_1,\epsilon_2}_{tr,(2,o)}(A,U:B,E_D),\end{gather*} $\epsilon_1\in\{-1,1\},\epsilon_2\in\{-1,0,1,2\},o=0$ and with the constraint $k,l\geq 1$ in case $\epsilon_1=1$. 
{ Finally, for $P\in C^{ k,l+1}_{u}(A,V:B,E_D)$  $(Q_1,...,Q_n)\mapsto P(Q_1,...,Q_n)$ is Lipschitz on bounded sets of $Comp^-(U,V,C^{k,l}_{u}(A,U:B,E_D))$ with corresponding statements on all other spaces  in adding to the $P$ variable only $1$ more derivative to $l$ and to $o$. Moreover, the Lipschitz property  is uniform on bounded sets for $P$ in the space it can be taken.} 
\end{Lemma}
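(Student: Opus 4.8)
The strategy is to establish the extension and all the stated continuity/Lipschitz properties in three layers, reducing everything to the behaviour of iterated differentials under composition via the chain rules already available in the appendix (Propositions \ref{analytic2}, \ref{CalcDiffAnalytic}, \ref{DeltaAnalytic} and formulas \eqref{CompositionAnalytic}, \eqref{CompositionCyclic}, \eqref{CompositionAnalExpectation}, \eqref{basicsecondorder}). The key observation is that $d_X^p[P\circ Q]$ is, by \eqref{CompositionAnalExpectation} and the Faà di Bruno-type expansion, a finite sum of terms of the form $(d_X^{q}\partial^{\bullet}_{\bullet}P)(Q)$ composed via $\#$ and $\circ$ with products of derivatives $d_X^{r_i}\partial^{s_i}_{\bullet}Q_{j_i}$, with $q\le p$ and $\sum r_i\le p$, $\sum s_i\le $ (number of free difference quotients applied), so that each summand involves at most the relevant orders of $P$ and of the $Q_i$. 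The point is then just to track indices so that $P$ needs $k$ free difference quotients and $l$ (or $l+1$, for the Lipschitz statement) full differentials, while each $Q_i$ needs $k$ free difference quotients and $l$ full differentials; the extra ``$1\vee\epsilon_2$'' on the $Q$-slot is exactly what is needed to absorb the cyclic-derivative term $\sum_j\mathscr{D}_{i,\mathscr{D}_j g}\mathscr{D}_jV$-type contributions coming from \eqref{CompositionCyclic}.

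\textbf{Step 1: the bounded-variation/product estimates.} First I would record the explicit formula \eqref{CompoCkltrFormula}-style expansion for $d^s_{(X)}(\partial^l_{(j_1,\dots,j_l)}(P\circ Q))$ on monomials, and bound each summand using the multiplicativity of the $\#$ and $\circ$ products on (cyclic) extended Haagerup tensor powers established in Proposition \ref{CyclicPermutations}(1) and Proposition \ref{OnePermutation}. This yields, on the dense subspace of $C^{l+1}_b$-functions, an inequality of the shape
\begin{equation*}
\|P\circ Q\|_{k,l,U}\le C(k,l,n)\,\|P\|_{k,l,V}\Big(1+\max_i\|Q_i\|_{k,l,U}\Big)^{k+l-1}\max_i\|Q_i\|_{k,l,U}
\end{equation*}
together with the analogous inequalities for the $\mathscr{D}_{i,1}$, $\mathscr{D}_{i,Q'(X')}$, and (for the $tr,(2,o)$ spaces) $\Delta_{\mathcal{R}}+\delta_{\mathcal{R}}$ terms; here \eqref{basicsecondorder} is used directly to see that composing a second-order operator of type $\Delta_{\mathcal{R}}$ with $Q$ again produces a sum of a first-order term and a term of the same type $\Delta_{(\partial Q\o\partial Q)\#\mathcal{R}}$, which is where the ``$o=0$'' restriction and the $\epsilon_1$ bookkeeping enter. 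From these estimates, continuity of $(P,Q)\mapsto P\circ Q$ on bounded sets follows by writing $P_1\circ Q_1-P_2\circ Q_2=(P_1-P_2)\circ Q_1+(P_2\circ Q_1-P_2\circ Q_2)$ and handling the second difference by a telescoping/mean-value argument in the $Q$ variable --- which is precisely the origin of the ``one more derivative on $P$'' in the Lipschitz statement, since controlling $P_2\circ Q_1 - P_2\circ Q_2$ requires $d_X$ of $P_2\circ(\cdot)$, i.e. $\|P_2\|_{k,l+1,V}$.

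\textbf{Step 2: passing to completions.} Having the estimates on the analytic/$C^{l+1}_b$ dense subspaces, the extension is a routine density argument: given $Q_i\in Comp^-(U,V,C^{k,l}_u)$ pick $Q_i^{(\nu)}\in \iota(B_C)$ with $Q^{(\nu)}(X)\in V$ converging to $Q_i$ in $C^{k,l}_u$-norm, and $P^{(\mu)}$ converging to $P$; the bilinear estimate of Step 1 shows $(P^{(\mu)}\circ Q^{(\nu)})$ is Cauchy, and its limit is independent of the approximating sequences, again by the estimate. For the $tr$, $tr,c$, $tr;0,\epsilon_2$ and $tr,(2,o)$ spaces one uses $Comp(U,V,\cdot)$ rather than $Comp^-$ because there $P$ comes from a genuine analytic function with expectations $B_c\{X:E_D,R^+,\C\}$ (resp. its cyclic/second-order variants), so $P(Q)$ is defined on all of $V\subset A^n_{S}$ by evaluation of $E_{D,Q(X)}$ as in Proposition \ref{ExpectationAnalytic}(d), and no boundary approximation of $Q$ by analytic functions is needed; consistency of the various extensions is immediate because they all restrict to the same map on the common dense analytic subspace. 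The index shift $C^{k,l;0,\epsilon_2}_{tr}\times Comp(\cdot,C^{k,l;0,1\vee\epsilon_2}_{tr})\to C^{k,l;0,\epsilon_2}_{tr}$ (and its $(2,o)$ analogue) is exactly what the chain rule \eqref{CompositionCyclic} forces: applying $\mathscr{D}_{i,1}$ to $P\circ Q$ produces terms $\mathscr{D}_{i,(\mathscr{D}_{Q_j}P)(Q)}(Q_j)$, and $(\mathscr{D}_{Q_j}P)(Q)$ is controlled by the ``$\mathscr{D}_{i,Q'(X')}$ with arbitrary $Q'$'' part of the norm --- present only when $\epsilon_2=2$, i.e. one needs the source $Q$-space to have $\epsilon_2=1\vee\epsilon_2$.

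\textbf{Main obstacle.} The genuinely delicate point, and where I would spend the most care, is \textbf{Step 1 in the cyclic and second-order settings}: verifying that each summand of $d^s_X\mathscr{D}_{i,Q(X')}(P\circ Q)$ and of $(\Delta_{\mathcal{R}}+\delta_{\mathcal{R}})(P\circ Q)$ actually lands in the correct (cyclic) extended Haagerup tensor power, with a norm bound uniform over $Q'$ in the unit ball, requires matching up the combinatorial bookkeeping of \eqref{CompositionAnalExpectation} with the stability-of-kernels statements of Proposition \ref{OnePermutation} (the three ``$U\#_K V\in M^{\oeh{D}(\cdots,\tau)}$'' cases) and the hypothesis $V\subset A^n_{R,UltraApp}$ needed to even evaluate into cyclic tensor products (Proposition \ref{TensorCyclicEvaluation}). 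In particular one must check that composing cyclic analytic functions does not break the cyclic-symmetry kernels --- this is where the restriction to $Comp$ (not $Comp^-$) and the ultrametric-approximability assumption on $V$ are essential, and it is the only place where the argument is not a mechanical extension of the single-variable / polynomial computation. Everything else (the $C^{l+1}_b$ chain rule, the density extension, the telescoping Lipschitz estimate) is bookkeeping once the index count is pinned down.
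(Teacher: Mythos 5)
Your proposal follows essentially the same route as the paper's proof: the Fa\`a di Bruno-type expansion of $d^s_X\partial^k(P\circ Q)$ bounded via the (cyclic) extended Haagerup product estimates on the dense $C^{l+1}_b$ subspace, the density extension with the $Comp^-$/$Comp$ distinction (analytic functions with expectations for the $tr$-spaces), the chain rule \eqref{CompositionCyclic} to account for the $1\vee\epsilon_2$ shift on the $Q$-slot, \eqref{basicsecondorder} for the $\epsilon_1$ part, and one extra derivative on $P$ (via a mean-value/fundamental-theorem argument) for the Lipschitz statement. The only slight imprecision is that the supremum over arbitrary coefficients $Q'$ in the norm \eqref{normehorrible} already appears (with factor $1/2$) when $\epsilon_2=1$, not only when $\epsilon_2=2$, but this does not affect your conclusion that the source $Q$-space must carry the index $1\vee\epsilon_2$.
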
 
Although the case $o\in [\![1,\max(0,l-1)]\!]$ is not needed in this paper, it can be treated similarly but this is left to the reader.
\begin{proof}

{ Note first that for composition on $Comp^-$
we can extend the first definition of composition since then we have approximate $Q\in \iota(B)^n$ with $Q(X)\in V$. For all extension to $Comp$ we use the second definition since we can start from $P\in B_{c}\{ X_1,...,X_n;E_D,S^+,\C\}$ by density in the corresponding spaces. 
As we will see, we will always extend first in $Q$, and for $P$ fixed as above this extension can be done with $V=A_S^n,$ using $Comp(U,A_S^n,C)=Comp^-(U,A_S^n,C)$ (since $A_S^n$ open and using  compatibility with the topology of considered $C$) and then restrict this first extension to our space $Comp(U,V,C)\subset Comp(U,A_S^n,C).$
}
We have to estimate various norms using  \eqref{CompositionAnalytic} and \eqref{CompositionAnalExpectation} (and its variant which is the elementary differentiation of composition of functions):
\begin{align*}&d^{s}_{X(r_1,...,r_s)}(\partial^{k}_{(j_1,...,j_k)}P(Q_1,...Q_n))=\sum_{l=1}^k\sum_{n_1,...,n_l}\sum_{1\leq i_1<...<i_l=k} \\ &d^{s}_{X(r_1,...,r_s)}[(\partial_{(n_1,...,n_l)}^{l}(P))(Q_1,...Q_n)\#(\partial_{(j_1,...,j_{i_1})}^{i_1}Q_{n_1},\partial_{(j_{i_1+1},...,j_{i_2})}^{i_2-i_1} Q_{n_2},...,\partial_{(j_{i_{l-1}+1},...,j_{k})}^{k-i_{l-1}} Q_{n_l})]
\\&=\sum_{l=1}^k\sum_{n_1,...,n_l}\sum_{1\leq i_1<...<i_l=k}\ \ \sum_{V=\{\{t_{0,1}<...<t_{0,u_0}\},...\{t_{l,1}<...<t_{1,u_l}\}\in Part([1,s])\}} \sum_{m=1}^{u_0}\sum_{o_1,...,o_m}\sum_{1\leq j_1<...<j_m=u_0}
\\&
\sum_{L\in Part([1,u_0]):L_{1,1}=1,L_{.-1,1}<L_{.,1}} \left[[d_{X(o_1,...,o_m)}^{m}(\partial_{(n_1,...,n_l)}^{l}(P)](Q_1,...Q_n)\right.\\&\left.\circ
(d_{X(r_{t_{0,L_{1,1}}},...,r_{t_{0,L_{1,j_1})})}}^{j_1}Q_{o_1},,...,d_{X(r_{t_{0,L_{m,1}}},...,r_{t_{0,L_{m,u_0-j_{m-1}})})}}^{u_0-j_{m-1}} Q_{o_m}))]\right]\\&\#(d^{u_1}_{X(r_{t_{1,1}},...,r_{t_{1,u_1}})}\partial_{(j_1,...,j_{i_1})}^{i_1}Q_{n_1},...,d^{u_l}_{X(r_{t_{l,1}},...,r_{t_{l,u_l}})}\partial_{(j_{i_{l-1}+1},...,j_{k})}^{k-i_{l-1}} Q_{n_l})]
\end{align*}
(the sum over $V$ runs over partitions of $[1,s]$ (not ordered) and the sum over $L=\{\{L_{1,1}<...<L_{1,j_1}\},\cdots,\{L_{m-1,1}<L_{m,1}<...<L_{m,j_m-j_{m-1}}\}\}),L_{.-1,1}<L_{.,1}$ over partitions $Part([1,u_0])$ of $[1,u_0]$ with the extra inequalities written ordering the blocks of the partitions by the index of the smallest element).
Now for $P\in \cap_{T>S}C^{l+1}_b(A_S^n,B_c\langle X: D,T,\C\rangle)$, one checks (using we started from one more derivative on $U$ than necessary, namely $l+1$ instead of $l$) that $(Q_1,...,Q_n)\mapsto P(Q_1,...,Q_n)$ is uniformly continuous (on balls) thus extends by uniform continuity to $Comp^-(U,V,C^{k,l}_{u}(A,U:B,E_D))$.

Obviously, if one does not care about constants, we have from the previous computation, a bound of the form 
$$\|P(Q_1,...,Q_n)\|_{k,l,U}\leq C(k,l,n)\|P\|_{k,l,V}
\left(1+\max_{i=1,...,n}\|Q_i\|_{k,l,U}\right)^{k+l}$$
thus $P\mapsto P(Q_1,...,Q_n)$ is Lipschitz with value in the space continuous functions with supremum norm on $Q_i$ and thus extend to all $P$ in the  space $C^{k,l}_{u}(A,V:B,E_D)$. This concludes to the extension part. 
{ Note that one deduces from the computations above the estimate of independent interest :
\begin{equation}\label{composition1stOrder}\|P(Q_1,...,Q_n)\|_{k,l,U,\geq 1}\leq C(k,l,n)\|P\|_{k,l,V,\geq 1}
\left(1+\max_{i=1,...,n}\|Q_i\|_{k,l,U}\right)^{k+l-1}\max_{i=1,...,n}\|Q_i\|_{k,l,U,\geq 1}\end{equation}

}

For the Lipschitz property, the only problematic term in the expression above is the composition $d^{s}_{X(r_1,...,r_s)}[(\partial_{(o_1,...,o_l)}^{l}(P))(Q_1,...,Q_n).$ We note  that under the supplementary assumption of differentiability for $P$, it is always differentiable with differential   $$\sum_{i}d^{s+1}_{X(r_1,...,r_s,i)}[(\partial_{(n_1,...,n_l)}^{l}(P))(Q_1,...,Q_n)(\cdot,...,\cdot,H_i).$$ The conclusion follows by the fundamental Theorem of calculus. 

 Now, the case of $C^{k,l}_{tr}$ spaces is obvious because $P(Q_1,...,Q_n)$ exactly comes from the composition in Proposition \ref{ExpectationAnalytic} 
  and the discussion at the beginning of the proof to deal with $Comp$. $C^{k,l}_{tr,c}$ is also a variant.

We now turn to the spaces $C^{k,l;0,\epsilon_2}_{tr}$ first with $\epsilon_2=1$. For $P$ fixed analytic, the extension in $Q_i$ is as easy as before {(using the estimate below)}, it remains to prove uniform Lipschitz property  in $P$. Recall the basic formula \eqref{CompositionCyclic} 
 and since in our  case $\mathscr{D}_{Q_i,R}(P)(Q_1,...,Q_n)\in C^{k,l}_{tr}(A,U^{m-1}:B,E_D)$ we have the following bound for $p\leq l$ :
\begin{align}
 \label{eq:cycliccompo}
\begin{split} \sup_{R\in (C^{k,p}_{tr}(A,U^{m-1}:B,E_D))_1}& \|\mathscr{D}_{i,R(X')}(P(Q_1,...,Q_n))\|_{k,p,U^m}\\&\leq \sum_{j=1}^n\sup_{S\in (C^{k,p}_{tr}(A,U^m:B,E_D))_1} \|\mathscr{D}_{i,S(X'')}(Q_j))\|_{k,p,U^{m+1}}\\&\sup_{R\in (C^{k,p}_{tr}(A,U^{m-1}:B,E_D))_1} \|\mathscr{D}_{Q_j,R(X')}(P)(Q_1,...Q_n)\|_{k,p, U^{m}}\end{split}\end{align}
where we of course took the variables $S=\mathscr{D}_{Q_j,R(X')}(P)(Q_1,...Q_n), X''=(X',X)\in U^m,$  and used  $\|\mathscr{D}_{i,\mathscr{D}_{Q_j,R(X')}(P)(Q_1,...Q_n)}(Q_j))\|_{k-1,p,U^{m}}\leq \|\mathscr{D}_{i,S(X'')}(Q_j))\|_{k-1,p,U^{m+1}}.$
And from a variant with parameter 
 of our previous estimates for the change of variable $(Q_1(X),...,Q_n(X),X')$ {(based on the fact that no additional sum related to composition is involved for the variables $X'$ so that the constant $C(k-1,p,n)$ below only involves the number of variables of $X$)}, the last term is bounded by \begin{align*}
 \begin{split}\|\mathscr{D}_{Q_j,R(X')}&(P)(Q_1,...Q_n)\|_{k,p,U^{m}}\\&\leq C(k,p,n)\|\mathscr{D}_{X_j,R(X')}(P)\|_{k,p,V\times U^{m-1}}\left(1+\max_{i=1,...,n}(\|Q_i\|_{k,p,U}) \right)^{p+k}.\end{split}\end{align*}
This gives the expected Lipschitz bound in $P$ (using $U\subset V$ in taking $Q_j(X)=X_j$) for the part with cyclic gradients.  The Lipschitz property  in $Q$ is dealt with as before.

We now consider the case $\epsilon_2=0$.  In this case the norm becomes
 \begin{align*}\begin{split}&\|P\|_{C^{k,l;\epsilon_1,0}_{tr,V}(A,U:B,E_D)}=\|\iota(P)\|_{k,l,U}+{\epsilon_1}\|{}(\Delta_V+\delta_V)(P)\|_{C^*_{tr}(A,U)}+\sum_{p=0}^{l-1}{\sum_{i=1}^n}\|\mathscr{D}_{i,1}(P)\|_{k,p,U}.\end{split}\end{align*}
 and thus we can use the estimate \eqref{eq:cycliccompo} with $R=1$ to conclude.
 
 We now turn to the case $\epsilon_2 = -1$.  In this case the norm becomes
 \begin{align*}\begin{split}&\|P\|_{C^{k,l;\epsilon_1,-1}_{tr,V}(A,U:B,E_D)}=\|\iota(P)\|_{k,l,U}+{\epsilon_1}\|{}(\Delta_V+\delta_V)(P)\|_{C^*_{tr}(A,U)}+ 1_{k\geq 1} \sum_{p=0}^{l}{\sum_{i=1}^n}\|\mathscr{D}_{i,1}(P)\|_{k,p,U}.\end{split}\end{align*}
The term 
 $\sum_{p=0}^{l}{\sum_{i=1}^n}\|\mathscr{D}_{i,1}(P)\|_{k,p,U}$ is controlled by the similar term (with summation up to $l$) in \eqref{normehorrible} which gives the norm of $Q$ (noting that $\epsilon_2 \vee 1 =1$).  The other terms are treated as before. 
 
 Finally, we consider the case $\epsilon_2 = 2$.  This time $1\vee \epsilon_2 =2$ and the summation over $p$ goes up to {$l-1$}; thus we can use essentially the same estimate as in \eqref{eq:cycliccompo} in this case.

It remains to deal with the case $\epsilon_1=1$ with $o=0$.  It is based on \eqref{basicsecondorder}
\begin{align*}(\Delta_{\mathcal{R}}+\delta_{\mathcal{R}})(P\circ Q)&=(d_{Q(X)}P(E_{D,Q(X)}).(\Delta_{\mathcal{R}}+\delta_{\mathcal{R}})(Q))\\&+[\Delta_{(\partial Q\otimes \partial Q)\#\mathcal{R})}+\delta_{(\partial Q\otimes \partial Q)\#\mathcal{R})}](P)(E_{D,Q(X)})(Q(X))\,.\end{align*}
so that one gets :
\begin{align*}&\sup_{\|{\mathcal{R}^{kl}}\|_{[(D'\cap A^{\oeh D 2})\widehat{\o}(D'\cap A^{\oeh D 2})]}\leq 1}\|(\Delta_{\mathcal{R}} +\delta_{\mathcal{R}})(P\circ Q)\|_{C^*_{tr}(A,U)}\\&\leq \sup_{\|{\mathcal{R}^{kl}}\|_{[(D'\cap A^{\oeh D 2})\widehat{\o}(D'\cap A^{\oeh D 2})]}\leq 1}\|(\Delta_{\mathcal{R}} +\delta_{\mathcal{R}})(P)\|_{C^*_{tr}(A,V)}\left(\max_{i=1,...,n}\|Q_i\|_{1,0,U}^2\right)\\&+\sup_{\|{\mathcal{R}^{kl}}\|_{[(D'\cap A^{\oeh D 2})\widehat{\o}(D'\cap A^{\oeh D 2})]}\leq 1}\|(\Delta_{\mathcal{R}} +\delta_{\mathcal{R}})(Q)\|_{C^*_{tr}(A,U)}\|P\|_{1,1,U}.\end{align*}
This enables the extension in $P$ after extension in $Q$ if $k,l\geq 1$ and gives the Lipschitz property  in $Q$ on bounded set as required (using $o$ became $o+1$ for dealing with the annoying new term). The case $\epsilon_1=-1$ is possible because taking $\mathcal{R}^{kl}=1_{k=l}(1\o 1)\o (1\o 1)$ recovers the Laplacian and using a general $\mathcal{R}$ on the $P$ variable enables to deal with the particular case (and remove the sup) for $Q,P\circ Q$ variables.\end{proof}


\begin{Corollary}\label{compositionCkl}In the setting of the previous Lemma (in particular for $U\subset V\subset A_{R,UltraApp}^n$),
for  any $l\geq 1$ (and $k\geq 2$ in any case with $W$) the map 
 $(P,Q_1,...,Q_n)\mapsto P(Q_1,...,Q_n)$ also extends continuously  consistently  
 to $$C^{k+l}_c(A,V:B,D)\times {Comp(U,V,}(C^{k,l}_{tr,W}(A,U:B,E_D)))\to C^{k,l}_{tr,W}(A,U:B,E_D),$$ $$C^{k+l}_c(A,V:B,D)\times {Comp(U,V,}(C^{k,l}_{tr,W,c}(A,U:B,E_D)))\to C^{k,l}_{tr,W,c}(A,U:B,E_D),$$  $$C^{l}_c(A,V:B,D)\times {Comp(U,V,}(C^{l}_{c}(A,U:B,D)))\to C^{l}_{c}(A,U:B,D)$$ 
$$C^{k,l;0,-1}_{tr}(A,V:B,E_D)\times {Comp(U,V,}(C^{k+l+1}_{c}(A,U:B,D)))\to C^{k,l;0,-1}_{tr}(A,U:B,E_D),$$

Similarly as before if we require one more derivative in $P$ in the $l$ variable, one gets  the Lipschitz property  on bounded sets in the space for $Q$. 
\end{Corollary}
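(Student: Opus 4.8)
The plan is to deduce each of the four composition statements from Lemma~\ref{compositionCklFixed} by composing on the left with the bounded injections supplied by Lemma~\ref{Ckc}, after identifying the spaces $C^{k,l}_{tr,W}$ (and its cyclic variant $C^{k,l}_{tr,W,c}$) and $C^{l}_c$ with the appropriate members of the graded scale $C^{k,l;\epsilon_1,\epsilon_2}_{tr,(2,o)}$ on which Lemma~\ref{compositionCklFixed} already operates. Since all the maps in sight are continuous extensions, from the common dense subspace of (cyclic) analytic functions with expectations, of the single composition operation $(P,Q_1,\dots,Q_n)\mapsto P(Q_1,\dots,Q_n)$ of Propositions~\ref{analytic2} and \ref{ExpectationAnalytic}, the word ``consistently'' will require no separate argument.

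First I would record two elementary identifications. On one hand, when $k\geq 2$ the norm of $C^{k,l}_{tr,W}(A,U:B,E_D)=C^{k,l;1_{k\geq 2},2}_{tr,W}(A,U:B,E_D)$ is equivalent, with constant depending only on the fixed datum $W$, on $R$ and on $U$, to that of $C^{k,l;-1,2}_{tr,(2,0)}(A,U:B,E_D)$: the two norms differ only in that the first uses the exact operator $\Delta_W+\delta_W$ where the second uses $\Delta+\delta_\Delta$, and by a localized form of Proposition~\ref{DeltaAnalytic} the operator $(\Delta_W+\delta_W)-(\Delta+\delta_\Delta)=-\sum_i\partial_i(\cdot)\#\mathscr{D}_iW+(\delta_W-\delta_\Delta)$ is of first order, so its $C^*_{tr}(A,U)$-norm is dominated by $C(W)\,\|\omega(\cdot)\|_{k,l,U}$, a term already present in both norms; the cyclic-gradient summands of the two norms coincide since the unit ball over which the supremum is taken contains the constant $1$. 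The hypothesis ``$k\geq 2$ in any case with $W$'' is exactly what makes the Laplacian term $1_{k\geq 2}\|(\Delta_W+\delta_W)(\cdot)\|_{C^*_{tr}(A,U)}$ present, matching the (unconditional) Laplacian term of $C^{k,l;-1,2}_{tr,(2,0)}$. On the other hand, the injection $C^{k+l}_c(A,U:B,D)\to C^{k,l}_{tr,W}(A,U:B,E_D)$ of Lemma~\ref{Ckc} in fact takes values in $C^{k,l;1,2}_{tr,(2,0)}(A,U:B,E_D)$: if $P$ carries no expectations then $\delta_{\mathcal{R}}(P)=0$, so $(\Delta_{\mathcal{R}}+\delta_{\mathcal{R}})(P)=\Delta_{\mathcal{R}}(P)$ is, by its defining formula, a finite sum of $\#$-contractions of $\partial_i\otimes 1\,\partial_j(P)$ against $\mathcal{R}^{ij}$, whence $\sup_{\|\mathcal{R}^{ij}\|\leq 1}\|(\Delta_{\mathcal{R}}+\delta_{\mathcal{R}})(P)\|_{0,0,U}\leq C\,\|P\|_{2,0,U,c}$ (legitimate since $k+l\geq 2$), while the remaining seminorms of $C^{k,l;1,2}_{tr,(2,0)}$ are bounded exactly as in Lemma~\ref{Ckc}. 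In the same way $C^{k+l+1}_c(A,U:B,D)\to C^{k,l;0,1}_{tr}(A,U:B,E_D)$ is a restatement of the second injection of Lemma~\ref{Ckc}, the subscript $W$ disappearing because $\epsilon_1=0$.

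With these identifications in hand, for the first arrow I would take $P\in C^{k+l}_c(A,V:B,D)$, regard it as an element of $C^{k,l;1,2}_{tr,(2,0)}(A,V:B,E_D)$, take $Q_i\in C^{k,l}_{tr,W}(A,U:B,E_D)$, regard each $Q_i$ as an element of $C^{k,l;-1,2}_{tr,(2,0)}(A,U:B,E_D)=C^{k,l;-1,1\vee 2}_{tr,(2,0)}(A,U:B,E_D)$, apply the last composition statement of Lemma~\ref{compositionCklFixed} with $\epsilon_1=-1$, $\epsilon_2=2$, $o=0$ to obtain $P(Q_1,\dots,Q_n)\in C^{k,l;-1,2}_{tr,(2,0)}(A,U:B,E_D)$, and transfer the bound back to $C^{k,l}_{tr,W}(A,U:B,E_D)$ via the norm equivalence above; the cyclic arrow is identical with cyclic tensor products throughout, the cyclic variant of that statement being obtained by the same argument. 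For the fourth arrow one has $P\in C^{k,l;0,-1}_{tr}(A,V:B,E_D)$ and $Q_i\in C^{k+l+1}_c(A,U:B,D)$, hence $Q_i\in C^{k,l;0,1}_{tr}(A,U:B,E_D)=C^{k,l;0,1\vee(-1)}_{tr}(A,U:B,E_D)$, so the $\epsilon_1=0$, $\epsilon_2=-1$ line of Lemma~\ref{compositionCklFixed} applies verbatim. The third arrow $C^{l}_c(A,V:B,D)\times Comp(U,V,C^{l}_c(A,U:B,D))\to C^{l}_c(A,U:B,D)$ is the expectation-free, cyclic analogue of the $C^{k,l}_u$ case of Lemma~\ref{compositionCklFixed}: composition of cyclic analytic functions is well defined by Propositions~\ref{analytic2} and \ref{CyclicPermutations}, its iterated free difference quotients are given by \eqref{CompositionAnalytic}, each summand is estimated through the complete boundedness of the $\#$-multiplications of Proposition~\ref{CyclicPermutations} and evaluated into cyclic tensor powers by Proposition~\ref{TensorCyclicEvaluation} (this is where $V\subset A_{R,UltraApp}^n$ is used), and the full differentials $d_X$ are reduced to free difference quotients as in the proof of Lemma~\ref{compositionCklFixed}, the extension off the analytic functions following by the same uniform-continuity-in-$Q$ and Lipschitz-in-$P$ argument. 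In every case one extra differentiation in the $P$-variable upgrades continuity to a Lipschitz bound on bounded sets, directly by the corresponding clauses of Lemmas~\ref{compositionCklFixed} and \ref{Ckc}.

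The main obstacle is not any single estimate --- each is routine once set up --- but the bookkeeping of matching the ``trace $+$ Laplacian $+$ cyclic-gradient'' seminorms of $C^{k,l}_{tr,W}$ (and of $C^{l}_c$, which carries no such structure in its definition) to the $\epsilon_1,\epsilon_2$-graded scale of Lemma~\ref{compositionCklFixed}: one must be careful that the injection $C^{k+l}_c\hookrightarrow C^{k,l;1,2}_{tr,(2,0)}$ really produces the ``$\sup_{\mathcal{R}}$'' form of the second-order term, which is available here only because $P$ has no expectations, and that the target space closes up under composition only when $Q$ is taken one notch better in the $\epsilon_2$-index (the ``$1\vee\epsilon_2$''), which is precisely what the estimates of Lemma~\ref{Ckc} deliver.
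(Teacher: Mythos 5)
Your proposal is correct and follows essentially the same route as the paper: the paper's proof is exactly the reduction to Lemma \ref{compositionCklFixed} via the canonical maps $C^{k+l}_c\to C^{k,l;1,2}_{tr,(2,0)}$, $C^{k,l}_{tr,W}\to C^{k,l;-1,2}_{tr,(2,0)}$ and $C^{k+l+1}_c\to C^{k,l;0,1}_{tr}$ from Lemma \ref{Ckc}, with the $C^l_c$ arrow handled by stability of expectation-free (cyclic) analytic functions under composition. You merely spell out the norm comparisons (first-order nature of $(\Delta_W+\delta_W)-(\Delta+\delta_\Delta)$ and the $\sup_{\mathcal R}$ bound for expectation-free $P$) that the paper leaves implicit.
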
 

\begin{proof}
This is a consequence of the previous result using the canonical maps :$C^{k+l}_c(A,V:B,D)\to C^{k,l;1,2}_{tr,(2,0)}(A,V:B,E_D),$  $C^{k,l}_{tr,W}(A,U:B,E_D)\to C^{k,l;-1,2}_{tr,(2,0)}(A,V:B,E_D),$ for $k\geq 2,l\geq 1,$
and $C^{k+l+1}_c(A,U:B,D)\to C^{k,l;0,1}_{tr}(A,U:B,E_D),$ from Lemma \ref{Ckc}.

 The last variant for $C^{k}_{c}(A,U:B,D)$ is easy since it is defined as a subspace with equivalent norm with respect to the previous space (with $l=0$) and thus a consequence of stability of analytic functions (without expectation) by composition.\end{proof}

An easy computation shows that \begin{equation}\label{CyclicDifferential}\sum_{i=1}^n\tau(\mathscr{D}_{i,e}(P)(E_{D,X})(X)H_i)=\tau(e[d_XP(E_{D,X}).(H_1,...,H_n)]).\end{equation}

 Note that \eqref{CyclicDifferential} extends for $e=1$ to $C^{k,l;\epsilon_1,\epsilon_2}_{tr,V}(A,U:B,E_D)$, $X\in U$ as soon as $l\geq 1.$

We will need later the following consequence of Proposition \ref{DeltaAnalytic}. 

Let us define the first space introducing a conjugate variable assumption that will be frequently used in the next subsection: $$A_{R,conj}^n=\{X\in A_{R,UltraApp}^n, \partial_i^*(1\o 1)\in W^*(X),i=1,...,n\}.$$ 
\begin{Lemma}\label{cyclic}Let  $U\subset A_{R,conj}^n.$
\begin{enumerate}\item {Let $V\in C^{1}_c(A,A_{R,conj}^n:B,D).$
For $g\in B\{ X_1,...,X_n:E_D,S,\C\}$, $X=(X_1,...,X_n)\in A_{R,conj}^n$, $\xi_i=\partial_i^*1\o 1,\xi=(\xi_1,...,\xi_n)$ the conjugate variables of $X$ relative to $E_D$ in presence of $B$, then $$(\delta_V(g))(E_{X,D})=dg(E_{X,D}).(\xi-\mathscr{D}V(X_1,...,X_n)),$$
and this extends to $g\in C^{k,l}_{tr,V}(A,U), k\geq 2,l\geq 1.$}

\item Let $k\in\{ 0,1,2,3\},V\in C^{{k+1}}_c(A,A_{R,conj}^n:B,D)$. For any $g\in C^{k+2,2}_{tr,V}(A,U:B,E_D)$, we have $h=(\Delta_V+\delta_V)(g)\in C^{k,{0 ;0,-1}}_{tr,V}(A,U{\cap A_{R,conj}^n}:B,E_D)$,{$(\mathscr{D}_{i}g)\in C^{{k+2,1 }}_{tr}(A,U{\cap A_{R,conj}}:B,E_D)$} and we have equality in $C^{{k,0}}_{tr}(A,U{\cap A_{R,conj}}:B,E_D)$:
$$\mathscr{D}_{i}h=(\Delta_V+\delta_V)(\mathscr{D}_{i}g)-\sum_{j=1}^n\mathscr{D}_{i,\mathscr{D}_{j}g}\mathscr{D}_{j}V.$$
\end{enumerate}
\end{Lemma}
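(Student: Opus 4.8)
\textbf{Plan of proof for Lemma \ref{cyclic}.}

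The plan is to prove part (1) first, since part (2) uses it (together with the commutation formula from Proposition \ref{DeltaAnalytic}). For part (1) I would start with $g$ a monomial in $B\{X_1,\dots,X_n:E_D,S,\C\}$, so it suffices by linearity and weak-$*$/norm density to treat the purely algebraic case. Recall $\delta_V$ is the derivation defined by $\delta_V(P)=0$ for $P\in B\langle X:D,R\rangle$ and $\delta_V(E_D(Q))=E_D((\Delta_V+\delta_V)(Q))$. I would argue by induction on the number of nested $E_D$'s appearing in $g$. For $g$ with no expectation the claim reads $0=dg(E_{X,D}).(\xi-\mathscr{D}V(X))$ evaluated at $X$; but $dg$ is the ``differentiation under $E_D$'' which by definition vanishes on $B\langle X:D,R\rangle$, so both sides are zero. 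For the inductive step, using the definition of the conjugate variables $\xi_i=\partial_i^*(1\o 1)$ relative to $E_D$ in the presence of $B$, one has the integration by parts identity $E_D[(\partial_i Q)\# 1\o 1]=E_D[Q\,\xi_i]$ in the form $\tau$-testable against $D$; combined with the Schwinger--Dyson-type relation $\xi_i=\mathscr{D}_iV(X)$ being \emph{not} assumed here (it is $\xi-\mathscr{D}V$ that appears), the key computation is that for a submonomial $E_D(Q)$, applying $d$ (differentiation under that $E_D$) replaces $E_D(Q)$ by $E_D(\sum_j \partial_j Q\#H_j)$ with $H_j=\xi_j-\mathscr{D}_jV(X)$, while applying $\Delta_V+\delta_V$ inside $E_D(Q)$ produces $E_D((\Delta_V+\delta_V)Q)$, and $\Delta_V=\Delta-\sum_i\partial_i(\cdot)\#\mathscr{D}_iV$ with $\Delta$ the flat Laplacian; the difference between the free-difference-quotient-squared term $\Delta$ evaluated at $X$ and the directional derivative in the direction $\xi$ is exactly governed by the definition $E_D[m\circ(1\o E_D\o1)\partial_i\o1\partial_i Q]=E_D[(\partial_i Q)\#\xi_i]$ that expresses $\xi_i$ as the conjugate variable. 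So the induction closes. The extension to $g\in C^{k,l}_{tr,V}(A,U)$ for $k\ge2,l\ge1$ follows by continuity: both sides are continuous in the relevant seminorms (the left via the definition of the $C^{k,l}_{tr,V}$ norm which controls $(\Delta_V+\delta_V)$, the right via $\|\cdot\|_{k,l,U}$ controlling $d_X g$), and monomials are dense.

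For part (2), I would first record the membership statements. That $(\mathscr{D}_i g)\in C^{k+2,1}_{tr}(A,U\cap A_{R,conj}:B,E_D)$ comes directly from the definition of the $C^{k+2,2}_{tr,V}$ seminorm in \eqref{normehorrible}, whose last block of terms is precisely $\sup\|\mathscr{D}_{i,Q(X')}(g)\|_{k,p,U^m}$ with $p$ up to $l-1=1$, so $\mathscr{D}_i g=\mathscr{D}_{i,1}g$ lies in the stated space. That $h=(\Delta_V+\delta_V)(g)\in C^{k,0;0,-1}_{tr}(A,U\cap A_{R,conj}:B,E_D)$ requires that one may apply the free difference quotient $k$ times to $h$ and one cyclic gradient, which is exactly what the ``$(\Delta_V+\delta_V)(g)$'' term in the $C^{k+2,2}_{tr,V}$ norm controls, using $k+2\ge k+2$ derivatives available on $g$, two of which are consumed by $\Delta_V$; here I would invoke the composition/evaluation results (Corollary \ref{compositionCkl}, with $V\in C^{k+1}_c$) to handle the extension $\Delta_V$ from the ``$V_0(Z)=\frac12\sum Z_i^2$'' model evaluated at $Z=\mathscr{D}V(X)$, exactly as in \eqref{defDeltaV}.

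The heart of part (2) is the commutation formula $\mathscr{D}_i h=(\Delta_V+\delta_V)(\mathscr{D}_i g)-\sum_j\mathscr{D}_{i,\mathscr{D}_j g}\mathscr{D}_j V$. My plan is to first prove it at the purely analytic level: for $g\in B_c\{X_1,\dots,X_n:E_D,S,\C\}$, combine \eqref{commutationDDelta} $\mathscr{D}_i(\Delta+\delta_\Delta)=(\Delta+\delta_\Delta)\mathscr{D}_i$ of Proposition \ref{DeltaAnalytic}(c) with the extra term coming from the $-\sum_i\partial_i(\cdot)\#\mathscr{D}_i V$ in $\Delta_V$; the relevant identity is \eqref{derivcyclic} / Proposition \ref{DeltaAnalytic}(d), namely $\mathscr{D}_i(\Delta_V+\delta_V)(g)=(\Delta_V+\delta_V)\mathscr{D}_i(g)+\sum_j\mathscr{D}_{i,\mathscr{D}_j g}\mathscr{D}_j V$ up to sign conventions — I would reconcile the sign with \eqref{CyclicDerivation} and \eqref{CompositionCyclic} and the flip \eqref{flip} carefully. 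Then I would pass from the analytic identity to the asserted identity in $C^{k,0}_{tr}(A,U\cap A_{R,conj}:B,E_D)$ by the density of $B_c\{X:E_D,S,\C\}$ in $C^{k+2,2}_{tr,V}(A,U)$ together with the continuity of each operation appearing — $(\Delta_V+\delta_V)$ is continuous $C^{k+2,2}_{tr,V}\to C^{k,0;0,-1}_{tr}$ by the definition of the norm, $\mathscr{D}_i$ is continuous into the appropriate space, $\mathscr{D}_{i,\mathscr{D}_j g}(\cdot)$ composed with $\mathscr{D}_j V$ is continuous by Lemma \ref{Ckc} and Corollary \ref{compositionCkl} using $V\in C^{k+1}_c$, and the evaluation at $X\in A_{R,conj}$ together with the closability of the free difference quotient (which lets one apply up to $k$ free difference quotients to an identity valid in $C^{0,0}_{tr}$). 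The main obstacle I anticipate is precisely this last bookkeeping: tracking which of the $k+2$ orders of differentiability on $g$ are spent by $\Delta_V$, which by $\mathscr{D}_i$, and checking the sign and the exact form of the correction term $\mathscr{D}_{i,\mathscr{D}_j g}\mathscr{D}_j V$ survives the passage through the extension \eqref{defDeltaV} (where $\mathscr{D}V(X)$ is frozen as an auxiliary variable $Z$), since the chain rule \eqref{CompositionAnalExpectation} then contributes terms differentiating $Z\mapsto\mathscr{D}V(X)$ that must be shown to reassemble into exactly $-\sum_j\mathscr{D}_{i,\mathscr{D}_j g}\mathscr{D}_j V$ and nothing more; this is a careful but routine verification once the analytic identity \eqref{derivcyclic} is in hand and the sign is pinned down.
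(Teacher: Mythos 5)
Your plan follows the paper's proof quite closely in outline: part (1) is verified on the $E_D$-algebra generated by $B,X_1,\dots,X_n$ (monomials with finite-sum coefficients) directly from the definition of the conjugate variables and then extended by density, and the commutation formula of part (2) is first obtained for analytic $g$ through the frozen-variable extension \eqref{defDeltaV} together with the cyclic chain rule \eqref{CompositionCyclic} (this is exactly how the paper reassembles the correction term $\sum_j\mathscr{D}_{i,\mathscr{D}_jg}\mathscr{D}_jV$, via $\mathscr{D}_{Z_j}([\Delta_{V_0(Z)}+\delta_{V_0(Z)}]P)=\mathscr{D}_jP$ and $\mathscr{D}_{X_i}V_0=0$), before passing to general $g$ by density and closability of the free difference quotient.

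There is, however, a genuine gap at the step where you claim $h=(\Delta_V+\delta_V)(g)\in C^{k,0;0,-1}_{tr}(A,U\cap A_{R,conj}^n:B,E_D)$, equivalently that $\Delta_V+\delta_V$ is continuous from $C^{k+2,2}_{tr,V}$ to $C^{k,0;0,-1}_{tr}$ ``by the definition of the norm.'' By \eqref{normehorrible}, the term of the $C^{k+2,2}_{tr,V}$-seminorm involving $\Delta_V+\delta_V$ is only $\|(\Delta_V+\delta_V)(P)\|_{C^*_{tr}(A,U)}$, i.e.\ a uniform bound with no free difference quotients and no cyclic gradient, whereas membership in $C^{k,0;0,-1}_{tr}$ requires controlling $\partial^k h$ and $\partial^k\mathscr{D}_i h$. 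This boundedness is the real content of part (2) and is not definitional; the paper obtains it by a mechanism missing from your plan: in each representation $X\in U\subset A_{R,conj}^n$ one uses part (1) to rewrite $\delta_V(g)(E_{X,D})=dg(E_{X,D}).(\xi-\mathscr{D}V(X))$, so that, using the commutation of $\partial$ with $d$ and the closability of $\partial$, the $k$-th order free difference quotients of the $\delta_V$-part of $h$ (and of $(\Delta_V+\delta_V)(\mathscr{D}_ig)$ in the commutation identity, where $\mathscr{D}_ig\in C^{k+2,1}_{tr}$ is available) are estimated by the differential terms of $\|g\|_{k+2,2,U}$, the $\Delta_V$-part consuming two of the $k+2$ available free difference quotients and the term with a second derivative of $V$ being handled by Lemma \ref{Ckc}. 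Without invoking part (1) in this way, your membership claim for $h$, and hence the passage of the identity to general $g\in C^{k+2,2}_{tr,V}$, does not close. (Two minor points: the conjugate-variable identity you display in part (1) omits the factor $2$ built into $\Delta$ and only holds after applying $E_D$, not termwise; and the sign discrepancy you flag between \eqref{derivcyclic} and Proposition \ref{DeltaAnalytic}(d) simply reflects the two different sign conventions for $\Delta_V$ in \eqref{deltaV} and in the appendix.)
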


\begin{proof}(1) Because of the norm continuity of the various maps, by density, the first assertion needs only to be checked for $V=0$ and $g=P$ a monomial. By the standard form of tensor products in extended Haagerup tensor products \cite[(2.4), (2.5)]{M05}, one can even reduce terms in those tensor products to finite linear combinations of products. Thus it suffices to check this on the algebra generated by $B,X_1,...,X_n$ where this is then an easy consequence of  the definition of conjugate variables. The extension to 
$C^{k,l}_{tr,V}(A,U), k\geq 2,l\geq 1$ is then obvious by norm continuity of the various maps.

(2) We first need to extend  \eqref{derivcyclic} to $V\in C^{k+1}_c(A,A_{R,conj}^n:B,D)$, still for $g=P\in B_c\{ X_1,...,X_n:E_D,R,\C\}$. If  one uses the notation after this formula extending the definition of $\Delta_V+\delta_V$ to these values of $V$ and notes from the formula \eqref{CompositionCyclic} for cyclic gradient of compositions above (extended beyond analytic functions since $[\Delta_{V_0(Z)}+\delta_{V_0(Z)}](P)$ is a non-commutative analytic function with expectation and we can use the composition Lemma as in the proof of Proposition \ref{infgen}), one gets the expected relation:
 \begin{align*}
\mathscr{D}_{X_i}&\left([\Delta_{V_0(Z)}+\delta_{V_0(Z)}](P)(X,\mathscr{D} V(X))\right)=\left(\mathscr{D}_{X_i}[\Delta_{V_0(Z)}+\delta_{V_0(Z)}](P)\right)(X,\mathscr{D} V(X))\\&\qquad\qquad +\sum_{j=1}^n\left(\mathscr{D}_{X_i,\mathscr{D}_{Z_j}([\Delta_{V_0(Z)}+\delta_{V_0(Z)}](P))}\mathscr{D}_{X_j} V(X)\right)\\&=\left([\Delta_{V_0(Z)}+\delta_{V_0(Z)}](\mathscr{D}_{X_i}P)\right)(X,\mathscr{D} V(X))+\sum_{j=1}^n\left(\mathscr{D}_{X_i,\mathscr{D}_{X_j}(P))}\mathscr{D}_{X_j} V(X)\right)
  \end{align*}
where we used \eqref{derivcyclic} for the extra variables $Z$ to get 
\begin{eqnarray*}
\mathscr{D}_{Z_j}([\Delta_{V_0(Z)}+\delta_{V_0(Z)}](P))&=&[\Delta_{V_0(Z)}+\delta_{V_0(Z)}](\mathscr{D}_{Z_j}P)+\sum_{k=1}^n\mathscr{D}_{Z_j,\mathscr{D}_{j}P}\mathscr{D}_{Z_k}V_0(Z)\\
&=&\mathscr{D}_{j}P\end{eqnarray*}
 since $\mathscr{D}_{Z_j}P=0$ and 
similarly $$\mathscr{D}_{X_i}[\Delta_{V_0(Z)}+\delta_{V_0(Z)}](P)=[\Delta_{V_0(Z)}+\delta_{V_0(Z)}]\mathscr{D}_{X_i}(P)$$ since $\mathscr{D}_{X_i}V_0=0$.

It now remains to extend the relation in $P$ to apply it to our $g$.

For the second statement we check that the map $g\mapsto (\Delta_V+\delta_V)(g)$ is bounded for $g$ analytic function with expectation between the  spaces \[\Delta_V+\delta_V:C^{k+2,2}_{tr,V}(A,U:B,E_D)\to C^{k,{0 ;0,-1}}_{tr,V}(A,U{\cap A_{R,conj1}^n}:B,E_D),\] where the identity has just been checked. 
 We need to bound the $k$-th order free difference quotient of $h$ and  $\mathscr{D}h$. We of course use $\mathscr{D}g$ is controlled in $C^{k+2,1}(A,U:B,E_D)$ thus by closability we can apply a $k$-th order free difference quotients to the relation for $\mathscr{D}h$ (using Lemma \ref{Ckc} for the term with second order derivative on $V$). We can also apply a $k$-th order free difference quotient to the formula for $h$, each time using the relation for $\delta_V(g)$ in terms of differential.
The bounds are now easy using for the term $\partial\delta_V$ the identity checked before in (1) in any representation for $\delta_V$ and commutation of $\partial$ and $d$.\end{proof}

\subsection{Free Difference Quotient with value in extended Haagerup tensor products}

We now consider closability properties of the free difference quotient with value in the extended Haagerup tensor product. 



 For later uses we consider variants of the spaces considered in subsection \ref{semigroup}:
$A_{R,conj0}^n= A_{R,UltraApp}^n,A_{R,conj}^n=A_{M,conj1}^n$ with all conjugate variables  relative to $B,E_D$:
$$A_{R,conj(1/2)}^n=\{X\in A_{R,conj0}^n, \partial_i^*(1\o 1)\in L^2(W^*(X)),i=1,...,n\},$$
 $$A_{M,conj2}^n=\{X\in A_{R,conj}^n, \partial_i^*(\partial_i^*(1\o 1)\o 1)\in W^*(X),i=1,...,n\}$$
They are motivated by the various cases in the next Lemma:
\begin{Lemma}\label{conjvar}Let $M=W^*(X_1,...,X_n,B)$ for $(X_1,...,X_n)\in (A,\tau).$
\begin{enumerate}
\item If $(X_1,...,X_n)\in (A,\tau)$ have conjugate variables $(\partial_1^*1\o 1,...,\partial_n^*1\o 1)\in L^2(M,\tau)$ relative to $B,E_D$ 
 then the unbounded densely defined operator $$\partial_i:M\to M\oeh{D} M$$ is weak-* closable with closure $\overline{\partial_i}^{eh}$. 
{ Moreover,  $\partial_i\o_D 1, 1\o_D \partial_i$ are weak-* closable $M\oeh{D} M\to M\oeh{D} M\oeh{D}M,$ and the closures are derivations for the natural multiplication: for  $U\in M\oeh{D} M, V\in D'\cap M\oeh{D} M$, with $U,V\in D(\overline{\partial_i\o_D 1}^{eh})$ (resp. $U,V\in D(\overline{1\o_D\partial_i}^{eh})$) so is $U\#V$ and $$\overline{\partial_i\o_D 1}^{eh}(U\#V)=\overline{\partial_i\o_D 1}^{eh}(U)\#_2V+U\#\overline{\partial_i\o_D 1}^{eh}(V)$$ (resp. $\overline{1\o_D\partial_i}^{eh}(U\#V)=\overline{1\o_D\partial_i}^{eh}(U)\#_1V+U\#\overline{1\o_D\partial_i}^{eh}(V)$).}
\item If $(X_1,...,X_n)\in (A,\tau)$ have conjugate variables $(\partial_1^*1\o 1,...,\partial_n^*1\o 1)\in L^2(M,\tau)$ then $(1\o E_D)\partial_i$ extends to a bounded operator from $M$ to $L^2(M),\tau)$ or from $L^2(M,\tau)$ to $L^1(M,\tau)$. If moreover $(X_1,...,X_n)\in (A,\tau)$ have conjugates variables $(\partial_1^*1\o 1,...,\partial_n^*1\o 1)\in M$ and second order conjugate variables  $(\partial_1^*(1\o \partial_1^*1\o 1),...,\partial_n^*(1\o \partial_n^*1\o 1))\in M$ then  $(1\o E_D)\partial_i$ extends to a bounded operator on $L^2(M,\tau)$.
\item  If $(X_1,...,X_n)\in (A,\tau)$ have conjugate variables $(\partial_1^*1\o 1,...,\partial_n^*1\o 1)\in M$ and second order conjugate variables  $(\partial_1^*(1\o \partial_1^*1\o 1),...,\partial_n^*(1\o \partial_n^*1\o 1))\in M$ then the  unbounded densely defined operator $\partial_{i_1,...,i_k}^k:M\to M^{\oeh{D} (k+1)}$ is weak-* closable with closure $\overline{\partial_{i_1,...,i_k}^k}^{eh}$,
and $\partial_{i_1,...,i_k}^k:L^2(M,\tau)\to L^2(M,\tau)^{\o_D (k+1)}$ is closable with closure $\overline{\partial_{i_1,...,i_k}^k}.$

Moreover, for $k\leq 3$ {(resp. $k\leq 2$)} the conclusions about the $eh$ extension and for $k\leq 2$ { (resp. $k\leq 1$)} for  the $L^2$ extension hold assuming only $(\partial_1^*1\o 1,...,\partial_n^*1\o 1)\in M$ { (resp. $L^2(M)$). 

Finally, if $F\in C^{k,0}_{tr}(A,A_{R,conj(1_{k\geq 1}/2+1_{k\geq 3}/2+1_{k\geq 4})}^n:B,E_D)$ and $\|X_i\|\leq R$ then $F(X)\in D(\overline{\partial_{i_1,...,i_k}^k}^{eh})$ and $\overline{\partial_{i_1,...,i_k}^k}^{eh}(F(X))=[\partial_{i_1,...,i_k}^k(F)](X)$}
\item{ If  $(X_1,...,X_n)\in (A,\tau)$ have conjugate variables $(\partial_1^*1\o 1,...,\partial_n^*1\o 1)\in M$ then  $\partial_i^*$  is a weak-* continuous bounded operator $D(\overline{\partial_i\o_D1}^{eh}\oplus\overline{1\o_D\partial_i}^{eh})\to M$ and if moreover they have second order conjugate variables it extends to a bounded operator $M\oeh{D} {M}\to L^2(M)$.}
\end{enumerate}
\end{Lemma}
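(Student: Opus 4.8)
\textbf{Proof plan for Lemma \ref{conjvar}, part (4).}

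The plan is to recall that $\partial_i^*$ is by definition the adjoint of the free difference quotient $\partial_i$, so the statement is really a statement about the domain and continuity of this adjoint once we know about the closability of $\partial_i$ and the structure of its closure. First I would use part (1): since $(X_1,\dots,X_n)$ have conjugate variables in $M$ (in particular in $L^2(M)$), $\partial_i$ is weak-$*$ closable with closure $\overline{\partial_i}^{eh}$, and moreover $\overline{\partial_i\otimes_D 1}^{eh}$ and $\overline{1\otimes_D\partial_i}^{eh}$ are closable derivations on $M\oeh{D}M$ with values in $M\oeh{D}M\oeh{D}M$. So the domain $D(\overline{\partial_i\otimes_D1}^{eh}\oplus\overline{1\otimes_D\partial_i}^{eh})$ makes sense. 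The key computation is the ``integration by parts'' identity valid on the algebra generated by $B,X_1,\dots,X_n$: for a basic tensor $X = a\otimes b$ one has, via the coproduct/derivation identity and the definition of the conjugate variable $\xi_i=\partial_i^*(1\otimes 1)$,
\begin{equation}\label{ibpadjoint}
\partial_i^*(a\otimes b) = a\,\xi_i\, b - (\overline{\partial_i\otimes_D1}^{eh})(a\otimes b)\#(1\otimes b) - (a\otimes 1)\#(\overline{1\otimes_D\partial_i}^{eh})(a\otimes b),
\end{equation}
or more precisely $\partial_i^*(a\otimes b) = m\circ(1\otimes 1)$ applied to $a\#\xi_i\#b$ minus the two ``inner'' derivative terms; I would first verify this on simple tensors of elements of the $*$-algebra, then note both sides are well-defined and continuous for the claimed norms. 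Concretely: the term $a\xi_i b$ is controlled by $\|\xi_i\|_M\|a\|_2\|b\|_2$ when $\xi_i\in M$, while the two correction terms are applications of the multiplication map $\#$ (Proposition \ref{CyclicPermutations}.(1) / Theorem \ref{finite2}) to the components of $\overline{\partial_i\otimes_D1}^{eh}(U)$ and $\overline{1\otimes_D\partial_i}^{eh}(U)$, hence bounded $M\oeh{D}M\oeh{D}M\times(M\oeh{D}M)\to M$ into the operator norm, and weak-$*$ continuous on bounded sets by the weak-$*$ continuity of $\#$.

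The argument then runs as follows. First establish \eqref{ibpadjoint} (or rather its correct tensorial form) on the dense $*$-subalgebra by a direct computation using that $\xi_i$ is the $L^2$-conjugate variable and the coproduct identity $\overline{\delta_i\otimes1+1\otimes\delta_i}\circ\partial_i$ familiar from free probability. Second, observe that each term on the right-hand side of the identity is, for $U\in D(\overline{\partial_i\otimes_D1}^{eh}\oplus\overline{1\otimes_D\partial_i}^{eh})$, a weak-$*$ continuous function of $U$ valued in $M$ with an explicit bound in terms of $\|\xi_i\|_M$, $\|U\|_{M\oeh{D}M}$, and the graph norms of the two inner derivatives; this gives the bounded weak-$*$ continuous extension $\partial_i^*:D(\overline{\partial_i\otimes_D1}^{eh}\oplus\overline{1\otimes_D\partial_i}^{eh})\to M$ claimed. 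Third, for the improved statement: if in addition the second-order conjugate variables $\partial_i^*(1\otimes\partial_i^*(1\otimes1)\otimes1)\in M$ exist, then by part (3) (with $k=2$) the map $\partial_i\colon L^2(M)\to L^2(M)^{\otimes_D 2}$ has a closed extension and, crucially, $(1\otimes E_D)\partial_i$ and the relevant second-order operators are bounded on $L^2(M)$ by part (2); combining these with the $L^2$-version of \eqref{ibpadjoint} yields that $\partial_i^*$ extends to a bounded operator $M\oeh{D}M\to L^2(M)$. Here I would estimate each of the three terms of \eqref{ibpadjoint} in $L^2$ norm: $a\xi_ib\in L^2$ with $\|a\xi_i b\|_2\le \|\xi_i\|_M\cdot\|a\otimes b\|_{M\oeh{D}M}$ crudely, and the two inner terms are handled using the boundedness of $(1\otimes E_D)\partial_i$ on $L^2$ (equivalently, using the second-order conjugate variable to absorb one more derivative) together with the contractivity of $\#$ on the appropriate Haagerup tensor powers.

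The main obstacle I anticipate is bookkeeping: getting the precise tensorial form of \eqref{ibpadjoint} right (which operadic composition $\#_i$ goes where, and on which component of which Haagerup tensor power the inner derivatives act), and then checking that each resulting term genuinely lands in the claimed target space with a uniform bound — this requires careful use of Theorems \ref{Finite1}, \ref{finite2}, and Proposition \ref{CyclicPermutations} to see that the products $U\#V$ of elements of $M^{\oeh{D}3}$ with elements of $D'\cap M^{\oeh{D}2}$ are weak-$*$ continuous and norm-bounded into $M$ (resp. $L^2(M)$). The density reduction to the $*$-algebra and the passage to the closure via weak-$*$ continuity are routine given part (1), as is the $L^2$ case given parts (2) and (3); the genuinely delicate point is simply that no derivative count is lost, i.e., that one order of conjugate variable suffices for the $M$-valued statement and two orders suffice for the $L^2$-valued one, which is exactly matched by the structure of \eqref{ibpadjoint} since the correction terms involve only $\overline{\partial_i\otimes_D1}^{eh}$ and $\overline{1\otimes_D\partial_i}^{eh}$ (one derivative) in the first case and $(1\otimes E_D)\partial_i$ on $L^2$ (needing the second-order conjugate variable for $L^2$-boundedness) in the second.
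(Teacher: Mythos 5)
Your plan follows essentially the same route as the paper: the $M$-valued statement is obtained by extending term by term the relative Voiculescu formula \eqref{Dan}, $\partial_i^*(a\o_D b)=a\,\xi_i\, b-(1\o E_D)(\partial_i a)\,b-a\,(E_D\o 1)(\partial_i b)$ — note this is the corrected form of your displayed identity, which omits the conditional expectations and pairs a threefold tensor with a second tensor via $\#$ instead of applying the canonical contraction $m\circ(1\o E_D\o 1)$ to $\overline{\partial_i\o_D 1}^{eh}(U)$ and $\overline{1\o_D\partial_i}^{eh}(U)$ — using the weak-* continuous Haagerup tensor product operations, exactly as in the paper. For the $L^2$-valued extension on all of $M\oeh{D}M$ the paper does what you outline: it invokes the boundedness of $(1\o E_D)\partial_i$ on $L^2$ from part (2) (hence the need for second-order conjugate variables) together with module Cauchy--Schwarz estimates applied to the standard row/column decomposition $\sum_j a_j\o_D b_j$ of elements of $M\oeh{D}M$ to control $\sum_j a_j\xi_i b_j$ and the two correction terms.
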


\begin{proof}
(1) Using \cite[Prop 14, Th15]{dabsetup}, we have a canonical weak-* continuous completely contractive map $M\oeh{D} M\subset L^2(M)\o_DL^2(M).$ Thus closability follows from closability as a map valued in the Hilbert space $L^2(M)\o_DL^2(M)$. The densely defined adjoint is then given by Voiculescu's formula  $B\langle X_1,...,X_n\rangle \o_D B\langle X_1,...,X_n\rangle,$:
\begin{equation}\label{Dan}\partial_i^*(a\o_D b)=a\partial_i^*(1\o 1)b-(1\o E_D)(\partial_i(a))b- a(E_D\o 1)(\partial_i(b)).\end{equation}
 This shows  the first result. 
The reasoning for $\partial_i\o_D 1, 1\o_D \partial_i$ is similar.
{To check the derivation property it suffices to take bounded nets $U_n\to U, V_\nu\to V$ and to use the weak-* continuity of $.\#_K.$ obtained in Proposition \ref{OnePermutation} from Theorem \ref{finite2}.(2)  in order to take the limit successively in $n,\nu$ of $\overline{\partial_i\o_D 1}^{eh}(U_n\#V_\nu)=\overline{\partial_i\o_D 1}^{eh}(U_n)\#_2V_\nu+U_n\#\overline{\partial_i\o_D 1}^{eh}(V_\nu)$ } 

(2)  The second result is the relative variant of  \cite[Remark 11, Lemma 12]{Dab08}. 

(3) The third result then follows similarly from the first using also the second result. It always suffices to show weak-* closability from $M$ (or $L^2(M)$) with value an $L^2$ tensor product, for which one needs densely defined adjoints with value $L^1(M)$ or $L^2(M)$ respectively.

We detail only the case $k=2,3$. From Voiculescu's formula, for $a,b,c,d\in B\langle X_1,...,X_n\rangle$, one deduces:
\begin{align*}(\partial_{i_1,i_2}^2)^*&(a\o_D b \o_D c)=\partial_{i_1}^*(a\o[b\partial_{i_2}^*(1\o 1)c-bE_D\o1\partial_{i_2}(c)-1\o E_D\partial_{i_2}(b)c])
\\&=[a\partial_{i_1}^*1\o 1-1\o E_D\partial_{i_1}(a)][b\partial_{i_2}^*(1\o 1)c-b(E_D\o1)\partial_{i_2}(c)-1\o E_D\partial_{i_2}(b)c])
\\&-a(E_D\o 1)\partial_{i_1}[b\partial_{i_2}^*(1\o 1)c-b(E_D\o1)\partial_{i_2}(c)-1\o E_D)\partial_{i_2}(b)c]
\end{align*}
where the second line is in $M$ and the third in $L^2(M)$ by the second point as soon as the first order conjugate variables are in $M$ (resp.  both in $L^1(M)$ by the second point as soon as the first order conjugate variables are in $L^2(M)$). This gives the various statements in case $k=2$.

Likewise, we have :
\begin{align*}(\partial_{i_0,i_1,i_2}^3)^*&(a\o_D b \o_D c\o_D d)\\&=[a\partial_{i_1}^*1\o 1-1\o E_D\partial_{i_1}(a)](\partial_{i_1,i_2}^2)^*(b\o c \o d)-a(E_D\o 1\partial_{i_0})(\partial_{i_1,i_2}^2)^*(b\o c \o d)
\end{align*}
and the first term is in $L^2(M),$ the second in $L^1(M)$ by the second point and what we just proved, as soon as the first order conjugate variable are in $M$ (resp. both in $L^2(M)$ if we have first and second conjugate variables in $M$). 

The higher order terms are then similar to this last case when we have both 
first and second conjugate variables in $M$. All the higher adjoints are then valued in $L^2(M)$ on basic tensors from $B\langle X_1,...,X_n\rangle$.

For the compatibility with $C^k$ spaces, the non-commutative analytic functionals are clearly in the domain and the extension by density is straightforward (even with norm instead of weak-* convergence which is used at the analytic function level though).

(4) For  the fourth statement the $M$ valued extension only involves application of canonical maps associated to Haagerup tensor product to mimic  the formula above.  For the second part of the fourth statement, we extend each term of the formula above. First we know that  $a\o_D b\to a\xi_ib$ can be extended to $M\oeh{D} M$ since  $\xi_i\in D'\cap M$ (see e.g. \cite[Lemma 43.(2)]{dabsetup}). 
We  next write down explicit bounds for the last $L^2(M)$ valued extension. From the Cauchy-Schwarz inequality for Hilbert modules  one gets $(\sum_j a_j\xi_ib_j)^*\sum_j a_j\xi_ib_j\leq \|\sum_j a_ja_j^*\|(\sum_jb_j^*\xi_i^*\xi_ib_j)$
so that $$\|\sum_j a_j\xi_ib_j\|_2^2\leq \|\sum_j a_ja_j^*\|\|\xi_i\|_2^2\|\sum_j b_jb_j^*\|,$$ and moreover $$\|\sum_j a_j\xi_ib_j\|^2\leq \|\sum_j a_ja_j^*\|\|\xi_i\|^2\|\sum_j b_j^*b_j\|,$$
Likewise  we get, \begin{eqnarray*}
\|\sum_ja_j(E_D\o 1)\partial_i(b_j)\|_2^2&\leq &\|\sum_j a_ja_j^*\|\sum_j\|(E_D\o 1)(\partial_i(b_j)\|_2^2\\
&\leq &\|\sum_j a_ja_j^*\|\|(E_D\o 1)\partial_i\|^2\sum_j\|b_j\|_2^2\end{eqnarray*}
and replacing $b_j$ by $a_j^*$, $a_j$, by $b_j^*$ :$$\|\sum_j(1\o E_D)(\partial_i(a_j))b_j\|_2^2\leq  \|\sum_j b_j^*b_j\|\|(E_D\o 1)\partial_i\|^2\sum_j\|a_j\|_2^2$$
giving the last claimed extension  (using the canonical expression for elements in the extended Haagerup product in \cite{M05}).
\end{proof}
We finally recall Voiculescu's extension result for free products:

\begin{Lemma}\label{ExtFDQ}
Assume that the conjugate variables to $X_1,\dots,X_n$ exist.  Consider the unique extension $\hat{\partial}_i$ on $B\langle X_1,...,X_n,S_t,t>0\rangle$ of the free difference quotient derivations $\partial_i$ satisfying the Leibniz rule and $\hat{\partial}_i(S_t)=0$. Then  $\hat{\partial}_i^*(1\o 1)=\partial_i^*(1\o 1)$.
\end{Lemma}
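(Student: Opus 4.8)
The statement to prove is Lemma~\ref{ExtFDQ}: if the conjugate variables $\partial_i^*(1\otimes 1)$ to $X_1,\dots,X_n$ (relative to $B$, $E_D$) exist, then the canonically extended free difference quotient $\hat\partial_i$ on $B\langle X_1,\dots,X_n,S_t,t>0\rangle$, characterized by the Leibniz rule and $\hat\partial_i(S_t)=0$ (and $\hat\partial_i(b)=0$, $\hat\partial_i(X_j)=\delta_{ij}1\otimes 1$), satisfies $\hat\partial_i^*(1\otimes 1)=\partial_i^*(1\otimes 1)$, i.e., the conjugate variable is unchanged after freely adjoining a free Brownian motion over $D$. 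The natural approach is the one originating in Voiculescu's work on free Fisher information and its amalgamated generalizations: characterize $\hat\partial_i^*(1\otimes 1)$ by the defining integration-by-parts identity and verify that $\partial_i^*(1\otimes 1)$ satisfies it in the enlarged algebra.

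First I would recall the definition: for $P$ in the algebra generated by $B, X_1,\dots,X_n, \{S_t\}$, the element $\xi_i := \hat\partial_i^*(1\otimes 1)$ is the unique element of $L^2(\mathscr{A})$ (where $\mathscr{A} = W^*(B,X,\{S_t\})$) such that $\tau(\xi_i P) = (\tau\otimes\tau)(\hat\partial_i P)$ for all such $P$, with $\tau\otimes\tau$ interpreted $D$-bilinearly as $\tau\circ m\circ(1\otimes E_D\otimes 1)$ on $\mathscr{A}\oeh{D}\mathscr{A}$ — equivalently, $\langle \hat\partial_i Q, (1\otimes 1)\rangle$ paired suitably. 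The claim is that $\xi_i = \partial_i^*(1\otimes 1) \in W^*(B,X) \subset \mathscr{A}$. By linearity and the Leibniz rule it suffices to test the identity $\tau(\partial_i^*(1\otimes 1) \cdot P) = (\tau\otimes\tau)(\hat\partial_i P)$ on monomials $P$ that are alternating words in letters from $W^*(B,X)$ and the centered increments $S_{t}-S_{s}$. I would split into the two cases. If $P$ contains no $S$-letter, then $\hat\partial_i P = \partial_i P$ and the identity is exactly the defining property of $\partial_i^*(1\otimes 1)$ in $W^*(B,X)$ — here one uses that $\tau$ restricted to $W^*(B,X)$ is the original trace and that $E_D$ is the same. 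If $P$ does contain an $S$-letter, I would use freeness with amalgamation over $D$ of $W^*(B,X)$ and $D\otimes W^*(\{S_t\})$: on the left-hand side, $\tau(\partial_i^*(1\otimes 1)P)$ vanishes because $\partial_i^*(1\otimes 1)\in W^*(B,X)$ while $P$ is (a product reducible to) a centered word crossing into the free complement, so the mixed moment is zero by freeness; on the right-hand side, $\hat\partial_i$ applied by Leibniz only produces nonzero terms when it hits an $X$-letter, leaving the $S$-increments intact in the two tensor legs, and then applying $\tau\circ m\circ(1\otimes E_D\otimes 1)$ one again gets zero by freeness with amalgamation (the surviving $S$-increment sits as a centered factor free from everything around it after the $E_D$ is applied). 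This matching of zeros on both sides closes the verification on the dense $*$-algebra.

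The main obstacle, and the place requiring genuine care rather than routine bookkeeping, is the case analysis for words containing $S$-letters: one must track precisely, after applying the Leibniz expansion of $\hat\partial_i$ and then the $D$-bilinear pairing $m\circ(1\otimes E_D\otimes 1)$, which residual words remain and argue that each is a centered alternating word to which amalgamated freeness applies to force the moment to vanish. In particular one has to handle the $E_D$-correction terms carefully, since $E_D$ of a word involving $S$-increments can produce elements of $D$ and one must confirm these do not spoil the centering. A clean way to organize this is to first establish the identity for $P$ in the $*$-algebra generated by $B,X$, and $S$-increments by induction on the number of $S$-increments, using at each step the conditional-expectation/freeness computation $E_{W^*(B,X)}[\text{(word ending in centered }S\text{-increment)}]=0$; alternatively one can invoke the existing closability and extension machinery (Lemma~\ref{conjvar} and the surrounding results) to reduce to checking on a generating set. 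Finally, once the identity holds on the dense $*$-algebra and $\partial_i^*(1\otimes 1)\in W^*(B,X)\subset L^2(\mathscr{A})$ is a fixed $L^2$-element, uniqueness of the conjugate variable (adjoint of a closable unbounded operator) gives $\hat\partial_i^*(1\otimes 1) = \partial_i^*(1\otimes 1)$, completing the proof.
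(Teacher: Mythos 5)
Your proof is correct in outline and is essentially the standard argument: the paper itself gives no proof of Lemma~\ref{ExtFDQ}, merely ``recalling'' Voiculescu's extension result for free products (\cite{dvv:entropy5}, in its $D$-amalgamated form), and your verification of the adjoint identity on the dense $*$-algebra --- reducing monomials to $E_D$-centered alternating words, noting that $\hat\partial_i$ kills the subalgebra generated by $D$ and the $S$-increments, and using freeness with amalgamation to make both sides vanish when an $S$-increment survives --- is precisely the argument behind that cited result. The only point demanding the care you already flag is that a general monomial containing $S$-letters must first be decomposed into $W^*(B,X)$-terms plus centered alternating products before the freeness/vanishing step is applied to both sides of the pairing.
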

Let $U\subset A_{R,conj}^n$, $\mathscr{A}=A*_D(D\otimes W^*(S_t^{(i)},i=1,...,n,t\geq 0))$, and recall that we defined in subsection \ref{freebrownian}: $U_A=\{X\in \mathscr{A}_{R}^n , X\in U \}\subset A_{R,conj}^n$.  
Given any inclusion $i:\mathscr{A}\to A$ set $U_A'=\{X\in \mathscr{A}_{R}^n , i(X)\in U \}$.  If $U$ is invariant under 
trace preserving isomorphisms (as will be the case for us), the space $U_A'$ does not depend on the choice of the inclusion $i$.

{For all spaces with cyclic variants here, $\mathscr{A}^{\oeh{D}n}$ is replaced by $\mathscr{M}^{\oehc{D}n}$, with $\mathscr{M}=W^*(B,X_1,...,X_n, S_{t}, t>0)$ so that Proposition \ref{TensorCyclicEvaluation} can be applied to all the variables $X_1,...,X_n, S_{t}$.}

\subsection{Conditional expectations and $C^{k,l}$ functions} 
Recall the spaces $C^{k,l}_{tr,V}(A,U:\mathscr{B},D:\mathscr{S}),$
$C^{k,l}_{tr}(A,U:\mathscr{B},D:\mathscr{S}_{\geq u}),$ etc. from subsection \ref{freebrownian}. They are convenient spaces to define semigroups thanks to the following result. The composition maps are variants of the previous subsection and the new conditional expectations are based of the behaviour for extended Haagerup products of free difference quotients of our previous Lemma \ref{conjvar}.

\begin{Proposition}\label{ExpectationCkl}
\begin{enumerate}
\item Let $k,l$ ($k\geq l$ when required in the definition of the space) and $U\subset A_{R,conj0}^n$ (resp. { $U\subset A_{R,conj(1/2)}^n$ if $k\geq 1$, resp. $U\subset A_{R,conj1}^n$, if $k\geq 3$} resp. $U\subset A_{R,conj2}^n$, if $k\geq 4$) . Then 
 $E_B:\mathscr{B}=B*_D(D\otimes W^*(S_t^{(i)},i=1,...,n,t\geq 0))\to B$ gives rise to contractions $$E_0:(C^{k,l}_{tr}(A,U:\mathscr{B},E_D:\mathscr{S}), \|.\|_{k,l,U}) \to (C^{k,l}_{tr}(A,U:B,E_D),\|.\|_{k,l,U}),$$ 
 $$E_0:C^{k,l;\epsilon_1,\epsilon_2}_{tr,V}(A,U:\mathscr{B},E_D:\mathscr{S})\to C^{k,l;\epsilon_1,\epsilon_2}_{tr,V}(A,U:B,E_D),$$ $$E_0:C^{k,l}_{tr,V}(A,U:\mathscr{B},E_D:\mathscr{S})\to C^{k,l}_{tr,V}(A,U:B,E_D), \quad k\geq 2$$ { and likewise for cyclic variants : $C^{k,l}_{tr,c}(A,U:\mathscr{B},E_D:\mathscr{S})\to C^{k,l}_{tr,c}(A,U:B,E_D)$, $C^{k,l;\epsilon_1,\epsilon_2}_{tr,V,c}(A,U:\mathscr{B},E_D:\mathscr{S})\to C^{k,l;\epsilon_1,\epsilon_2}_{tr,V,c}(A,U:B,E_D)$, $C^{k,l}_{tr,V,c}(A,U:\mathscr{B},E_D:\mathscr{S})\to C^{k,l}_{tr,V,c}(A,U:B,E_D)$}. { They are also contractions for the seminorms $\|.\|_{k,l,U\geq 1}$ and $\|.\|_{C^{k,l}_{tr,V}(A,U:\mathscr{B},E_D:\mathscr{S}),\geq 1}.$}

{We also have similarly for $u>0$ $$E_u:C^{k,l}_{tr}(A,U:\mathscr{B},E_D:\mathscr{S})\to C^{k,l}_{tr}(A,U:\mathscr{B},E_D:\mathscr{S}_u),$$  $$E_u:C^{k,l;\epsilon_1,\epsilon_2}_{tr,V}(A,U:\mathscr{B},E_D:\mathscr{S})\to C^{k,l;\epsilon_1,\epsilon_2}_{tr,V}(A,U:\mathscr{B},E_D:\mathscr{S}_u),$$ $$E_u:C^{k,l}_{tr,V}(A,U:\mathscr{B},E_D:\mathscr{S})\to C^{k,l}_{tr,V}(A,U:\mathscr{B},E_D:\mathscr{S}_u),$$ such that  $E_0\circ E_u=E_0$ }{ and $E_0= E_u\circ \theta_u'= E_0\circ \theta_u'.$
}

\item Moreover, the extension result of Corollary \ref{compositionCkl} is also valid for{ any {$U\subset A_{R,conj0}^n,V\subset A_{S,conj0}^n$} giving composition maps $\circ$}:
\begin{align*}\circ:C^{k+l}_c(A,V:B,D)&\times {Comp(U_A,V_A',}C^{k,l}_{tr,W}(A,U:\mathscr{B},E_D:\mathscr{S}))\\ &\to C^{k,l}_{tr,W}(A,U:\mathscr{B},E_D:\mathscr{S}),\end{align*}
\begin{align*}\circ:C^{k,l}_{tr}(A,V:B,D)&\times {Comp(U_A,V_A',}C^{k,l}_{tr}(A,U:\mathscr{B},E_D:\mathscr{S}))\\ &\to C^{k,l}_{tr}(A,U:\mathscr{B},E_D:\mathscr{S})\end{align*}({here and in the next also for $(k,l)\in \N^2$}),
\begin{align*}\circ:C^{k,l;0,\epsilon_2}_{tr}(A,V:B,D)&\times {Comp(U_A,V_A',}C^{k,l;0,1}_{tr}(A,U:\mathscr{B},E_D:\mathscr{S}))\\ & \to C^{k,l;0,\epsilon_2}_{tr}(A,U:\mathscr{B},E_D:\mathscr{S}),\end{align*}  
\begin{align*}\circ:C^{k}_c(A,V:B,D)&\times {Comp(U_A,V_A',}C^{k}_{c}(A,U:\mathscr{B},D:\mathscr{S}))\\ &\to C^{k}_{c}(A,U:\mathscr{B},D:\mathscr{S}),
\end{align*} and as in Lemma \ref{Ckc} a map $\iota':C^{k+l}_{c}(A,U:\mathscr{B},D:\mathscr{S}))\to C^{k,l}_{tr,W,c}(A,U:\mathscr{B},E_D:\mathscr{S}),$
{$\iota':C^{k+l+1}_{c}(A,U:\mathscr{B},D:\mathscr{S}))\to C^{k,l,\epsilon_1,\epsilon_2}_{tr,W,c}(A,U:\mathscr{B},E_D:\mathscr{S}).$}
 We also have $(.)\circ\theta_u'(.)=\theta_u'((.)\circ (.))$ on the above spaces.

\item Finally, we also have a similar composition map $\circ_u$ for any $u>0$ { for $(k,l)\in \N^2$~:} 
$$C^{k,l}_{tr}(A,V:\mathscr{B},E_D:\mathscr{S}_{\geq u}))\times {Comp(U_A,V_A',}C^{k,l}_{tr}(A,U:\mathscr{B},E_D:\mathscr{S}_u))\qquad\qquad$$
$$\qquad\qquad \to C^{k,l}_{tr}(A,U:\mathscr{B},E_D:\mathscr{S}),$$

{ $$C^{k,l;0,\epsilon_2}_{tr}(A,V:\mathscr{B},E_D:\mathscr{S}_{\geq u}))\times {Comp(U,V,}(C^{k,l;0,1}_{tr}(A,U:\mathscr{B},E_D:\mathscr{S}_u))$$}
$$\qquad\qquad \to C^{k,l;0,\epsilon_2}_{tr}(A,U:\mathscr{B},E_D:\mathscr{S})),$$ $\epsilon_2\in\{-1,0,1\}$ and we have : $(.)\circ[(.)\circ_u(.)]=[(.)\circ(.)]\circ_u(.)$ and $E_B(.)\circ (.)=E_u(\theta_u'(.)\circ_u(.)):C^{k,l}_{tr}(A,U:\mathscr{B},E_D:\mathscr{S}))\times {Comp(U_A,V_A',}C^{k,l}_{tr}(A,U:\mathscr{B},E_D:\mathscr{S}_u))\to C^{k,l}_{tr}(A,U:\mathscr{B},E_D:\mathscr{S}_u).$ 
\end{enumerate}
\end{Proposition}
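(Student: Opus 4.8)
The statement is Proposition~\ref{ExpectationCkl}, and the three parts are established in the order (1), (2), (3), each part being reduced to the analogous statement already proved in the earlier subsections (Lemma~\ref{compositionCklFixed}, Corollary~\ref{compositionCkl}, Lemma~\ref{Ckc}, Lemma~\ref{conjvar}). The overarching principle throughout is that all the maps involved ($E_B$, composition $\circ$, the shifts $\theta_u'$) are already defined and bounded on the underlying analytic function spaces, or on the $C^{k,l}$-spaces \emph{without} Brownian motions, and the point is only to check that they respect the partial-evaluation structure by which the spaces $C^{k,l}_{tr,V}(A,U:\mathscr{B},E_D:\mathscr{S})$ are defined (as closures of images of $\eta_S$, see subsection~\ref{freebrownian}).

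\textbf{Part (1).} First I would treat $E_0=E_B$. On analytic functions $B_c\{X_1,\dots,X_n,S_{t_1},\dots,S_{t_m}-S_{t_{m-1}}:E_D,\dots,\C\}$, the map $E_B$ sends a word to $0$ unless it contains no $S$'s, i.e.\ it is just the coordinate projection onto the $\C\langle X\rangle$-part, and this is clearly completely contractive and commutes with $\partial_i$ (by Lemma~\ref{ExtFDQ}, the conjugate variables of $X$ in $\mathscr{B}$ agree with those in $B$, so $\overline{\partial_i}^{eh}$ on $\mathscr{M}$ restricts correctly to $\mathscr{M}_0=W^*(B,X)$) and with $\Delta_V+\delta_V$, $\mathscr{D}_i$, $\Delta_{\mathcal R}+\delta_{\mathcal R}$ since all of these only differentiate in the $X$ variables and $E_B$ is $D$-bimodular. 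Hence $E_0$ does not increase any of the seminorms $\|\cdot\|_{k,l,U}$, $\|\cdot\|_{k,l,U,\geq 1}$, the $C^*_{tr}$ part, or the cyclic-gradient parts appearing in \eqref{normehorrible}, so it extends by density to contractions on all the completions listed, and on the cyclic variants verbatim (using that the cyclic evaluations of Proposition~\ref{TensorCyclicEvaluation} applied to $(X,S_\cdot)$ restrict to those applied to $X$). For general $u>0$, $E_u$ is the conditional expectation $E_{\mathscr{A}_u}$, which on Wick words keeps exactly those with all $S$'s indexed by times $\le u$; it is again completely contractive, $D$-bimodular, commutes with the $X$-differential operators, and commutes with the $S_{\ge u}$-free-difference-quotients in the sense needed because $E_{\mathscr{A}_u}$ is a trace-preserving normal projection onto a subalgebra containing $B,X,S_{\le u}$ (apply Lemma~\ref{conjvar} on $\mathscr{M}$ and $\mathscr{M}_u$). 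The identities $E_0\circ E_u=E_0$, $E_0= E_u\circ\theta_u'=E_0\circ\theta_u'$ are then checked on analytic generators where they are tautological (the shift $\theta_u$ re-indexes the free Brownian increments) and extended by continuity.

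\textbf{Parts (2) and (3).} For composition, the content is that Lemma~\ref{compositionCklFixed} and Corollary~\ref{compositionCkl} are stated for the spaces $C^{k,l}_{\cdot}(A,\cdot:B,E_D)$ but the proofs used only: (i) the composition and differentiation rules \eqref{CompositionAnalytic}, \eqref{CompositionAnalExpectation}, \eqref{CompositionCyclic}, \eqref{basicsecondorder} for analytic functions, which hold equally when extra commuting semicircular letters $S_t$ are present (they are just more $X$-type variables on which $P$, the outer function from $C^{k+l}_c(A,V:B,D)$ or $C^{k,l}_{tr}(A,V:B,D)$, does not depend); (ii) the evaluation/norm estimates, which go through since $\mathscr{M}$ is covered by Proposition~\ref{TensorCyclicEvaluation} for all of $X,S_\cdot$. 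Concretely, given $Q\in Comp(U_A,V_A',C^{k,l}_{tr,W}(A,U:\mathscr{B},E_D:\mathscr{S}))$, approximate $Q$ by $\eta_S$-images of analytic functions with expectations in $X,S$; compose with the analytic approximants of $P$ (which involve only $X$); pass to the limit using the same Lipschitz-in-$Q$, bounded-in-$P$ estimates as in Lemma~\ref{compositionCklFixed}, now on the larger algebra. The compatibility $(\cdot)\circ\theta_u'(\cdot)=\theta_u'((\cdot)\circ(\cdot))$ holds because $\theta_u'$ is induced by a trace-preserving $*$-homomorphism fixing $A$ hence commutes with substitution in the $X$ variables. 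For $\circ_u$ one does the same with the splitting $\mathscr{S}=\mathscr{S}_u\sqcup\mathscr{S}_{\ge u}$: the outer function lives on $\mathscr{S}_{\ge u}$, the inner one on $\mathscr{S}_u$, and their composition lands in the full $\mathscr{S}$; associativity $(\cdot)\circ[(\cdot)\circ_u(\cdot)]=[(\cdot)\circ(\cdot)]\circ_u(\cdot)$ and the key identity $E_B(\cdot)\circ(\cdot)=E_u(\theta_u'(\cdot)\circ_u(\cdot))$ are checked on analytic generators (it is precisely the algebraic fact that evaluating $P\circ X_\cdot$ and then taking $E_B$ equals taking $E_u$ of $\theta_u'[P]$ composed with $X_{\le u}$, cf.\ the semigroup computation in Proposition~\ref{semig}) and extended by continuity of all the maps involved.

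\textbf{Main obstacle.} The only genuinely delicate point is the bookkeeping for which conjugacy hypotheses on $U$ (the hierarchy $A_{R,conj0}^n\supset A_{R,conj(1/2)}^n\supset A_{R,conj1}^n\supset A_{R,conj2}^n$) are needed so that the higher free difference quotients $\overline{\partial^k_{(i_1,\dots,i_k)}}^{eh}$ and the conditional expectations $(1\otimes E_D)\partial_i$, $\partial_i^*$ extend boundedly on $\mathscr{M}$ — this is governed by Lemma~\ref{conjvar}(2),(3),(4), and one must verify that $E_B$ (resp.\ $E_u$) maps $\mathscr{M}$-conjugacy data to $\mathscr{M}_0$- (resp.\ $\mathscr{M}_u$-)conjugacy data, which follows from Lemma~\ref{ExtFDQ} together with the fact that a trace-preserving normal conditional expectation onto a subalgebra containing the generators does not worsen the integrability of conjugate variables. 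Everything else is a routine density-plus-continuity argument on the analytic generators.
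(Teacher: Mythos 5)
Your treatment of Part (1) contains a genuine error that undercuts the whole argument: you assert that $E_B$ ``sends a word to $0$ unless it contains no $S$'s, i.e.\ it is just the coordinate projection onto the $\C\langle X\rangle$-part.'' This is false, because the spaces $C^{k,l}_{tr}(A,U:\mathscr{B},E_D:\mathscr{S})$ are defined via partial evaluations $\eta_S$ of \emph{arbitrary} analytic monomials in $X$'s and Brownian increments, not of Wick-orthogonalized words; for instance $E_B\big(S_t\,a\,S_t\big)=t\,E_D(a)$ is nonzero. The actual content of the paper's proof of (1) is precisely the computation you skip: for a monomial $P$ evaluated at the increments, $E_B(P)=Q$ where $Q$ is obtained from $P$ by summing over all non-crossing pairings of the Brownian letters compatible with the given $E_D$-parenthesization, each pair being replaced by a \emph{formal} conditional expectation $E_D$ and weighted by $(t_i-t_{i-1})^{|m|_{Z_i}/2}$. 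This is why the target of $E_0$ is a space of functions \emph{with} expectations at all (and why the semigroup $\varphi_t^\alpha$ of Proposition \ref{semig} lands in $C^{k,l}_{tr}$ rather than in a space without expectations); if $E_0$ killed every $S$-word the whole $\{\cdot\}$-formalism would be unnecessary. Once the Wick formula for $Q$ is in hand, contractivity is not ``clear'': one needs the identity $\partial^l_i(Q)(E_{D,X})(X)=\overline{\partial^l_i}^{eh}\big[E_A\big(P(E_{D,X})(X)\big)\big]=\big(E_A^{\otimes_{eh}(l+1)}\big)\big(\overline{\partial^l_i}^{eh}[P(E_{D,X})(X)]\big)$, which uses Lemma \ref{conjvar} (this is where the hypotheses $U\subset A^n_{R,conj(1/2)}$, $A^n_{R,conj1}$, $A^n_{R,conj2}$ for $k\geq 1,3,4$ enter), duality via Lemma \ref{ExtFDQ}, functoriality of the extended Haagerup tensor product, and, for the cyclic variants, the stability of cyclic tensor products under $E_A^{\otimes(l+1)}$ from Proposition \ref{CyclicPermutations}(3); the terms involving $\Delta_V+\delta_V$ and the cyclic gradients require the explicit combinatorial form of $Q$ to check that they commute with $E_0$. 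Your proposal addresses none of this because it starts from the wrong formula for $E_B$, and the same objection applies to your description of $E_u$ via ``Wick words.''

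Your sketch of Parts (2) and (3) — rerunning the composition estimates of Lemma \ref{compositionCklFixed} and Corollary \ref{compositionCkl} with the Brownian increments treated as extra variables on which the outer function does not depend, checking the algebraic identities on analytic generators and extending by continuity — is consistent with the paper, which indeed disposes of these parts briefly once Part (1) is established. But as written the proposal does not prove the Proposition: the missing Wick-expansion step in Part (1) is the heart of the statement, not bookkeeping.
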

\begin{proof}
By density, it suffices to prove contractivity restricting  to the polynomial variant of the space  $C^0_{b,tr}(U,B\langle X_1,...,X_n:D,R\rangle)$. But if $P$ is in the partial evaluation $\eta_S(B_c\{X_1,...,X_n,S_{t_1},...,S_{t_m}-S_{t_{m-1}}:\mathscr{B},E_D,\max[R,\max_{i=2,n} 2(t_i-t_{i-1})]\C\})\}$, it is easy to see by definition of free semicircular variables with amalgamation that 

\noindent
$ E_A(P(E_{D,X}))=Q(E_{D,X})$ for some $Q\in B\{X_1,...,X_n:{B},E_D,R\}.$ $Q$ is the same as $P$ where brownian variables are replaced by sums over formal conditional expectations.

 More precisely, let $P=\epsilon_{m,\sigma}(P')$, for $$m\in M_{2k}(X_1,...,X_n,Z_1=S_{t_1},...,Z_m=S_{t_m}-S_{t_{m-1}},Y), \sigma\in NC_2(2k)$$  with $P'\in B^{\oehc{D}|m|+1},$ a typical monomial in the direct sum for analytic functions with expectation in the component indexed by $(m,\sigma)$. Recall that $Y$ variables and the pairing $\sigma$ indicate the position of conditional expectations. Let $\pi_m: NC_2(2k+|m|_Z)\to NC(2k)$ the restriction to the indices of $Y$ variables in the monomial $m(X_1=1,...,X_n=1,Z_1,...,Z_m,Y)$ and $\pi_{m,i,k}: NC_2(2k+|m|_Z)\to NC(|m|_{(Z_i)^{(k)}})$, $i=1,...,m,k=1,...,n$ the restriction to indices of the variables in position $(Z_i)^{(k)}.$ Note that this is valued in pair partitions when $(Z_i)^{(k)}$ variables are only paired within themselves.

Then, the conditional expectation is obtained by replacing with pairings and conditional expectations the brownian variables in an appropriate way so that  we define with for convenience $t_0=0$ :
$$E_B(P)=Q:=\sum_{\textrm{\tiny$\begin{array}{c}\pi\in NC_2(2k+|m|_Z)\\ \pi_m(\pi)=\sigma\\\pi_{m,i,k}(\pi)\in NC_2(|m|_{(Z_i)^{(k)}})\end{array}$}}\epsilon_{m(X_1,...,X_n,Z_1=Y,...Z_m=Y,Y),\pi}(P')\prod_{i=1}^m(t_i-t_{i-1})^{|m|_{Z_i}/2},$$
so that the relation above $E_A(P(E_{D,X})(X))=(E_B(P))(E_{D,X})(X), X\in A_R^n$ is easy to check by definition of free Brownian motions.
Note that \begin{equation}\label{DeltaExpectation}(\Delta+\delta_\Delta)(E_0(P))=E_0((\Delta+\delta_\Delta)(P))\end{equation} (where of course $\Delta$ only applies on $X_i$ variables) since, using the definition in the proof of Proposition \ref{DeltaAnalytic}, both expressions correspond to having a supplementary sum over pairs of $X_i$ variables giving a partition not  crossing the previous ones and replaced by a formal $E.$

Using relation \eqref{CyclicDifferential} with $e,H_i$ in the smaller algebra $A$, one sees that for $e\in A$, \begin{equation}\label{CyclicGradExpectation}E_A[\mathscr{D}_{i,e}(P)(E_{D,X}(X)]=\mathscr{D}_{i,e}(Q)(E_{D,X}(X))\end{equation} and we can extend this directly to the cyclic gradient of Proposition \ref{CalcDiffAnalytic}. For $e\in B\{X_1,...,X_n:{B},E_D,R\}$ we have \begin{equation}\label{CyclicGradExpectation2}E_0[\mathscr{D}_{i,e}(P)]=\mathscr{D}_{i,e}(E_0(P)).\end{equation} Indeed, for $e,P$ monomials, since $e$ has no dependence in $S_t$'s, there is a bijection between pairs of $S_t$'s appearing in each monomial after and before applying $\mathscr{D}_{i,e}$. Since cyclic permutations keep non-crossing partitions  the result is thus an easy combinatorial rewriting.

 It thus remains to check contractivity estimates to extend $E_0$ to spaces of $C^k$ functions. 

For $X\in U$, $P$ as before $\partial^l_i(Q)(E_{D,X})(X)=\overline{\partial^l_i}^{eh}[Q(E_{D,X})(X)]$ by Lemma \ref{conjvar} {(we only use it when $k\geq 1$, the various conditions on $U$ also when $k\geq 4$ comes from this application)}, and by duality from Lemma \ref{ExtFDQ}, one gets it equals to $\overline{\partial^l_i}^{eh}[E_A(P(E_{D,X})(X))]=(E_A^{\o_{eh} l+1})(\overline{\partial^l_i}^{eh}[(P(E_{D,X})(X))])$ and thus one gets by functoriality of Haagerup tensor product:
$$\|\partial^l_i(E_0(P))(X)\|_{A^{\o_{ehD (l+1)}}}\leq \|(\partial^l_i(P))(X)\|_{A^{\o_{ehD (l+1)}}}.$$

{ Here it is crucial to note that for all cyclic variants that by Proposition \ref{CyclicPermutations}.(3) if $\overline{\partial^l_i}^{eh}[(P(E_{D,X})(X))]$ is in a cyclic extended Haagerup tensor product, it remains there after application of $(E_A^{\o_{eh} l+1})$.}

Likewise, the full differential commute with conditional expectation (which is a linear bounded map, we thus get the bound for all parts of the seminorm involving free difference quotients and full differentials. { We thus proved  contractivity on $C^{k,l}_{tr}$-spaces.

Since $(\Delta_V+\delta_V)(P)=(\Delta_0+\delta_\Delta)(P)+d_XP.(\mathscr{D}_1V,...,\mathscr{D}_nV)$ the previous results give $(\Delta_V+\delta_V)(E_B(P))=E_B((\Delta_V+\delta_V)(P))$ so that since in this case $k\geq 2$, the choice of the seminorm chosen with this term is compatible with contractivity.
The contractivity of the term with cyclic gradients is also easy with the previous established commutation relation, so that one gets the stated contractivity on $C^{k,l}_{tr,V}$-spaces. Obtaining multiplication maps is as easy as before in this context and by arguments of stability of subspaces  for $C^{k}_{c}$-spaces.

The variant $E_u$ and its relations are obvious.}
\end{proof}

 \subsection{Regular Change of variables for Conjugate variables}
 The computation of conjugate variables along change of variables we used to identify conjugate variables of our transport maps are explained in the next Lemma \ref{adjointAppendix} with the differentiation along a path of such change of variables.

Let $M=W^*(X_1,...,X_n,B)$ for $(X_1,...,X_n)\in (A,\tau).$ We will soon assume those variables have enough conjugate variables relative to $D$ in presence of $B$.

\begin{Lemma}\label{adjointAppendix}
Assume $ W^*(B,X_1,...,X_n)=M$  is such that $X\mapsto \langle e_D, X\#e_D\rangle$ is a trace on $D'\cap M\oeh{D} M$.

Let $(X_1,...,X_n)\in U'\subset A_{S,conj}^n,S>0$ and thus have conjugates variables $(\partial_1^*1\o 1,...,\partial_n^*1\o 1)\in M^n$ 
 relative to $B,E_D$. Take $F=F^*\in (C_{tr,c}^{k,l}(A,U'))^n$, with $k\geq 2$. 
   
   Then $(Y_1,...,Y_n)=F(X_1,...,X_n)$ have conjugate variables in $M$ 
  as soon as $\Vert{}1-\mathscr{J} F\Vert{}_{M_n(M\oehc{D} M)}<1,$ with $(\mathscr{J} F)_{ij}=\partial_{j,X}Y_i.$
   Moreover, we have, setting $C(F)=\frac{1}{1-\Vert{}1-\mathscr{J} F\Vert{}_{M_n(M\oehc{D} M)}}$  :
\begin{align*}&\| \partial_{j,Y}^*1\o 1 \|\leq  C(F)\|\partial_j^*1\o 1\|+C(F)^2\left(\sum_{k\neq j}\|\sigma[(\mathscr{J} F)_{kj}]\|_{M\otimes_{ehD} M}\right)\sum_{k\neq j}\|\partial_k^*1\o 1\|\\&+C(F)^2\sum_{\textrm{\tiny$\begin{array}{c}k,l,m\in[1,n]\\(\epsilon,\eta)\in\{(1,0),(0,1)\}\end{array}$}}
\|\overline{1^{\o_D\epsilon}\o_D\partial_k\o_D 1^{\o_D\eta}}^{eh}(\sigma[(\mathscr{J} F)_{lm}])\|_{M^{\oeh{D}3}}.\end{align*}
   
\end{Lemma}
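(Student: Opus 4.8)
The statement asserts that after a change of variables $Y = F(X)$ with $F = F^*$ and $\|1 - \mathscr{J}F\|_{M_n(M\oehc{D}M)} < 1$, the new variables $Y_1,\dots,Y_n$ have conjugate variables living in $M = W^*(B,X_1,\dots,X_n)$, together with an explicit norm estimate. The approach follows the scheme of \cite{alice-shlyakhtenko-transport} but must be carried out in the extended Haagerup framework developed above, where the crucial structural fact is that $D'\cap M\oeh{D}M$ carries a trace $\tau(\cdot) = \langle e_D,\cdot\#e_D\rangle$ (this is precisely Assumption \ref{thehyp3}/Theorem \ref{Finite3}.(3)), and that $M_n(M\oehc{D}M)$ acts with a bounded $\#$-multiplication and has a well-behaved $*$-operation.

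First I would use the chain rule \eqref{CompositionAnalytic} to identify the Jacobian $\mathscr{J}F = (\partial_{j,X}Y_i)_{ij}\in M_n(M\oehc{D}M)$ — here the fact that $F\in (C^{k,l}_{tr,c}(A,U'))^n$ with $k\geq 2$ and $U'\subset A^n_{S,conj}$ guarantees, via Proposition \ref{analytic2} and Lemma \ref{conjvar}.(1), that the free difference quotients of $F$ evaluated at $X$ indeed land in the cyclic Haagerup tensor product $M\oehc{D}M$, and (via Lemma \ref{conjvar}.(4)) that $\partial_{i,X}^*$ is a bounded operator on $D(\overline{\partial_i\o_D1}^{eh}\oplus\overline{1\o_D\partial_i}^{eh})\to M$. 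Since $\|1 - \mathscr{J}F\|_{M_n(M\oehc{D}M)} < 1$, the matrix $\mathscr{J}F$ is invertible in $M_n(M\oehc{D}M)$ by the Neumann series $(\mathscr{J}F)^{-1} = \sum_{p\geq 0}(1-\mathscr{J}F)^{\#p}$, with $\|(\mathscr{J}F)^{-1}\|_{M_n(M\oehc{D}M)} \leq C(F)$, using submultiplicativity of $\#$ from the definition of the norm on $M_n(A^{\oehc D2})$. Next, by the change-of-variables identity $\partial_{j,Y} = \sum_i [(\mathscr{J}F)^{-1}]_{ij}\#_{?}\,\partial_{i,X}$ (the precise operadic composition being the one in \eqref{chainrule}), I would write $\partial_{j,Y}^*(1\o1)$ in terms of the $\partial_{i,X}^*$ and the entries of $(\mathscr{J}F)^{-1}$: one expands $\mathscr{J}_F^*(1\o1) = \mathscr{J}^*((\mathscr{J}F)^{-1,*})$ (the formula cited in Lemma \ref{adjoint}), then applies Voiculescu's formula \eqref{Dan} for $\partial_k^*$ to each basic tensor appearing in $(\mathscr{J}F)^{-1,*}$. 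This produces three types of terms: a ``main'' term $a\,\partial_k^*(1\o1)\,b$, and two ``correction'' terms involving $(1\o E_D)\partial_k$ and $(E_D\o1)\partial_k$ applied to the legs of $(\mathscr{J}F)^{-1,*}$ — the latter being controlled because $\overline{1^{\o_D\epsilon}\o_D\partial_k\o_D1^{\o_D\eta}}^{eh}$ is bounded on the relevant domain (Lemma \ref{conjvar}.(1)) and the legs are themselves entries of $\sigma[\mathscr{J}F]$-type objects.

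To get the stated bound I would split $(\mathscr{J}F)^{-1} = 1 + [(\mathscr{J}F)^{-1} - 1]$ and further $(\mathscr{J}F)^{-1} - 1 = (\mathscr{J}F)^{-1}\#(1 - \mathscr{J}F)$, so that the diagonal-versus-off-diagonal structure of $\mathscr{J}F$ (diagonal entries near $1\o1$, off-diagonal entries small) separates the contribution of $\partial_j^*(1\o1)$ itself (coefficient $\leq C(F)$) from the contributions of $\partial_k^*(1\o1)$, $k\neq j$, which are weighted by $C(F)^2\sum_{k\neq j}\|\sigma[(\mathscr{J}F)_{kj}]\|_{M\o_{ehD}M}$. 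The correction terms, after a single application of $\overline{1^{\o\epsilon}\o\partial_k\o1^{\o\eta}}^{eh}$ to $\sigma[(\mathscr{J}F)_{lm}]$, contribute the last sum with an overall $C(F)^2$. The self-adjointness hypothesis $F = F^*$ is what makes $\partial_{j,Y}^*(1\o1)$ actually equal to $\mathscr{J}_F^*(1\o1)_j$ rather than merely having a formal adjoint — it ensures the relevant $*$-compatibility so the conjugate variable (not just a one-sided inverse) exists. Throughout, the traciality of $\langle e_D,\cdot\#e_D\rangle$ on $D'\cap M\oeh{D}M$ is used exactly as in \cite{alice-shlyakhtenko-transport} to justify that the formal adjoint relation defining $\partial_{j,Y}^*$ is consistent and yields an element of $M$ (cf. Theorem \ref{Finite3}.(3), (4)).

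\textbf{Main obstacle.} The bookkeeping obstacle is tracking the operadic $\#_i$ compositions correctly: the chain rule in the cyclic Haagerup setting involves the $\sigma$-twist (the flip built into $M_n(A^{\oehc D2})$, recorded by $\sigma = \sigma_2\circ\sigma_1^{-1}$ in Theorem \ref{finite2}.(1d)), so that inverting $\mathscr{J}F$ and taking adjoints interacts nontrivially with which leg of each tensor the multiplication acts on — this is why the estimate is phrased in terms of $\|\sigma[(\mathscr{J}F)_{kj}]\|$ rather than $\|(\mathscr{J}F)_{kj}\|$. The genuinely delicate analytic point, however, is verifying that every intermediate expression stays inside the domains of the closed operators $\overline{\partial_i}^{eh}$ and $\overline{1^{\o\epsilon}\o\partial_k\o1^{\o\eta}}^{eh}$ (not merely $L^2$), so that all the manipulations take place in $M$ and $M\oehc{D}M$ rather than in $L^2$-completions; this is where the hypothesis $F\in C^{k,l}_{tr,c}$ with $k\geq 2$ and $U'\subset A^n_{S,conj}$ is essential, via Lemma \ref{conjvar}, and where most of the care in writing the full proof would go. The existence statement and the differentiated version \eqref{poi} then follow by the argument already sketched in the proof of Lemma \ref{adjoint}, using $A^{-1} - B^{-1} = A^{-1}(B-A)B^{-1}$ and the fact that the non-negative cone in $M_n(A^{\oehc D2})$ is closed.
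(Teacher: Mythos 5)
Your proposal follows essentially the same route as the paper's proof: the chain rule and Neumann-series inversion of $\mathscr{J}F$ in $M_n(M\oehc{D}M)$, the traciality of $\langle e_D,\cdot\#e_D\rangle$ (via the weak-* continuity and \eqref{ScalarProductTrace}) to identify $\partial_{j,Y}^*(1\o 1)$ as $\sum_i \partial_{i,X}^*$ applied to the $\sigma$-twisted, adjointed entries of $(\mathscr{J}F)^{-1}$, Lemma \ref{conjvar}(1),(3),(4) for domain membership and the extended Voiculescu formula, and a diagonal/off-diagonal plus derivative-of-the-inverse decomposition for the norm bound. Two minor caveats that do not change the verdict: the identity $\mathscr{J}_F^*(1\o 1)=\mathscr{J}^*((\mathscr{J}F)^{-1,*})$ is precisely what this lemma establishes (Lemma \ref{adjoint} cites it from here, so it cannot be imported), and \eqref{poi} belongs to Lemma \ref{adjoint} rather than to the present statement — but since you also sketch the traciality/duality derivation of that identity, your argument coincides with the paper's.
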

\begin{proof}

This proof is a variant relative to $D$ of Lemma 3.1 in \cite{alice-shlyakhtenko-transport}.

Take $P\in B\langle X_1,...,X_n,D,R,\C\rangle$, $R\geq\max(S,\sup_{X\in U'}\|F_i(X)\|)$, then $P(Y)$ satisfies the natural extension of formula \eqref{CompoCkltrFormula} from the proof of  Lemma \ref{compositionCklFixed} and so we get the equation in $D'\cap M\oeh{D}M$: $$\partial_{i,X}P(Y)=\sum_{j=1}^n(\partial_{j}(P))(Y)\#\partial_{i,X}Y_j.$$
 Note that from the assumption on $(\mathscr{J} F)_{ij}=\partial_{j,X}Y_i$, one deduces that  $\mathscr{J} F$ is 
invertible in $M_n(M\oehc{D} M)$ so that one gets:$$\partial_{i}(P)(Y)=\sum_{j=1}^n\partial_{j,X}P(Y)\#[(\mathscr{J} F)^{-1}]_{ji}.$$

Thus applying the weak-* continuity of Theorem \ref{finite2}.(2) to introduce $E_{D'}$ (and then remove it in the next-to-last line), the assumed traciality and applying 
\eqref{ScalarProductTrace} to $X=[(\mathscr{J} F)^{-1}]_{ji}^*,Y=E_{D'}(\partial_{j,X}P(Y))$, we get: 
\begin{align*}\langle e_D,(\partial_{i}(P))(Y)\#e_D\rangle&=\sum_{j=1}^n\langle e_D,E_{D'}(\partial_{j,X}P(Y))\#[(\mathscr{J} F)^{-1}]_{ji}\#e_D\rangle
\\&=\sum_{j=1}^n\tau(([(\mathscr{J} F)^{-1}]_{ji}^*)^*E_{D'}[\partial_{j,X}P(Y)])
\\&=\sum_{j=1}^n\langle[(\mathscr{J} F)^{-1}]_{ji}^*)\#e_D,E_{D'}[\partial_{j,X}P(Y)]\#e_D\rangle
\\&=\sum_{j=1}^n\langle([(\mathscr{J} F)^{-1}]_{ji}^*)\#e_D,\overline{\partial_{j,X}}^{L^2}(P(Y))\rangle.
\end{align*}
Thus if we check that $([(\mathscr{J} F)^{-1}]_{ji}^*)\#e_D\in D(\overline{\partial_{j,X}}^*)$
 we  will deduce the existence of the conjugate variable and the equality $$\partial_{i,Y}^*(1\o_D1)=\sum_{j=1}^n{\partial_{j,X}}^*[([(\mathscr{J} F)^{-1}]_{ji}^*)\#e_D].$$

Note that in any representation with $X\in U$ as above  $$\overline{\partial_{i,X}}^{eh}F_j(X)=[(\partial_{i,X}(\iota(F))](X)=[(\partial_{i,X}(\iota(F)^*)](X)=(\partial_{i,X}(\iota(F))](X)^\star=[\overline{\partial_{i,X}}^{eh}F_j(X)]^\star$$
where the last $\star$ is the one of $M\oehc{D}M,$ and one uses natural properties of evaluation extended using the one on polynomials since $X\in A^n_{S,UltraApp}.$

Now, since ($\mathscr{J} F)_{ij}\in M\oehc{D}M$ one can note that $(\sigma(\mathscr{J} F)_{ij})$ is well defined in $D'\cap M\oeh{D}M$ and $(\mathscr{J} F)_{ij}^*=\sigma[(\mathscr{J} F)_{ij}^\star]=[\sigma((\mathscr{J} F)_{ij})](X)$ and thus from Lemma \ref{conjvar} (3), the assumption
 $\sigma((\mathscr{J} F)_{ij})\in D(\overline{\partial_k\o 1}^{eh}\oplus \overline{1\o \partial_k}^{eh})$, Neumann series and from the derivation property in (1) of  the same Lemma,  so does $(\sigma(\mathscr{J} F))^{-1}_{ij}$ and for instance, one gets as expected $$\overline{\partial_k\o 1}^{eh}[(\sigma(\mathscr{J} F))^{-1}_{ij}]=-\sum_{l,m}(\sigma(\mathscr{J} F)^{-1}_{il}))\#[\overline{\partial_k\o 1}^{eh}((\sigma(\mathscr{J} F))_{lm})]\#_2(\sigma(\mathscr{J} F))^{-1}_{mj}).$$

Thus from part (4) of the same Lemma, one gets that $\partial_{i,Y}^*(1\o_D1)$ exists and is in $M$ and the expected bound easily follows from the proof of this statement giving the appropriate extension of Voiculescu's formula. Only note that for $j\neq i$ \begin{align*}&[([\sigma(\mathscr{J} F)^{-1}]_{ji})]\#{\partial_{j,X}}^*(1\o 1)\\&=\sum_{N=1}^\infty \sum_{n=0}^{N-1}\sum_{k\neq i} [(\sigma(\mathscr{J} F-1)^{N-n-1})_{jk}]\#[\sigma(\mathscr{J} F)]_{ki})]\#[\sigma(\mathscr{J} F-1)_{ii}]^n\#{\partial_{j,X}}^*(1\o 1).\end{align*}

\end{proof}

\subsection{Various continuity properties}

\medskip

We start by checking the continuity in $\alpha$ of our various maps. Recall that $A^n_{R/3}\subset A^n_{R,\alpha}$ independently of $\alpha\in [0,1].$

\begin{Lemma}
If we assume the Assumption of Lemma \ref{EstimXt} with $V,W\in C^{k+2}_{c}(A,2R:\mathscr{B},D), U\subset A^n_{R/3}$, then  $X:\alpha\mapsto X_t(\alpha)$ is continuous on  $[0,1]$ with value  $C^0([0,T],C^{k}_{c}(A,U:\mathscr{B},D:\mathscr{S}))$.
\end{Lemma}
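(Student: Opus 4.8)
The statement is a continuous dependence result for the solution of the SDE \eqref{eqn:Xt} with respect to the interpolation parameter $\alpha$. The plan is to reuse the Picard iteration scheme from the proof of Lemma \ref{EstimXt}, now tracking the $\alpha$-dependence explicitly. Recall that on a small time interval $[0,T]$ (chosen as in Lemma \ref{EstimXt}, so that the a priori bound $\|X_t\|\le 2R$ holds for all Picard iterates and so that the iteration is a contraction in $\|.\|_{k,0,U,c}$) we defined $X^{[0,m]}$ recursively by $X^{[0,0]}_.=X_0$ and
$$X_t^{[0,m]}(\alpha) = S_t-\frac{1}{2} \int_0^t \mathscr{D} V_\alpha (X_u^{[0,m-1]}(\alpha)) \,du+X_0,\qquad t\in [0,T].$$
Since $V_\alpha = V+\alpha W$ with $V,W\in C^{k+2}_c(A,2R:\mathscr{B},D)$, the map $\alpha\mapsto \mathscr{D}V_\alpha\in C^{k+1}_c(A,2R:\mathscr{B},D)^n$ is affine, hence continuous (indeed Lipschitz). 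First I would show by induction on $m$ that $\alpha\mapsto X^{[0,m]}(\alpha)$ is continuous with values in $C^0([0,T],C^k_c(A,U:\mathscr{B},D:\mathscr{S}))$: this uses the continuity of composition $(P,Q)\mapsto P(Q)$ from Corollary \ref{compositionCkl} (and its Brownian-filtration variant in Proposition \ref{ExpectationCkl}), the continuity of the time integral $\int_0^t(\cdot)du$ over subsets of $[0,T]$, and the affine (hence continuous) dependence of $\mathscr{D}V_\alpha$ on $\alpha$; all these operations are continuous for the relevant norms, and composing/adding/integrating continuous functions of $\alpha$ preserves continuity.

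Next I would pass to the limit $m\to\infty$. The key point, already established in the proof of Lemma \ref{EstimXt}, is that for $T$ small enough the Picard iteration converges in $\|.\|_{k,0,U,c}$ \emph{uniformly} over $\alpha$ in the compact interval $[0,1]$: the contraction constant and the a priori bounds depend only on $R$, $c$ and on $\sup_{\alpha\in[0,1]}\|\mathscr{D}V_\alpha\|_{C^{k+1}_c(A,2R)}$, which is finite because $\alpha\mapsto\mathscr{D}V_\alpha$ is affine on the compact $[0,1]$. Hence $X^{[0,m]}(\alpha)\to X^{[0,\infty]}(\alpha)=X(\alpha)$ uniformly in $\alpha\in[0,1]$ and in $t\in[0,T]$, in the norm $\|.\|_{k,0,U,c}$. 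A uniform limit of continuous functions being continuous, $\alpha\mapsto X_\cdot(\alpha)$ is continuous on $[0,1]$ with values in $C^0([0,T],C^k_c(A,U:\mathscr{B},D:\mathscr{S}))$ on this first small interval.

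Finally I would iterate to cover all of $[0,T]$ as in Lemma \ref{EstimXt} (noting $U\subset A^n_{R/3}\subset A^n_{R,\alpha}$ independently of $\alpha$, so the iteration domain does not move with $\alpha$ and we may restart from $X_s(\alpha)\in U$). At each of the finitely many restarts the same argument applies: the restarted process depends continuously on $\alpha$ because it is built from the previous piece (continuous in $\alpha$ by induction) by the same composition/integration operations with $\mathscr{D}V_\alpha$, and the convergence of the corresponding Picard scheme is again uniform in $\alpha\in[0,1]$ with constants independent of $\alpha$. Gluing the finitely many continuous pieces via \eqref{poll} yields continuity of $\alpha\mapsto X_\cdot(\alpha)$ on all of $[0,T]$. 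The main obstacle is the bookkeeping needed to make the $\alpha$-uniformity of the Picard convergence precise — i.e. checking that all constants appearing in the contraction estimate (the a priori $\|\cdot\|\le 2R$ bound, the local Lipschitz constant of $Q\mapsto \int\mathscr{D}V_\alpha(Q)$ in $\|.\|_{k,0,U,c}$) can be chosen uniformly over $\alpha\in[0,1]$; this is exactly where one invokes compactness of $[0,1]$ together with the affine, hence norm-continuous and norm-bounded, dependence of $V_\alpha$ on $\alpha$ in $C^{k+2}_c(A,2R:\mathscr{B},D)$. Everything else is a routine reuse of the continuity statements for composition and conditional expectations from Corollary \ref{compositionCkl} and Proposition \ref{ExpectationCkl}.
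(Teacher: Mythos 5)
Your argument is correct in substance, but it takes a genuinely different route from the paper. You prove continuity in $\alpha$ by re-running the Picard scheme of Lemma \ref{EstimXt} with $\alpha$-uniform constants (the a priori bound and the contraction constant depend only on $\sup_{\alpha\in[0,1]}\Vert\mathscr{D}V_\alpha\Vert_{C^{k+1}_c(A,2R)}$, finite since $\alpha\mapsto V_\alpha$ is affine), showing each iterate is continuous in $\alpha$ via the composition estimates of Lemma \ref{compositionCklFixed}/Corollary \ref{compositionCkl}, and then passing to the uniform limit. The paper instead works directly with the difference $X_t(\alpha)-X_t(\alpha')$: it writes it in the Duhamel form with the averaged Hessian $\int_0^1 d\beta\,\partial\mathscr{D}V_\alpha(\beta X_u(\alpha)+(1-\beta)X_u(\alpha'))\geq c\,\mathrm{Id}$ (positivity being a closed convex cone) and applies Lemma \ref{lemhyp} to get $\Vert X_t(\alpha)-X_t(\alpha')\Vert\leq e^{-ct/2}\int_0^t e^{cu/2}\Vert[\mathscr{D}V_\alpha-\mathscr{D}V_{\alpha'}](X_u(\alpha'))\Vert\,du$, then handles the free difference quotients by induction through \eqref{HigherProcess}, splitting the top-order term into three pieces and using uniform approximation of $V_{\alpha'}$ by analytic functions to control the piece involving the $(k+2)$-nd derivative of $V$. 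The paper's convexity route yields estimates with exponential weights valid on the whole time interval without time-stepping and is the same mechanism reused elsewhere (e.g.\ \eqref{bornenorm}, Lemma \ref{ContSemigAlpha}), whereas your route is softer but consumes the hypothesis $V,W\in C^{k+2}_c$ through the Lipschitz-in-$Q$ part of the composition lemma (which needs the spare derivative on $\mathscr{D}V_\alpha$) and requires the bookkeeping of $\alpha$-uniform constants across finitely many time steps. One caution: the phrase ``gluing via \eqref{poll}'' should not be read as composing two $\alpha$-dependent flows, since both factors are only $C^k_c$ and the composition is Lipschitz in the inner variable only when the outer function has one more derivative; the correct implementation, which you also indicate, is to restart the Picard iteration at time $s$ with initial data $X_s(\alpha)$ viewed as a function of $(X_0,S_{[0,s]})$, so that the only compositions at top order are with $\mathscr{D}V_\alpha\in C^{k+1}_c$.
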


\begin{proof}
For the continuity in $\alpha$ of $X$, we have :
\begin{align*} 
&X_t(\alpha)-X_t(\alpha') =-\frac{1}{2}\int_0^t du [\mathscr{D} V_\alpha-\mathscr{D} V_{\alpha'}](X_u(\alpha'))\\& -\frac{1}{2}\int_0^t du\left[\int_0^1d\beta\partial\mathscr{D} V_\alpha (\beta X_u(\alpha)+(1-\beta)X_u(\alpha'))\right]\# [X_u(\alpha)-X_u(\alpha')]
\end{align*}
Using the argument in Lemma \ref{lemhyp} with $\partial\mathscr{D} V_\alpha(X_u)$ replaced by  $$\left[\int_0^1d\beta\partial\mathscr{D} V_\alpha (\beta X_u(\alpha)+(1-\beta)X_u(\alpha'))\right]\geq c Id,$$ with the positivity coming since our notion of positivity is a closed convex cone, one gets:
\begin{align*} 
&\|X_t(\alpha)-X_t(\alpha')\| \leq e^{-ct/2}\int_0^t due^{cu/2} (\sum_i\|[\mathscr{D}_i V_\alpha-\mathscr{D}_i V_{\alpha'}](X_u(\alpha'))\|^2)^{1/2}
\end{align*}
This converges uniformly on $[0,T]$ to $0$ when $\alpha\to \alpha'$ using the corresponding continuity of $V_\alpha.$

Similarly, one gets bounds inductively using \eqref{HigherProcess} in decomposing the higher order term \begin{align*}\partial_j\mathscr{D}_i V_\alpha (X_u(\alpha))& \# (\partial^{k}_{(j_1,...,j_k)}X_u^{(j)}(\alpha))-\partial_j\mathscr{D}_i V_{\alpha'} (X_u(\alpha')) \# (\partial^{k}_{(j_1,...,j_k)}X_u^{(j)}(\alpha'))\\&=(\partial_j\mathscr{D}_i V_\alpha-\partial_j\mathscr{D}_i V_{\alpha'}) (X_u(\alpha)) \# (\partial^{k}_{(j_1,...,j_k)}X_u^{(j)}(\alpha))\\&+[\partial_j\mathscr{D}_i V_{\alpha'} (X_u(\alpha))-\partial_j\mathscr{D}_i V_{\alpha'} (X_u(\alpha'))] \# (\partial^{k}_{(j_1,...,j_k)}X_u^{(j)}(\alpha))\\&+(\partial_j\mathscr{D}_i V_{\alpha'} (X_u(\alpha')) \# (\partial^{k}_{(j_1,...,j_k)}X_u^{(j)}(\alpha))-(\partial^{k}_{(j_1,...,j_k)}X_u^{(j)}(\alpha')))\end{align*}

The last line is treated by  Lemma \ref{lemhyp}, the first line and lower order terms tend to zero uniformly on compact by continuity of $\alpha \mapsto V_\alpha$ or inductively, in the second line (and corresponding terms for lower order terms) , $V_\alpha$ is approximated (uniformly in $\alpha$) by analytic functions to get a Lipschitz function, and use the previous bound on $\|X_t(\alpha)-X_t(\alpha')\|$. Note that the Lipschitz property  could have been treated by explicit bounds on derivatives except for the lowest order term having highest derivative in $V$, namely $k+2$, for which it is crucial that our definition of $C^{k+2}_c$ imply a uniform continuity of the highest derivative via uniform approximation by analytic functions as explained. This concludes the uniform convergence statement in $\alpha$.
\end{proof} 
We also obtain the corresponding result for semigroups. 
 \begin{Lemma}\label{ContSemigAlpha}
If we assume the Assumption of Proposition \ref{semig} with $V,W\in C_c^{k+l+2}(A,2R:B,D)$ ($k\in\{2,3\}, l\geq 1$) then for every $T>0$ each $P\in C_c^{k+l}(A,A_{R,conj }^n:B,E_D)$, $\varphi^{.\prime}(P):\alpha\mapsto \varphi^{\alpha\prime}(P)$ is continuous on  $[0,1]$ with value  $$C^0([0,T],C_{tr,V_0}^{k,l}(A,A_{R/3,conj }^n:B,E_D)).$$
\end{Lemma}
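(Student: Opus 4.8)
The plan is to prove continuity of $\alpha\mapsto\varphi^{\alpha\prime}(P)$ by combining the explicit formula $\varphi^{\alpha\prime}(P)=E_0(P(X^\alpha_t))$ (Proposition~\ref{semig}) with the continuity in $\alpha$ of the flow $X^\alpha_t$ just established in the previous Lemma, together with the continuity of composition and of the conditional expectation $E_0$. First I would fix $T>0$ and $P\in C_c^{k+l}(A,A_{R,conj}^n:B,E_D)$, and recall that by Lemma~\ref{EstimXt}, for $V,W\in C_c^{k+l+2}(A,2R:B,D)$ the map $X_0\mapsto X^\alpha_t(X_0,\{S_s\})$ lies in $C^{k+l}_c(A,A^n_{R,\alpha,conj}:B,E_D:\mathscr S)$, and that by the previous Lemma (applied on the stable domain $U=A^n_{R/3}\subset A^n_{R,\alpha}$, independently of $\alpha$) the family $\alpha\mapsto X^\alpha_\cdot$ is continuous on $[0,1]$ with values in $C^0([0,T],C^{k+l}_c(A,A^n_{R/3,conj}:B,E_D:\mathscr S))$. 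Since $A^n_{R/3,conj}$ sits inside each $A^n_{R,\alpha,conj}$, we may work on this $\alpha$-independent domain throughout.

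Next I would use the composition result of Corollary~\ref{compositionCkl} (in the Brownian-motion context of Proposition~\ref{ExpectationCkl}(2)), which gives a composition map
$$C^{k+l}_c(A,V:B,D)\times \mathrm{Comp}(U_A,V_A',C^{k,l}_{tr,W}(A,U:\mathscr B,E_D:\mathscr S))\to C^{k,l}_{tr,W}(A,U:\mathscr B,E_D:\mathscr S),$$
which by the final sentence of that Corollary is moreover Lipschitz on bounded sets (after adding one more derivative to $P$, which we have since $k+l\geq k+l$ and $P\in C_c^{k+l}$ with $V,W\in C_c^{k+l+2}$ gives room). Concretely, for $P$ fixed the partial map $Q\mapsto P(Q)$ is Lipschitz on bounded subsets of $C^{k,l}_{tr,W,c}(A,A^n_{R/3,conj}:B,E_D:\mathscr S)$; since the flows $\{X^\alpha_t: \alpha\in[0,1], t\in[0,T]\}$ form a bounded set in that space (by the exponential bounds \eqref{bornenorm} of Lemma~\ref{EstimXt}, uniform in $\alpha$ because all constants depend only on $c,R$ and on $\sup_{\alpha}\|V_\alpha\|$ in the relevant $C^k$-norm), the estimate
$$\|P(X^\alpha_t)-P(X^{\alpha'}_{t'})\|_{C^{k,l}_{tr,W,c}}\le L\,\|X^\alpha_t-X^{\alpha'}_{t'}\|_{C^{k,l}_{tr,W,c}}$$
holds with $L$ depending only on $P$ and the uniform bound. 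The right-hand side tends to $0$ as $(\alpha',t')\to(\alpha,t)$ by the previous Lemma (continuity in $\alpha$, uniformly on $[0,T]$) together with the continuity of $t\mapsto X^\alpha_t$ asserted in Lemma~\ref{EstimXt}. Finally, applying the contraction $E_0:C^{k,l}_{tr,V_0,c}(A,U:\mathscr B,E_D:\mathscr S)\to C^{k,l}_{tr,V_0,c}(A,U:B,E_D)$ from Proposition~\ref{ExpectationCkl}(1) (note $\varphi^{\alpha\prime}$ lands in the $V_\alpha$-decorated space, but here we read the target as $C_{tr,V_0}^{k,l}$ as in the statement, using $V_0=\frac12\sum X_i^2$ and the canonical comparison of norms), one transfers the continuity of $\alpha\mapsto P(X^\alpha_t)$ to $\alpha\mapsto\varphi^{\alpha\prime}(P)=E_0(P(X^\alpha_t))$, with values in $C^0([0,T],C_{tr,V_0}^{k,l}(A,A^n_{R/3,conj}:B,E_D))$, and by the semigroup property and strong continuity in $t$ one extends from $[0,T]$ to all compact time intervals.

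The main obstacle I anticipate is bookkeeping rather than conceptual: one must verify that the relevant sets of flows genuinely form a \emph{bounded} family in the right $C^{k,l}_{tr,W,c}$-norm \emph{uniformly in} $\alpha\in[0,1]$, so that the "Lipschitz on bounded sets" clause of Corollary~\ref{compositionCkl} applies with an $\alpha$-independent constant; this needs the uniformity in $\alpha$ of the constants $C_{k+l}$ in \eqref{bornenorm}, which in turn rests on the $h$-convexity constant $c$ being the same for all $V_\alpha=V+\alpha W$ (guaranteed by Assumption~\ref{thehyp}) and on $\alpha\mapsto V_\alpha$ being bounded in $C^{k+l+2}_c$. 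A secondary subtlety, exactly as in the proof of the previous Lemma, is that the highest-order derivative ($C_c^{k+l+2}$) of $V_\alpha$ enters the fixed-point/Picard construction of $X^\alpha$, and there one cannot get uniform continuity by crude derivative bounds but must invoke the definition of $C^{k+l+2}_c$ via uniform approximation by analytic functions; however this has already been carried out in the previous Lemma, so here it can simply be cited. The rest — continuity of $E_0$, of the composition map, of $t\mapsto X^\alpha_t$ — is supplied verbatim by Proposition~\ref{ExpectationCkl}, Corollary~\ref{compositionCkl}, and Lemma~\ref{EstimXt}.
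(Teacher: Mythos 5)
Your proposal is correct and follows essentially the same route as the paper: continuity in $\alpha$ of the flow $X^\alpha_t$ from the preceding Lemma, composed with the composition map and the conditional expectation $E_0$ of Proposition \ref{ExpectationCkl}, together with the identification $C^{k,l}_{tr,V_0}(A,A^n_{R/3,conj}:B,E_D)=C^{k,l}_{tr,V_\alpha}(A,A^n_{R/3,conj}:B,E_D)$ with equivalent norms for $k\geq 2$, $l\geq 1$, and the fact that $X_t^\alpha(X)\in A^n_{R,conj}$ for $X\in A^n_{R/3,conj}$ so the composition condition is met. One small over-claim: invoking the Lipschitz-on-bounded-sets clause of Corollary \ref{compositionCkl} formally requires one extra derivative on $P$ (i.e.\ $P\in C^{k+l+1}_c$), which you do not have and which is not supplied by the regularity of $V,W$; however, plain continuity of $Q\mapsto P(Q)$ on bounded sets for fixed $P$ — available at your stated regularity and all that the paper uses — already yields the conclusion, so this does not affect the argument.
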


\begin{proof}Recall that $C_{tr,V_0}^{k,l}(A,A_{R/3,conj}^n:B,E_D)=C_{tr,V_\alpha}^{k,l}(A,A_{R/3,conj}^n:B,E_D)$ with equivalent norms for $k\geq 2, l\geq 1$. The result follows by composing the composition map and expectations of Proposition \ref{ExpectationCkl} with our previous Lemma since for $X\in A_{R/3,conj}^n$, $X_t(X)\in A_{R,conj }^n$ for all $t$ so that the composition condition is satisfied.
 \end{proof} 
 
\subsection{Conjugate variables along free SDE's} 
 
The following result is an adaptation in free probability of (a special case of) Lemma 4.2 in \cite{RoTh02}, except that we have to use Ito Formula for the proof instead of Girsanov Theorem, not (yet) available in free probability. This is also an extension to our new classes of $C^2$ functions of a result first explained by the first author in \cite{Dab09}.

\begin{Proposition}\label{boundConj}
Assume the Assumption of Proposition \ref{ConvSDE}(a) with $V\in C^{4}_c(A,R:B,D).$  Assume moreover that, for $M=W^*(B,X_0)*_D(D\otimes W^*(S_t,t>0)$,  $\tau=\langle e_D, .\#e_D\rangle$ is a trace on $D'\cap M\oeh{D} M$ as in the conclusion of Theorem \ref{Finite3}.(3) and Proposition \ref{CyclicPermutations}.(2).

Consider on [0,T] the unique solution obtained there: 
$$X_t(X_0)=X_0+ S_t -\frac{1}{2} \int_0^t \mathscr{D} V (X_u(X_0)) du$$

Then $X_t^1,...,X_t^n$ have bounded conjugate variables in presence of $B$ relative $E_D$, and the corresponding $i$-th conjugate variable is given by $$\xi_s^i=\frac{1}{s}E_{W^*(B,X_s^1,...,X_s^n)}\left(X_s^i-X_0^i-\int_0^sdt \ \frac{t}{2}F_{\mathscr{D}_i V }(X_t^1,...,X_t^n)\right)+\frac{1}{2}\mathscr{D}_i V(X_s^1,...,X_s^n),$$

where for $W\in C^{2}_c(A,R:B,D)$ we defined:$F_W(X)=\frac{1}{2}\Delta_V(W)(X).$
\end{Proposition}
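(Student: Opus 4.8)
The statement asserts that the solution $X_t$ of the free SDE driven by $\mathscr D V$ has bounded conjugate variables relative to $E_D$ in presence of $B$, with an explicit formula. The strategy is the free-probability analogue of Lemma~4.2 of \cite{RoTh02}, but since Girsanov's theorem is not available we substitute a computation based on It\^o's formula. The guiding identity is: to show that $\xi_s^i$ is the $i$-th conjugate variable, it suffices to check the integration-by-parts relation
\begin{equation*}
\tau(\xi_s^i\, P(X_s)) = \tau\otimes_D\tau(\partial_{i,X_s} P(X_s))
\end{equation*}
for $P$ ranging over a dense set of functions (polynomials in $B,X_s^1,\dots,X_s^n$, then the closure as allowed by Lemma~\ref{conjvar}), where $\partial_{i,X_s}$ is the free difference quotient in the variables $X_s^1,\dots,X_s^n$. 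The key point is that this right-hand side can be produced by differentiating a well-chosen function of $X_t$ along the flow and applying It\^o.

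First I would fix a test function $P$ in the algebra generated by $B$ and $X$, and consider the process $t\mapsto \mathscr D_i Q(X_t)$ for suitable $Q$, or more directly apply It\^o's formula \eqref{eqito} to $P$ composed with the flow and track the drift term, which involves $\Delta_{V}+\delta_V$ and hence, after taking cyclic gradients via \eqref{derivcyclic}, the operator $\Delta_V$ applied to $\mathscr D_i V$. This is where the auxiliary function $F_W(X)=\tfrac12\Delta_V(W)(X)$ enters: the time-integral of $\tfrac t2 F_{\mathscr D_i V}(X_t)$ is precisely the correction term that accumulates from the non-commutative It\^o correction along the drift. Concretely, I would compute $E_{W^*(B,X_s)}(X_s^i - X_0^i)$ using the SDE, note that $X_s^i - X_0^i = S_s^i - \tfrac12\int_0^s \mathscr D_i V(X_u)\,du$, and then process the martingale part $E_{W^*(B,X_s)}(S_s^i)$: by the defining property of the free Brownian motion and freeness with amalgamation over $D$, this conditional expectation of $S_s^i$ reproduces, after dividing by $s$, exactly the conjugate-variable functional — this is the free analogue of the fact that $\tfrac1s\mathbb E[B_s \,|\, \mathcal F_s^X]$ computes a score function. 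The remaining deterministic terms involving $\mathscr D_i V$ and its iterated Laplacian are matched using the commutation relations \eqref{derivcyclic}, \eqref{commutationDDelta} and the definition of $\Delta_V$ in \eqref{deltaV}, together with the fact that for a solution of the SDE with $h$-convex potential all the relevant quantities stay bounded (the a priori bounds of Lemma~\ref{EstimXt} and the boundedness of $\mathscr D V$, $\Delta_V \mathscr D V$ since $V\in C^4_c$).

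The boundedness claim itself I would obtain from the explicit formula together with a $C^4$ hypothesis on $V$: the terms $\mathscr D_i V(X_s)$ and $F_{\mathscr D_i V}(X_s)=\tfrac12\Delta_V(\mathscr D_i V)(X_s)$ lie in $W^*(B,X_s)$ and are bounded because $\mathscr D_i V\in C^3_c(A,R:B,D)$ and $\Delta_V$ maps $C^4_c$ into $C^2_c\subset C^*_{tr}$ (Proposition~\ref{DeltaAnalytic} and the extension of $\Delta_V$ in \eqref{defDeltaV}), all evaluated at $\|X_s\|\le R$; the conditional expectation is a contraction so the whole expression is bounded in operator norm uniformly on $[0,T]$. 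The traciality hypothesis on $D'\cap M\oeh{D}M$ is used precisely to make sense of $\tau\otimes_D\tau(\partial_{i,X_s}P(X_s))$ and to justify the duality computations exactly as in Lemma~\ref{adjointAppendix}, so that the free difference quotient $\partial_{i,X_s}$ is closable (Lemma~\ref{conjvar}) and the identification passes from polynomials to the required closure.

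\textbf{Main obstacle.} The delicate step is the rigorous justification of the It\^o-formula computation of $E_{W^*(B,X_s)}(S_s^i)$ and the matching of the drift corrections: one must carefully apply It\^o \eqref{eqito} not to $P$ itself but to a function built from $P$ and the cyclic gradient, control that all intermediate quantities (iterated Laplacians of $\mathscr D_i V$, the lower-order terms in \eqref{HigherProcess}) remain in the right $C^{k,l}$ spaces so the stochastic and time integrals converge, and verify that the martingale terms genuinely vanish under the conditional expectation $E_{W^*(B,X_s)}$ — this last point requires the adaptedness/filtration structure of $\mathscr A_u$ and the freeness with amalgamation, and is where an argument analogous to Clark--Ocone (as used in the proof of It\^o's formula above) replaces Girsanov. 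Getting the exact coefficient $\tfrac t2$ in front of $F_{\mathscr D_i V}$ right is the kind of bookkeeping that needs the commutation identity \eqref{derivcyclic} applied with care; I expect that to be the main place where a sign or factor could go wrong.
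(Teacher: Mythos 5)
Your overall orientation is right -- the paper does replace Girsanov by an It\^o computation, uses the traciality of $\langle e_D,\cdot\#e_D\rangle$ together with the cyclic symmetry $\partial_i\mathscr{D}_jV=\rho(\partial_j\mathscr{D}_iV)$, and finally needs the boundedness of $\mathscr{D}_iV$ and $\Delta_V(\mathscr{D}_iV)$ -- but the core of your argument rests on a claim that is not true as stated and hides the actual difficulty. You assert that $\frac1s E_{W^*(B,X_s)}(S_s^i)$ ``reproduces the conjugate-variable functional by the defining property of the free Brownian motion and freeness with amalgamation over $D$.'' That is only a heuristic valid for the drift-free case $X_0+S_s$; once the drift $-\frac12\mathscr{D}V(X_u)\,du$ is present, $X_s$ is no longer of the form ``initial condition plus a variable free over $D$ from it,'' and the whole content of the proposition is to identify the correction terms $-\int_0^s\frac t2 F_{\mathscr{D}_iV}(X_t)\,dt$ and $+\frac s2\mathscr{D}_iV(X_s)$. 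Asserting the conditional-expectation identity for $S_s^i$ and then ``matching the remaining deterministic terms'' is circular: the matching is exactly what has to be proved.

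Concretely, the It\^o computation does not close by itself. What one gets (this is Step~1 of the paper's proof, after introducing the Malliavin-type derivation $\delta_s$ with $\delta_tP(X_t)=t\,\partial P(X_t)$ and using traciality plus $\partial_i\mathscr{D}_jV=\rho(\partial_j\mathscr{D}_iV)$) is the recursive relation \eqref{diffConjVarSDE}: the ``defect'' $\tau(P_t\Xi_t^i)-\tau(\langle(1\otimes_D1)_i,\delta_tP(X_t)\rangle)$ for a test polynomial $P$ equals the time integral of the same defect for $\beta_s=\frac12\Delta_V(P)(X_s)$, which is a polynomial of strictly higher degree whenever $V$ is not quadratic. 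A naive Gronwall argument therefore fails, and the vanishing of the defect has to be extracted by the iteration over the degree with the quantitative bounds $A_l(t,k)\sim\frac{(t-\theta)^l}{2^l(l!)^2}F^l(k+2l)^{2l+1}R^{(k+l)p}$, shown to tend to $0$ for $t-\theta$ small and then propagated by induction over time intervals -- first for $V$ a polynomial of finite degree, and only then for general $V\in C^4_c(A,R:B,D)$ by approximating $V$ by $(c',R)$ h-convex polynomials and proving uniform convergence of $X_t(V_n)$ and $\Xi_t(V_n)$ on $[0,T]$ so one can pass to the limit in the conjugate-variable equation. Neither the degree-iteration scheme nor the polynomial-approximation step appears in your proposal (you only flag ``the main obstacle'' without a mechanism to resolve it), so as written the proof has a genuine gap precisely at its central step.
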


\begin{proof}
\begin{step} Obtaining a differential equation from Ito formula.\end{step}

We have to prove that $\tau(\langle 1\o_D 1,\partial P(X_t^1,...,X_t^n)\rangle)=\tau(\xi_t^i P(X_t^1,...,X_t^n))$ for an ordinary $B$-non-commutative polynomial $P$ (in the algebra generated by $B,X_1,...,X_n$).  Let us write $\delta_s$ the following (Malliavin) Derivation operator defined on $B$-non-commutative polynomials in $X_u^i$'s (as usual one can assume them algebraically free without loss of generality): 
$$\delta_s(P(X_{s_1}^{i_1},...,X_{s_n}^{i_n}))=\sum_{j}(\partial_{(j)}(P))(X_{s_1}^{i_1},...,X_{s_n}^{i_n}) (s\wedge s_j),$$
where $\partial_{(j)}$ is the $B-E_D$-free difference quotient in the $j$-th variable for $P$ (sending $X_{s_j}^{i_j}$ to $(1\o_D 1)_{i_j}$ having only an $i_j$-th non-zero component). Obviously, $\delta_t P(X_t^1,...,X_t^n)=t\partial P(X_t^1,...,X_t^n)$ so that it suffices to show:
$$\tau(\langle (1\o_D1)_i,\delta_s P(X_s^1,...,X_s^n)\rangle)-\tau(\Xi_s^iP_s)=0,$$
for $\Xi_s^i=X_s^i-X_0^i-\int_0^sdt\ \frac{t}{2} F_{\mathscr{D}_i V}(X_t^1,...,X_t^n)+\frac{s}{2}\mathscr{D}_i V(X_s^1,...,X_s^n),$ and any non-commutative polynomial $P_s=P(X_s^1,...,X_s^n)$. 
We will first prove using Ito formula a differential equation for the above differences.

Applying Ito formula, one gets ($\partial_{j}$ the ordinary difference quotient):
\begin{align*}P_t=P(X_t^1,...,X_t^n)&=P(X_0^1,...,X_0^n)+\int_0^tds\frac{1}{2}\Delta_V(P)(X_s^1,...,X_s^n)\\
&\qquad+\int_0^t \partial(P)(X_s^1,...,X_s^n)\#dS_s.\end{align*}
Let us write for short $\beta_s=\frac{1}{2}\Delta_V(P)(X_s^1,...,X_s^n)$.

Thus, let us compute likewise~: $$\tau(P_t(X_t^i-X_0^i))=\int_0^tds \tau(P_s (-\frac{1}{2}\mathscr{D}_i V(X_s))+\beta_s(X_s^i-X_0^i)+\langle 1\o_D1, \partial_i(P)(X_s^1,...,X_s^n)\rangle_{B\langle X\rangle}).$$

\begin{align*}\tau(P_t t\mathscr{D}_i V(X_t))&=\int_0^tds\  \tau(P_s \mathscr{D}_i V(X_s)+P_s s F_{\mathscr{D}_i V}(X_s^1,...,X_s^n)+\beta_s s\mathscr{D}_i V(X_s))\\ & +\int_0^tds\tau(\langle \partial(P^*)(X_s^1,...,X_s^n),\partial(s\mathscr{D}_i V(X_s) )\rangle_).\end{align*}

Thus \begin{eqnarray*}\tau(P_t\Xi_t^i)&=&\int_0^tds\left( \tau(\beta_s\Xi_s^i)+\tau(\langle 1\o_D1,\partial_i(P)(X_s^1,...,X_s^n)\rangle)\right)\\
&&
-\int_{0}^{t }ds \tau(\langle \partial(P^*)(X_s^1,...,X_s^n),\partial(\frac{s}{2}V_i(s,X_s) )\rangle).\end{eqnarray*}

Using similarly Ito's formula on tensor products:

\begin{align*}\tau(\langle (1\o_D1)_i,  \delta_t P\rangle)&=\int_0^tds \tau\big(\langle (1\o_D1), \partial_i (P)(X_s)\rangle_{L^2(B\langle X_s \rangle,E_D)})\\
& + \frac{s}{2}\tau(\langle (1\o_D1)_i,(\Delta_V\o 1+1\o \Delta_V)\partial P(X_s)\rangle\big)
\\& =\int_0^tds \tau(\langle (1\o_D1), \partial_i (P)(X_s)\rangle_{L^2(B\langle X_s \rangle,E_D)})\\
& + \tau(\langle (1\o_D1)_i,\delta_s\beta_s-\sum_j\partial_j(P(X_s))\#\frac{1}{2}\delta_s\mathscr{D}_j V(X_s)\rangle)
\end{align*}
where we used the elementary relation  applied to a polynomial $P$: $$(\Delta_V\o 1+1\o \Delta_V)\partial(.)
=\partial\Delta_V(.)-\sum_j\partial_j(.)\#\partial\mathscr{D}_j V.$$

But of course we can use the fundamental property for cyclic gradients $\partial_i\mathscr{D}_j V(X_s)=\rho(\partial_j\mathscr{D}_i V(X_s))=(\partial_j\mathscr{D}_i V(X_s))^*$ with the rotation $\rho(a\o b)= b\o a$ extended to cyclic Haagerup tensor products and using $V=V^*$. Thus, one gets:
\begin{align*}\tau(\langle (1\o_D1)_i,\sum_j\partial_j(P(X_s))\#\delta_s\mathscr{D}_j V(X_s)\rangle)=\sum_js\tau(\langle (1\o_D1),\partial_j(P(X_s))\#(\partial_j\mathscr{D}_i V(X_s))^*\rangle)
\end{align*}
Rewritten with the notation of Theorem \ref{Finite1}--\ref{Finite3} so that one can use our traciality assumption, this is 
\begin{align*}&\sum_js\langle e_D,\partial_j(P(X_s))\#(\partial_j\mathscr{D}_i V(X_s))^*\#e_D\rangle\\&=
\sum_js\langle e_D,E_{D'}(\partial_j(P(X_s)))\#(\partial_j\mathscr{D}_i V(X_s))^*\#e_D\rangle\\&=
\sum_js\langle e_D,(\partial_j\mathscr{D}_i V(X_s))^*\#E_{D'}(\partial_j(P(X_s)))\#e_D\rangle\\&=
\sum_js\langle (\partial_j\mathscr{D}_i V(X_s))\#e_D,\partial_j(P(X_s))\#e_D\rangle
\end{align*}
Note that we introduced in the second line the projection on the commutant using the weak-* continuity obtained in  Theorem \ref{finite2}.(2). In the next-to-last line, after using traciality, we used \eqref{ScalarProductTrace}. In the last line we removed the conditional expectation using the fact that $(\partial_j\mathscr{D}_i V(X_s))\#e_D$ commutes with $D$. Finally, we have $(\partial_j(P(X_s))\#e_D)^*=\partial_j(P^*(X_s))\#e_D$ and $[(\partial_j\mathscr{D}_i V(X_s))\#e_D]^*=(\partial_j\mathscr{D}_i V(X_s))\#e_D$ since $V=V^*$ and thus $$\langle (\partial_j\mathscr{D}_i V(X_s))\#e_D,\partial_j(P(X_s))\#e_D\rangle=\langle\partial_j(P^*(X_s))\#e_D, (\partial_j\mathscr{D}_i V(X_s))\#e_D\rangle.$$

We have thus obtained: \begin{align*}\tau&(\langle (1\o_D1)_i,\delta_t P(X_t^1,...,X_t^n)\rangle)=\int_0^t ds\tau(\langle 1\o_D1,\partial_i P(X_s^1,...,X_s^n)\rangle)\\
&+\int_0^tds\tau(\langle (1\o_D1)_i,\delta_s \beta_s\rangle)-\int_0^tds \tau(\langle \partial(P^*)(X_s^1,...,X_s^n),\delta_s V_i(s,X_s)\rangle)\end{align*}
Summing up, we have obtained our ``differential equation":\begin{equation}\label{diffConjVarSDE}\tau(P_t\Xi_t^i)-\tau(\langle S_i,\delta_t P(X_t^1,...,X_t^n)\rangle_{B\langle X_s \rangle})=\int_0^tds\ \tau(\beta_s\Xi_s^i)-\tau(\langle S_i,\delta_s \beta_s\rangle_{B\langle X_s \rangle}).\end{equation}

\begin{step} Case with $V\in B\langle X_1,..., X_n:D,R,\C\rangle$ of finite degree $p+1$ (i.e. ``usual" polynomial with all terms in the $\ell^1$ direct sum of order higher than $p+2$ vanishing).\end{step}

Let us write $$M_n:=n\max_i\|\mathscr{D}_i V\|_{B\langle X_1,..., X_n:D,1,\C\rangle}=En.$$ Let $p$ be the maximum degree of $\mathscr{D}_i V$. Let $R\geq \sup_{s\in [0,T],i}\|X_t^i\|$.

 Let $\tilde{M}_n:=M_n+2n(\frac{R^p}{R^p-1})^2=Dn$. Finally, let $\theta$ a time such that for all monomials $P$, all $t\leq \theta$ we have already established what we want (for instance at the beginning $\theta=0$): $$\tau(P_t\Xi_t^i)-\tau(\langle S_i,\delta_t P(X_t^1,...,X_t^n)\rangle_{B\langle X_s \rangle})=0.$$
 
 Let us show quickly using \eqref{diffConjVarSDE} that for $P$ monomial of degree less than $n=kp$ (with coefficient in extended Haagerup norm less than 1 i.e. of norm less that $1$ in $B\langle X_1,..., X_n:D,1,\C\rangle$), we have for $t\geq\theta$ (since by definition the left hand side is 0 before)~: \begin{align*}\tau&(P_t\Xi_t^i)-\tau(\langle S_i,\delta_t P(X_t^1,...,X_t^n)\rangle_{B\langle X_s \rangle})\\ &\leq \frac{(t-\theta)^l(C+pT)F^{l}(k+2l)^{2l+1}}{2^l(l!)^2}R^{(k+l)p}
 =:A_l(t,k),\end{align*}
  where $C=sup_{[0,T]}\|\Xi_t^i\|<\infty$ and $F=\max \left(\frac{p^2}{R-1},Ep\right)$.
  
   We prove this by induction on $l$. Initialization at $l=0$ is obvious by boundedness of $X_t$ by $R\geq 1$.
 
 To prove induction step, note that $\beta_s=\frac{1}{2}\Delta_V(P)(X_s^1,...,X_s^n)$ contain two types of terms. The term coming from the first order part is a finite sum monomials of degree less than $(k+1)p$. Each of these terms will be bounded by the induction Assumption at level $l$ by $A_l(s,k+1)$ times the norm of the coefficient in the extended Haagerup tensor product, which all sums up to $\max_i\|\mathscr{D}_i V\|_{B\langle X_1,..., X_n:D,1,\C\rangle}\leq M_n/n=E.$ Finally the number of sums due to derivation can always be crudely bounded by $n=kp$, the degree of $P$. We thus obtain a bound $FkA_l(s,k+1)$ for this first order term.

  The other terms come from the second order derivative, we have of course at most $n(n-1)/2$ pairs of terms selected by the derivative, but we have to pay attention to their degrees. For sure we have at most $n$ terms with a given space $l\leq n$ between the two $1\o 1$ inserted by the derivative, in that case the degree is at most $kp-l$ after taking the conditional expectation $E_D$, and we have a bound by $R^l$ to bound the coefficient induced by this conditional expectation (corresponding to the variables $X_s$ inside, below $E_D$). Let us gather terms by taking only into account the integer part $i$ of $l/p$. We have thus at most $np$ terms with such an integer part, all of degree at most $(k-i)p$, with $R^{ip}$ plus a factor $1, R, ...,R^{p-1}$ depending of the exact degree in the group. 
At the end one thus gets~:
  \begin{align*}\tau&(P_t\Xi_t^i)-\tau(\langle S_i,\delta_t P(X_t^1,...,X_t^n)\rangle)\\ &\leq\int_\theta^tds A_l(s,k+1)Fk+\sum_{{i=0}}^k A_l(s,i)np R^{p(k-i)}\frac{R^p-1}{R-1}
\\ &\leq\int_\theta^tds A_l(s,k+1)Fk+\sum_{i=0}^k A_l(s,i)k R^{p(k-i)}FR^p 
  .\end{align*}

We have just  used our induction Assumption and we reorder a bit our expression to factorize powers of R and replace $A_l$ by its value to get:  
    \begin{align*}\tau&(P_t\Xi_t^i)-\tau(\langle S_i,\delta_t P(X_t^1,...,X_t^n)\rangle_{B\langle X_s \rangle})
 \\ &\leq FkR^{p(k+1)}\int_\theta^tds \frac{A_l(s,k+1)}{R^{p(k+1)}}+\sum_{i=0}^k \frac{A_l(s,i)}{R^{pi}}
\\ &\leq FkR^{p(k+1+l)}\int_\theta^tds \sum_{i=0}^{k+1}(i+2l)^{2l+1} \frac{(s-\theta)^l(C+pT)F^{l}}{2^l(l!)^2}
  .\end{align*}
We can now use  an easy comparison to integral of a Riemann sum. $$\sum_{i=0}^{k+1}(i+2l)^{2l+1}\leq \sum_{i=1}^{k+1+2l}i^{2l+1}\leq \frac{(k+2+2l)^{2l+2}}{2l+2}.$$

  Computing the integral, we therefore proved:
  \begin{align*}\tau&(P_t\Xi_t^i)-\tau(\langle S_i,\delta_t P(X_t^1,...,X_t^n)\rangle_{B\langle X_s \rangle})
\\ &\leq FkR^{p(k+1+l)}\frac{(k+2+2l)^{2l+2}}{2l+2} \frac{(t-\theta)^{l+1}(C+pT)F^{l}}{2^l(l!)((l+1)!)}\leq A_{l+1}(t,k)
  .\end{align*}

  Let us finally estimate $$A_{l}(t,k)=2l(C+pT)R^{kp}\frac{((k/2l)+1)^{2l+1}(l)^{2l}}{(l!)^2}(4R^{p}F(t-\theta)/2)^l.$$
 Note that   $$((k/2l)+1)^{2l+1}\leq \exp((2l+1)k/2l)\leq \exp{2k}$$ and by Stirling's formula $$(l)^{2l}/(l!)^2\sim e^{2l}/(2\pi l)$$ we conclude that as soon as $4R^{p}F(t-\theta)e^2/2<1$, i.e. when $t-\theta<2/e^24R^{p}F$  (independent of $k$), $A_{l}(t,k)\to_{l\to\infty} 0$, so that one easily deduces by induction one can take $\theta=T$.

\begin{step} Case of general $V$.\end{step}
Take a sequence $V_n$ as in step 2 converging to $V$ in $C^{4}_c(A,R:B,D).$ Note that we can assume the $V_n$ to be $(c',R)$ h-convex for some $c'<c$. Let us write $X_t(V_n), X_t(V)$ the solutions given by Proposition \ref{ConvSDE}, and call $\Xi_t(V_n), \Xi_t(V)$ the formulas from step (1) and let us show that $$\sup_{t\in [0,T]}\max(\|X_t(V_n)- X_t(V)\|,\|\Xi_t(V_n)- \Xi_t(V)\|)\to_{n\to \infty}0.$$

This is roughly the same argument as in the previous subsection for continuity in $\alpha$. Note that
\begin{align*} 
&X_t(V_n)-X_t(V) =-\frac{1}{2}\int_0^t du [\mathscr{D} V_n-\mathscr{D} V](X_u(V))\\& -\frac{1}{2}\int_0^t du\left[\int_0^1d\beta\partial\mathscr{D} V_n (\beta X_u(V_n)+(1-\beta)X_u(V))\right]\# [X_u(V_n)-X_u(V)].
\end{align*}
Using the argument in Lemma \ref{lemhyp} with $\partial\mathscr{D} V_\alpha(X_u)$ replaced by  $$\left[\int_0^1d\beta\partial\mathscr{D} V_n (\beta X_u(V_n)+(1-\beta)X_u(V))\right]\geq c' Id,$$ with the positivity coming since our notion of positivity is a closed convex cone, one gets:
\begin{align*} 
&\|X_t(V_n)-X_t(V)\| \leq e^{-c't/2}\int_0^t due^{c'u/2} (\sum_i\|[\mathscr{D}_i V_n-\mathscr{D}_i V](X_u(V))\|^2)^{1/2}
\end{align*}
This converges uniformly on $[0,T]$ to $0$ when $n\to\infty$ using the corresponding limit $V_n\to V$ and the a priori bounds on the norm of the process $X_u(V)$ on $[0,T].$ (Doing this for small $T$ first, this in particular ensures a bound for $X_t(V_n)$ for $t$ huge enough without assuming the assumption of Proposition \ref{ConvSDE}(b) for $V_n$.)
The convergence of $\Xi(V_n)$ is then straightforward by the explicit formula. We can then take the limit in the conjugate variable equation to conclude.\end{proof}

\subsection{Examples of h-convex potentials} 
 \label{ConvexPotentialSection}

We first produce an elementary example in $1$ variable.

\begin{Lemma}\label{basicquartic}
If $v(X_1)=\mu\frac{X_1^2}{2}+\lambda \frac{X_1^3}{3}+\nu\frac{X_1^4}{4}\in \C\langle X_1,...,X_n\rangle\subset B_c\langle X_1,...,X_n;D,R,\C\rangle$ for $\nu>0,\lambda^2\leq 8\mu\nu/3$, then for any $B,D$, $v=v^*\in  C^{2}_c(A,R:B,D)$  is $(0,R)$-convex for any $R$. 
\end{Lemma}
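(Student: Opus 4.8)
The statement asserts $(0,R)$-h-convexity of the one-variable quartic $v(X_1)=\mu X_1^2/2+\lambda X_1^3/3+\nu X_1^4/4$, i.e.\ by Definition~\ref{defhconvex} that the single entry $\partial_1 \mathscr{D}_1 v(X)\in A^{\oehc{D}2}$ is a nonnegative element in the sense of the Hille--Yosida-type characterization (it is a $1\times 1$ ``matrix''). First I would compute $\mathscr{D}_1 v$ and then $\partial_1\mathscr{D}_1 v$ explicitly. One has $\mathscr{D}_1 v = \mu X_1 + \lambda X_1^2 + \nu X_1^3$, and using $\partial_1(X_1)=1\otimes1$, $\partial_1(X_1^2)=1\otimes X_1 + X_1\otimes 1$, $\partial_1(X_1^3)=1\otimes X_1^2 + X_1\otimes X_1 + X_1^2\otimes 1$, we get
\begin{align*}
\partial_1\mathscr{D}_1 v(X) &= \mu\,1\otimes 1 + \lambda(1\otimes X_1 + X_1\otimes 1) + \nu(1\otimes X_1^2 + X_1\otimes X_1 + X_1^2\otimes 1).
\end{align*}
This is manifestly self-adjoint (as $v=v^*$, $X_1=X_1^*$), consistent with the remark preceding Definition~\ref{defhconvex}.

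The key step is to show this element $Q := \partial_1\mathscr{D}_1 v(X)$ acts as a nonnegative operator on each Hilbert space $A^{\oehc{D}m}$ via the $\#$-action. Since the action is $(Q\# Y) = $ (left multiplication by the various pieces with the tensor leg appropriately contracted), and because of the tracial structure (Theorem~\ref{Finite3}(3) / the $L^2$-action of Theorem~\ref{finite2}(1f)), the plan is to rewrite $Q$ as a sum of squares in the $\ast$-algebra $A^{\oehc{D}2}$ plus a nonnegative scalar. Concretely, I would try to complete the square: look for constants $a,b$ and express
\begin{equation*}
Q = \nu\,(a + bX_1\otimes 1 + 1\otimes X_1 \cdot c + \dots)^\star\#(\dots) + (\text{const})\,1\otimes 1,
\end{equation*}
more precisely seeking a decomposition of the symmetric quadratic form. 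The natural guess, mirroring the classical fact that $x\mapsto \mu x^2/2+\lambda x^3/3+\nu x^4/4$ has second derivative $\mu + 2\lambda x + 3\nu x^2 = 3\nu(x+\lambda/3\nu)^2 + \mu - \lambda^2/3\nu \ge 0$ under $\lambda^2 \le 3\mu\nu$, is to write $Q$ in a ``symmetrized'' squared form; the appearance of the sharper constant $8\mu\nu/3$ rather than $3\mu\nu$ is because the non-commutative/operator-space positivity of $Q$ on all the $A^{\oehc{D}m}$ is governed by the operator norm of the relevant $2\times 2$ coefficient pattern, not merely its determinant. So I would set up the $2\times 2$ symmetric scalar matrix encoding the coefficients of $1,\ X_1\otimes1 \leftrightarrow 1\otimes X_1,\ X_1^2\otimes1\leftrightarrow 1\otimes X_1^2,\ X_1\otimes X_1$ and verify that $\lambda^2 \le 8\mu\nu/3$ is exactly the condition making the corresponding operator-pencil $\mu\,\mathrm{id} + \lambda\,T + \nu\,T^2 \ge 0$ where $T$ is the self-adjoint operator implementing ``multiply one leg by $X_1$,'' using $\|T\|$-type bounds and the fact that $T$ and its ``reflected'' partner have a controlled joint behavior (here the $8/3$ versus $3$ discrepancy comes from $\|T + T^{op}\|$ considerations on the symmetric/antisymmetric subspaces).

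The main obstacle I anticipate is precisely pinning down why the threshold is $8\mu\nu/3$: this requires carefully identifying the worst-case vector in $\bigcup_m A^{\oehc{D}m}$ for the quadratic form $\langle Q\#Y, Y\rangle$, which amounts to a spectral estimate for a sum of two (non-commuting) ``multiplication-by-$X_1$-in-one-leg'' operators on the relative tensor product. I would reduce this, using the contractive evaluation maps and Proposition~\ref{CyclicPermutations}(3), to the universal case where $X_1$ is a single semicircular (or even a free/Haar-type generator of norm $R$) so that the spectral computation becomes a concrete $2\times2$ (or finite-dimensional) matrix positivity check: $\begin{pmatrix}\mu & \lambda/2 \cdot(\ast)\\ \ast & \nu\cdot(\ast)\end{pmatrix}\ge 0$ with the off-diagonal weighted by the operator norm of the symmetrization, and the arithmetic $\lambda^2 \le 8\mu\nu/3$ falls out. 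Once that scalar positivity is checked, the general statement follows because $(0,R)$-h-convexity only requires $Q \ge 0$ uniformly over $X\in A^n_{R,\mathrm{UltraApp}}$, and the evaluation/functoriality machinery already developed transports the semicircular computation to all such $X$; self-adjointness and membership $v = v^\ast \in C^2_c(A,R:B,D)$ are immediate since $v$ is a genuine polynomial.
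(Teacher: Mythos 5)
Your computation of the Hessian $Q=\partial_1\mathscr{D}_1v=\mu\,1\o 1+\lambda(X_1\o 1+1\o X_1)+\nu(X_1^2\o 1+X_1\o X_1+1\o X_1^2)$ is correct, and the idea of completing squares is indeed where the paper starts; the exact identity is
\begin{equation*}
Q=\Bigl(X_1+\tfrac{\lambda}{2\nu}\Bigr)^2\o\tfrac{\nu}{2}+\tfrac{\nu}{2}\o\Bigl(X_1+\tfrac{\lambda}{2\nu}\Bigr)^2+\tfrac{\nu}{2}\Bigl(X_1\o 1+1\o X_1+\tfrac{\lambda}{2\nu}1\o 1\Bigr)^2+\Bigl(\mu-\tfrac{3\lambda^2}{8\nu}\Bigr)1\o 1,
\end{equation*}
so the threshold $\lambda^2\leq 8\mu\nu/3$ is simply the condition that the leftover constant be nonnegative; it has nothing to do with a worst-case spectral estimate, a $2\times 2$ coefficient matrix, or $\|T+T^{op}\|$ considerations, and your speculation on that point would not lead anywhere.

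The genuine gap is in what you do with such a decomposition. The spaces $A^{\oehc{D}m}$ are \emph{not} Hilbert spaces: $Q\geq 0$ in the sense of Definition \ref{defhconvex} means, via the Hille--Yosida characterization, that $e^{-tQ}$ acts as a contraction for the $\#$-action on every $A^{\oehc{D}m}$ equipped with the extended Haagerup (Banach/operator-space) norm. A sum-of-squares expression for $Q$ in the $*$-algebra $A^{\oehc{D}2}$, or spectral positivity of some associated self-adjoint operator, does not by itself give $\|e^{-tQ}\|_{M\oehc{D}M}\leq 1$, and this is precisely the hard part of the lemma. The paper's proof supplies it by an explicit computation of the exponential: since the two legs commute, $e^{-tQ}$ factors into left-leg and right-leg exponentials times $e^{-t\nu X_1\o X_1}$; the mixed term is decoupled by a Gaussian (Hubbard--Stratonovich) integral, and a Hermite expansion in the Gaussian variable yields $e^{-tQ}=e^{-(\mu-3\lambda^2/(8\nu))t}\sum_n c_n(X_1)\o c_n(X_1)$ with $\sum_n c_n^*c_n=\int e^{-2tY_1^2}\,d\gamma(\sigma)\leq 1$ (and likewise $\sum_n c_nc_n^*$), where $Y_1=(X_1+\tfrac{\lambda}{2\nu})\sqrt{\nu/2}$. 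This row/column bound is what controls the extended Haagerup norm of $e^{-tQ}$, and $\sigma$-invariance of $e^{-tQ}$ gives the cyclic norm bound. Note also that this argument is carried out for an arbitrary self-adjoint $X_1\in D'\cap M$, so your proposed reduction to semicircular variables and transfer by evaluation maps is both unnecessary and unjustified: contractivity of the evaluation maps does not transport a positivity/contraction statement from the semicircular picture to a general $X\in A^n_{R,UltraApp}$ without further argument.
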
 
 
\begin{proof}From the computation on algebraic tensor products inside cyclic tensor products, in the proof of Proposition \ref{CyclicPermutations}, it is clear that $v(X_1)\in B_c\langle X_1,...,X_n;D,R,\C\rangle$.
Note that \begin{eqnarray*}H(X_1)&=&{{{   \partial_1}}}\mathscr D_1v\\
&=&\nu(X_1^2\otimes 1+X_1\otimes X_1+1\otimes X_1^2)+\lambda(X_1\o 1+ 1\o X_1)+\mu 1\o 1
\\&=&\left(X_1+\frac{\lambda}{2\nu}\right)^2\otimes \frac{\nu}{2}+ \frac{\nu}{2}\otimes\left(X_1+\frac{\lambda}{2\nu}\right)^2 + \frac{\nu}{2}\left(X_1\otimes 1+1\otimes X_1+\frac{\lambda}{2\nu}1\otimes 1\right)^2\\
&&+ \left(\mu- \frac{3\lambda^2}{8\nu}\right)1\otimes 1.\end{eqnarray*}
Thus let $B\subset (M,\tau)$, fix $X_1=X_1^*\in D'\cap M$  and let us observe that
$$e^{-t H(X_1)}=\sum_{k=0}^\infty\frac{(-t\nu)^k}{k!}e^{-t(\nu X_1^2+\lambda X_1+\mu/2)}(X_1^k\otimes X_1^k) e^{-t(\nu X_1^2+\lambda X_1+\mu/2)}$$
belongs to $ M\oh{D} M\subset M\oeh{D} M.$

Of course, the sum even converges in a projective tensor product, and we want to estimate its norm. Recall that 
$$M\oeh{D} M\subset CB_{M',M'}(D'\cap B(L^2(M)),B(L^2(M))\subset CB(B(L^2(M)),B(L^2(M))$$
 completely isometrically.

We now get an alternative integral formula. For convenience, we let 
$$Y_1=\left(X_1+\frac{\lambda}{2\nu}\right)\frac{\sqrt{\nu}}{\sqrt{2}}\,.$$ Using Cauchy product formula of absolutely converging series, one gets:
\begin{eqnarray*}e^{-t H(X_1)}&=&e^{-(\mu- \frac{3\lambda^2}{8\nu})t}
e^{-t Y_1^2}\sum_{k=0}^\infty \frac{[-t\nu (X_1\o 1+1\o X_1+\frac{\lambda}{2\nu})^2/2]^k}{k!}e^{-tY_1^2}\\&=&e^{-(\mu- \frac{3\lambda^2}{8\nu})t}
\int_{\R}d\sigma e^{-\sigma^2/2}e^{-tY_1^2}\sum_{k=0}^\infty \frac{[i\sqrt{t\nu}\sigma(X_1\o 1+1\o X_1+\frac{\lambda}{2\nu})]^k}{k!}e^{-tY_1^2}
\\
&=&\frac{e^{-(\mu- \frac{3\lambda^2}{8\nu})t}}{\sqrt{2\pi}}\int_{\R}d\sigma e^{-\sigma^2/2}e^{-tY_1^2}e^{i\sqrt{t\nu}\sigma(X_1\o 1+1\o X_1+\frac{\lambda}{2\nu})}e^{-tY_1^2}\\
&=&\frac{e^{-(\mu- \frac{3\lambda^2}{8\nu})t}}{\sqrt{2\pi}}\int_{\R}d\sigma e^{-\sigma^2/2}e^{-tY_1^2+i\sqrt{t\nu}\sigma (X_1+\frac{\lambda}{4\nu})}\otimes e^{-tY_1^2+i\sqrt{t\nu}\sigma (X_1+\frac{\lambda}{4\nu})},\end{eqnarray*}
where the second line is obtained using moments of standard gaussian  variables (and Fubini Theorem).
Using Hermite polynomials $H_n(x)=\frac{(-1)^n}{\sqrt{n!}}e^{x^2/2}\frac{d}{dx}e^{-x^2/2}$ as orthonormal basis, and $\xi_i\in L^2(M)$, one obtains by using the orthogonal decomposition in $L^2(d\gamma), d\gamma(\sigma) =e^{-\sigma^2/2}d\sigma/\sqrt{2\pi}$, 
$$\langle \xi_1,e^{-tY_1+i\sqrt{t\nu}\sigma (X_1+\frac{\lambda}{4\nu})}\xi_2\rangle =\sum_{n=0}^\infty H_n(\sigma)\langle \xi_1,c_n(X_1)\xi_2\rangle$$
which yields
\begin{align*}e^{-t H(X_1)}&=e^{-(\mu- \frac{3\lambda^2}{8\nu})t}\sum_{n=0}^\infty
c_n(X_1)\otimes 
c_n(X_1),\\
 c_n(X_1)&=\frac{1}{\sqrt{2\pi}}\int_{\R}d\sigma e^{-\sigma^2/2}H_n(\sigma)e^{-tY_1^2+i\sqrt{t\nu}\sigma (X_1+\frac{\lambda}{4\nu})}.\end{align*}
Indeed, to make this  identification  in $M\o_{eh}M=CB_{M',M'}(K(L^2(M),B(L^2(M)))$ (see \cite{BlecherPaulsen} for the equality), we first identify the two sides after evaluation on a finite rank operator, say in using the orthogonal decomposition recalled earlier  \begin{align*}
&\int_{\R}d\gamma(\sigma) \langle \xi_1,e^{-tY_1^2+i\sqrt{t\nu}\sigma (X_1+\frac{\lambda}{4\nu})}\xi_2\rangle \langle \xi_3, e^{-tY_1^2+i\sqrt{t\nu}\sigma (X_1+\frac{\lambda}{4\nu})}\xi_4\rangle \\
&=\sum_{n=0}^\infty
\langle \xi_1,c_n(X_1)\xi_2\rangle 
\langle \xi_3,c_n(X_1)\xi_4\rangle.\end{align*}
Then, if both sides extend to compact operators, one obtains the claimed equality. We already said the left hand side does (for instance by our previous bound
 on $e^{-t H(X_1)}$ obtained from the series expansion) and the right hand side will by our next bound {giving the contractivity property}.

Thus, for instance from \cite{M05}, when $\mu\ge \frac{3\lambda^2}{8\nu}$:
\begin{align*}||e^{-t H(X_1)}||_{M\oeh{D} M}&\leq  ||\sum_{n=0}^\infty c_n(X_1)c_n(X_1)^*||
\end{align*}

But note that for $\xi\in L^2(M)$, with $(e_j)_{j\in \N}$ an orthonormal basis of this space, we first get using Parseval equality and Tonelli Fubini Theorem to switch the sum over $j$:

\begin{align*}&\int_{\R}d\gamma(\sigma) \langle e^{-tY_1^2+i\sqrt{t\nu}\sigma (X_1+\frac{\lambda}{4\nu})}\xi,e^{-tY_1^2+i\sqrt{t\nu}\sigma (X_1+\frac{\lambda}{4\nu})}\xi\rangle
\\&=\sum_j\int_{\R}d\gamma(\sigma) \langle e^{-tY_1^2+i\sqrt{t\nu}\sigma (X_1+\frac{\lambda}{4\nu})}\xi, e_j\rangle
\langle e_j,e^{-tY_1^2+i\sqrt{t\nu}\sigma (X_1+\frac{\lambda}{4\nu})}\xi\rangle
\\&=\sum_j\sum_n
|\langle e_j,c_n(X_1)\xi\rangle|^2
\\&=\sum_n\sum_j
|\langle e_j,c_n(X_1)\xi\rangle|^2=\sum_n
\|c_n(X_1)\xi\|^2
=\langle \xi,\sum_{n=0}^\infty c_n(X_1)^*c_n(X_1)\xi\rangle
\end{align*}
where the third line is obtained by using Parseval equality again this time on $L^2(d\gamma)$, and again Fubini-Tonelli and Parseval.
Thus, we got, since $\nu>0$:
\begin{eqnarray*}
\sum_{n=0}^\infty c_n(X_1)^*c_n(X_1)&=&\frac{1}{\sqrt{2\pi}}\int_{\R}d\sigma e^{-\sigma^2/2}(e^{-tY_1^2+i\sqrt{t\nu}\sigma (X_1+\frac{\lambda}{4\nu})})^*e^{-tY_1^2+i\sqrt{t\nu}\sigma (X_1+\frac{\lambda}{4\nu})}\\
&=&\frac{1}{\sqrt{2\pi}}\int_{\R}d\sigma e^{-\sigma^2/2}e^{-2tY_1^2}\end{eqnarray*}
 is a contraction and so is $\sum_{n=0}^\infty c_n(X_1)c_n(X_1)^*$. Finally, from \eqref{finitesum}, it is easy to see in truncating the series that $\sigma( e^{-t H(X_1)})=e^{-t H(X_1)}$ and this concludes to :
$$||e^{-t H(X_1)}||_{M\oehc{D} M}\leq 1.$$
\end{proof} 
 
In order to deduce a more general example, we need to describe more explicitly the norm structure we put on $M_n(M\oehc{D} M)$ to obtain various contractive maps.

\begin{Lemma}\label{diagmult}
There is a completely contractive map $$\ell^\infty([\![1,n]\!], M\oehc{D} M)\to CB(\ell^2([\![1,n]\!],M^{\oehc{D}m}),\ell^2([\![1,n]\!],M^{\oehc{D}m}))$$ corresponding to action by diagonal matrices. Especially, there is a contractive diagonal embedding $(\ell^\infty([\![1,n]\!], M\oehc{D} M))\to M_n(M\oehc{D} M).$
\end{Lemma}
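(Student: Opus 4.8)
The statement asserts the existence of a completely contractive map from $\ell^\infty([\![1,n]\!], M\oehc{D} M)$ into the space of completely bounded maps on $\ell^2([\![1,n]\!],M^{\oehc{D}m})$ given by the action of diagonal matrices, and as a corollary a contractive diagonal embedding into $M_n(M\oehc{D} M)$. The plan is to exploit the two group actions (inner action $\sigma_1^{-1}(X)\#\cdot$ and outer action $\cdot\#\sigma_2^{-1}(X)$) and the interpolation description of $M\oehc{D} M$ that were set up in Theorems~\ref{Finite1}--\ref{Finite3} and in Proposition~\ref{CyclicPermutations}. First I would recall that for a single index the action $X\in M\oehc{D} M$ on $D'\cap L^2(M)\o_D L^2(M)$ extends, under the running assumptions, to an action on all of $L^2(M)\o_D L^2(M)$, hence (after free-product embedding, as in the construction of $M^{\oehc{D}m}$ as intersection space sitting inside $N_\kappa^{\oeh{D}2}$) to an action on each tensor power $M^{\oehc{D}m}$, with $\|X\#\cdot\|\le \|X\|_{M\oehc{D}2}$ coming directly from the definition of the norm $\|\cdot\|_{M_n(M\oehc{D}2)}$ in subsection~\ref{diffopsection}, which is tailored precisely so that the action on $(M^{\oehc{D}m})$ is contractive.

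Second, given $(X_1,\dots,X_n)\in\ell^\infty([\![1,n]\!],M\oehc{D} M)$ with $\sup_i\|X_i\|_{M\oehc{D}2}\le 1$, the diagonal matrix $D_X=\mathrm{diag}(X_1,\dots,X_n)$ acts on $\xi=(\xi_1,\dots,\xi_n)\in\ell^2([\![1,n]\!],M^{\oehc{D}m})$ by $(D_X\#\xi)_i=X_i\#\xi_i$. I would bound $\|D_X\#\xi\|^2_{(M^{\oehc{D}m})^n}=\sum_i\|X_i\#\xi_i\|^2\le \sum_i\|\xi_i\|^2=\|\xi\|^2$ using the per-entry contractivity just recalled; this uses that the norm on $(M^{\oehc{D}m})^n$ is the $\ell^2$-norm of the entrywise norms, as in the definition of $M_n(A^{\oehc{D}2})$. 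The same estimate applies to $D_X^*=\mathrm{diag}(X_1^*,\dots,X_n^*)$ since the involution on $M\oehc{D}2$ is isometric (Theorem~\ref{finite2}(1e)) and $(D_X^*\#\xi)_i=X_i^*\#\xi_i$. Taking the supremum over $m\ge 0$ and over the unit ball of $\xi$ gives exactly $\|D_X\|_{M_n(M\oehc{D}2)}\le \sup_i\|X_i\|_{M\oehc{D}2}$, which is the contractive diagonal embedding asserted in the corollary. For complete boundedness one replaces $M$ by $M_p(M)$ throughout (equivalently tensors with $M_p(\C)$); since all the bounds above are obtained entrywise from the contractivity of the single-index action and are insensitive to amplification (the construction of $M\oehc{D}2$ and its action is already stated at the level of $M_p(M)$ in Theorems~\ref{Finite1}--\ref{finite2}), the amplified maps are again contractions, so the map is completely contractive.

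Third, to see that the target really lands in completely bounded maps on $\ell^2([\![1,n]\!],M^{\oehc{D}m})$ compatibly over $m$ (rather than just a formal family), I would note that the action on each $M^{\oehc{D}m}$ is obtained by interpolating the actions on $D'\cap\langle M,e_D\rangle$ and $D'\cap L^1(\langle M,e_D\rangle)$ (Theorem~\ref{finite2}(1f)), and that these interpolations are consistent across $m$ because they all arise from the single outer/inner actions on $\langle N_\kappa,e_D\rangle$ and $L^1(\langle N_\kappa,e_D\rangle)$ restricted via the projections $P_n$ as in Lemma~\ref{basic2}; hence the diagonal action defines a genuine element of $\bigcap_m B(\ell^2([\![1,n]\!],M^{\oehc{D}m}))$, which is what $M_n(M\oehc{D}2)$ was defined to sit inside.

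\textbf{Main obstacle.} The only genuinely delicate point is the compatibility/consistency across the tensor powers $m$: one must make sure that ``the'' action of $X_i$ on $M^{\oehc{D}m}$ is well defined as an interpolated operator and that the diagonal assembly is again a bounded operator uniformly in $m$, i.e. that no constant depending on $m$ sneaks in. This is really a matter of carefully tracking the definitions — the norm on $M_n(A^{\oehc{D}2})$ was designed so that $Q\ge 0$ is equivalent to a resolvent/contraction-semigroup statement on each $\ell^2([\![1,n]\!],A^{\oehc{D}m})$ — so the proof is essentially a verification, but writing it cleanly requires invoking the right pieces of Theorems~\ref{Finite1}--\ref{Finite3} and Proposition~\ref{CyclicPermutations}(2). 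Everything else is the elementary observation that a diagonal operator on an $\ell^2$-sum has norm equal to the supremum of the entry norms, applied entrywise and then amplified.
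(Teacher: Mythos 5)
Your argument has a genuine gap, and it sits exactly where the real content of Lemma \ref{diagmult} lies. You treat $\ell^2([\![1,n]\!],M^{\oehc{D}m})$ as a naive $\ell^2$-direct sum of Banach spaces, so that a diagonal of entrywise contractions is trivially a contraction, and you dispose of complete contractivity by saying one may ``replace $M$ by $M_p(M)$''. But the statement is an operator-space statement: the space $\ell^2([\![1,n]\!],M^{\oehc{D}m})$ carries the operator space $\ell^2$-direct sum structure, which the paper identifies (via interpolation of $\ell^\infty([\![1,n]\!])\o_{\min}M^{\oehc{D}m}$ and $\ell^1([\![1,n]\!])\widehat{\o}M^{\oehc{D}m}$ and Pisier's interpolation theorem for Haagerup tensor products) completely isometrically with $\ell^2_{oh}([\![1,n]\!])\o_{h}M^{\oehc{D}m}$. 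With this structure the norm of $(\xi_1,\dots,\xi_n)$ is \emph{not} the $\ell^2$-sum of the $\Vert\xi_i\Vert_{M^{\oehc{D}m}}$ (it interpolates row- and column-type norms), so your Pythagorean estimate $\sum_i\Vert X_i\#\xi_i\Vert^2\le\sum_i\Vert\xi_i\Vert^2$ does not bound the relevant norm, and the complete contractivity into $CB(\ell^2([\![1,n]\!],M^{\oehc{D}m}))$ certainly does not follow from an unspecified amplification. The paper's proof supplies precisely what is missing: a shuffle map rearranging $(\ell^\infty([\![1,n]\!])\o_{h}M^{\oehc{D}2})\widehat{\o}(\ell^2_{oh}([\![1,n]\!])\o_{h}M^{\oehc{D}m})$, a multiplication $\ell^2_{oh}([\![1,n]\!])\o_{h}\ell^\infty([\![1,n]\!])\to\ell^2_{oh}([\![1,n]\!])$ obtained by interpolating the column and row module actions, and the complete contractivity of the product $M^{\oehc{D}2}\widehat{\o}M^{\oehc{D}m}\to M^{\oehc{D}m}$, checked by reducing to the intersection (flip) components and the complete contractivity of composition of $CB$ maps.

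A second, related weakness is your ``per-entry'' step itself. The bound $\Vert X\#\xi\Vert_{M^{\oehc{D}m}}\le\Vert X\Vert_{M^{\oehc{D}2}}\Vert\xi\Vert_{M^{\oehc{D}m}}$, with $\Vert X\Vert_{M^{\oehc{D}2}}$ the cyclic intersection norm, is not available off the shelf: Proposition \ref{CyclicPermutations}(1) only records stability of the cyclic spaces under $\#$, and the quantitative (complete) contractivity is established inside the proof of Lemma \ref{diagmult}. Your appeal to Theorem \ref{Finite3}(4) is also off-target: that statement concerns the Hilbert-space action on $L^2(M)\o_{D}L^2(M)$ and requires the extremality/basis hypotheses of Assumption \ref{thehyp2}, whereas Lemma \ref{diagmult} is proved without them; and invoking ``the definition of the norm on $M_n(M\oehc{D}M)$'' to justify the entrywise contraction is circular, since that norm is defined through the very action whose boundedness by the cyclic norm you are trying to prove. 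So both the identification of the norm on the $\ell^2$-sum and the contractivity of the single-entry action need the operator-space argument of the paper; the elementary diagonal-operator observation does not replace it.
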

\begin{proof}
First recall that in \cite{PisierBook}, the operator space structure of $\ell^2([\![1,n]\!],M^{\oehc{D}m})$ is described as the interpolation of $\ell^\infty([\![1,n]\!])\o_{min}M^{\oehc{D}m}=\ell^\infty([\![1,n]\!])\o_{h}M^{\oehc{D}m}$ and $\ell^1([\![1,n]\!])\hat{\o}M^{\oehc{D}m}=\ell^1([\![1,n]\!])\o_{h}M^{\oehc{D}m}$ (the first equality comes from the fact both operator space product are injective and \cite[Lemma 9.2.4, Prop 9.3.1]{EffrosRuan} that imply the same result with $\ell^\infty([\![1,n]\!])$ replaced by $M_n(\C)$, the second equality reduces to the first one after taking duals, the computation of dual of Haagerup tensor product is known in this case from \cite[Cor 9.4.8]{EffrosRuan}  and for the projective tensor product see \cite[Prop 8.1.2, 8.1.8]{EffrosRuan}). From the interpolation result of Haagerup tensor products \cite[Th 5.22]{PisierBook}, one deduces the complete isometry $\ell^2([\![1,n]\!],M^{\oehc{D}m})=\ell^2_{oh}([\![1,n]\!])\o_{h}M^{\oehc{D}m}.$

We will start from this description to get our map. From the universal property of the projective tensor product (and agreement of Haagerup and extended Haagerup tensor products in the finite dimensional case), it suffices to get a canonical completely contractive map $$(\ell^\infty([\![1,n]\!])\o_{eh}M^{\oehc{D}2})\widehat{\o}(\ell^2_{oh}([\![1,n]\!])\o_{eh}M^{\oehc{D}m})\to \ell^2_{oh}([\![1,n]\!])\o_{eh}M^{\oehc{D}m}.$$
To reach this goal, we compose several known complete contractions. First we start with the shuffle map from \cite[Lemma 8]{dabsetup}:
\begin{align*}(\ell^\infty([\![1,n]\!])&\o_{h}M^{\oehc{D}2})\widehat{\o}(\ell^2_{oh}([\![1,n]\!])\o_{h}M^{\oehc{D}m})\\&\to \ell^2_{oh}([\![1,n]\!])\o_{h}\left((\ell^\infty([\![1,n]\!])\o_{h}M^{\oehc{D}2})\widehat{\o}M^{\oehc{D}m}\right)
\\&\to \ell^2_{oh}([\![1,n]\!])\o_{h}\ell^\infty([\![1,n]\!])\o_{h}\left(M^{\oehc{D}2}\widehat{\o}M^{\oehc{D}m}\right).\end{align*}

We compose this map with a canonical multiplication map $ \ell^2_{oh}([\![1,n]\!])\o_{h}\ell^\infty([\![1,n]\!])\to \ell^2_{oh}([\![1,n]\!])$. It is obtained by interpolation from the map $\ell^\infty([\![1,n]\!])\o_{h}\ell^2_{c}([\![1,n]\!])\to \ell^2_{c}([\![1,n]\!])$ from \cite[3.1.3, Prop 3.1.7]{BLM} and the symmetric map $\ell^2_{r}([\![1,n]\!])\o_{h}\ell^\infty([\![1,n]\!])\to \ell^2_{r}([\![1,n]\!])$ which we interpolate after noticing that $\ell^2_{c}([\![1,n]\!])\o_{h}\ell^\infty([\![1,n]\!])=\ell^2_{c}([\![1,n]\!])\o_{min}\ell^\infty([\![1,n]\!])=\ell^\infty([\![1,n]\!])\o_{min}\ell^2_{c}([\![1,n]\!])=\ell^\infty([\![1,n]\!])\o_{h}\ell^2_{c}([\![1,n]\!])$. This multiplication of course gives the expected diagonal matrix action.

The multiplication map we finally  want $M^{\oehc{D}2}\widehat{\o}M^{\oehc{D}m}\to M^{\oehc{D}m}$ is of course the one we built in Proposition \ref{CyclicPermutations} (1). By density of the algebraic tensor product, it suffices to get a contractivity on basic tensors. Since the target norm  is induced form $M^{\o_{eshc D}m}$, it suffices to get the contractivity with this target space. This decomposes in various contractivity for each flip (using the fonctoriality of nuclear tensor product). We thus have to see that 
$\#:M^{\oeh{D}2}\widehat{\o}(D'\cap M^{\oeh{D}m})\to M^{\oeh{D}m}$ and $\#_i:M^{\oeh{D}m}\widehat{\o}(D'\cap M^{\oeh{D}2})\to M^{\oeh{D}m}$ are complete contractions. This is obvious from complete contractivity of composition of $CB$ maps.
\end{proof}

\begin{Lemma}\label{fullquartic}
Let $A=(A_{i,j})\in M_n(\R) $ a positive matrix with  $A\geq cI_n$ and $(\lambda_{i,j})\in M_{n,k}(\R) , \mu\in [0,\infty[^k$, $\upsilon_j(x)=\nu_{j,2}\frac{x^2}{2}+\nu_{j,3} \frac{x^3}{3}+\nu_{j,4}\frac{x^4}{4}$ for $\nu_{j,4}>0,\nu_{j,3}^2\leq 8\nu_{j,2}\nu_{j,4}/3.$
Let 
$$V(X)=\sum_{j=1}^k\mu_j\upsilon_j\left(\sum_{i=1}^n\lambda_{i,j}X_i\right)
+\sum_{i,j=1}^nA_{i,j}X_iX_j\,.$$ 
Then,for any $B,D$,  $V(X)\in \C\langle X_1,...,X_n\rangle\subset B_c\langle X_1,...,X_n;D,R,\C\rangle,$
$V=V^*\in  C^{6}_c(A,R:B,D)$  is $(c,R)$-h-convex for any $R$. 

Moreover, let $P=P^*\in \C\langle u_1,...,u_n\rangle$ a $*$-polynomial in unitary variables, and define for $\varepsilon>0$
$$\mathcal{V}(X)=V(X)+\varepsilon P(\frac{\sqrt{-1}+X_1}{\sqrt{-1}-X_1}, \cdots,\frac{\sqrt{-1}+X_n}{\sqrt{-1}-X_n})\,.$$
Then, for any $R>0$ and any $c'\in [0,c)$, there exists $\varepsilon_R>0$ so that for $\varepsilon\in [-\varepsilon_R,\varepsilon_R]$,  
$W\in C^{6}_c(A,R:B,D)$  is $(c',R)$ h-convex.
\end{Lemma}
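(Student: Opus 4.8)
\textbf{Proof plan for Lemma \ref{fullquartic}.}

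The plan is to handle the two assertions separately, reducing the first (the quartic part $V$) to Lemma \ref{basicquartic} via the diagonal matrix multiplication maps of Lemma \ref{diagmult}, and the second (the unitary perturbation) to a perturbative smallness estimate in the operator norm on $M_n(A^{\oehc{D}2})$.

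First I would compute the Hessian $(\partial_i\mathscr{D}_jV(X))_{ij}$ and split it into the contribution of the quadratic term $\sum A_{i,j}X_iX_j$ and the contributions of the one-variable terms $\mu_j\upsilon_j(\sum_i\lambda_{i,j}X_i)$. The quadratic term contributes exactly $A\otimes(1\otimes_D1)$ (interpreting $A\ge cI_n$ as the scalar matrix acting on $\ell^2([\![1,n]\!],A^{\oehc{D}m})$), which is $\ge c\operatorname{Id}$ by definition of the norm on $M_n(A^{\oehc{D}2})$; one checks that $A\otimes(1\otimes 1)$ indeed has $e^{-tA\otimes(1\otimes1)}$ contractive by the spectral calculus for the positive scalar matrix $A$. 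For each $j$, set $Y_j=\sum_i\lambda_{i,j}X_i$ and note the chain rule gives $\partial_i\mathscr{D}_{i'}[\upsilon_j(Y_j)] = \lambda_{i,j}\lambda_{i',j}\, H_j$ where $H_j=(\partial_1\mathscr{D}_1\upsilon_j)(Y_j)\in A^{\oehc{D}2}$ is exactly the element shown to satisfy $\|e^{-tH_j}\|_{A^{\oehc{D}2}}\le1$ in Lemma \ref{basicquartic} (using $\nu_{j,3}^2\le 8\nu_{j,2}\nu_{j,4}/3$). So the $j$-th piece of the Hessian is $(\lambda_{\cdot,j}\lambda_{\cdot,j}^t)\otimes H_j\ge 0$: this is the rank-one positive scalar matrix $\lambda_{\cdot,j}\lambda_{\cdot,j}^t$ ``tensored'' with the non-negative element $H_j$. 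To see that such a product is non-negative I would diagonalize $\lambda_{\cdot,j}\lambda_{\cdot,j}^t=\|\lambda_{\cdot,j}\|^2 ee^t$ with $e$ a unit vector, change orthonormal basis (which is an isometry of $\ell^2([\![1,n]\!],A^{\oehc{D}m})$, hence preserves the cone), and reduce to the diagonal case $\operatorname{diag}(\|\lambda_{\cdot,j}\|^2 H_j,0,\dots,0)$, whose semigroup is contractive by Lemma \ref{diagmult} together with Lemma \ref{basicquartic}. Since the cone of non-negative $Q=Q^*\in M_n(A^{\oehc{D}2})$ is a closed convex cone (as recalled after the Hille--Yosida proposition), summing over $j$ and adding the quadratic part gives $(\partial_i\mathscr{D}_jV(X))_{ij}-c\operatorname{Id}\ge0$, i.e.\ $(c,R)$ h-convexity of $V$ for every $R$. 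The $C^6_c$ regularity is immediate since $V$ is a polynomial (degree $4$), and that it lies in $B_c\langle X_1,\dots,X_n;D,R,\C\rangle$ follows from the algebraic-tensor-in-cyclic-tensor computation used in Proposition \ref{CyclicPermutations}.

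For the perturbation $W=\mathcal V$, the idea is that $X\mapsto \frac{\sqrt{-1}+X_i}{\sqrt{-1}-X_i}$ and all its derivatives up to order needed are bounded in the relevant $C^k_c$ norms uniformly over $X\in A^n_{R,UltraApp}$ (here one uses that for $\|X_i\|\le R$ the resolvent $(\sqrt{-1}-X_i)^{-1}$ is a bounded analytic function of $X_i$, hence lies in $C^\infty_c(A,R:B,D)$ with norm depending only on $R$). Composing with the polynomial $P$ and using the composition and differentiation results of Corollary \ref{compositionCkl}, Proposition \ref{CalcDiffAnalytic} and Lemma \ref{cyclic}, the Hessian $(\partial_i\mathscr{D}_j[\varepsilon P(\cdots)])_{ij}$ is an element of $M_n(A^{\oehc{D}2})$ with norm $\le \varepsilon\, C_{R,P}$ for a constant depending only on $R$ and $P$. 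Then $(\partial_i\mathscr{D}_j\mathcal V(X))_{ij}-c'\operatorname{Id} = \big[(\partial_i\mathscr{D}_jV(X))_{ij}-c\operatorname{Id}\big] + (c-c')\operatorname{Id} + (\partial_i\mathscr{D}_j[\varepsilon P])_{ij}$, and since the last two summands together are $\ge (c-c'-\varepsilon C_{R,P})\operatorname{Id}\ge 0$ once $\varepsilon\le \varepsilon_R:=(c-c')/C_{R,P}$, and the first summand is $\ge0$ by the first part, the closed-cone property gives $(c',R)$ h-convexity of $\mathcal V$. One should also note $\mathcal V=\mathcal V^*$ since $P=P^*$ and the Cayley-type transforms are unitaries, and $\mathcal V\in C^6_c(A,R:B,D)$ by the composition lemma applied to the analytic (non-polynomial) resolvent functions.

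The main obstacle I expect is the second part: making precise the claim that $\varepsilon P(\text{Cayley transforms})$ has a Hessian of small norm in $M_n(A^{\oehc{D}2})$, which requires carefully tracking that the cyclic-gradient and free-difference-quotient bounds in Lemma \ref{cyclic} and Corollary \ref{compositionCkl} are uniform over $X\in A^n_{R,UltraApp}$ and produce estimates in the cyclic Haagerup tensor norm (not merely in $L^2$ or the ordinary Haagerup norm), so that the perturbed Hessian genuinely lives in $M_n(A^{\oehc{D}2})$ with the asserted norm bound; this is where one must invoke that the resolvents are genuine elements of $C^\infty_c(A,R:B,D)$ with $R$-dependent norms and that the number of derivatives ($\le 6$, matching $C^6_c$) suffices for all the composition estimates. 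The first part is comparatively routine once one accepts Lemma \ref{basicquartic} and the diagonal-action Lemma \ref{diagmult}.
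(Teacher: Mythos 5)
Your first half follows the paper's proof essentially verbatim: reduce by the cone property to one direction at a time, pull the rank-one scalar matrix $\lambda_{\cdot,j}\lambda_{\cdot,j}^t$ into diagonal form by conjugating with an orthogonal matrix (which acts isometrically on $\ell^2([\![1,n]\!],A^{\oehc{D}m})=\ell^2_{oh}\oh{} A^{\oehc{D}m}$), and then invoke Lemma \ref{diagmult} together with the contraction $\|e^{-tH}\|_{A^{\oehc{D}2}}\le 1$ from Lemma \ref{basicquartic}; the treatment of the quadratic term and the summation over $j$ via the closed cone is the same. Your handling of the perturbation's size is also fine and equivalent to the paper's: writing $(c-c')\operatorname{Id}+b$ with $\|b\|_{M_n(A^{\oehc{D}2})}\le c-c'$ and using submultiplicativity to get $\|e^{-t[(c-c')\operatorname{Id}+b]}\|\le e^{-t(c-c')}e^{t\|b\|}\le 1$ is the same estimate the paper obtains by a Dyson series for $e^{-t(a+b)}$.

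The genuine gap is the regularity claim $\mathcal{V}\in C^{6}_c(A,R:B,D)$, i.e. that the resolvents $(\sqrt{-1}-X_i)^{-1}$ belong to $C^{6}_c(A,R:B,D)$ for \emph{every} $R$. Your justification --- ``$(\sqrt{-1}-X_i)^{-1}$ is a bounded analytic function of $X_i$, hence lies in $C^\infty_c(A,R:B,D)$'' --- does not work as stated: $C^{6}_c(A,R:B,D)$ is by definition a (separation-)completion of the spaces $B_c\langle X_1,\dots,X_n:D,S,\C\rangle$, $S>R$, of power series with radius of convergence exceeding $R$, in the $C^6$-type seminorms over $A_R^n$. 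For $R\ge 1$ the power series of $(\sqrt{-1}-X_1)^{-1}$ has radius of convergence $1$ and diverges on $A_R^n$, so norm-boundedness and holomorphy give you nothing; membership requires an explicit approximation by globally convergent series in the $\|\cdot\|_{6,0,A_R^n,c}$ seminorm. The paper supplies exactly this missing argument: setting $P_t(X)=(t\sqrt{-1}-X_1)^{-1}$, the set of $t>0$ with $P_t\in C^{6}_c(A,R:B,D)$ is nonempty (geometric series for $t$ large), open (Neumann series in the Banach algebra $C^{6}_c$), and closed in $]0,\infty[$ (resolvent identity combined with the explicit bound coming from $\partial^k_{(1,\dots,1)}P_t=k!\,P_t^{\otimes(k+1)}$, which gives $\|P_t\|_{6,0,A_R^n}\le\sum_{k=0}^{6}t^{-(k+1)}$ and a Cauchy estimate as $t_n\to t$), hence equals $]0,\infty[$ by connectedness. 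Without some argument of this kind (or another explicit approximation scheme), the step ``$\varepsilon P(\text{Cayley transforms})$ has a well-defined Hessian of norm $O(\varepsilon)$ in $M_n(A^{\oehc{D}2})$ uniformly over $A^n_{R,UltraApp}$'' is not established, since the composition and differentiation lemmas you invoke only apply to elements of the $C^k_c$ spaces.
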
 
 \begin{proof}
 From the additivity of  positivity, the positivity elements form a cone, so that it suffices to consider $k=1$ and even to show that $W(X)=\upsilon_1\left(\sum_{i=1}^n\lambda_{i,1}X_i\right)$ is (0,R) convex. But with the notation of the previous proof $$\partial_i\mathscr{D}_jW
 =\lambda_{j,1}\lambda_{i,1}H(\sum_{i=1}^n\lambda_{i,1}X_i).$$
 
 Let us call $P=\sum_i\lambda_{i,1}^2>0$. From the previous proof of Lemma \ref{basicquartic}, one deduces  $||e^{-t H(\sum_{i=1}^n\lambda_{i,1}X_i)}||_{M\oehc{D} M}\leq 1$. 
  Let us fix an orthogonal matrix $O$ with $O_{j,1}=\lambda_{j,1}/\sqrt{P}$ and write the matrix $A=(\partial_i\mathscr{D}_jW)_{j,i}$ as $A=O(O^* AO)O^*$. Note that $(O^*AO)_{j,i}=0$ except for $(O^*AO)_{1,1}=PH(\sum_{i=1}^n\lambda_{i,1}X_i).$ Thus $e^{-t A}=Oe^{-t (O^* AO)} O^*$. To conclude, note that $O,O^*$ are contractions in $M_n(M\oehc{D} M)$ since their action coincides with $O^\epsilon\o1$ on $\ell^2([\![1,n]\!],M^{\oehc{D}m})=\ell^2_{oh}([\![1,n]\!])\o_{h}M^{\oehc{D}m}.$ Finally, $e^{-t (O^* AO)}=Diag(e^{-tP H(\sum_{i=1}^n\lambda_{i,1}X_i)},1,...,1)$ and each term in the diagonal matrix is a contraction, so that one can apply Lemma \ref{diagmult} to conclude to $||e^{-t (O^* AO)}||_{M_n(M\oehc{D} M)}\leq 1.$
  
  We finally consider the case where the polynomial is pertubed.
 { In order to check that $\mathcal V\in C^{6}_c(A,R:B,D)$, since this space is obviously an algebra, it suffices to check $P_t(X)=\frac{1}{t\sqrt{-1}-X_1}\in C^{6}_c(A,R:B,D)$ for $t>0$. For $t$ large enough, a geometric series converging in $C^{6}_c(A,R:B,D)$ shows this. The set of such $t$ is thus non-empty, it is easy to check that $C^{6}_c(A,R:B,D)$ has an equivalent Banach algebra norm, then, a Neumann series gives the set of $t$ is open. It remains to see it is closed in $ ]0,\infty[$ to get the result by connectivity. An easy computation shows that $||P_t||_{6,0,A_R^n}\leq \sum_{k=0}^61/t^{k+1}$ as soon as we showed $P_t$ is in the space above, since $\partial^k_{(1,...,1)}P_t(X_1)=(k!) P_t(X_1)^{\otimes k+1}.$ When $t_n\to t>0$, and using 
 $$P_{t}(X_1)-P_s(X_1)=-P_{t}(X_1)(t-s)\sqrt{-1}P_s(X_1)$$
  one easily gets the convergence $||P_{t_n}-P_t||_{6,0,A_R^n}\to 0$ (in getting a Cauchy sequence and identifying the limit with $P_t$). It only remains to check the stated $h$-convexity. It suffices to take the coefficients of $P$ small enough so that  $b=(\partial_i\mathscr{D}_j(\mathcal{V}-V))_{j,i}$ has a norm  $||b||:=||b||_{M_n(M\oehc{D} M)}<c$ and in this case $c'=c-||b||$ is appropriate. Indeed, let $a= (\partial_i\mathscr{D}_j(V))_{j,i}$, we can use the Dyson series: $$e^{-t(a+b)}=e^{-ta}+\sum_{k=1}^\infty\int_0^tds_1\int_0^{s_1}ds_2\cdots\int_0^{s_{k-1}}ds_ke^{-(t-s_1)a}be^{-(s_1-s_2)a}\cdots be^{-(s_{k-1}-s_k)a}be^{-s_ka} ,$$
 and one obtains:
 $$||e^{-t(a+b)}||_{M_n(M\oehc{D} M)}\leq e^{-tc}+\sum_{k=1}^\infty\int_0^tds_1\int_0^{s_1}ds_2\cdots\int_0^{s_{k-1}}ds_ke^{-tc}||b||^k=e^{-t(c-||b||)}.
$$ 
 }
 \end{proof}

It remains to check the other assumptions on $\mathcal{V}$. We need variants of results from \cite[Th 3.4]{guionnet-edouard:combRM} and \cite{alice-shlyakhtenko-freeDiffusions}.

\begin{Proposition}\label{ConcentrationNorm}Let $V$ be of the form of $\mathcal{V}$ in Lemma \ref{fullquartic}, and $(c,R)$ h-convex for all $R>0$. Consider the probability on $(M_N(\C)_{sa})^{n}$ given (for some normalization constant $Z_{V,N}$) by :
$$\mu_{V,N}(dx)=\frac{1}{Z_{V,N}}e^{-N Tr(V(X_1,..,X_n))}dLeb_{(M_N(\C)_{sa})^{n}}(dX)$$
 Let $A_{1}^N,..., A_{n}^N$ of law $\mu_{V,N}$ (on a same probability space), we have a constant $C>0$ such that a.s.: $$\limsup _{N\to \infty}\max_i||A_{i}^N||_\infty\leq C,$$
 and for $K\in\N^*$\begin{equation}\label{IntegralUnifBound}
 \limsup _{N\to \infty}E_{\mu_{V,N}}(1_{\{||A_{i,l}^N||_\infty\geq C\}}\frac{1}{N}Tr((A_{i,l}^N)^{2K}))=0.
 \end{equation}
Moreover, 
for any non-commutative polynomial $P\in \C\langle X\rangle\otimes_{alg}\C\langle X\rangle$
 $$\lim_{N\to \infty}\left|E_{\mu_{V,N}}(\frac{1}{N^2}(Tr\otimes Tr)(P(A_1,...,A_k))-\frac{1}{N^2}\left[(E_{\mu_{V,N}}\circ Tr)\otimes (E_{\mu_{V,N}}\circ Tr)\right](P)\right|=0 .$$
  \end{Proposition}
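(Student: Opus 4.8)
\textbf{Proof proposal for Proposition \ref{ConcentrationNorm}.}

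The plan is to prove the three statements essentially independently, exploiting the classical convexity of $X \mapsto N\Tr(V(X_1,\dots,X_n))$ on $(M_N(\mathbb{C})_{sa})^n$. The crucial observation is that $h$-convexity of $V$ for \emph{all} $R$ implies, by evaluating the abstract positivity condition $(\partial_i \mathscr{D}_j V(X))_{ij} - c\,\mathrm{Id} \geq 0$ on the specific finite-dimensional bimodule $L^2(M_N(\mathbb{C}), \frac1N\Tr)$ (with $D=\mathbb{C}$, so the cyclic extended Haagerup tensor product reduces to the ordinary one, and evaluation into it is available because finite-dimensional matrices are trivially in $A^n_{R,UltraApp}$), that the real Hessian of $X \mapsto \Tr(V(X))$ as a function on the Euclidean space $(M_N(\mathbb{C})_{sa})^n$ is bounded below by $c$ times the identity. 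Concretely, for $H = (H_1,\dots,H_n)$ self-adjoint matrices, $\frac{d^2}{dt^2}\Big|_{t=0}\Tr(V(X+tH)) = \sum_{i,j}\Tr\big(H_i \cdot [(\partial_i\mathscr{D}_j V)(X)\# H_j]\big) \geq c\sum_i \Tr(H_i^2)$; this is precisely the inner-product pairing that the $M_n(A^{\oehc{D}2})$ norm controls, applied to the tracial $\mathbb{C}$-bimodule $L^2(M_N)$. Hence $\mu_{V,N}$ is a log-concave probability measure with a uniform (in $N$) lower bound $cN$ on the Hessian, so Bakry--\'Emery gives a log-Sobolev inequality with constant $\frac{1}{cN}$, and thus Gaussian concentration for Lipschitz functions on $(M_N(\mathbb{C})_{sa})^n$ with its Euclidean (Hilbert--Schmidt) structure.

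For the operator norm bound, I would follow the argument of \cite[Th 3.4]{guionnet-edouard:combRM} (or the analogous step in \cite{alice-shlyakhtenko-freeDiffusions}): the function $X \mapsto \max_i \|X_i\|_\infty$ is $1$-Lipschitz for the Hilbert--Schmidt norm (after the rescaling by $\sqrt{N}$ that makes $\mu_{V,N}$ have Hessian $\geq c$), so concentration gives $\mu_{V,N}\big(|\max_i\|X_i\|_\infty - m_N| > t\big) \leq 2e^{-cNt^2/2}$ where $m_N$ is a median. One then bounds $m_N$ (equivalently $E_{\mu_{V,N}}[\max_i\|X_i\|]$) by a constant $C$ uniformly in $N$: since $V$ is of the perturbed-quartic form, it dominates a quadratic-plus-quartic potential up to a bounded perturbation (the polynomial-in-unitaries piece is uniformly bounded by $\varepsilon\|P\|$), so standard matrix-model estimates on the largest eigenvalue — via the Schwinger--Dyson / loop equations, or a crude comparison with a GUE-type bound after completing the quartic square as in Lemma \ref{basicquartic} — yield $\limsup_N m_N \leq C$. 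Borel--Cantelli then upgrades the concentration estimate to the claimed almost sure bound $\limsup_N \max_i\|A_i^N\|_\infty \leq C$ (taking $C$ slightly larger than the limsup of the medians). For \eqref{IntegralUnifBound}, I would write $E_{\mu_{V,N}}\big(1_{\{\|A_{i}^N\|\geq C\}}\frac1N\Tr((A_i^N)^{2K})\big) \leq \big(\mu_{V,N}(\|A_i^N\|\geq C)\big)^{1/2}\big(E_{\mu_{V,N}}[\frac1N\Tr((A_i^N)^{4K})]\big)^{1/2}$ by Cauchy--Schwarz; the first factor decays like $e^{-cN(C-C')^2/4}$ by concentration (for $C' < C$ the limit of the medians), while the second factor is $O(1)$ uniformly in $N$ since all moments of $\frac1N\Tr((A_i^N)^{2m})$ converge (again by concentration of the $1$-Lipschitz—in the appropriate weak sense—moment functionals, or more simply by a moment bound coming from the potential dominating a quartic), so the product $\to 0$.

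The factorization (asymptotic freeness of traces, i.e. vanishing of the covariance of $\frac1N\Tr$) is the cleanest: for a fixed $P = P_1 \otimes P_2$ with $P_1, P_2 \in \mathbb{C}\langle X\rangle$, the map $X \mapsto \frac1N\Tr(P_\ell(X))$ is, after restriction to a ball $\{\max_i\|X_i\|\leq C+1\}$ where by the above the mass of $\mu_{V,N}$ concentrates, Lipschitz for the Hilbert--Schmidt norm with constant $O(N^{-1/2})$ (one derivative of $\Tr(P_\ell(X))$ is $\Tr(\mathscr{D}_i P_\ell(X) \cdot H_i)$, of Hilbert--Schmidt norm $\leq \|\mathscr{D}_i P_\ell\|_{\text{on the ball}}\cdot\sqrt{N}\cdot\|H\|_{HS}$, giving Lipschitz constant $\leq \mathrm{const}/\sqrt{N}$ after the $\frac1N$). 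Hence by the log-Sobolev/Poincar\'e inequality with constant $\frac{1}{cN}$, $\mathrm{Var}_{\mu_{V,N}}\big(\frac1N\Tr(P_\ell(X))\big) \leq \frac{1}{cN}\cdot \frac{\mathrm{const}}{N} = O(N^{-2})$, modulo the negligible contribution of the complement of the ball handled exactly as in \eqref{IntegralUnifBound}. Then $\big|E[\frac1{N^2}(\Tr\otimes\Tr)(P)] - \frac1{N^2}(E\circ\Tr)\otimes(E\circ\Tr)(P)\big| = |\mathrm{Cov}(\frac1N\Tr P_1, \frac1N\Tr P_2)| \leq \sqrt{\mathrm{Var}(\tfrac1N\Tr P_1)\mathrm{Var}(\tfrac1N\Tr P_2)} = O(N^{-2}) \to 0$, and the general $P \in \mathbb{C}\langle X\rangle\otimes_{alg}\mathbb{C}\langle X\rangle$ follows by bilinearity.

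\textbf{Main obstacle.} The delicate point is the uniform-in-$N$ bound on the largest eigenvalue (hence on the median $m_N$): concentration only says the distribution is tight around its median, but pinning down that $\limsup m_N < \infty$ requires genuine matrix-model input — one cannot get it from convexity alone. I would handle this by the Schwinger--Dyson-equation argument of \cite{guionnet-edouard:combRM}, controlling moments $\frac1N E_{\mu_{V,N}}[\Tr(X_i^{2K})]$ by induction on $K$ using integration by parts against the potential (the quartic leading term gives the needed coercivity, the bounded unitary-polynomial perturbation only shifts constants), and then deducing an $L^\infty$ bound via $\|A_i^N\|_\infty \leq \big(\frac1N\Tr((A_i^N)^{2K})\big)^{1/(2K)}\cdot N^{1/(2K)}$ combined with the concentration tail to remove the $N^{1/(2K)}$ factor — a now-standard bootstrap. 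A secondary technical nuisance is that the trace functionals are only Lipschitz on a ball, not globally, so each concentration estimate must be paired with an a priori mass bound on the complement of the ball, but this is exactly the mechanism of \eqref{IntegralUnifBound} and is routine once the operator-norm control is in hand.
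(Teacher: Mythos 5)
Your proposal is correct and follows essentially the same route as the paper: the paper's entire proof consists of the observation that $(c,R)$ h-convexity for all $R$, evaluated on $L^2(M_N(\C),\frac1N\Tr)$ with $D=\C$, yields the classical Hessian lower bound $c$ for $X\mapsto \Tr V(X)$ on Hermitian matrices, after which it invokes \cite[Th 3.4]{guionnet-edouard:combRM} verbatim. Your expanded account (log-Sobolev concentration from the uniform convexity, loop-equation/moment control of the largest eigenvalue to pin down the median, Cauchy--Schwarz for \eqref{IntegralUnifBound}, and the $O(N^{-2})$ covariance bound giving factorization) is precisely the content of that cited theorem, so you have simply written out the details the paper delegates.
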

\begin{proof} The proof is identical to \cite[Th 3.4]{guionnet-edouard:combRM} since $X_1,\ldots,X_n\mapsto \Tr{V}(X_1,\ldots,X_n)$ is convex, with Hessian bounded below by $c$, on the space of Hermitian matrices. In fact, one can check that any $h$-convex function ${V}$ satisfies this property.
\end{proof}

\begin{Theorem}\label{SDV}
Let $V$ be of the form of $\mathcal{V}$ in Lemma \ref{fullquartic}, and $(0,R)$ h-convex for all $R$.
Consider, the law absolutely continuous with respect to the law $P_{G^N}$ of GUE $G^N$:$$d\mu_{V,N}(X)=\frac{1}{Z_{V,N}}e^{-N Tr(V(X_1,..,X_n))}dP_{G^N}(X).$$
 Then $E_{\mu_{V,N}}\circ\frac{1}{N}Tr$ converges  in law to a tracial state $\tau_V$ which is the law of self-adjoint  variables $X(V)$ (of norm bounded by some $R$) and the unique solution with this property to the equation $(SD_V)$, for $G(X)=\tau_{X}(V)$:
$$ \forall P\in \C\langle X_1,...,X_n\rangle , \ (\tau_V\otimes \tau_V)(\partial_{X_i}(P))=\tau_V(X_iP)+\tau_V(\mathcal{D}_i V P).$$
Moreover, there is a solution on $\R_+$ given by Proposition \ref{ConvSDE} with potential $V_0+V$ 
and $\tau_V$ is the unique stationary $R^\omega$-embeddable 
trace for this free SDE. 
\end{Theorem}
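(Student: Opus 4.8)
\textbf{Proof plan for Theorem \ref{SDV}.}

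The plan is to follow the classical random matrix strategy for proving convergence of the empirical distribution together with a Schwinger-Dyson characterization, and then to identify the limit with the stationary measure produced by the free SDE machinery of Section \ref{sg}. First I would establish the a priori estimates needed to work in a bounded setting: Proposition \ref{ConcentrationNorm} already gives, almost surely under $\mu_{V,N}$, $\limsup_N \max_i \|A_i^N\|_\infty \le C$ for some finite $C$, together with the uniform integrability bound \eqref{IntegralUnifBound} and the factorization of $\frac{1}{N^2}\Tr\otimes\Tr$ against the product of expectations. These three facts are exactly what is needed to pass to the limit in the finite-$N$ Schwinger-Dyson (loop) equations. Concretely, integration by parts on $(M_N(\C)_{sa})^n$ applied to the density $\frac{1}{Z_{V,N}}e^{-N\Tr(V_0+V)}$ gives, for every $P\in\C\langle X_1,\dots,X_n\rangle$,
\begin{equation*}
\mathbb E_{\mu_{V,N}}\!\left[\tfrac{1}{N^2}(\Tr\otimes\Tr)(\partial_{X_i}P)\right]
= \mathbb E_{\mu_{V,N}}\!\left[\tfrac1N\Tr\big((X_i+\mathcal D_i V)P\big)\right].
\end{equation*}
Using the norm bound to restrict to the event $\{\max_i\|A_i^N\|\le C\}$ (the complement contributing negligibly by \eqref{IntegralUnifBound}), the factorization property turns the left side into $(\tau_N\otimes\tau_N)(\partial_{X_i}P)+o(1)$ where $\tau_N=\mathbb E_{\mu_{V,N}}\circ\frac1N\Tr$, and by compactness of the set of tracial states supported on $\|X_i\|\le C$ (for the weak-* topology on $\C\langle X_1,\dots,X_n\rangle^*$) every subsequential limit $\tau$ satisfies $(SD_V)$.

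Next I would prove uniqueness of a solution to $(SD_V)$ among tracial states with $\|X_i\|\le R$ for a suitable $R$. Here I would invoke the convexity hypothesis: $(SD_V)$ says precisely that the conjugate variables of $X(V)$ relative to $\C$ (in presence of $B=\C$) are the cyclic gradients $\mathcal D_i(V_0+V)$, i.e.\ $X(V)$ is a free Gibbs law with potential $V_0+V$. Since $V_0+V$ is $(c,2R)$ h-convex (with $c\ge 1>0$ because $V_0=\frac12\sum X_i^2$ contributes $\operatorname{Id}$ and $V$ is $(0,R)$ h-convex), Proposition \ref{ConvSDE}(b), or more precisely the second clause of Assumption \ref{thehyp} combined with the uniqueness statement in Proposition \ref{ConvSDE}, yields at most one such Gibbs law in $A_{R/3,\mathrm{UltraApp}}^n$. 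The point is that any solution of $(SD_V)$ is a stationary law for the dynamics $X_t = X_0 + S_t - \frac12\int_0^t \mathcal D(V_0+V)(X_s)\,ds$, and the contraction estimate \eqref{boundX}, $\|X_t(X_0)-X_t(\tilde X_0)\|\le e^{-ct/2}\|X_0-\tilde X_0\|$, forces two stationary laws to coincide. I would also need that the subsequential limits constructed above are $R^\omega$-embeddable (hence land in $A_{R,\mathrm{UltraApp}}^n$): this is automatic since $\tau_N$ is, for each $N$, a convex combination of laws of genuine $N\times N$ matrices, and $R^\omega$-embeddability passes to weak-* limits. Combining existence (from the matrix model) and uniqueness gives that the full sequence $\tau_N$ converges to the unique such $\tau_V$, and that $\tau_V$ is the law of the stationary solution of the free SDE with potential $V_0+V$ supplied by Proposition \ref{ConvSDE}.

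The main obstacle I expect is the justification of the passage to the limit in the loop equations in the presence of the unitary-valued perturbation term $\varepsilon P\big(\frac{\sqrt{-1}+X_1}{\sqrt{-1}-X_1},\dots\big)$: the density $e^{-N\Tr V}$ is no longer polynomial, so the integration-by-parts identity above picks up the cyclic gradient $\mathcal D_i V$, which is an analytic function of resolvents $(\sqrt{-1}-X_j)^{-1}$ rather than a polynomial. One must therefore check that the a priori norm bound $C$ is strictly below the radius where these resolvents are analytic (true since $X_j$ is self-adjoint so $\mathrm{spec}(X_j)\subset\R$ and $\sqrt{-1}\notin\R$), that $\mathcal D_i V\in C^*_u$ restricted to $\|X_j\|\le C$ so that $\frac1N\Tr((X_i+\mathcal D_i V)P)$ is a bona fide continuous function of the $\tau_N$-moments, and that uniform integrability still kills the boundary term; all of this uses Lemma \ref{fullquartic} and the fact that $\mathcal D_i V$ lies in $C^6_c(A,R:B,D)$, whose evaluation is weak-* continuous on $A_{C,\mathrm{UltraApp}}^n$. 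With that technical input in place the argument is the standard concentration-plus-loop-equation scheme, and the remaining steps (compactness, extracting $(SD_V)$, matching with the SDE) are routine.
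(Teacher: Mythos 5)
Your proposal is correct and takes essentially the same route as the paper: the a priori norm, uniform-integrability and factorization bounds of Proposition \ref{ConcentrationNorm}, passage to the limit in the finite-$N$ integration-by-parts identity to show that every limit point satisfies $(SD_V)$, uniqueness via the exponential contraction of the free SDE with potential $V_0+V$ from Proposition \ref{ConvSDE}, and compactness of norm-bounded tracial states to upgrade subsequential to full convergence. The only cosmetic difference is that the paper realizes the limit points as genuine operators $X^\omega$ in a von Neumann ultraproduct of the algebras $M_N(L^\infty(\mu_{V,N}))$ (which yields the $R^\omega$-embeddability, i.e.\ membership in $A^n_{R,UltraApp}$, and the norm bound automatically), whereas you work with weak-* subsequential limits of the states $\tau_N$ and argue embeddability directly; the mathematical content is the same.
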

Note that the $R^\omega$-embeddability assumption in the uniqueness is not really necessary but we stick to that case in order to be consistent and use our previous setting.
\begin{proof}  \setcounter{Step}{0}
\begin{step}Defining limit variables in a von Neumann algebra ultraproduct.\end{step}
Consider a non-principal ultrafilter $\omega$ on $\N$ and the tracial von Neumann algebra ultraproducts $\mathcal{L}^\omega=L^2(M_N(L^\infty(\mu_{V,N}))^\omega$, $\mathcal{M}^\omega=M_N(L^\infty(\mu_{{{V}},N}))^\omega$. Considering $A_1^N,...,A_n^N$ the canonical hermitian variables in $M_N(L^\infty(\mu_{V,N}))$, we know from \eqref{IntegralUnifBound} that $||A_i^N1_{\{||A_i^N||\leq C\}}- A_i^N||_2\to 0$ so that $X_i^\omega=(A_i^N)^\omega=(A_i^N1_{\{||A_i^N||\leq C\}})^\omega\in \mathcal{M}^\omega$. We thus also fix $B_i^N=A_i^N1_{\{||A_i^N||\leq C\}}.$ 

This gives a tracial state $\tau_{X^\omega}.$  Let us check that any such state satisfies $(SD_V)$. 

\begin{step}Showing $(SD_V)$.\end{step}

As in \cite{guionnet-edouard:combRM}, we use an integration by parts formula on $\mu_{V,N}$ which gives $\forall P\in \C\langle X_1,...,X_m\rangle$:
\begin{align*}\ E_{\mu_{V,N}}&\left(\frac{1}{N}Tr(A_i^NP(A_1^N,...,A_m^N))+ \frac{1}{N}Tr(N\nabla_{A_i^N}G(A_1^N,...,A_m^N)P(A_1^N,...,A_m^N))\right)\\&=E_{\mu_{V,N}}\left((\frac{1}{N}Tr\otimes\frac{1}{N}Tr)(\partial_{X_i}P)(A_1^N,...,A_m^N)\right)\end{align*}
and the second concentration result in Proposition \ref{ConcentrationNorm} implies that the right hand side converges when $N\to \omega$ to $(\tau_{X^\omega}\otimes \tau_{X^\omega})(\partial_i(P))$.
One thus obtains the relation in taking of limit to $\omega$ of the integration by parts relation. 
Moreover, note that this implies $\tau_{X^\omega}$ has finite Fisher information.

\begin{step}Properties and use of the SDE.\end{step}
Let $X_0=X^\omega$ or a $R^\omega$-embeddable solution of $(SD_V)$, which ensures $X_0\in A_{R/3,App}^n$ in the scalar case $B=\C$.
The application of our Proposition \ref{ConvSDE} thus gives a unique solution $X_t(X_0)$ on $[0,\infty[$ solving
$$X_t(X_0)=X_0-\frac{1}{2}\int_0^t \mathcal D V(X_s(X_0)) ds-\frac{1}{2}\int_0^t X_s(X_0) ds +S_t.$$
Considering another solution starting at $Y_0$, one obtains: $$||X_t(X_0)-X_t(Y_0)||_2^2\leq e^{-ct}||X_0-Y_0||_2^2.$$

Then exponential decay implies that the laws  $\tau_{X_t(X^\omega)}$ and
$\tau_{X_t(X^{\omega'})}$ are arbitrarily close for $t\to\infty$ and since they are equal to $\tau_{X^\omega}$ and
$\tau_{X^{\omega'}}$ by stationarity, one deduces that $X^\omega$ have the same law for any ultrafilter. Similarly, $(SD_V)$ has a unique $R^\omega$-embeddable solution and the exponential decay implies a stationary state for the SDE is unique too.

\begin{step}Conclusion on the limit of $E_{\mu_{V,N}}\circ\tau_.$.\end{step}
The law $E_{\mu_{V,N}}\circ\frac{1}{N}Tr$ is close to $E_{\mu_{V,N}}\circ\tau_{B^N}$ for $N$ large enough and this second law lies in the compact set $\mathcal{S}_C^n$ (tracial state space of the universal free product $C([-C,C])^{*n}$ with the weak-* topology) and from the result on ultrafilter limits the sequence has a unique limit point there (any such limit point being a $\tau_{X^\omega}$). We thus deduce by compactness the claimed convergence.
\end{proof}

\begin{Corollary}\label{corfullquartic}
Let $V,V+W$ be of the form of $\mathcal{V}$ in Lemma \ref{fullquartic}, and thus  $(c,R)$ h-convex for all $R$ and some $c>0$. Then they satisfy Assumption \ref{thehyphyphyp}.
\end{Corollary}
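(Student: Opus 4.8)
The goal is to verify that a pair $(V,V+W)$ of potentials of the form $\mathcal V$ in Lemma \ref{fullquartic} satisfies Assumption \ref{thehyphyphyp}; that is, three things: (i) $V$ and $V+W$ are $(c,2R)$ h-convex for some $c>0$; (ii) Assumption \ref{thehyp} holds, namely for every $\alpha\in[0,1]$ there is $(X_1^{V+\alpha W},\ldots,X_n^{V+\alpha W})\in A_{R/3,UltraApp}^n$ whose conjugate variables relative to $E_D$ in the presence of $B$ are $(\mathscr D_i(V+\alpha W))_i$; and (iii) the slightly stronger conclusion that such a solution may in fact be taken in $A_{R/4,UltraApp}^n$. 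The key point is that since $V,V+W$ are of the form $\mathcal V$, so is $V+\alpha W=\alpha(V+W)+(1-\alpha)V$ for every $\alpha\in[0,1]$: the quadratic part $\sum A_{i,j}X_iX_j$ and the one-dimensional quartic pieces $\mu_j\upsilon_j(\sum\lambda_{i,j}X_i)$ are stable under convex combinations (with $\nu_{j,4}>0$ and $\nu_{j,3}^2\le 8\nu_{j,2}\nu_{j,4}/3$ preserved by the scaling), the perturbation term is linear in its small coefficient $\varepsilon$, and the matrix $A$ for $V+\alpha W$ still satisfies $A\ge cI_n$ (as a convex combination of two such matrices). Hence every $V+\alpha W$ falls within the scope of Lemma \ref{fullquartic} and Theorem \ref{SDV}.

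First I would record (i): by Lemma \ref{fullquartic}, for $B=\C$ (where $\mathcal V$ is genuinely a polynomial in $\C\langle X_1,\ldots,X_n\rangle$, hence lies in $C^6_c(A,R:B,D)$ for any $B,D$ by the embedding discussed there), and for any $R>0$ and $c'<c$ there is $\varepsilon_R>0$ so that the perturbed potential is $(c',R)$ h-convex provided $\varepsilon$ is small enough; applying this to both $V$ and $V+W$ at the radius $2R$ and shrinking the relevant convexity constant slightly, we get a common $c'>0$ with $V,V+W$ both $(c',2R)$ h-convex, and then by convex combination (the set of nonnegative elements of $M_n(A^{\oehc D2})$ being a closed cone, as recalled after the Hille--Yosida proposition) $V+\alpha W$ is $(c',2R)$ h-convex for all $\alpha\in[0,1]$. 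Then for (ii) and (iii) I would invoke Theorem \ref{SDV} for the potential $V+\alpha W$ (of the form $\mathcal V$, $(0,R')$ h-convex for all $R'$): it produces a tracial state $\tau_{V+\alpha W}$, realized by self-adjoint variables $X(V+\alpha W)$ of norm bounded by some $R'$, which is the unique $R^\omega$-embeddable solution of the Schwinger--Dyson equation $(SD_{V+\alpha W})$, equivalently the unique state with conjugate variables $X_i+\mathscr D_i(V+\alpha W)=\mathscr D_i(V_0+V+\alpha W)$; rewriting, this says the conjugate variables of $X(V+\alpha W)$ relative to $E_D$ for the potential $V+\alpha W$ itself (i.e. the adjoint of $\partial_i$ applied to $1\otimes1$) equal $\mathscr D_i(V+\alpha W)$ as required in Assumption \ref{thehyp} — here we use that in the case at hand $B=\C$, $D=\C$, so $E_D$ is the trace and the conjugate-variable condition is exactly $(SD)$.

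The remaining, and main, issue is the \emph{uniform norm bound}: Assumption \ref{thehyphyphyp} demands the solution live in $A_{R/4,UltraApp}^n$ (and Assumption \ref{thehyp} in $A_{R/3,UltraApp}^n$), with $R$ the \emph{same} $R$ for which we have $(c,2R)$ h-convexity and six-fold differentiability. Theorem \ref{SDV} and Proposition \ref{ConcentrationNorm} give an a priori bound $\limsup_N\max_i\|A_i^N\|\le C$ with a constant $C=C(V+\alpha W)$ coming from the random-matrix model; the work is to show this $C$ can be made $\le R/4$ by choosing the perturbation parameter $\varepsilon$ small (and the free radius $R$ large enough relative to the fixed data $A,\lambda,\mu,\upsilon$ of the unperturbed quartic part). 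For $\varepsilon=0$ the potential $V+\alpha W$ reduces to a uniformly convex polynomial whose equilibrium measure has compact support with an explicit norm bound depending only on $A\ge cI_n$ and the quartic coefficients, and by the convexity one has good control (e.g. via the one-variable reductions in Lemma \ref{fullquartic} and standard concentration/free-probability estimates) of $\max_i\|X_i\|$; a continuity/perturbation argument in $\varepsilon$ (the map $\varepsilon\mapsto$ operator norm of the solution being controlled uniformly on $[0,1]\times[-\varepsilon_R,\varepsilon_R]$, using the Dyson-series type estimates already used in Lemma \ref{fullquartic} together with the exponential-decay stability estimate $\|X_t(X_0)-X_t(Y_0)\|_2^2\le e^{-ct}\|X_0-Y_0\|_2^2$ from Theorem \ref{SDV}) then pushes $C$ below $R/4$. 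I would also note that $R^\omega$-embeddability — needed to place the solution in $A^n_{R,App}$ when $D=\C$, hence in $A^n_{R,UltraApp}$ — holds automatically in the scalar case since the variables arise as ultraproduct limits of matrix models, exactly as in the proof of Theorem \ref{SDV}. Assembling: fix $R$ large and $\varepsilon$ small so that (i) holds at radius $2R$ with constant $c'>0$ and (iii) holds with the solution in $A^n_{R/4,UltraApp}$; then $(V,W)$ satisfies Assumption \ref{thehyphyphyp}, which is the assertion of Corollary \ref{corfullquartic}. $\qed$
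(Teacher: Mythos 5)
Your overall route is the paper's: closure of the class of Lemma \ref{fullquartic} (and of $(c,R)$ h-convexity, by the closed-cone/convexity property) under the convex combinations $V_\alpha=(1-\alpha)V+\alpha(V+W)$, and existence of the free Gibbs state for each $V_\alpha$ from the matrix model of Theorem \ref{SDV}, with $R^\omega$-embeddability translating into membership in $A^n_{R,UltraApp}$ in the case $B=D=\C$. But there is a genuine flaw in your identification of the conjugate variables. Theorem \ref{SDV}, applied with its potential equal to $V+\alpha W$, produces a state whose conjugate variables are $X_i+\mathscr{D}_i(V+\alpha W)=\mathscr{D}_i(V_0+V+\alpha W)$: the extra quadratic comes from the GUE reference measure $dP_{G^N}$. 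Your ``rewriting'' step, asserting that this same state has $\partial_i^*(1\otimes 1)=\mathscr{D}_i(V+\alpha W)$, is false -- the conjugate variables $\partial_i^*(1\otimes 1)$ do not depend on any choice of potential, and $X_i+\mathscr{D}_iV_\alpha\neq \mathscr{D}_iV_\alpha$. To verify Assumptions \ref{thehyp} and \ref{thehyphyphyp} you need a state solving the Schwinger--Dyson equation for $V_\alpha$ itself, i.e.\ you must apply the theorem with the Gaussian part of $V_\alpha$ playing the role of the reference measure (equivalently, run the proof of Theorem \ref{SDV} with the Lebesgue reference measure, which is exactly the setting of Proposition \ref{ConcentrationNorm}, whose hypotheses the $(c,R)$ h-convex $V_\alpha$ satisfies). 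As written, your state solves the wrong equation and does not furnish the variables required by the assumption.

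Second, the uniform norm bound is not the delicate point you make it. The corollary's potentials are $(c,R)$ h-convex and lie in $C^6_c$ at \emph{every} radius $R$, and Proposition \ref{ConcentrationNorm} already gives a finite a priori bound $C$ on $\limsup_N\max_i\|A_i^N\|$, which can be taken uniform over $\alpha\in[0,1]$ since the constant only depends on the convexity constant $c$ and on bounds for the coefficients of $V_\alpha$, which move in a convex hull. One then simply chooses $R$ with $C\le R/4$; no additional smallness of $\varepsilon$ beyond what Lemma \ref{fullquartic} already demands, no continuity-in-$\varepsilon$ of the solution, and no Dyson-series perturbation of the Gibbs state are needed -- and the argument you sketch for that step (pushing $C$ below $R/4$ by shrinking $\varepsilon$, via the stability estimate of Proposition \ref{ConvSDE}) is in any case only gestured at, not carried out. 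With the conjugate-variable identification corrected and the radius chosen after the uniform bound, the rest of your argument coincides with the paper's short proof.
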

\begin{proof}
The application of the previous Theorem gives existence of solution of $(SD_{V_\alpha}), \alpha\in [0,1]$ which is $R^\omega$-embeddable or equivalently $L(F_\infty)^\omega$-embeddable which is a reformulation of $A_{R,UltraApp}^n$ in the case $B=\C$. Everything else comes from Lemma \ref{fullquartic} and stability of $(c,R)$ h-convexity under taking convex combinations.
\end{proof}

\end{document}